\documentclass[12pt,english]{amsart}
\usepackage{lmodern}
\usepackage{helvet}

\usepackage[T1]{fontenc}
\usepackage[latin9]{inputenc}
\usepackage{mathrsfs}
\usepackage{amsthm}
\usepackage{amstext}
\usepackage{amssymb}
\usepackage{graphicx}
\usepackage{setspace}
\onehalfspacing

\makeatletter

\newcommand{\noun}[1]{\textsc{#1}}

\numberwithin{equation}{section}
\numberwithin{figure}{section}
  \theoremstyle{plain}
  \newtheorem*{prop*}{\protect\propositionname}
  \theoremstyle{plain}
  \newtheorem*{cor*}{\protect\corollaryname}
 \theoremstyle{definition}
 \newtheorem*{defn*}{\protect\definitionname}
  \theoremstyle{plain}
  \newtheorem*{thm*}{\protect\theoremname}
\theoremstyle{plain}
\newtheorem{thm}{\protect\theoremname}[section]
  \theoremstyle{remark}
  \newtheorem{rem}[thm]{\protect\remarkname}
  \theoremstyle{plain}
  \newtheorem{lem}[thm]{\protect\lemmaname}
  \theoremstyle{plain}
  \newtheorem{cor}[thm]{\protect\corollaryname}
  \theoremstyle{plain}
  \newtheorem{prop}[thm]{\protect\propositionname}
  \theoremstyle{definition}
  \newtheorem{defn}[thm]{\protect\definitionname}
\newenvironment{lyxlist}[1]
{\begin{list}{}
{\settowidth{\labelwidth}{#1}
 \setlength{\leftmargin}{\labelwidth}
 \addtolength{\leftmargin}{\labelsep}
 }}
{\end{list}}

\usepackage{amsfonts}
\usepackage{amssymb}
\usepackage[all,cmtip]{xy}
\usepackage{array}
\usepackage{lmodern}
\usepackage[T1]{fontenc}

\def\QQ{\mathbb{Q}}
\def\RR{\mathbb{R}}
\def\CC{\mathbb{C}}

\def\ZZ{\mathbb{Z}}
\def\PP{\mathbb{P}}

\def\vf{\varphi}
\def\vft{\tilde{\vf}}
\def\lm{\mathfrak{m}}
\def\lt{\mathfrak{t}}
\def\lh{\mathfrak{h}}
\def\l{\mathfrak{l}}
\def\n{\mathfrak{n}}
\def\la{\mathfrak{a}}
\def\ad{\text{ad}}
\def\Ad{\text{Ad}}

\def\lq{\mathfrak{q}}

\def\g{\mathfrak{g}}

\def\k12{\mathcal{K}_{\lambda_1,\lambda_2}}
\def\tk12{\tilde{\mathcal{K}}_{\lambda_1,\lambda_2}}
\def\ck12{\check{\mathcal{K}}_{\lambda_1,\lambda_2}}

\def\b{\bullet}
\def\fb{F^{\b}}
\def\tfb{\tilde{F}^{\b}}
\def\hfb{\hat{F}^{\b}}
\def\Q{\mathcal{Q}}
\def\M{\mathcal{M}}
\def\N{\mathcal{N}}
\def\A{\mathcal{A}}
\def\L{\mathcal{L}}

\theoremstyle{definition}

\theoremstyle{definition}

\theoremstyle{theorem}

\theoremstyle{theorem}

\theoremstyle{theorem}

\theoremstyle{definition}

\newtheorem*{disc}{Discussion}

\makeatother

\usepackage{babel}
  \providecommand{\corollaryname}{Corollary}
  \providecommand{\definitionname}{Definition}
  \providecommand{\lemmaname}{Lemma}
  \providecommand{\propositionname}{Proposition}
  \providecommand{\remarkname}{Remark}
  \providecommand{\theoremname}{Theorem}
\providecommand{\theoremname}{Theorem}

\begin{document}

\title{Naive boundary strata and nilpotent orbits}

\author{Matt Kerr and Gregory Pearlstein }

\subjclass[2000]{14D07, 14M17, 17B45, 20G99, 32M10, 32G20}
\begin{abstract}
We study certain real Lie-group orbits in the compact duals of Mumford-Tate
domains, verifying a prediction of \cite{GGK} and determining which
orbits contain a limit point of some period map. A variety of examples
are worked out for the groups $SU(2,1)$, $Sp_{4}$, and $G_{2}$.
\end{abstract}
\maketitle

\section{Introduction}

In a previous work \cite{KP}, we introduced and studied boundary
components for Mumford-Tate domains, which are homogeneous classifying
spaces $D=G(\RR)^{\circ}/\mathcal{H}$ for Hodge structures with additional
symmetries (in a Tannakian sense) \cite{GGK}. Here $G$ is a reductive,
connected $\QQ$-algebraic group, $G(\RR)^{\circ}$ the identity component
of the (Lie) group of real points, and $\mathcal{H}$ a compact subgroup.
The boundary components $B(N)$ essentially parametrize all possible
LMHS (limiting mixed Hodge structures) for period maps $\Phi:\mathcal{S}\to\Gamma\backslash D$
into such a domain with given monodromy logarithm $N$, and also admit
a homogeneous description. A feature of that work was the interesting
representation theory that arises from considering symmetries and
asymptotics of Hodge structures in tandem.

The purpose of the present study is to better understand the interaction
between asymptotic Hodge theory and the $G(\RR)^{\circ}$-orbit structure
of the compact dual $\check{D}=G(\CC)/Q$ of $D$. Here $Q\leq G(\CC)$
is a parabolic subgroup, and $\check{D}$ a projective variety containing
$D$ as an analytic open subset. The ``naive boundary strata'' of
the title are the orbits in the topological boundary of $D$ in $\check{D}$,
and a natural question is which ones are Hodge-theoretically ``accessible''
in the sense of containing a limit point of (a lift $\tilde{\Phi}$
of) some period map $\Phi$. In addition to obtaining a nice answer
to this question ($\S5.2$), we shall clarify the relationship of
these ``boundary orbits'' to the boundary components, and obtain
a mixed-Hodge-theoretic parametrization of all the orbits and description
of their incidence structure.

Now the traditional way to record the asymptotics of a period map
$\Phi$ is via the limiting mixed Hodge structure, and not the ``naive''
limit point of $\tilde{\Phi}$ in $\partial D$. There is a good reason
for this: because of logarithmic growth of periods, the latter loses
information recorded by the LMHS. One has the classic example (cf.
\cite{Ca}) of a degenerating family of genus-2 curves in two parameters,
with two cycles vanishing at the origin. The degenerate fiber is a
rational curve with two pairs of points identified, and the cross
ratio of these 4 points is encoded in the extension data of the LMHS,
while the ``naive'' limit and even the cohomology of the singular
fiber record nothing. From an algebro-geometric point of view, then,
it is unclear why one would want to study the interaction between
variational Hodge theory and the orbit structure of $\check{D}$.

Our motivation for this work arose instead from a perspective heavily
influenced by problems in complex geometry and representation theory,
where the (finitely many) $G(\RR)^{\circ}$-orbits are objects of
some importance \cite{FHW,Wo}. Recent work of Robles \cite{Ro} has
finally settled the question of maximal dimensions of integral manifolds
of the infinitesimal period relation on $\check{D}$ (and hence of
images of period maps in $\Gamma\backslash D$). It seems that one
way of producing interesting maximal-dimensional VHS is by threading
integral manifolds through ``accessible'' orbits in $\partial D$,
and that this approach holds some promise for the much-studied question
of smoothing Schubert varieties in $\check{D}$ in their cohomology
classes . We also mention that in the forthcoming work \cite{GGK2},
the proof of psuedo-convexity of $D$ will be recast in terms of our
Hodge-theoretic analysis of $\partial D$.

On the representation-theoretic side, the $G(\RR)^{\circ}$-orbits
are related to the construction of infinite-dimensional unitary representations
of $G(\RR)$ via parabolic induction. Moreover, one reason for writing
\cite{KP} was to see for which M-T domains one might extend H. Carayol's
approach \cite{Car} to putting an arithmetic structure on automorphic
cohomology. Our analysis of codimension-1 boundary strata suggests
(cf. $\S5.3$) how to generalize his definition of Fourier coefficients
to at least some cases where $G$ is an exceptional group. We shall
pursue these connections in future works.

\subsection*{Summary:}

In the remainder of the Introduction, we briefly describe the main
results. Given a polarized Hodge structure $(V,{}'B,{}'\varphi_{0})$
with Mumford-Tate group ${}'G$, let $\text{Ad}:{}'G\twoheadrightarrow{}'G^{ad}=:G$
be the adjoint map, and set $Z:=\ker(\text{Ad})(\CC)=Z({}'G(\CC))$,
$\vf_{0}:=\Ad\circ{}'\vf_{0}$. Let $\Theta$ be the Cartan involution
of $G$ induced by conjugation by $\vf_{0}(\sqrt{-1})$.

For any $g\in{}'G(\RR)^{\circ}$, if $'\vf_{1}=g^{-1}\circ{}'\vf_{0}\circ g$
satisfies $\Ad\circ{}'\vf_{1}=\vf_{0}$, then ${}'\vf_{1}/{}'\vf_{0}$
is a cocharacter of $Z^{\circ}$, whereupon it must clearly be trivial.
So we have a diagram \[\xymatrix{
{}'\check{D}:= {}'G(\CC)/Q_{F^{\b}_{{}'\vf_0}} \ar @{->>} [d] & {}'G(\RR)^{\circ}/\mathcal{H}_{{}'\phi_0} =: {}'D \ar [d]^{\cong} \ar @{_(->} [l] 
\\
\check{D}:= G(\CC)/Q_{F^{\b}_{\vf_0}} & G(\RR)^{\circ}/\mathcal{H}_{\vf_0} =: D \ar @{_(->} [l]
}\]in which $Z$ belongs to $Q_{F_{{}'\vf_{0}}^{\b}}$ and the left-hand
side is finite-to-one. But the parabolic subgroup $Q_{F_{\vf_{0}}^{\b}}$
is necessarily connected, and so in fact ${}'\check{D}$ and $\check{D}$
are the same. For this reason, we may work without loss of generality
in the adjoint setting. Note that we view the points of the compact
dual $\check{D}$ as flags on $\mathfrak{g}:=Lie(G(\CC))$. We shall
make the simplifying assumptions that the polarization on $(\mathfrak{g},\vf_{0})$
induced by $'B$ is a multiple of the Killing form $B$, and that
the horizontal distribution on $\check{D}$ (equiv. ${}'\check{D}$)
is bracket-generating.

In \cite[(IV.B.10)]{GGK}, it was conjectured that one should be able
to parametrize $\check{D}$ via pairs $(H,\chi)$, where $H\leq G_{\RR}$
is a maximal algebraic torus and $\chi\in\mathrm{X_{*}(H(\CC))}$
a complex co-character of $H$, in $G(\RR)^{\circ}$-equivariant fashion.
This is proved in $\S3$ (Theorem \ref{thm surj}), the take-away
from which summarized in the following
\begin{prop*}
(a) Given any point $F^{\b}\in\check{D}$, there exists a pair $(H,\chi)$
as above such that $F^{\b}=\mathscr{F}(H,\chi)^{\b}$ \emph{(}cf.
\eqref{def F}-\eqref{def F 2}\emph{)}.

(b) The pair $(H,\chi)$ determines a bigrading $\mathfrak{g}_{\CC}=\oplus_{p,q}\g_{\chi}^{p,q}$
with $F^{\b}=\oplus_{p\geq\b}\g_{\chi}^{p,q}$ and $\overline{\g_{\chi}^{p,q}}=\g_{\chi}^{q,p}$.

(c) The $G(\RR)^{\circ}$-orbit containing $F^{\b}$ has real codim.
$\sum_{p,q>0}\dim_{\CC}(\g_{\chi}^{p,q})$ in $\check{D}$.
\end{prop*}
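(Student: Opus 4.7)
The plan is to invoke Theorem \ref{thm surj} for part (a), derive (b) from the definition of $\mathscr{F}$ together with properties of the conjugate cocharacter, and establish (c) by a dimension count on tangent spaces.

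For (a), Theorem \ref{thm surj} provides, for any $\fb\in\check{D}$, a pair $(H,\chi)$ with $\fb=\mathscr{F}(H,\chi)^\b$. The substantive content is existence: one must show that the complex parabolic $\lq_{\fb}\leq\g$ stabilizing $\fb$ meets the complexification of an $\RR$-defined maximal torus $H$. Once such an $H$ has been produced, $\chi\in X_{*}(H(\CC))$ is recovered as a cocharacter whose non-negative weight spaces reassemble $\fb$.

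For (b), because $H$ is defined over $\RR$, complex conjugation $\sigma$ on $H(\CC)$ yields a conjugate cocharacter $\bar\chi:=\sigma\circ\chi\circ\sigma\in X_{*}(H(\CC))$. Since $H(\CC)$ is abelian, $\chi$ and $\bar\chi$ commute, and $\g$ decomposes into simultaneous eigenspaces $\g_\chi^{p,q}:=\{X\in\g:\Ad(\chi(z))X=z^{p}X,\ \Ad(\bar\chi(z))X=z^{q}X\}$. By construction of $\mathscr{F}$, $F^{p}=\oplus_{p'\geq p,q}\g_\chi^{p',q}$. The intertwining relation $\sigma\circ\Ad(\chi(z))\circ\sigma=\Ad(\bar\chi(\bar z))$ then gives, for $X\in\g_\chi^{p,q}$, the identity $\Ad(\chi(z))\bar X=\sigma\bigl(\Ad(\bar\chi(\bar z))X\bigr)=z^{q}\bar X$, and symmetrically $\Ad(\bar\chi(z))\bar X=z^{p}\bar X$; thus $\overline{\g_\chi^{p,q}}=\g_\chi^{q,p}$.

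For (c), set $d_{p,q}:=\dim_{\CC}\g_\chi^{p,q}$. The tangent space $T_{\fb}\check{D}=\g/F^{0}\g=\oplus_{p<0,q}\g_\chi^{p,q}$ has real dimension $2\sum_{p<0,q}d_{p,q}$. The real tangent space to the $G(\RR)^{\circ}$-orbit is the image of $\g_{\RR}\to\g/F^{0}\g$; its kernel equals $\g_{\RR}\cap F^{0}\g=(F^{0}\g\cap\overline{F^{0}\g})_{\RR}=(\oplus_{p,q\geq 0}\g_\chi^{p,q})_{\RR}$, of real dimension $\sum_{p,q\geq 0}d_{p,q}$. Hence the orbit has real dimension $\sum_{p,q}d_{p,q}-\sum_{p,q\geq 0}d_{p,q}=\sum_{p<0\text{ or }q<0}d_{p,q}$. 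Subtracting from the real dimension of $\check{D}$ and using the symmetry $d_{p,q}=d_{q,p}$ from (b) to cancel the mixed-sign contributions yields real codimension $\sum_{p,q<0}d_{p,q}$. Finally, the root-system involution $\alpha\leftrightarrow-\alpha$ of $(\g,H(\CC))$ sends each $\g_\chi^{p,q}$ (with $(p,q)\neq(0,0)$) bijectively to $\g_\chi^{-p,-q}$---the Cartan $\mathfrak{h}_{\CC}$ lying in $\g_\chi^{0,0}$ and so not contributing off the diagonal---so $d_{p,q}=d_{-p,-q}$ and $\sum_{p,q<0}d_{p,q}=\sum_{p,q>0}d_{p,q}$, completing (c).

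The principal obstacle is part (a), delegated to Theorem \ref{thm surj}: producing an $\RR$-defined maximal torus inside the complex parabolic $\lq_{\fb}$. After that, (b) is a brief verification with complex conjugation and (c) reduces to inclusion-exclusion on the weight indices combined with the $\alpha\leftrightarrow-\alpha$ symmetry.
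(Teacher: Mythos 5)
Your proposal is correct and follows essentially the same route as the paper: (a) is the content of Theorem \ref{thm surj}, resting on the existence of a real Cartan inside $Q_{F^{\bullet}}\cap\overline{Q_{F^{\bullet}}}$ (Lemma \ref{bruhat lemma}) plus conjugacy of Cartans in the connected parabolic to transport $\chi_{0}$; (b) is the simultaneous $(\chi,\bar{\chi})$-eigenspace decomposition of Lemma \ref{bigrading lemma}; and (c) reproduces the tangent-space count of Proposition \ref{prop codim}, using the same two symmetries $\dim\g^{p,q}=\dim\g^{q,p}$ and $\dim\g^{p,q}=\dim\g^{-p,-q}$. The only cosmetic discrepancy is normalization: with the paper's conventions $\Ad(\chi(z))$ acts on $\g_{\chi}^{p,q}$ by $z^{2p}$ rather than $z^{p}$, owing to the factor $\tfrac{1}{2}$ in the definition of $\pi_{\chi}$.
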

In $\S4$, this is used to index the $G(\RR)^{\circ}$-orbits in $\check{D}$
and describe their incidence relations, beginning with the
\begin{cor*}
Write $F_{\vf_{0}}^{\b}=\mathscr{F}(H_{0},\chi_{0})^{\b}$. Let $\{H_{0},H_{1},\ldots,H_{n}\}$
be representatives of the $G(\RR)^{\circ}$-conjugacy classes of Cartan
subgroups of $G(\RR)^{\circ}$, obtained by Cayley transforms from
$H_{0}$, with complex \emph{{[}}resp. real\emph{{]}} Weyl groups
$W_{\CC}(H_{j})$ \emph{{[}}resp. $W_{\RR}^{\circ}(H_{j})$\emph{{]}}.
Let $\chi_{j}\in\mathrm{X}_{*}(H_{j}(\CC))$ be obtained from $\chi_{0}$
in the same manner, with stabilizers $W_{j}\leq W_{\CC}(H_{j})$.
Then $\mathscr{F}$ induces an ``orbit map'' from the finite set
\[
\Xi:=\bigcup_{j}W_{\RR}^{\circ}(H_{j})\backslash\{(H_{j},w\chi_{j})\,|\, j\in\{0,\ldots,n\},\, w\in W_{\CC}(H_{j})/W_{j}\}
\]
onto the set of $G(\RR)^{\circ}$-orbits. This map is a bijection
if $\check{D}$ is a complete flag variety \emph{(}cf. Lemma \ref{lem complete flag}\emph{)}.
\end{cor*}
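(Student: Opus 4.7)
The plan is to combine part (a) of the Proposition, which produces a pair $(H,\chi)$ for every $F^{\b}\in\check{D}$, with the classical Cayley-transform enumeration of Cartan subgroups of $G(\RR)^{\circ}$, and then to quotient by the manifest $W_{\CC}(H_{j})$- and $W_{\RR}^{\circ}(H_{j})$-symmetries. For surjectivity, given $F^{\b}\in\check{D}$, first write $F^{\b}=\mathscr{F}(H,\chi)^{\b}$ by the Proposition. Since the $\{H_{0},\ldots,H_{n}\}$ exhaust the $G(\RR)^{\circ}$-conjugacy classes of Cartans of $G(\RR)^{\circ}$, I may conjugate $F^{\b}$ by some $g\in G(\RR)^{\circ}$ inside its orbit so that $H=H_{j}$ for a unique $j$; the cocharacter becomes $g\chi g^{-1}\in\mathrm{X}_{*}(H_{j}(\CC))$.

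Next I need to recognize this transported cocharacter as $w\chi_{j}$ for some $w\in W_{\CC}(H_{j})$. The key point is that $\check{D}$ is a single $G(\CC)$-orbit, so the parabolic stabilizer of $\mathscr{F}(H_{j},g\chi g^{-1})^{\b}$ is $G(\CC)$-conjugate to $Q_{F_{\vf_{0}}^{\b}}$, which by the parallel Cayley-transform construction of $\chi_{j}$ is itself the parabolic cut out by $\chi_{j}$ on $H_{j}(\CC)$. Since two cocharacters of a common complex Cartan produce $G(\CC)$-conjugate parabolics if and only if they lie in the same $W_{\CC}(H_{j})$-orbit, we obtain $g\chi g^{-1}=w\chi_{j}$ with $w$ determined modulo the stabilizer $W_{j}$. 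Quotienting further by $W_{\RR}^{\circ}(H_{j})=N_{G(\RR)^{\circ}}(H_{j})/H_{j}(\RR)^{\circ}$, whose elements are realized by honest $G(\RR)^{\circ}$-conjugations of the pair $(H_{j},w\chi_{j})$, yields a well-defined class in $\Xi$ that maps to the original $G(\RR)^{\circ}$-orbit. Thus the orbit map is well-defined and surjective.

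For bijectivity when $\check{D}$ is a complete flag variety, suppose that $g\cdot\mathscr{F}(H_{j},w_{j}\chi_{j})^{\b}=\mathscr{F}(H_{i},w_{i}\chi_{i})^{\b}$ for some $g\in G(\RR)^{\circ}$. Lemma \ref{lem complete flag} should guarantee that in this setting the Cartan subgroup can be read off from the flag up to $G(\RR)^{\circ}$-conjugation --- morally, because the stabilizer is now a Borel and a regular cocharacter thereof determines its unique maximal torus up to its Weyl stabilizer. This forces $gH_{j}g^{-1}$ and $H_{i}$ to be $G(\RR)^{\circ}$-conjugate, hence $i=j$ since the $H_{k}$ represent distinct conjugacy classes, and then $g$ may be taken inside $N_{G(\RR)^{\circ}}(H_{j})$; its action on $\mathrm{X}_{*}(H_{j}(\CC))$ is by an element of $W_{\RR}^{\circ}(H_{j})$, so $(H_{j},w_{j}\chi_{j})$ and $(H_{j},w_{i}\chi_{i})$ represent the same class in $\Xi$.

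The main obstacle is precisely this last recovery step: outside the complete-flag setting, several cocharacters living on inequivalent Cartans can produce the same parabolic stabilizer in $\check{D}$, and the orbit map can fail to be injective. Invoking Lemma \ref{lem complete flag} is what I expect to rule this out, and I would plan to use it in essentially black-box form for the final sentence of the argument.
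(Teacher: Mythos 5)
Your strategy coincides with the paper's (surjectivity via the Proposition plus transport to the representative Cartans $H_{j}$ and Weyl-conjugacy of the cocharacters; injectivity in the complete-flag case by recovering the Cartan from the flag), and the well-definedness direction is fine, but two steps as written have genuine gaps. In the surjectivity argument, the asserted equivalence ``two cocharacters of a common complex Cartan produce $G(\CC)$-conjugate parabolics if and only if they lie in the same $W_{\CC}(H_{j})$-orbit'' fails in the direction you use it: $\chi$ and $2\chi$ define the same parabolic (the parabolic only remembers which roots have $\pi_{\chi}\geq0$, not the cocharacter) yet are never Weyl-conjugate. What rescues the step is information you discarded: the pair $(H,\chi)$ produced by part (a) is, by the construction behind Theorem \ref{thm surj}, an element of $\tilde{\Xi}_{\RR}$, i.e.\ already $G(\CC)$-conjugate to $(H_{0},\chi_{0})$; hence after transport the pairs $(H_{j},g\chi g^{-1})$ and $(H_{j},\chi_{j})$ are $G(\CC)$-conjugate with the \emph{same} Cartan, so conjugate by an element of $N(G(\CC),H_{j}(\CC))$, i.e.\ Weyl-conjugate. (Equivalently, one can work with the indexed flag rather than the parabolic, since the flag does determine the cocharacter on a fixed Cartan, cf.\ Remark \ref{rem p. 13}(b), together with conjugacy of Cartans inside the connected parabolic.)

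The injectivity step is also not yet a proof. Lemma \ref{lem complete flag} is only a list of equivalent characterizations of the complete-flag condition; it does not assert that the flag determines the real Cartan up to $G(\RR)^{\circ}$-conjugacy, and your heuristic about a regular cocharacter of a Borel determining ``its unique maximal torus'' is not the operative mechanism---a Borel contains many maximal tori, and the real difficulty is conjugating two \emph{real} Cartans by a \emph{real} group element fixing the flag. The paper's argument (Theorem \ref{thm 4.9}) is that in the complete flag case $\g^{0,0}=\lh_{j}$ (equation \eqref{eqn dagger*}), so the identity component of the isotropy group $G(\RR)^{\circ}\cap Q_{\fb}\cap Q_{\overline{\fb}}$ is connected solvable; Borel's conjugacy theorem for maximal tori of connected solvable groups then conjugates $H_{j}(\RR)^{\circ}$ and $\Psi_{g}(H_{k}(\RR)^{\circ})$ by an element of that isotropy group, forcing $j=k$ and producing the required element of $W_{\RR}^{\circ}(H_{j})$. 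Without this solvability input (or some substitute for it) the injectivity claim remains unproven; indeed the paper's $SU(2,1)$ ball example in $\S6.1.1$ shows the map genuinely fails to be injective outside the complete-flag setting, so some such special feature must be used.
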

Denoting analytic closure by $cl$, the partial order on orbits given
by $\mathcal{O}_{1}\geq\mathcal{O}_{2}\iff cl(\mathcal{O}_{1})\supseteq\mathcal{O}_{2}$
is known as \emph{Bruhat order}, and is generated (at least in the
complete flag setting) by Cayley transforms and cross actions in a
sense made precise in \cite{Ye}. In $\S4.3$ it is briefly explained
how to understand these processes in terms of ``naive'' limits of
Hodge flags and the framework of the Corollary.

In the remainder of the paper, we are interested in those orbits incident
to $D$ itself:
\begin{defn*}
An orbit $\mathcal{O}\subset cl(D)$ is called a \emph{naive boundary
stratum}.
\end{defn*}
Given a polarized variation of Hodge structure $\Phi$ over a punctured
disk with monodromy logarithm $N$, one can take the limit in two
ways. The limiting flag $\tilde{\mathscr{F}}(\Phi)\in\tilde{B}(N)\subseteq\check{D}$
associated to the LMHS is obtained by first twisting by $e^{-\frac{\log(q)}{2\pi i}N}$
and then taking the $q\to0$ limit; the \emph{naive limit} $\hat{\mathscr{F}}(\Phi)\in cl(D)$
is the limit with no twist. They are related by the naive limit map
\[
\mathscr{F}_{lim}^{N}:\tilde{B}(N)\to cl(D)
\]
defined and studied in $\S5.1$. For instance, if a MHS $(\tilde{F}^{\b},W(N)_{\b})\in\tilde{B}(N)$
is $\RR$-split, then $\mathscr{F}_{lim}^{N}(\tilde{F}^{\bullet})$
is the flag obtained by flipping the associated bigrading about the
antidiagonal; moreover, $\mathscr{F}_{lim}^{N}$ factors the projection
$\tilde{B}(N)\to D(N)$ induced by passing to the $\QQ$-splitting
of the LMHS.

As we described above, the instinctive question is how to determine
whether a given naive boundary stratum $\mathcal{O}=G(\RR)^{\circ}.\mathscr{F}(H,\chi)^{\b}$
contains a naive flag, or equivalently some $\hat{B}(N)$. To this
end, we introduce (cf $\S5.2$) the following terminology:
\begin{itemize}
\item $\mathcal{O}$ is \emph{rational} if the filtration $\tilde{W}_{\b}:=\oplus_{-p-q\leq\b}\g_{\chi}^{p,q}$
is $G(\RR)^{\circ}$-conjugate to a $\QQ$-rational one; and
\item $\mathcal{O}$ is \emph{polarizable} if there exists a nonzero element
$\hat{N}\in\g_{\RR}^{-1,-1}$ such that $\hat{N}^{j}:\g_{\chi}^{p,j-p}\overset{\cong}{\to}\g_{\chi}^{p-j,-p}$
($\forall p\in\ZZ,j\in\mathbb{N}$) and a positivity condition holds.
\end{itemize}
Two of our main general results (cf. Theorem \ref{thm equiv}\emph{ff})
may then be stated as follows:
\begin{thm*}
A stratum $\mathcal{O}$ contains a $\hat{B}(N)$ if and only if $\mathcal{O}$
is rational and polarizable. All strata of codimension one are polarizable.
\end{thm*}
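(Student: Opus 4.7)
The plan is to handle both directions of the first statement by transporting the Jacobson-Morozov $\mathfrak{sl}_2$-data and the polarization of a polarized LMHS through the antidiagonal bigrading flip describing $\mathscr{F}_{lim}^{N}$ in $\S5.1$, and then to prove codimension-one polarizability by a direct structural analysis of the very small bigrading forced by Proposition (c).

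For the forward direction, take $\hat{F}^{\bullet}\in\mathcal{O}\cap\hat{B}(N)$ together with an associated polarized LMHS $(\tilde{F}^{\bullet},W(N))$. First I would replace this LMHS by its canonical Cattani-Kaplan-Schmid $\RR$-split approximation; because the CKS correction lies in a real unipotent subgroup commuting with $N$, the naive image $\hat{F}^{\bullet}$ stays inside the same $G(\RR)^{\circ}$-orbit, so I may assume $\RR$-splitness. Once $\RR$-split, the antidiagonal-flip description identifies the bigrading $\g_{\chi}^{p,q}$ at $\hat{F}^{\bullet}$ with a re-indexing of the Deligne bigrading $I^{p,q}$ of the LMHS. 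Under this identification $\tilde{W}_{\bullet}$ on $\g$ matches $W(N)_{\bullet}$ and $N$ corresponds to a real element $\hat{N}\in\g_{\RR}^{-1,-1}$. Rationality of $\mathcal{O}$ is then immediate from the $\QQ$-rationality of $N$ (monodromy integrality), and the isomorphism conditions on $\hat{N}$ together with the positivity condition are the Lefschetz and Riemann-Hodge polarization conditions for $N$ on the polarized LMHS, transported through the flip.

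The converse reverses this procedure: starting from a rational, polarizable orbit $\mathcal{O}=G(\RR)^{\circ}\cdot\mathscr{F}(H,\chi)^{\bullet}$ and a real $\hat{N}\in\g_{\RR}^{-1,-1}$ as in the definition, I would first conjugate by the real element witnessing rationality to arrange that $\tilde{W}_{\bullet}$ is $\QQ$-rational, then apply the inverse flip to produce a filtration $\tilde{F}^{\bullet}$ and an $N\in\g_{\QQ}^{-1,-1}(\tilde{F})$. The Jacobson-Morozov isomorphism and positivity conditions on $\hat{N}$ translate to the polarized-MHS conditions for $(\tilde{F}^{\bullet},W(N),N)$. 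The Cattani-Kaplan-Schmid nilpotent orbit theorem then yields a polarized VHS $\Phi(z):=e^{zN}\tilde{F}^{\bullet}$ on a punctured disk, whose naive limit at $q=0$ is, by construction, $\hat{F}^{\bullet}\in\mathcal{O}$; hence $\hat{F}^{\bullet}\in\hat{B}(N)$.

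For the codimension-one polarizability, Proposition (c) gives $\sum_{p,q>0}\dim_{\CC}\g_{\chi}^{p,q}=1$. The conjugation symmetry $\overline{\g_{\chi}^{p,q}}=\g_{\chi}^{q,p}$ forces the unique nonzero positive-quadrant piece to satisfy $p=q$, and the bracket-generating hypothesis on the horizontal distribution pins this down to $(p,q)=(1,1)$: a larger value $p=q\geq 2$ would leave no available pieces to bracket-generate the intermediate horizontal degrees. Killing-form duality then supplies a one-dimensional $\g_{\chi}^{-1,-1}$, which is real by conjugation symmetry, and any nonzero $\hat{N}$ in it is available. Completing $\hat{N}$ to a Jacobson-Morozov $\mathfrak{sl}_{2}$-triple and decomposing $\g$ into $\mathfrak{sl}_{2}$-isotypic components compatible with the bigrading yields the desired isomorphisms $\hat{N}^{j}:\g_{\chi}^{p,j-p}\cong\g_{\chi}^{p-j,-p}$. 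Positivity, after possibly replacing $\hat{N}$ by $-\hat{N}$, follows from the proportionality of the polarization on $(\g,\vf_{0})$ with the Killing form $B$. \textbf{The main obstacle} I anticipate is this last verification: checking that the $\mathfrak{sl}_{2}$-action coming from $\hat{N}$ is compatible with the signature of $B$ at $\vf_{0}$, which is exactly where the simplifying polarization hypothesis becomes essential.
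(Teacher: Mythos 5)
Your forward direction tracks the paper's proof of $(B)\implies(C)$ in Theorem \ref{thm equiv}: pass to the $\RR$-split approximation, flip the Deligne bigrading about the antidiagonal, and read off the polarizability conditions from the polarized LMHS; the observation that $\QQ$-rationality of $N$ gives rationality of $\mathcal{O}$ is correct (and the $\delta$-correction in fact leaves the naive limit flag literally unchanged, since $\delta\in\hat{F}^{1}$). The other two parts each contain a genuine gap.

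In the converse you assert that, once $\tilde{W}_{\b}$ has been conjugated to a $\QQ$-rational filtration, the inverse flip produces ``an $N\in\g_{\QQ}^{-1,-1}$.'' It does not: polarizability only supplies a \emph{real} nilpotent $\hat{N}\in\g_{\RR}^{-1,-1}$, and rationality of the filtration $W(N)_{\b}$ says nothing about rationality of $N$ itself. The missing step --- which is the entire content of the paper's proof of this implication --- is to perturb $N$ to a rational element without leaving the set of admissible polarizing nilpotents. The paper does this by computing $T_{N}\left(\Ad Q(\RR).N\right)=W_{-2}\g_{\RR}$, so that the $Q(\RR)$-orbit of $N$ is open in $W_{-2}\g_{\RR}$ and hence contains a $\QQ$-point; acting by $Q(\RR)$ then moves $N$ into $\g_{\QQ}$ while keeping the naive limit in the same stratum. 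Without an openness-plus-density argument of this kind your converse does not close.

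For codimension one, your identification of the unique positive-quadrant piece as a one-dimensional $\g^{1,1}$ spanned by a real root vector, with $\g^{-1,-1}$ its Killing dual, agrees with Corollary \ref{cor codim 1 p.19b}. But the two remaining verifications are exactly where your plan stalls. First, the Jacobson--Morozov grading attached to an arbitrary $\hat{N}\in\g_{\RR}^{-1,-1}$ need not coincide with the $(p+q)$-grading of the bigrading; in the codimension-one case it does, but only because $\dim_{\RR}\la=1$ forces the coroot $[X_{\beta},X_{-\beta}]$ to equal the grading element $Y$ --- an argument you would need to supply. Second, the positivity cannot be extracted from proportionality of the polarization with $B$ at $\vf_{0}$ together with a single sign flip of $\hat{N}$: the $j=0$ positivity conditions on the primitive parts of the $\g^{p,-p}$ are independent of the sign of $\hat{N}$ altogether, so they must be established by relating $\fb$ to the genuinely polarized flags in $D$. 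The paper circumvents all of this by invoking the equivalent criterion of Definition \ref{defn polarizable}(b): since $iX$ spans the one-dimensional real normal space to the stratum and $D$ is open with $\mathcal{O}\subset\partial D$, one of $e^{\pm iyX}\fb$ lies in $D$ for small $y>0$, and then Lemma \ref{lem 1 p. 39} (the one-variable Cattani--Kaplan--Schmid input, packaged as $(A)\implies(C)$ of Theorem \ref{thm equiv}) delivers both the isomorphism and the positivity conditions at once. You should replace your direct verification with this geometric argument; as written, the step you yourself flag as ``the main obstacle'' is not resolved.
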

In $\S6$, we work out for a variety of examples (including the three
$G_{2}$-domains of \cite{GGK}) the complete incidence diagram and
the associated bigradings for $G(\RR)^{\circ}$-orbits, and determine
which of the boundary strata are polarizable.

\subsubsection*{Acknowledgments:}

This paper has some overlap with recent work of M. Green and P. Griffiths,
and we wish to thank them as well as R. Kulkarni and C. Robles for
helpful conversations and correspondence. The authors acknowledge
partial support from NSF Grant DMS-1068974 (Kerr) and NSF Grant DMS-1002625
(Pearlstein).

\section{Preliminaries}

Let $G$ be a connected $\QQ$-algebraic adjoint group, $H_{\mathbb{C}}\leq G_{\mathbb{C}}$
a maximal algebraic torus subgroup. The groups of complex points $G(\CC)$,
$H(\CC)$ have natural Lie group structures, and we let $\mathfrak{g}$,
$\mathfrak{h}$ denote the complex Lie algebras. From $G$, $\mathfrak{g}$
inherits an underlying $\QQ$-Lie algebra $\mathfrak{g}_{\QQ}$, and
we let $B:\mathfrak{g}_{\QQ}\times\mathfrak{g}_{\QQ}\to\QQ$ denote
the (symmetric, nondegenerate) Killing form $B(X,Y)=Tr(\ad X\circ\ad Y)$.

Consider the lattice
\[
\Lambda^{*}:=\ker\left\{ \exp(2\pi i(\cdot)):\,\mathfrak{h}\to H(\CC)\right\} .
\]
Sending $\phi\in\Lambda^{*}$ to the co-character 
\[
\begin{array}{cccc}
\chi_{\phi}: & \CC^{*} & \to & H(\CC)\\
 & z & \mapsto & e^{\log(z)\phi}
\end{array}
\]
yields an isomorphism
\[
\Lambda^{*}\overset{\cong}{\longrightarrow}\mathrm{X}_{*}(H(\CC)),
\]
with inverse $\chi\mapsto\chi'(1)$. Writing $\Lambda:=Hom(\Lambda^{*},2\ZZ)\subset\mathfrak{h}^{*}$,
we have 
\[
\mathfrak{g}=\mathfrak{h}\oplus\bigoplus_{\alpha\in\Delta}\mathfrak{g}_{\alpha}=\mathfrak{h}\oplus\bigoplus_{\alpha\in\Delta}\CC\langle X_{\alpha}\rangle,
\]
where the roots $\Delta\subset\Lambda$ generate $\Lambda$, and
\[
\ad(h)X_{\alpha}=\alpha(h)X_{\alpha}\;\;\;(\forall h\in\mathfrak{h}).
\]
In particular, for $\chi=\chi_{\phi}\in\mathrm{X}_{*}(H(\CC))$, we
define
\[
\begin{array}{cccc}
\pi_{\chi}: & \Lambda & \to & \ZZ\\
 & \lambda & \mapsto & \frac{1}{2}\lambda(\phi)
\end{array}
\]
so that $\ad(\phi)X_{\alpha}=\pi_{\chi}(\alpha)X_{\alpha}$, and 
\[
\begin{array}{ccc}
\Ad(\chi(z))X_{\alpha} & = & e^{\log(z)\ad\phi}X_{\alpha}\\
 & = & e^{2\log(z)\pi_{\chi}(\alpha)}X_{\alpha}\\
 & = & z^{2\pi_{\chi}(\alpha)}X_{\alpha}.
\end{array}
\]
We shall write for $i>0$ \begin{equation}\label{def F}\mathscr{F}(H,\chi)^{i}\mathfrak{g}_{\CC}:=\bigoplus_{\tiny\left\{ \begin{array}{c}\alpha\in\Delta\\\pi_{\chi}(\alpha)\geq i\end{array}\right.}\mathfrak{g}_{\alpha}
\end{equation}and for $i\leq0$ \begin{equation}\label{def F 2}\mathscr{F}(H,\chi)^{i}\mathfrak{g}_{\CC}:=\mathfrak{h}\oplus\bigoplus_{\tiny\left\{ \begin{array}{c}\alpha\in\Delta\\\pi_{\chi}(\alpha)\geq i\end{array}\right.}\mathfrak{g}_{\alpha};
\end{equation}note that the (partial) flag $\mathscr{F}(H,\chi)^{\bullet}$ depends
only on $H$ and $\pi_{\chi}$.
\begin{rem}
$G(\RR)$ and $G(\CC)$ operate on flags in $\g_{\CC}$ via $\Ad$.
This will often be tacit; that is, $\Ad(g)F^{\bullet}$ will be written
$g.\fb$. This is especially necessary in $\S5$ where the notation
would otherwise become unwieldy.
\end{rem}
Next, we specialize to the case where $H$ is defined over $\RR$.
More precisely, let $\Theta=\Psi_{C}\in Aut(G_{\RR})$ be a Cartan
involution, so that $\theta:=\Ad C\in Aut(\mathfrak{g}_{\RR})$ satisfies
$\theta^{2}=\text{id}$ and $-B(\,\cdot\,,\theta(\,\cdot\,))>0$.
Take $H\leq G_{\RR}$ to be a $\Theta$-stable Cartan subgroup, and
let $\mathfrak{g}_{\RR},\mathfrak{h}_{\RR}$ be the Lie algebras of
$G(\RR),H(\RR)$. We have decompositions into $\pm1$-eigenspaces
\[
\mathfrak{g}_{\RR}=\mathfrak{k}\oplus\mathfrak{p\,,\;\;\;\mathfrak{h}_{\RR}=\mathfrak{t}\oplus\mathfrak{a}=(\mathfrak{h}_{\RR}\cap\mathfrak{k})\oplus(\mathfrak{h}_{\RR}\cap\mathfrak{p})}
\]
of $\theta$, and clearly $-B>0$ {[}resp. $<0${]} on $\mathfrak{k}$
{[}resp. $\mathfrak{p}${]}. A root $\alpha\in\Delta$ is \emph{real}
if $\alpha(\mathfrak{t}_{\CC})=0$, \emph{imaginary} if $\alpha(\mathfrak{a}_{\CC})=0$,
and otherwise \emph{complex}. Indeed, we have $\Lambda^{*}\subset i\mathfrak{t}\oplus\mathfrak{a}$,
and so the action of $\theta$ resp. $\rho:=\,$complex conjugation
on the root vectors $X_{\alpha}\in\mathfrak{g}$ sends $\alpha\mapsto-\bar{\alpha}$
resp. $\bar{\alpha}$. In particular, for $\alpha$ imaginary, we
have
\[
\begin{array}{ccccc}
\theta(\alpha)=\alpha & \implies & \theta X_{\alpha}\in\RR\langle X_{\alpha}\rangle & \implies & \theta X_{\alpha}=X_{\alpha}\;\text{[resp. }-X_{\alpha}\text{]}\\
 &  &  & \implies & X_{\alpha}\in\mathfrak{k}_{\CC}\;\text{[resp. }\mathfrak{p}_{\CC}\text{]}
\end{array}
\]
in which case we say $\alpha$ is compact {[}resp. noncompact{]} imaginary.
So we have a decomposition
\[
\Delta=\Delta_{\RR}\cup\Delta_{\CC}\cup\Delta_{c}\cup\Delta_{n},
\]
with complex roots occurring in quadruplets and other types in $\pm$
pairs. Note that every $G(\RR)^{\circ}$-conjugacy class of Cartans
contains a $\Theta$-stable member. We will write
\[
W_{\RR}(H):=\frac{N(G(\RR),H(\RR))}{H(\RR)}\leq\frac{N(G(\CC),H(\CC))}{H(\CC)}=:W_{\CC}(H)
\]
for the real and complex Weyl groups. The latter is of course generated
by the reflections in the roots $\Delta$. An algorithm for computing
the real Weyl group (as implemented by the ATLAS computer software)
is described in \cite[sec. 6]{Ad} and \cite{Tr}. More germane for
our purposes is the connected Weyl group
\[
W_{\RR}^{\circ}(H):=\frac{N\left(G(\RR)^{\circ},H(\RR)^{\circ}\right)}{H(\RR)^{\circ}}\leq W_{\RR}(H),
\]
which contains the subgroup generated by reflections in $\Delta_{\RR}\cup\Delta_{c}$
but may be larger than this unless $H(\RR)$ is compact or split.

Now assume that there exists a maximal torus $T\leq G_{\RR}$ with
$T(\RR)$ compact. Taking $H:=T$, all roots are imaginary (in the
sense that $\rho:\alpha\mapsto-\alpha$), and we \emph{define} the
compact {[}resp. noncompact{]} ones to be those with $-B(X_{\alpha},\overline{X_{\alpha}})>0$
{[}resp. $<0${]}. (We may normalize so that $\overline{X_{\alpha}}=X_{-\alpha}$
resp. $-X_{-\alpha}$ for $\alpha\in\Delta_{n}$ resp. $\Delta_{c}$.)
Setting 
\[
\mathfrak{k}:=\mathfrak{t}\oplus\left(\bigoplus_{\alpha\in\Delta_{c}}\CC\langle X_{\alpha}\rangle\right)\cap\mathfrak{g}_{\RR}\,,\;\;\mathfrak{p}:=\left(\bigoplus_{\alpha\in\Delta_{n}}\CC\langle X_{\alpha}\rangle\right)\cap\mathfrak{g}_{\RR},
\]
$K:=\exp(\mathfrak{k})$ is maximal compact, and the involution $\theta$
defined by linearly extending $\theta|_{\mathfrak{k}}:=\text{id}$,
$\theta|_{\mathfrak{p}}:=-\text{id}$ is Cartan. In particular, $T$
is $\Theta$-stable, and $\Delta=\Delta_{c}\cup\Delta_{n}$.

Assume further that there exists a co-character $\chi_{0}\in\mathrm{X}_{*}(T(\CC))$
such that $\pi_{\chi_{0}}(\Delta_{c})\subset2\ZZ$ and $\pi_{\chi_{0}}(\Delta_{n})\subset1+2\ZZ$.
Let $\varphi_{0}$ denote the restriction of 
\[
\CC^{*}\overset{\chi_{0}}{\longrightarrow}T(\CC)\hookrightarrow G(\CC)
\]
to $S^{1}\to G(\RR)$, and $\Ad:G(\RR)\to Aut(\mathfrak{g}_{\RR},B)$
the adjoint homomorphism. Then $(\mathfrak{g}_{\QQ},\Ad\circ\varphi_{0},-B)$
is a polarized Hodge structure of weight $0$, with decomposition
$\mathfrak{g}_{\CC}=\oplus_{j\in\ZZ}\mathfrak{g}^{j,-j}$, where
\[
\mathfrak{g}^{j,-j}:=\left\{ \gamma\in\mathfrak{g}_{\CC}\,|\,\Ad(\chi_{0}(z))\gamma=z^{2j}\gamma\right\} =\bigoplus_{\tiny\left\{ \begin{array}{c}
\alpha\in\Delta,\\
\pi_{\chi_{0}}(\alpha)=j
\end{array}\right.}\CC\langle X_{\alpha}\rangle
\]
if $j\neq0$ and $\mathfrak{g}^{0,0}:=\mathfrak{t}_{\CC}\oplus\bigoplus_{\alpha\in\left(\ker\pi_{\chi_{0}}\right)\cap\Delta}\CC\langle X_{\alpha}\rangle.$
(Note that $C=\varphi(i)$.) The conjugacy class of $\varphi_{0}$
(or ``connected Hodge domain'')
\[
D:=G(\RR)^{\circ}.\varphi_{0}\cong G(\RR)^{\circ}/\mathcal{H}_{\varphi_{0}}
\]
parametrizes a set of $(-B)$-polarized Hodge structures on $\mathfrak{g}_{\QQ}$
with the same Hodge numbers; a very general point in $D$ has Mumford-Tate
group $G$. Writing 
\[
F_{0}^{k}\mathfrak{g}_{\CC}:=\bigoplus_{j\geq k}\mathfrak{g}^{j,-j}\,\left(=\mathscr{F}(T,\chi_{0})^{k}\mathfrak{g}_{\CC}\right),
\]
$D$ is an analytic open subset in its compact dual
\[
\check{D}:=G(\CC).F_{0}^{\bullet}\cong G(\CC)/Q_{F_{0}^{\bullet}}.
\]
Flags%
\footnote{In this paper, ``flag'' will mean what is sometimes called a ``partial
flag'', i.e. not necessarily ``maximal'' or ``complete''.%
} $F^{\bullet}\in\check{D}$ are called \emph{semi-Hodge}; a \emph{Hodge}
flag is one which satisfies $\mathfrak{g}_{\CC}=\oplus_{p\in\ZZ}F^{p}\cap\overline{F^{-p}}$.
Equivalently, $F^{\bullet}$ is of the form $\mathscr{F}(T,\chi)^{\bullet}$
for some compact $T\leq G_{\RR}$. As above $\chi$ has an associated
weight $0$ (but not necessarily $(-B)$-polarized) Hodge structure
$\varphi:\, S^{1}\to G(\RR)$, and we write $F^{\bullet}=F_{\varphi}^{\bullet}$.
We denote the non-Hodge locus in $\check{D}$ by $\mathfrak{Z}$,
and shall (by abuse of notation) write $\varphi\in\check{D}\backslash\mathfrak{Z}$.

Finally, taking $\Delta^{+}$ to be a system of positive roots with
$\pi_{\chi}(\Delta^{+})\subset\mathbb{Z}_{\geq0}$, assume that $\pi_{\chi}$
takes values $0$ and $1$ on the simple roots. Then it is known (cf.
\cite{Ro}) that the $G(\CC)$-invariant horizontal distribution $\mathcal{W}\subset T\check{D}$
given (at $\varphi$) by
\[
F^{-1}\mathfrak{g}_{\CC}/F^{0}\mathfrak{g_{\CC}}\subset\mathfrak{g}_{\CC}/F^{0}\mathfrak{g_{\CC}}
\]
is bracket-generating; that is, 
\[
\mathcal{W}+[\mathcal{W},\mathcal{W}]+[\mathcal{W},[\mathcal{W},\mathcal{W}]]+\cdots=T\check{D}
\]
(or equivalently, $F^{-1}+[F^{-1},F^{-1}]+\cdots=\mathfrak{g}_{\CC}$
on the Lie algebra level).

The three assumptions delineated in the last three paragraphs will
remain in effect for the rest of this paper. With $T$, $\theta$
as above, we can obtain $\theta$-stable representatives of all $G(\RR)^{\circ}$-conjugacy
classes of (real) Cartans by taking successive \emph{Cayley transforms}
in imaginary noncompact roots. Given $H\leq G_{\RR}$ and $\alpha\in\Delta_{n}$,
the Cayley transform in $\alpha$ is defined in terms of conjugation
by $\mathbf{c}_{\alpha}:=\exp\left(\frac{\pi}{4}\left(X_{-\alpha}-X_{\alpha}\right)\right)$,
where the root vectors $X_{\pm\alpha}$ are assumed to be normalized
so that $X_{-\alpha}=\overline{X_{\alpha}}$ and $[[X_{\alpha},X_{-\alpha}],X_{\alpha}]=2X_{\alpha}$
(in particular, $[X_{\alpha},X_{-\alpha}]\in\Lambda^{*}$). More precisely,
it replaces $H$ by the real algebraic torus underlying $\Psi_{\mathbf{c}_{\alpha}}(H_{\CC})$,
which by abuse of notation shall be denoted $\Psi_{\mathbf{c}_{\alpha}}(H)$.
This has the effect of increasing the real rank $\dim_{\RR}\mathfrak{a}$
of $H$ by $1$, replacing $\mathfrak{h}_{\RR}$ by $\left(\ker\alpha|_{\mathfrak{h_{\RR}}}\right)\oplus\RR\langle X_{\alpha}+X_{-\alpha}\rangle.$
(Conjugation by the square $\mathbf{c}_{\alpha}^{2}$, which stabilizes
$H$, yields the Weyl reflection in $\alpha$.) Up to scaling, the
new root vectors are the images of the old ones by $\Ad(\mathbf{c}_{\alpha})$,
and $\Ad(\mathbf{c}_{\alpha})X_{\pm\alpha}$ in particular are real
root vectors. The process may be reversed by applying Cayley transforms
$\Ad(\mathbf{d}_{\beta})$ in $\beta\in\Delta_{\RR}$, cf. \cite[sec. VI.7]{Kn}.
\begin{rem}
One may pictorially represent the situation in a graph with $G(\RR)^{\circ}$-conjugacy
classes of real Cartans as nodes and Cayley transforms as edges; the
ATLAS software can compute these so-called Hasse diagrams (cf. \cite{Ad},
\cite{AdC}).
\end{rem}

\section{From semi-Hodge flags to Cartan data}

Take $G$, $H_{0}=T_{0}\leq G_{\mathbb{R}}$, $\chi_{0}\in\mathrm{X}_{*}(T(\CC))$,
and $\vf_{0}\in D\subset\check{D}$ to be as in $\S2$, with associated
flag $F_{0}^{\bullet}$. The points of $D$ are the $(-B)$-polarized
Hodge structures $G(\mathbb{R})^{\circ}$-conjugate to $\vf_{0}$.
In this section we will give a similar characterization of the points
of $\check{D}$.

\subsection{The bigrading}

Let $F^{\bullet}\in\check{D}$, with $Q_{F^{\bullet}}\subset G(\mathbb{C})$
the parabolic subgroup preserving $F^{\bullet}$. Inside $\check{D}\cong G(\CC)/Q_{F^{\bullet}}$
we have the connected $G(\RR)^{\circ}$-orbit 
\[
\mathcal{O}_{F^{\bullet}}:=G(\RR)^{\circ}.F^{\bullet}\cong G(\RR)^{\circ}/\{Q_{F^{\bullet}}\cap\overline{Q_{F^{\bullet}}}\cap G(\RR)^{\circ}\}.
\]

\begin{lem}
\label{bruhat lemma} There exists a Cartan subgroup $H\leq G_{\RR}$
with 
\[
H(\RR)\subseteq Q_{F^{\bullet}}\cap\overline{Q_{F^{\bullet}}}\cap G(\RR).
\]
\end{lem}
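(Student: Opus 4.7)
The plan is to reduce the lemma to two classical facts about algebraic groups. Set $L := Q_{F^{\bullet}} \cap \overline{Q_{F^{\bullet}}}$. Since complex conjugation $\rho$ swaps the two factors of the intersection, $L$ is $\rho$-stable, hence a linear algebraic subgroup of $G_{\CC}$ defined over $\RR$. Noting that for $g \in G(\RR)$ one has $g \in Q_{F^{\bullet}}$ if and only if $g = \bar g \in \overline{Q_{F^{\bullet}}}$, what must be produced is a Cartan subgroup $H \leq G_{\RR}$ with $H(\RR) \subseteq L(\RR) = Q_{F^{\bullet}} \cap \overline{Q_{F^{\bullet}}} \cap G(\RR)$.

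First I would show that $L$ contains a maximal torus of $G(\CC)$. This follows from a classical structural fact: for any two parabolic subgroups $P_{1}, P_{2}$ of a connected reductive complex group, $P_{1} \cap P_{2}$ contains a maximal torus of the ambient group. (Briefly, each $P_{i}$ contains a Borel $B_{i}$, and the intersection of two Borels of a connected reductive group always contains a maximal torus; cf.\ Borel, \emph{Linear Algebraic Groups}, Prop.~14.22.) Applying this with $P_{1} = Q_{F^{\bullet}}$ and $P_{2} = \overline{Q_{F^{\bullet}}}$ produces a complex maximal torus $H'(\CC) \subseteq L$. Being connected, $H'(\CC)$ lies in the identity component $L^{\circ}$. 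Consequently every maximal torus of the connected algebraic group $L^{\circ}$ has dimension equal to $\mathrm{rk}(G)$, and so is automatically a maximal torus of $G(\CC)$.

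Next I would invoke the standard result that every connected linear algebraic group defined over a field of characteristic zero admits a maximal torus defined over that field (Borel, \emph{Linear Algebraic Groups}, \S18.2). Applied to the $\RR$-group $L^{\circ}$, this yields a torus $H \subseteq L^{\circ}$ defined over $\RR$ with $H(\CC)$ maximal in $L^{\circ}$. By the dimension count of the previous step, $H(\CC)$ is then a maximal torus of $G(\CC)$, so $H$ is a Cartan subgroup of $G_{\RR}$. Since $H(\RR) \subseteq L^{\circ}(\RR) \subseteq L(\RR)$, the lemma follows.

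The main (mild) obstacle is that $L$ need not be connected or reductive, so one cannot directly quote conjugacy of maximal tori inside $L$ itself; this is precisely what forces one to pass to the identity component $L^{\circ}$ and rely only on the fact that all maximal tori of a connected algebraic group are $L^{\circ}(\CC)$-conjugate and therefore equidimensional.
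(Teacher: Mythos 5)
Your argument is correct. The paper does not actually prove this lemma---it simply cites \cite[Thm.~2.6(1)]{Wo} and \cite[Lemma~2.1.2]{FHW}---and your proof is essentially the standard one found in those references: reduce to the facts that the intersection of two parabolics of $G_{\CC}$ contains a maximal torus (Borel, 14.22, which in fact also gives connectedness of $Q_{F^{\bullet}}\cap\overline{Q_{F^{\bullet}}}$, so your passage to the identity component is harmless extra caution) and that a connected $\RR$-group has a maximal torus defined over $\RR$ (Borel, 18.2).
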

\begin{proof}
See \cite[Thm. 2.6(1)]{Wo} or \cite[Lemma 2.1.2]{FHW}.
\end{proof}
Fix $H_{0}$, and write $\mathfrak{h}_{\RR}:=Lie(H(\RR))$, $\mathfrak{h}:=Lie(H(\CC))$
for the corresponding Lie subalgebras. From $F^{\bullet}$ we obtain
a filtration \begin{equation}\label{eqn *sharp}\tilde{W}_{-k}\mathfrak{g}_{\CC}:=\bigoplus_{p\in\ZZ}\left( F^p\cap \overline{F^{k-p}}\right)\mathfrak{g}_{\CC}
\end{equation}which is stabilized by $Q_{F^{\bullet}}\cap\overline{Q_{F^{\bullet}}}$
hence by $H(\CC)$; this is of course defined over $\RR$. Its nontriviality
``measures'' the failure of $F^{\bullet}$ to be a (pure) Hodge
flag.
\begin{lem}
\label{bigrading lemma}There is a unique bigrading\begin{equation}\label{eqn !!}\mathfrak{g}_{\CC}=\bigoplus_{(p,q)\in\ZZ}\mathfrak{g}^{p,q}
\end{equation}satisfying:\vspace{2mm}\\
(i) each $\mathfrak{g}^{p,q}$ is a sum of root spaces of $(\mathfrak{g},\mathfrak{h})$;\\
(ii) $F^{a}=\oplus_{\tiny\left\{ \begin{array}{c}
(p,q)\\
p\geq a
\end{array}\right.}\mathfrak{g}^{p,q}$;\\
(iii) $\overline{\mathfrak{g}^{p,q}}=\mathfrak{g}^{q,p}$;\\
(iv) $\tilde{W}_{-k}=\oplus_{\tiny\left\{ \begin{array}{c}
(p,q)\\
p+q\geq k
\end{array}\right.}\mathfrak{g}^{p,q}$;\\
(v) $B|_{\mathfrak{g}^{p,q}\times\mathfrak{g}^{p',q'}}$ nondegenerate
for $(p',q')=(-p,-q)$, and otherwise $0$.\vspace{2mm}\\
The $\{\dim(\mathfrak{g}^{p,q})\}$ do not depend upon the choice
of $H$.\end{lem}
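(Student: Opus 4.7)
The plan is to combine the root-space decomposition of $\mathfrak{g}_{\CC}$ for the Cartan $\mathfrak{h}$ supplied by Lemma \ref{bruhat lemma} with the filtration levels that each root space inherits from $F^{\bullet}$ and $\overline{F^{\bullet}}$. Since $H(\RR)$ is Zariski-dense in $H(\CC)$ and $Q_{F^{\bullet}}$ is Zariski-closed, Lemma \ref{bruhat lemma} upgrades to $H(\CC)\subseteq Q_{F^{\bullet}}\cap\overline{Q_{F^{\bullet}}}$, so both filtrations are $\mathfrak{h}$-stable and decompose as sums of $\mathfrak{h}$-weight spaces. Because $H(\CC)$ is a maximal torus of $G(\CC)$ sitting inside the parabolic $Q_{F^{\bullet}}$, it projects isomorphically onto a Cartan of the Levi, forcing $\mathfrak{h}\subseteq F^{0}$ with $\mathfrak{h}\cap F^{1}=0$, and likewise for $\overline{F^{\bullet}}$. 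For each root $\alpha\in\Delta$ let $p(\alpha)$ (resp. $q(\alpha)$) denote the unique integer with $\mathfrak{g}_{\alpha}\subseteq F^{p(\alpha)}\setminus F^{p(\alpha)+1}$ (resp. $\overline{F^{q(\alpha)}}\setminus\overline{F^{q(\alpha)+1}}$), and set
\[
\mathfrak{g}^{p,q}:=\bigoplus_{\{\alpha\,:\,(p(\alpha),\,q(\alpha))=(p,q)\}}\mathfrak{g}_{\alpha}
\]
for $(p,q)\neq(0,0)$, with $\mathfrak{g}^{0,0}$ obtained by adjoining $\mathfrak{h}$ to the analogous sum.

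Properties (i), (ii), and (iv) are then immediate from the construction, together with the fact that $\mathfrak{h}$ sits strictly at $F$- and $\overline{F}$-level $0$. For (iii), complex conjugation carries $\mathfrak{g}_{\alpha}$ to $\mathfrak{g}_{\overline{\alpha}}$ and swaps the two filtrations, so $(p(\overline{\alpha}),q(\overline{\alpha}))=(q(\alpha),p(\alpha))$; and $\mathfrak{h}$ is real. The key step is (v). First I would establish the Killing-form duality $(F^{a})^{\perp}=F^{1-a}$ for every $F^{\bullet}\in\check{D}$: this holds at the reference flag $F_{0}^{\bullet}$ because $(\mathfrak{g},\Ad\circ\vf_{0},-B)$ is a weight-$0$ polarized Hodge structure (Riemann's first bilinear relation), and it propagates to all of $\check{D}$ by $\Ad$-invariance of $B$ and $G(\CC)$-transitivity on $\check{D}$. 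Since $B(\mathfrak{g}_{\alpha},\mathfrak{g}_{\beta})=0$ unless $\beta=-\alpha$, this duality makes the induced perfect pairing on graded pieces $F^{a}/F^{a+1}\times F^{-a}/F^{-a+1}\to\CC$ match each $\mathfrak{g}_{\alpha}$ with $\mathfrak{g}_{-\alpha}$, forcing $p(-\alpha)=-p(\alpha)$; running the same argument on $\overline{F^{\bullet}}$ yields $q(-\alpha)=-q(\alpha)$. The vanishing and nondegeneracy statements in (v) then follow, with nondegeneracy on the $(0,0)$ block assembled from nondegeneracy of $B|_{\mathfrak{h}\times\mathfrak{h}}$ and the root-space pairings.

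Uniqueness is brief: by (i) any such bigrading is a grading of the $\mathfrak{h}$-weight-space decomposition, while (ii) and (iv) pin down the $F$-level $p(\alpha)$ and the total level $p(\alpha)+q(\alpha)$ of each weight space, determining the bidegree. For independence of the choice of $H$, the identity
\[
\dim\mathfrak{g}^{p,q}=\sum_{i,j\in\{0,1\}}(-1)^{i+j}\dim\!\bigl(F^{p+i}\cap\tilde{W}_{-(p+q)-j}\bigr)
\]
follows from (ii) and (iv), and its right-hand side depends only on $F^{\bullet}$ via \eqref{eqn *sharp}. The main obstacle I anticipate is the duality $(F^{a})^{\perp}=F^{1-a}$ used in (v): this is the one place where a property of the basepoint $F_{0}^{\bullet}$ must be transported equivariantly across $\check{D}$, and everything else reduces to bookkeeping about where each root space sits in the two filtrations.
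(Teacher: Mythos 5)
Your proof is correct, but it takes a genuinely different route from the paper's. The paper transports the reference pair $(H_{0},\chi_{0})$ by some $g\in G(\CC)$ with $g.F_{0}^{\bullet}=F^{\bullet}$ and then, using connectedness of $Q_{F^{\bullet}}$ to conjugate Cartans inside the parabolic, arranges $\Psi_{g}(H_{0,\CC})=H_{\CC}$; this produces an actual cocharacter $\chi\in\mathrm{X}_{*}(H(\CC))$ with $F^{\bullet}=\mathscr{F}(H,\chi)^{\bullet}$, and the bigrading is the joint eigenspace decomposition for $\Ad(\chi(z))$ and $\Ad(\bar{\chi}(z))$. You instead never construct $\chi$: you only use $H(\CC)$-stability of $F^{\bullet}$ and $\overline{F^{\bullet}}$ (Lemma \ref{bruhat lemma} plus Zariski density of $H(\RR)$) to read off filtration levels $p(\alpha),q(\alpha)$ of each root space. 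The trade-off shows up exactly at property (v): in the paper the antisymmetry $\pi_{\chi}(-\alpha)=-\pi_{\chi}(\alpha)$ is free, since $\pi_{\chi}$ is linear on the root lattice, whereas you must prove $p(-\alpha)=-p(\alpha)$ separately, which you do via the Killing-form self-duality $(F^{a})^{\perp}=F^{1-a}$ (Riemann's first bilinear relation at the basepoint, propagated by $\Ad$-invariance of $B$ and transitivity) -- a correct and rather pleasant argument. What the paper's construction buys, and yours does not, is the identity \eqref{eqn *!} exhibiting $F^{\bullet}$ and $\overline{F^{\bullet}}$ as $\mathscr{F}(H,\chi)$ and $\mathscr{F}(H,\bar{\chi})$, which is reused verbatim in the proof of Theorem \ref{thm surj}; what your approach buys is that it isolates precisely which input is needed for each property (in particular, that (i)--(iv) need nothing beyond $H(\CC)\subseteq Q_{F^{\bullet}}\cap\overline{Q_{F^{\bullet}}}$). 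Your final inclusion--exclusion formula for $\dim\mathfrak{g}^{p,q}$ in terms of $F^{\bullet}$ and $\tilde{W}_{\bullet}$ is a valid variant of the paper's formula in terms of $F^{\bullet}$ and $\overline{F^{\bullet}}$. One small point to make explicit: your claims $\mathfrak{h}\subseteq F^{0}$ and $\mathfrak{h}\cap F^{1}=0$ rest on the identifications $\mathrm{Lie}(Q_{F^{\bullet}})=F^{0}$ and $\mathrm{Lie}(U(Q_{F^{\bullet}}))=F^{1}$, which should themselves be checked at the basepoint and transported, just as you do for the duality.
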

\begin{proof}
We know that $F_{0}^{\bullet}:=F_{\vf_{0}}^{\bullet}=\mathscr{F}(H_{0},\chi_{0})^{\bullet}$,
and that $G(\CC)$ acts transitively on $\check{D}$. Taking $g\in G(\CC)$
so that $g.F_{0}^{\bullet}=F^{\bullet}$, we have $\mathscr{F}\left(\Psi_{g}(H_{0,\CC}),\Psi_{g}(\chi_{0})\right)^{\bullet}=F^{\bullet},$
and $\Psi_{g}(H_{0,\CC})\leq\Psi_{g}Q_{F_{0}^{\bullet}}=Q_{F^{\bullet}}.$
Since $Q_{F^{\bullet}}$ is connected, any two Cartans of $Q_{F^{\bullet}}$
(i.e. Cartans of $G(\CC)$ contained in $Q_{F^{\bullet}}$) are conjugate
by an element of $Q_{F^{\bullet}}$ (cf. \cite[11.16 and 12.1(a)]{Bo}).
Hence, we may arrange to have $\Psi_{g}(H_{0,\CC})=H_{\CC}$; write
$\Psi_{g}(H_{0})=H$ and $\Psi_{g}(\chi_{0})=:\chi_{\xi}=:\chi.$

Since $H$ is real, we have $\chi,\bar{\chi}\in\mathrm{X}_{*}(H(\CC)),$
and so 
\[
\mathfrak{g}^{p,q}:=\left\{ \gamma\in\mathfrak{g}_{\CC}\left|\begin{array}{c}
Ad(\chi(z))\gamma=z^{2p}\gamma\\
Ad(\bar{\chi}(z))=z^{2q}\gamma
\end{array}\right.\right\} 
\]
\[
=\oplus_{\alpha\in\Delta,\Tiny\left\{ \begin{array}{c}
\pi_{\chi}(\alpha)=p\\
\pi_{\bar{\chi}}(\alpha)=q
\end{array}\right.}\CC\langle X_{\alpha}\rangle
\]
gives a bigrading. Since \begin{equation}\label{eqn *!}\left\{ \begin{array}{c}F^{a}=\mathscr{F}(H,\chi)=\bigoplus_{\Tiny\begin{array}{c}(p,q)\\p\geq a\end{array}}\mathfrak{g}^{p,q}\\\overline{F^{b}}=\mathscr{F}(H,\bar{\chi})=\bigoplus_{\Tiny\begin{array}{c}(p,q)\\q\geq b\end{array}}\mathfrak{g}^{p,q},\end{array}\right.
\end{equation} this gives \emph{(i)}-\emph{(iv)}, and \emph{(v)} follows from the
fact that $X_{\alpha}\in\mathfrak{g}^{p,q}\iff X_{-\alpha}\in\mathfrak{g}^{-p,-q}$.
Uniqueness is clear, as is the last statement i since $\dim\mathfrak{g}^{p,q}=\dim\left(\{F^{p}\cap\overline{F^{q}}\}/\{F^{p}\cap\overline{F^{q+1}}+F^{p+1}\cap\overline{F^{q}}\}\right).$
\end{proof}
There are several easy remarks at this point. The first is that $[\mathfrak{g}^{p,q},\mathfrak{g}^{p',q'}]\subseteq\mathfrak{g}^{p+p',q+q'}$
and so the isotropy Lie algebra $\mathfrak{q}_{F^{\bullet}}:=Lie(Q_{F^{\bullet}})$
identifies with $F^{0}\mathfrak{g}_{\CC}$. Next, setting 
\[
W_{k}\mathfrak{g}_{\CC}:=\bigoplus_{p+q\leq k}\mathfrak{g}^{p,q}
\]
(which is clearly defined over $\RR$),%
\footnote{To a Hodge theorist, this filtration is much more familiar than $\tilde{W}_{\bullet}$,
but (unlike $\tilde{W}_{\bullet}$) depends on the choice of $H$.%
} we see that $(\mathfrak{g}_{\RR},F^{\bullet},W_{\bullet})$ give
the data of a \emph{split $\RR$-mixed Hodge structure}, which is
split by property \emph{(iii)} in the Lemma. From the end of the proof,
it satisfies the symmetry $\dim Gr_{j}^{W}\mathfrak{g}=\dim Gr_{-j}^{W}\mathfrak{g}.$
Defining new cocharacters $\chi_{Y},\chi_{\phi}$ by \begin{equation}\label{eqn !sharp}Y:=\frac{\xi+\bar{\xi}}{2},\;\;\;\phi:=\frac{\xi-\bar{\xi}}{2},
\end{equation} the mixed Hodge representation associated to $(\mathfrak{g}_{\RR},F^{\bullet},W_{\bullet})$
is
\[
\begin{array}{cccccc}
\tilde{\vf}_{F^{\bullet}}: & \RR^{*}\times S^{1} & \to & H(\RR) & \hookrightarrow & G(\RR)\\
 & (w,z) & \mapsto & \chi_{Y}(w)\chi_{\phi}(z) & =: & \tilde{\vf}_{F^{\bullet}}(w,z).
\end{array}
\]
The composition of its complexification with $Ad$, mapping $\CC^{*}\times\CC^{*}\to Aut(\mathfrak{g},B)$,
restricts on $\mathfrak{g}^{p,q}$ to multiplication by $w^{p+q}z^{p-q}$.
Since we can act with $G(\RR)^{\circ}$ (via $Ad$) compatibly on
$F^{\bullet}$,$W_{\bullet}$,$H$,$\phi$,$Y$, and $\{\mathfrak{g}^{p,q}\}$,
we have
\begin{cor}
The
\[
h_{\mathcal{O}}^{p,q}:=\dim_{\CC}(\mathfrak{g}^{p,q})
\]
are well-defined invariants of of the $G(\RR)^{\circ}$-orbit $\mathcal{O}$.
\end{cor}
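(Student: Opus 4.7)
The corollary packages two compatibility claims. The first --- that the dimensions $\dim_{\CC}\g^{p,q}$ do not depend on the auxiliary Cartan $H$ used to construct the bigrading in Lemma \ref{bigrading lemma} --- is already contained in the last sentence of that lemma's proof. What remains, then, is to show that if $F_{1}^{\b},F_{2}^{\b}$ lie in the same orbit $\mathcal{O}=G(\RR)^{\circ}.F^{\b}$, then the dimensions $h^{p,q}$ computed from the two flags agree.

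My plan is to invoke the intrinsic formula
\[
\dim_{\CC}\g^{p,q}(F^{\b}) \;=\; \dim_{\CC}\bigl(\{F^{p}\cap\overline{F^{q}}\}\big/\{F^{p}\cap\overline{F^{q+1}}+F^{p+1}\cap\overline{F^{q}}\}\bigr),
\]
established at the end of the proof of Lemma \ref{bigrading lemma}. This already shows the dimensions depend only on the pair $(F^{\b},\overline{F^{\b}})$, with no further reference to $H$ or $\chi$. Since any $g\in G(\RR)^{\circ}$ is a real point, the induced automorphism $\Ad(g)$ of $\g_{\CC}$ is defined over $\RR$, hence commutes with complex conjugation; in particular
\[
\Ad(g)\,\overline{F^{\b}}\;=\;\overline{\Ad(g)F^{\b}}.
\]
Consequently $\Ad(g)$ carries each numerator $F^{p}\cap\overline{F^{q}}$, and similarly each denominator $F^{p}\cap\overline{F^{q+1}}+F^{p+1}\cap\overline{F^{q}}$, isomorphically onto the analogous subspace attached to $g.F^{\b}$, so the resulting quotients are isomorphic as complex vector spaces.

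Writing $F_{2}^{\b}=g.F_{1}^{\b}$ for some $g\in G(\RR)^{\circ}$ and combining the two observations above, one concludes $h^{p,q}(F_{1}^{\b})=h^{p,q}(F_{2}^{\b})$, which is the statement of the corollary. I do not anticipate any real obstacle here; the content is essentially a bookkeeping consequence of Lemma \ref{bigrading lemma}, and the single ingredient that must be made explicit --- the compatibility of the $G(\RR)^{\circ}$-action on $\check{D}$ with complex conjugation --- is already implicit in the last paragraph preceding the corollary's statement.
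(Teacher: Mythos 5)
Your argument is correct and is essentially the paper's own: the paper justifies the corollary by noting that $G(\RR)^{\circ}$ acts compatibly (via $\Ad$) on $F^{\bullet}$, $H$, and the $\{\g^{p,q}\}$, combined with the Cartan-independence already recorded at the end of the proof of Lemma \ref{bigrading lemma}. Your use of the intrinsic formula $\dim\g^{p,q}=\dim\bigl(\{F^{p}\cap\overline{F^{q}}\}/\{F^{p}\cap\overline{F^{q+1}}+F^{p+1}\cap\overline{F^{q}}\}\bigr)$ together with the fact that $\Ad(g)$ commutes with conjugation for $g\in G(\RR)^{\circ}$ is just a slightly more explicit packaging of the same equivariance.
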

Finally, there is the
\begin{prop}
$F^{\bullet}$ is Hodge $\iff$ $\tilde{W}_{\bullet}$ is trivial
($\iff$ $W_{\bullet}$ is trivial).\end{prop}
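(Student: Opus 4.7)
The plan is to show that all three conditions in the statement are equivalent to the single numerical condition
\[
\mathfrak{g}^{p,q}=0 \quad \text{whenever } p+q\neq 0
\]
on the bigrading \eqref{eqn !!}, at which point the two ``$\iff$''s are just two translations of the same fact.

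For the two weight filtrations this is essentially immediate from the definitions. By property \emph{(iv)} of Lemma \ref{bigrading lemma}, one has $Gr_{-k}^{\tilde{W}}\mathfrak{g}_{\CC}=\bigoplus_{p+q=k}\mathfrak{g}^{p,q}$, so $\tilde{W}_{\bullet}$ has only a single jump (at $-k=0$) exactly when $\mathfrak{g}^{p,q}=0$ for every $p+q\neq 0$. The identical argument applied to $W_{k}=\bigoplus_{p+q\leq k}\mathfrak{g}^{p,q}$ yields the same characterization for $W_{\bullet}$, so in particular $\tilde{W}_{\bullet}$ is trivial iff $W_{\bullet}$ is trivial (which also recovers the parenthetical equivalence).

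For the Hodge equivalence, I would combine properties \emph{(ii)} and \emph{(iii)} of the Lemma to obtain
\[
F^{p}\cap\overline{F^{-p}}\;=\;\bigoplus_{\substack{a\geq p\\ b\geq -p}}\mathfrak{g}^{a,b}.
\]
A fixed summand $\mathfrak{g}^{a,b}$ belongs to $F^{p}\cap\overline{F^{-p}}$ for exactly the integers $p$ with $-b\leq p\leq a$; there are $a+b+1$ such $p$ when $a+b\geq 0$, and none when $a+b<0$. Consequently the equality $\sum_{p}F^{p}\cap\overline{F^{-p}}=\mathfrak{g}_{\CC}$ forces $\mathfrak{g}^{a,b}=0$ for $a+b<0$, while directness of the sum forces $\mathfrak{g}^{a,b}=0$ for $a+b>0$; conjoined, these give the bigrading condition above. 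Conversely, if $\mathfrak{g}^{a,b}$ is concentrated on the antidiagonal $a+b=0$, each piece appears in exactly one $F^{p}\cap\overline{F^{-p}}$ and the Hodge decomposition follows.

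I do not expect a real obstacle: the content of the proposition is simply the observation that all three filtrations encode the same ``off-antidiagonal'' information in the bigrading of Lemma \ref{bigrading lemma}, and once everything is rewritten in terms of the $\mathfrak{g}^{p,q}$ the equivalences reduce to the small combinatorial count above.
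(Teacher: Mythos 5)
Your argument is correct. It rests on the same foundation as the paper's proof, namely the bigrading of Lemma \ref{bigrading lemma} and the fact that all three conditions say precisely that $\g^{p,q}=0$ off the antidiagonal $p+q=0$; but the two directions are executed differently. For ``trivial $\Rightarrow$ Hodge'' the paper observes that the grading element $Y$ (from the cocharacters $\chi_Y,\chi_\phi$ attached to the bigrading) grades $\tilde{W}_{\bullet}$, so triviality forces $Y=0$, whence $\tilde{\vf}_{F^{\bullet}}|_{S^{1}}$ is a genuine Hodge structure with flag $F^{\bullet}$; for ``Hodge $\Rightarrow$ trivial'' it invokes the $p$-opposed condition $F^{p}\cap\overline{F^{-p+1}}=\{0\}$ to get $\tilde{W}_{-1}=\{0\}$ and then the symmetry $\dim Gr_{j}^{W}\g=\dim Gr_{-j}^{W}\g$ to conclude $\g/\tilde{W}_{0}=\{0\}$. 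You replace both steps by the self-contained count of how many summands $F^{p}\cap\overline{F^{-p}}$ contain a fixed $\g^{a,b}$ ($a+b+1$ of them if $a+b\geq0$, none if $a+b<0$), so that spanning kills the pieces with $a+b<0$ and directness kills those with $a+b>0$. This is marginally longer but avoids presupposing the standard equivalence between the Hodge decomposition and opposedness, and it makes the parenthetical equivalence with $W_{\bullet}$ explicit rather than implicit; both routes are sound.
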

\begin{proof}
$Y$ grades $\tilde{W}_{\bullet}$, so if $Gr_{j}^{\tilde{W}}=\{0\}$
for $j\neq0$ then $Y=0$ and $\vf:=\tilde{\vf}_{F^{\bullet}}|_{S^{1}}$
is a Hodge structure with $F^{\bullet}=F_{\vf}^{\bullet}$. Conversely,
if $F^{\bullet}$ is Hodge then the $p$-opposed condition $F^{p}\cap\overline{F^{-p+1}}=\{0\}$
holds, and so $\tilde{W}_{-1}=\{0\}$; by symmetry, $\mathfrak{g}/\tilde{W}_{0}=\{0\}$.
\end{proof}

\subsection{From Cartan data to semi-Hodge flags}

We are now prepared to parametrize the flags in $\check{D}$ by Cartan
data. Let $\xi_{\CC}$ denote the set of maximal tori ${}'H\leq G_{\CC}$,
and $\Xi_{\CC}$ the set of pairs $({}'H,\chi)$ ($\chi\in\mathrm{X}_{*}({}'H(\CC))$)
$G(\CC)$-conjugate to $(H_{0,\CC},\chi_{0})$. Define subsets $\tilde{\xi}_{\RR}\subset\xi_{\CC}$
resp. $\tilde{\Xi}_{\RR}\subset\Xi_{\CC}$ by imposing the requirement
that ${}'H=H_{\CC}$ for $H$ defined over $\RR$, and smaller subsets
$\xi_{\RR}$ resp. $\Xi_{\RR}$ by insisting that $H(\RR)$ be compact.
Finally let $\Xi_{\RR}^{\circ}\subset\Xi_{\RR}$ be the $G(\RR)^{\circ}$-orbit
of $(H_{0},\chi_{0})$. Applying $\mathscr{F}$ produces a $G(\RR)^{\circ}$-equivariant%
\footnote{$G(\RR)$-equivariant if the left-most column is removed; right-most
column $G(\CC)$-equivariant.%
} commutative diagram \[\xymatrix{
\xi_{\RR} \ar@{=} [r] & \xi_{\RR} \ar@{^(->} [r] & \tilde{\xi}_{\RR} \ar@{^(->} [r] & \xi_{\CC} 
\\
\Xi^{\circ}_{\RR} \ar@{->>} [u]_{\pi_{\RR}^{\circ}} \ar@{->>} [d]^{\mathscr{F}^{\circ}_{\RR}} \ar@{^(->} [r] & \Xi_{\RR} \ar@{->>} [u]_{\pi_{\RR}} \ar [d]^{\mathscr{F}_{\RR}} \ar@{^(->} [r] & \tilde{\Xi}_{\RR} \ar@{->>} [u]_{\tilde{\pi}_{\RR}} \ar [d]^{\tilde{\mathscr{F}}_{\RR}} \ar@{^(->} [r] & \Xi_{\CC} \ar@{->>} [u]_{\pi_{\CC}} \ar@{->>} [d]^{\mathscr{F}_{\CC}}
\\
D \ar@{^(->} [r] & \check{D} \setminus \mathfrak{Z} \ar@{^(->} [r] & \check{D} \ar@{=} [r] & \check{D}
}
\]in which surjectivity of the leftmost and rightmost upward arrows
follows from the $G(\RR)^{\circ}$-conjugacy {[}resp. $G(\RR)$-conjugacy{]}
of all compact maximal real {[}resp. maximal complex{]} tori. The
desired parametrization of Hodge and semi-Hodge flags is then given
by the following
\begin{thm}
\label{thm surj}$(i)$ $\tilde{\mathscr{F}}_{\RR}$ and (ii) $\mathscr{F}_{\RR}$
are surjective.\end{thm}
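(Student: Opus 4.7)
The plan is to derive (i) by unpacking the construction already present in the proof of Lemma~\ref{bigrading lemma}, in combination with Lemma~\ref{bruhat lemma}, and then to deduce (ii) from (i) by exploiting the fact—implicit in Proposition~3.4—that a Hodge flag forces the associated cocharacter to factor through a compact subtorus. The remaining work is then a Cartan-swap inside an appropriate Levi centralizer that preserves both the cocharacter and the flag.

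For (i), given $F^\b\in\check{D}$, I would first use $G(\CC)$-transitivity on $\check{D}$ to pick $g\in G(\CC)$ with $g.F_0^\b=F^\b$; the pair $(\Psi_g(H_{0,\CC}),\Psi_g(\chi_0))$ is then $G(\CC)$-conjugate to $(H_0,\chi_0)$ and maps to $F^\b$ under $\mathscr{F}$. Lemma~\ref{bruhat lemma} supplies a real Cartan $H\leq G_\RR$ with $H_\CC\leq Q_{F^\b}$. Exactly as in the proof of Lemma~\ref{bigrading lemma}, the connectedness of $Q_{F^\b}$ yields some $q\in Q_{F^\b}$ conjugating $\Psi_g(H_{0,\CC})$ to $H_\CC$; replacing $g$ by $q^{-1}g$ (which still sends $F_0^\b$ to $F^\b$, as $q\in Q_{F^\b}$), one arranges $\Psi_g(H_{0,\CC})=H_\CC$, so $(H,\Psi_g(\chi_0))\in\tilde{\Xi}_\RR$ is the desired preimage.

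For (ii), given a Hodge $F^\b\in\check{D}\setminus\mathfrak{Z}$, I would start from the pair $(H,\chi_\xi)\in\tilde{\Xi}_\RR$ produced by (i) and improve $H$ to be compact. By Proposition~3.4 the filtration $W_\b$ is trivial, which via~\eqref{eqn !sharp} forces $Y=0$, so $\xi=\phi\in i\mathfrak{h}_\RR$; combined with the general inclusion $\Lambda^*\subset i\mathfrak{t}\oplus\mathfrak{a}$ (for the $\theta$-stable $H$), this gives $\xi\in i\mathfrak{t}$, whence $\chi_\xi(\CC^*)$ is contained in the compact subtorus $T_{H,c,\CC}:=\exp(\mathfrak{t}_\CC)$ of $H_\CC$. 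Next, choose a compact Cartan $T\leq G_\RR$ with $T(\RR)\supset T_{H,c}$: such a $T$ exists because $T_{H,c}$ sits in some maximal compact subgroup $K\leq G(\RR)$, and a maximal torus of $K$ containing $T_{H,c}$ is the real form of a compact Cartan (using the compact-Cartan assumption of $\S2$ together with the $G(\RR)^\circ$-conjugacy of all maximal tori of $K$ with $H_0(\RR)$).

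The closing move—which I expect to be the main technical point—is a Levi-centralizer argument. Both $H_\CC$ and $T_\CC$ are maximal tori of the connected reductive Levi $L:=Z_{G(\CC)}(T_{H,c,\CC})$, so some $l\in L$ conjugates the former to the latter. Because $l$ centralizes $\chi_\xi(\CC^*)\subset T_{H,c,\CC}$, the cocharacter $\Psi_l(\chi_\xi)$ coincides with $\chi_\xi$ as a map $\CC^*\to T_{H,c,\CC}\subset T_\CC$; likewise $\Ad(l)$ commutes with $\ad\phi$, so it preserves each $\ad\phi$-eigenspace and therefore the flag—since a short computation shows $\mathscr{F}(H,\chi_\xi)^i=\bigoplus_{k\geq i}E_{2k}(\ad\phi)$ depends only on $\phi$, not on the ambient Cartan. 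Consequently $(T,\chi_\xi):=\Psi_l(H,\chi_\xi)\in\Xi_\RR$ is $G(\CC)$-conjugate to $(H_0,\chi_0)$ and still maps to $F^\b$. The hard part is precisely this swap: securing compactness of the Cartan without disturbing either the cocharacter or the flag, which is exactly what the Levi centralizer of the common subtorus $T_{H,c,\CC}$ makes possible.
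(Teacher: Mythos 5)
Your part (i) is the paper's own argument, essentially verbatim: transitivity of $G(\CC)$ on $\check{D}$, Lemma \ref{bruhat lemma} to place a real Cartan $H_{\CC}$ inside $Q_{F^{\b}}$, and connectedness of the parabolic to conjugate $\Psi_{g}(H_{0,\CC})$ onto $H_{\CC}$ by an element of $Q_{F^{\b}}$ (only note that if $q$ conjugates $\Psi_{g}(H_{0,\CC})$ to $H_{\CC}$ you should replace $g$ by $qg$ rather than $q^{-1}g$ --- a harmless relabeling). Part (ii), however, is a genuinely different and correct route. The paper starts from the characterization of a Hodge flag recalled in $\S2$: $F^{\b}=F_{\vf}^{\b}$ for a circle $\vf(S^{1})\subset G(\RR)$, which one then encloses in a compact maximal torus $T$ and feeds back into the argument of (i). You instead begin with the (possibly noncompact) pair $(H,\chi_{\xi})$ already produced by (i) and \emph{derive} compactness: Proposition 3.4 kills $\tilde{W}_{\b}$, hence $Y=0$, hence $\xi=\phi\in i\mathfrak{t}$ once $H$ is made stable under some Cartan involution, so $\chi_{\xi}$ factors through the anisotropic part $T_{H,c}$ of $H(\RR)$; you then swap $H_{\CC}$ for a compact Cartan $T_{\CC}\supset T_{H,c,\CC}$ inside the connected centralizer $L=Z_{G(\CC)}(T_{H,c,\CC})$, observing that the conjugating element fixes $\chi_{\xi}$ pointwise and that $\mathscr{F}(H,\chi_{\xi})^{\b}$ depends only on the eigenspaces of $\ad\xi$, not on the ambient Cartan, so the flag is untouched. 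All the ingredients check out (connectedness of the centralizer of a torus, the equal-rank hypothesis of $\S2$ to know that a maximal torus of $K$ containing $T_{H,c}$ complexifies to a Cartan of $G_{\CC}$). What your version buys is self-containedness --- it does not lean on the $\S2$/GGK identification of Hodge flags with circles in compact tori --- and it makes explicit the mechanism (the Levi centralizer) by which one may trade a noncompact Cartan carrying a ``compact'' cocharacter for a compact one without moving the flag; the paper's version is shorter because it outsources exactly that point to the definition of a Hodge flag.
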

\begin{proof}
\emph{(i)} Given $F^{\bullet}\in\check{D}$, by \eqref{eqn *!} we
have $F^{\bullet}=\mathscr{F}(H_{\CC},\chi)^{\bullet}$ for some $H\leq G_{\RR}$.

Let $g\in G(\CC)$ be such that $g.F_{0}^{\bullet}=F^{\bullet}$ in
$\check{D}$. Then 
\[
F^{\bullet}=g.\mathscr{F}\left(T_{0,\CC},\chi_{0}\right)^{\bullet}=\mathscr{F}\left(\Psi_{g}(T_{0,\CC}),\Psi_{g}(\chi_{0})\right)^{\bullet}
\]
and the $\{F^{i}\}$ are sums of root spaces of $\Psi_{g}(T_{0,\CC})$,
so that $\Psi_{g}(T_{0,\CC})\subset Q_{F^{\bullet}}$. We also have
$H_{\CC}\subset Q_{F^{\bullet}}$, and so (as in the proof of Lemma
\ref{bigrading lemma}) $H_{\CC}$ and $\Psi_{g}(T_{0,\CC})$ are
conjugate by $\rho\in Q_{F^{\bullet}}$. That is, $\Psi_{\rho g}(T_{0,\CC})=H_{\CC}$
and $\rho g.F_{0}^{\bullet}=F^{\bullet}$; and we conclude that $F^{\bullet}=\mathscr{F}\left(H_{\CC},\Psi_{\rho g}(\chi_{0})\right)^{\bullet}\in\mathscr{F}\left(\tilde{\Xi}_{\RR}\right)$.

\emph{(ii)} Given $\vf\in\check{D}\backslash\mathfrak{Z}$, there
is a compact maximal torus $T\leq G_{\RR}$ with $T(\RR)\supset\vf(S^{1})$,
and $F_{\vf}^{\bullet}=\mathscr{F}(T_{\CC},\chi_{\vf}).$ The rest
of the argument is as in \emph{(i)}.\end{proof}
\begin{rem}
\label{rem p. 13}(a) \emph{(ii)} is essentially part of Theorem (VI.B.9)
in \cite{GGK}, while \emph{(i)} establishes the conjecture made in
Remark (VI.B.10) of {[}op. cit.{]}.%
\footnote{Note that the uniqueness of semi-Hodge decompositions asserted there
is not correct; it depends upon the choice of Cartan.%
}

(b) In the proof of \emph{(i)}, for any $\mu\in\mathrm{X}_{*}(H(\CC))$
we have $\mu=\chi\iff\mathscr{F}(H_{\CC},\mu)=F^{\bullet}$. Hence
$\psi_{\rho g}(\chi_{0})=\chi$ and the original $(H_{\CC},\chi)$
belongs to $\tilde{\Xi}_{\RR}$. This shows that \emph{a real-Cartan/co-character
pair not in $\tilde{\Xi}_{\RR}$} (i.e. not $G(\CC)$-conjugate to
$(T_{0},\chi_{0})$) \emph{does not yield a flag in $\check{D}$. }

(c) Suppose $\mathscr{F}(H,\chi)=F_{\vf}^{\bullet}$ is the flag of
a $(-B)$-polarized Hodge structure $\vf$. Then $B(\cdot,\overline{\cdot}$)
is definite on each $\mathfrak{g}_{\vf}^{p,-p}$. Since $G\subset Aut(\mathfrak{g},-B)$,
the isotropy group $\mathcal{H}_{\vf}\subset G(\RR)$ is compact.
Moreover, $H(\RR)$ commutes with $\vf(S^{1})$, whereupon we have
$\vf(S^{1})\subset H(\RR)\subset\mathcal{H}_{\vf}$, forcing $H(\RR)$
compact. We conclude that \emph{$\mathscr{F}\left(\tilde{\Xi}_{\RR}\backslash\Xi_{\RR}\right)$
avoids the $(-B)$-polarized locus} (which resides between $D$ and
$\check{D}\backslash\mathfrak{Z}$).
\end{rem}
For later reference we emphasize the obvious
\begin{prop}
\label{prop obvious}If $F^{\bullet}\in\check{D}$ is $\mathscr{F}_{\CC}$
of $({}'H,\chi)\in\Xi_{\CC}$, then ${}'H(\CC)\subset Q_{F^{\bullet}}$.\end{prop}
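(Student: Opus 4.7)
The claim is essentially a tautology unpacking the definitions in \eqref{def F}--\eqref{def F 2}, so my ``proof plan'' is really just an outline of that unpacking.

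First I would recall that, by definition, each filtrand $F^{i}=\mathscr{F}({}'H,\chi)^{i}\mathfrak{g}_{\CC}$ is a direct sum of root spaces $\mathfrak{g}_{\alpha}$ of the pair $(\mathfrak{g},{}'\mathfrak{h})$ (together with all of ${}'\mathfrak{h}$, in the case $i\leq 0$). The relevant point is that each $F^{i}$ is a sum of simultaneous eigenspaces for the adjoint action of ${}'H(\CC)$.

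Next I would invoke the standard fact that for any $h\in{}'H(\CC)$, the operator $\Ad(h)$ acts on $\mathfrak{g}_{\alpha}$ by the scalar $\alpha(h)$ (more precisely by $e^{\alpha(\log h)}$, which makes sense as a character of ${}'H(\CC)$), and acts as the identity on ${}'\mathfrak{h}$. Consequently $\Ad(h)$ preserves every sum of root spaces and in particular stabilizes each $F^{i}$. Therefore $\Ad({}'H(\CC))\cdot F^{\bullet}=F^{\bullet}$.

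Finally, since $Q_{F^{\bullet}}$ is by definition the stabilizer in $G(\CC)$ of the flag $F^{\bullet}$ under the adjoint action (cf.\ the convention in Remark following \eqref{def F 2}), the previous step gives ${}'H(\CC)\subset Q_{F^{\bullet}}$, which is the claim. No genuine obstacle arises; the only thing to be a bit careful about is that the assertion uses the torus ${}'H$ associated to the pair $({}'H,\chi)$ itself, not some other Cartan of $Q_{F^{\bullet}}$, and this is exactly the torus whose root space decomposition was used to build $F^{\bullet}$ in the first place.
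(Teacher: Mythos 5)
Your proof is correct and is essentially the paper's own argument: the paper likewise observes that the $F^{i}$ are sums of root spaces (eigenspaces of $\ad(\chi(z))$) of ${}'H$, with ${}'\mathfrak{h}$ in the trivial eigenspace hence in $F^{0}\mathfrak{g}=\mathfrak{q}_{F^{\bullet}}$, so that ${}'H(\CC)$ stabilizes the flag. Your write-up just makes the group-level stabilization step slightly more explicit.
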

\begin{proof}
The $F^{i}$ are sums of eigenspaces of $\text{ad}(\chi(z))$ (which
are sums of root spaces of ${}'H$), and ${}'\mathfrak{h}$ belongs
to the ``trivial'' eigenspace hence to $F^{0}\mathfrak{g}$($=\mathfrak{q}_{F^{\bullet}}$). 
\end{proof}

\subsection{Discretizing the Cartan data}

Now any given $H\in\tilde{\xi}_{\RR}$ is $G(\RR)^{\circ}$-conjugate
to some Cartan in the list of all successive Cayley transforms of
$H_{0}$ in noncompact imaginary roots (\cite[p. 394]{Kn}). Removing
all but one Cartan in each $G(\RR)^{\circ}$-conjugacy class, we shall
fix henceforth:
\begin{itemize}
\item the resulting sublist $\{H_{0},H_{1},\ldots,H_{n}\}=:\tilde{\xi}_{\RR}^{\theta}$;
\item $\underline{c}(j)\,:=$ product of Cayley transforms with $\Psi_{\underline{c}(j)}(H_{0})=H_{j}$;
\item $\chi_{j}(z):=e^{\log(z)\ad(\xi_{j})}:=\Psi_{\underline{c}(j)}(\chi_{0}(z))\in\mathrm{X}_{*}(H_{j}(\CC))$;
\item the Cartan involution $\Theta:=\Psi_{\vf_{0}(i)}$($=$identity on
$H_{0}$);
\item its $\pm1$-eigenspaces $\mathfrak{k},\mathfrak{p}\subset\mathfrak{g}_{\RR}$.
\end{itemize}
For any $\Theta$-stable $H$, $\alpha\in\Delta_{n}$ $\implies$
$\theta(X_{\pm\alpha})=-X_{\pm\alpha}$ $\implies$ $\Theta(\mathbf{c}_{\alpha})=\mathbf{c}_{\alpha}^{-1}$
$\implies$ $\Theta(\Psi_{\mathbf{c}_{\alpha}}(H))=\Psi_{\mathbf{c}_{\alpha}^{-1}}(H)=\Psi_{\mathbf{c}_{\alpha}}\Psi_{\mathbf{c}_{\alpha}^{-2}}(H)=\Psi_{\mathbf{c}_{\alpha}}(H)$
since $\mathbf{c}_{\alpha}^{\pm2}=w_{\alpha}$ is the Weyl element.
Hence every $H\in\tilde{\xi}_{\RR}^{\theta}$ is $\Theta$-stable.
If $G_{\RR}$ is split, then we shall take $H_{n}$ to be the (unique)
split Cartan in $\tilde{\xi}_{\RR}^{\theta}$.

Noting that $\xi_{0}=\phi_{0}\in i(\mathfrak{h}_{0})_{\RR}$, we have
$\xi_{j}=\text{Ad}(\underline{c}(j))\phi_{0}$. Given any $w\in W_{\CC}(H_{j})$,
$\tilde{w}:=\Psi_{\underline{c}(j)^{-1}}(w)$ belongs to $W_{\CC}(H_{0})$
and 
\[
\xi_{j}^{[w]}:=\text{Ad}(w)\xi_{j}=\Ad(\underline{c}(j))\Ad(\tilde{w})\phi_{0}=:\Ad(\underline{c}(j))\phi_{\tilde{w}},
\]
hence 
\[
\theta(\xi_{j}^{[w]})=\Ad\left(\Theta(\underline{c}(j))\right)\theta(\phi_{0}^{[w]})=\Ad(\overline{\underline{c}(j)})\phi_{0}^{[w]}=\overline{\Ad(\underline{c}(j))\overline{\phi_{0}^{[w]}}}
\]
\[
=-\overline{\Ad(\underline{c}(j))\phi_{0}^{[w]}}=-\xi_{j}^{[w]}.
\]
Writing $Y_{j}^{[w]}:=\frac{\xi_{j}+\overline{\xi_{j}^{[w]}}}{2}$,
$\phi_{j}:=\frac{\xi_{j}^{[w]}-\overline{\xi_{j}^{[w]}}}{2}$,
\[
\mathfrak{a}_{j}:=(\mathfrak{k}_{j})_{\RR}\cap\mathfrak{p}\,,\;\;\;\mathfrak{k}_{j}:=(\mathfrak{h}_{j})_{\RR}\cap\mathfrak{k},
\]
this yields $\phi_{j}^{[w]}\in i\mathfrak{t}_{j}$, $Y_{j}^{[w]}\in\mathfrak{a}_{j}$.
This allows us to associate (nonuniquely) Hodge-compatible Cartan
data to any flag in $\check{D}$:
\begin{prop}
\label{prop '}Given any $F^{\bullet}\in\check{D}$:

(i) there exist $H_{j}\in\tilde{\xi}_{\RR}^{\theta}$, $w\in W_{\CC}(H_{j})$,
and $g\in G(\RR)^{\circ}$ such that
\[
F^{\bullet}=\mathscr{F}\left(\Psi_{g}(H_{j}),\Psi_{g}(\chi_{j}^{[w]})\right)^{\b}\;\left(=:\mathscr{F}(H,\chi)^{\b}\right);
\]
and

(ii) referring to \eqref{eqn !sharp}, there is a Cartan involution
$\Theta_{F^{\bullet}}$ and corresponding $\mathfrak{k}_{F^{\bullet}},\mathfrak{p}_{F^{\bullet}}\subset\mathfrak{g}_{\RR}$,
$\mathfrak{t}_{F^{\bullet}},\mathfrak{a}_{F^{\bullet}}\subset\mathfrak{h}_{\RR}$
such that 
\[
Y\in\mathfrak{a}_{F^{\bullet}}\;\;\textit{and}\;\;\phi\in i\mathfrak{t}_{F^{\bullet}}.
\]
\end{prop}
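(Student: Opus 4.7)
The plan is to combine Theorem \ref{thm surj}(i) with the Cayley-transform discussion laid out immediately before the proposition. First I would invoke Theorem \ref{thm surj}(i) to write $\fb = \mathscr{F}(\tilde H_\CC, \tilde\chi)^\b$ for some real Cartan $\tilde H \leq G_\RR$ and $\tilde\chi \in \mathrm{X}_*(\tilde H(\CC))$. Every real Cartan is $G(\RR)^\circ$-conjugate to a $\Theta$-stable one, and every $\Theta$-stable real Cartan is $G(\RR)^\circ$-conjugate to some $H_j$ in the exhaustive list $\tilde\xi_\RR^\theta$; I can therefore pick $g \in G(\RR)^\circ$ with $\tilde H = \Psi_g(H_j)$ for some $j$, whereupon $\tilde\chi = \Psi_g(\chi')$ for a unique $\chi' \in \mathrm{X}_*(H_j(\CC))$, and $\fb = \mathscr{F}(\Psi_g(H_j), \Psi_g(\chi'))^\b$.

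To put $\chi'$ into the form $\chi_j^{[w]}$, I would observe that both $(H_{j,\CC}, \chi')$ and $(H_{j,\CC}, \chi_j)$ lie in $\Xi_\CC$ --- the first by Remark \ref{rem p. 13}(b), the second by construction from $(H_{0,\CC}, \chi_0)$ via the Cayley transform $\underline c(j) \in G(\CC)$. Hence $\chi'$ and $\chi_j$ are $G(\CC)$-conjugate cocharacters of the common torus $H_j(\CC)$. The standard Weyl-group argument --- if $g_0 \chi_j g_0^{-1} = \chi'$ then $g_0 H_j(\CC) g_0^{-1}$ and $H_j(\CC)$ are two maximal tori in the reductive centralizer $Z_{G(\CC)}(\chi'(\CC^*))$, and correcting $g_0$ on the left by an element of that centralizer (which commutes with $\chi'$) produces a normalizer element of $H_j(\CC)$ still conjugating $\chi_j$ to $\chi'$ --- yields $w \in W_\CC(H_j)$ with $\chi' = \chi_j^{[w]}$. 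This completes (i).

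For (ii), I would set $\Theta_{\fb} := \Psi_g \circ \Theta \circ \Psi_g^{-1}$, which is a Cartan involution since $g \in G(\RR)^\circ$ and $\Theta$ is. Its $\pm 1$-eigenspaces on $\g_\RR$ are $\mathfrak{k}_{\fb} = \Ad(g)\mathfrak{k}$ and $\mathfrak{p}_{\fb} = \Ad(g)\mathfrak{p}$, and since $H_j$ is $\Theta$-stable, $H = \Psi_g(H_j)$ is $\Theta_{\fb}$-stable with $\mathfrak{t}_{\fb} = \Ad(g)\mathfrak{t}_j$ and $\mathfrak{a}_{\fb} = \Ad(g)\mathfrak{a}_j$. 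The generator of $\chi = \Psi_g(\chi_j^{[w]})$ is $\xi = \Ad(g)\xi_j^{[w]}$, and the preceding Cayley-transform calculation has already verified $Y_j^{[w]} \in \mathfrak{a}_j$ and $\phi_j^{[w]} \in i\mathfrak{t}_j$. Since $g \in G(\RR)^\circ$ makes $\Ad(g)$ commute with complex conjugation, applying $\Ad(g)$ term by term yields $Y = \Ad(g) Y_j^{[w]} \in \mathfrak{a}_{\fb}$ and $\phi = \Ad(g) \phi_j^{[w]} \in i\mathfrak{t}_{\fb}$, as required.

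The main subtlety is bookkeeping: three distinct conjugations are in play --- the complex Cayley transform $\underline c(j)$, the complex Weyl element $w$, and the real conjugator $g$ --- and only $\Ad(g)$ respects complex conjugation. This is why the decomposition $\xi_j^{[w]} = Y_j^{[w]} + \phi_j^{[w]}$ into real and imaginary parts must be carried out at the level of $H_j$, where the preparatory calculation sits, and only then transported by $g$ to yield the desired statement at $H$.
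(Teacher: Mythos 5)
Your proposal is correct and follows essentially the same route as the paper: invoke Theorem \ref{thm surj} to get $(H,\chi)\in\tilde{\Xi}_{\RR}$, conjugate the Cartan into the fixed list $\tilde{\xi}_{\RR}^{\theta}$ by some $g\in G(\RR)^{\circ}$, reduce the $G(\CC)$-conjugacy of cocharacters to $W_{\CC}(H_{j})$-conjugacy, and for (ii) set $\Theta_{F^{\bullet}}=\Psi_{g}\circ\Theta\circ\Psi_{g}^{-1}$ and transport the computation $Y_{j}^{[w]}\in\mathfrak{a}_{j}$, $\phi_{j}^{[w]}\in i\mathfrak{t}_{j}$ by $\Ad(g)$. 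The only (harmless) extra work is your centralizer argument in the Weyl step: since membership in $\Xi_{\CC}$ is conjugacy of \emph{pairs}, the conjugating element already normalizes $H_{j}(\CC)$ and hence directly represents a Weyl element.
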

\begin{proof}
\emph{(i)} By Theorem \ref{thm surj}, $F^{\bullet}$ is $\mathscr{F}$
of some $(H,\chi)\in\tilde{\Xi}_{\RR}$; clearly $H$ is some $\Psi_{g}(H_{j})$.
So $\left(H_{j},\Psi_{g^{-1}}(\chi)\right)$ is $G(\CC)$-, hence
$W_{\CC}(H_{j})$-, conjugate to $(H_{j},\chi_{j})$.

\emph{(ii)} Put $\Theta_{F^{\bullet}}:=\Psi_{g\vf_{0}(i)g^{-1}}$;
then $H$ is $\Theta_{\fb}$-stable and we have $\mathfrak{a}_{F^{\bullet}}=\Ad(g)\mathfrak{a}_{j}$
and $\mathfrak{t}_{F^{\bullet}}=\Ad(g)\mathfrak{t}_{j}$, so the result
follows from the above computations.\end{proof}
\begin{cor}
\label{cor double prime}Given $F^{\bullet}=\mathscr{F}(H,\chi)\in\check{D}$,
with the bigrading $\mathfrak{g}_{\CC}=\oplus\mathfrak{g}^{p,q}$
associated to $H$, $\Theta_{F^{\bullet}}(\mathfrak{g}^{p,q})=\mathfrak{g}^{-q,-p}$.\end{cor}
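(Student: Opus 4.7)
The plan is to exploit the explicit description of $\Theta_{F^{\bullet}}$ and the decomposition $\xi = Y + \phi$ provided by Proposition \ref{prop '} and the preceding discussion. Recall that by construction $\chi(z) = e^{\log(z)\ad\xi}$, and from \eqref{eqn !sharp}, $\xi = Y + \phi$ with $\bar\xi = Y - \phi$. By Lemma \ref{bigrading lemma}, $\mathfrak{g}^{p,q}$ is characterized as the simultaneous eigenspace where $\ad(\xi)$ acts by $2p$ and $\ad(\bar\xi)$ acts by $2q$; equivalently, $\ad(Y)$ acts by $p+q$ and $\ad(\phi)$ acts by $p-q$.

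The key input from Proposition \ref{prop '}(ii) is that $\phi \in i\mathfrak{t}_{F^{\bullet}} \subset \mathfrak{k}_{F^{\bullet},\CC}$ and $Y \in \mathfrak{a}_{F^{\bullet}} \subset \mathfrak{p}_{F^{\bullet},\RR}$. Since the complex-linear extension of $\theta_{F^{\bullet}}$ is $+1$ on $\mathfrak{k}_{F^{\bullet},\CC}$ and $-1$ on $\mathfrak{p}_{F^{\bullet},\CC}$, this gives $\theta_{F^{\bullet}}(\phi) = \phi$ and $\theta_{F^{\bullet}}(Y) = -Y$, whence
\[
\theta_{F^{\bullet}}(\xi) \,=\, -Y + \phi \,=\, -\bar\xi, \qquad \theta_{F^{\bullet}}(\bar\xi) \,=\, -\xi.
\]

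Now I would simply use that $\theta_{F^{\bullet}}$ is a Lie algebra automorphism, so for any $\gamma \in \mathfrak{g}^{p,q}$,
\[
\ad(\xi)\,\theta_{F^{\bullet}}(\gamma) \,=\, \theta_{F^{\bullet}}\bigl(\ad(\theta_{F^{\bullet}}(\xi))\gamma\bigr) \,=\, \theta_{F^{\bullet}}\bigl(\ad(-\bar\xi)\gamma\bigr) \,=\, -2q\,\theta_{F^{\bullet}}(\gamma),
\]
and similarly $\ad(\bar\xi)\theta_{F^{\bullet}}(\gamma) = -2p\,\theta_{F^{\bullet}}(\gamma)$. This places $\theta_{F^{\bullet}}(\gamma)$ inside $\mathfrak{g}^{-q,-p}$, and since $\theta_{F^{\bullet}}^2 = \mathrm{id}$ the reverse inclusion holds as well, yielding the claimed equality.

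There is essentially no hard step: the entire content is packaged in Proposition \ref{prop '}, which guarantees the existence of a Cartan involution $\Theta_{F^{\bullet}}$ whose $(\pm 1)$-decomposition of $\mathfrak{h}_{\RR}$ is adapted to the $(Y,\phi)$-splitting of $\xi$. The only mild point to check is that even though $\Theta_{F^{\bullet}}$ is only constructed up to the choice of $g$ and $w$ in \ref{prop '}, the identity we want depends only on $H$ and $\chi$ (via the action on $\mathfrak{g}^{p,q}$), so any valid choice suffices.
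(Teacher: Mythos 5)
Your proof is correct and is essentially the paper's own argument: both hinge on Proposition \ref{prop '}(ii) giving $\theta_{F^{\bullet}}(Y)=-Y$ and $\theta_{F^{\bullet}}(\phi)=\phi$, followed by an equivariance/eigenvalue computation. The only cosmetic difference is that the paper conjugates $\Ad(\tilde{\vf}_{F^{\bullet}}(w,z))$ by $\theta_{F^{\bullet}}$ at the group level, whereas you carry out the equivalent infinitesimal computation with $\ad(\xi)$ and $\ad(\bar{\xi})$.
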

\begin{proof}
By Prop. \ref{prop '}\emph{(ii)}, $\theta_{F^{\bullet}}(Y)=-Y$ and
$\theta_{F^{\bullet}}(\phi)=\phi$, whereupon the formula for $\tilde{\vf}_{F^{\bullet}}$
gives 
\[
\begin{array}{ccc}
\Ad(\tilde{\vf}_{F^{\bullet}}(w,z))\circ\theta_{F^{\bullet}} & = & \theta_{F^{\bullet}}\circ\Ad(\Theta_{F^{\bullet}}(\vft_{F^{\bullet}}(w,z))\\
 & = & \theta_{F^{\bullet}}\circ\Ad(\vft_{F^{\bullet}}(w^{-1},z)).
\end{array}
\]
Restrict this to $\mathfrak{g}^{p,q}$.\end{proof}
\begin{cor}
\label{cor prime}If $F^{\bullet}\in\mathfrak{Z}$, then for some
$p>0$, $\dim_{\CC}(\mathfrak{g}^{p,p})\neq0$.\end{cor}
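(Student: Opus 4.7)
The plan is the contrapositive: assume $\mathfrak{g}^{p,p}=0$ for every $p>0$, and conclude that $F^\bullet$ is Hodge. By the conjugation symmetry $\overline{\mathfrak{g}^{p,q}}=\mathfrak{g}^{q,p}$ (so each $\mathfrak{g}^{p,p}$ is defined over $\RR$) together with Corollary \ref{cor double prime} (which gives $\dim\mathfrak{g}^{p,p}=\dim\mathfrak{g}^{-p,-p}$), the vanishing extends to $\mathfrak{g}^{p,p}=0$ for every $p\neq 0$; and by the Proposition preceding Corollary \ref{cor double prime} it suffices to show $\mathfrak{g}^{p,q}=0$ whenever $p+q\neq 0$.

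So suppose for contradiction $X\in\mathfrak{g}^{p,q}$ is nonzero with $k:=p+q\neq 0$; by applying $\theta_{F^\bullet}$ if needed, I may take $k>0$. The crucial observation is the diagonal incidence of the bracket,
\[
[X,\bar X]\;\in\;[\mathfrak{g}^{p,q},\mathfrak{g}^{q,p}]\;\subset\;\mathfrak{g}^{p+q,p+q}=\mathfrak{g}^{k,k},
\]
which is zero by hypothesis; in particular $X$ and $\bar X$ commute. The task now is to leverage positivity to force $X=0$.

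For this I would use the positive-definite Hermitian form $\langle U,V\rangle:=-B(U,\theta_{F^\bullet}\overline V)$ on $\mathfrak{g}_\CC$, noting that the real element $e:=X+\bar X\in(Gr^W_k)_\RR$ is $\ad$-nilpotent, since $\ad e$ raises the $Y$-weight by $k>0$ and the $\ad Y$-spectrum on $\mathfrak{g}$ is bounded. Jacobson--Morosov embeds $e$ in an $\mathfrak{sl}_2$-triple $(e,h,f)\subset\mathfrak{g}_\RR$, and a Kostant--Sekiguchi-type normalization along the Cartan involution $\theta_{F^\bullet}$ allows $f$ to be taken as a positive multiple of $-\theta_{F^\bullet}e$, so that $h\propto-[e,\theta_{F^\bullet}e]\in(Gr_0^W)_\RR$ is nonzero. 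Expanding $[e,\theta_{F^\bullet}e]$ bigrading-piece by bigrading-piece in terms of $X,\bar X,\theta_{F^\bullet}X,\theta_{F^\bullet}\bar X$ and imposing $[h,e]=2e$ should then force a nonvanishing contribution in $\mathfrak{g}^{k,k}$---the analogue of the $[X_\alpha,X_{-\alpha}]$-type coroot summands that appear for a single real root---contradicting the hypothesis $\mathfrak{g}^{k,k}=0$.

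The main obstacle is precisely this final bigrading analysis: translating the abstract existence of the $\mathfrak{sl}_2$-triple into a concrete nonzero element in the diagonal piece $\mathfrak{g}^{k,k}$. A tempting shortcut---to compute $\langle[X,\bar X],[X,\bar X]\rangle$ directly via $B$-invariance and the Jacobi identity---fails because the naive expansion telescopes to zero; any successful argument must therefore couple the two cocharacters $\chi$ and $\bar\chi$ (equivalently, the $Y$- and $\phi$-gradings) in order to isolate a strictly positive quantity whose vanishing contradicts $X\neq 0$.
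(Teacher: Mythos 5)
Your reductions are fine (passing to the contrapositive, extending the vanishing to all diagonal pieces via Corollary \ref{cor double prime}, reducing to $\g^{p,q}=0$ for $p+q\neq 0$, and noting $[X,\bar X]\in\g^{k,k}$), but the crux of the argument --- extracting a nonzero element of $\g^{k,k}$ from the $\mathfrak{sl}_{2}$-triple --- fails for a concrete bookkeeping reason. With $X\in\g^{p,q}$, $k=p+q>0$, and $e=X+\bar X$, Corollary \ref{cor double prime} places $\theta_{F^{\bullet}}X\in\g^{-q,-p}$ and $\theta_{F^{\bullet}}\bar X\in\g^{-p,-q}$, so
\[
[e,\theta_{F^{\bullet}}e]\;\in\;\g^{p-q,q-p}\oplus\g^{0,0}\oplus\g^{q-p,p-q}\;\subset\;\bigoplus_{p'+q'=0}\g^{p',q'}.
\]
This element has \emph{no} component in $\g^{k,k}$ (that would force $p-q=q-p=k$, i.e.\ $k=0$), so its vanishing or nonvanishing there cannot contradict $\g^{k,k}=\{0\}$. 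The analogy with the coroot $[X_{\alpha},X_{-\alpha}]$ of a real root is misleading: a real root vector already sits in a \emph{diagonal} piece $\g^{m,m}$ (because real roots kill $\phi$), whereas your $X$ by assumption does not. A secondary problem is that the Kostant--Sekiguchi normalization $f=-\theta_{F^{\bullet}}e$ is only achieved after conjugating $e$ within its nilpotent orbit by an element of the maximal compact, which destroys the hypothesis $e=X+\bar X$ with $X\in\g^{p,q}$.

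The paper's proof is entirely different and goes through the structure theory of real Cartans rather than $\mathfrak{sl}_{2}$-theory: since $F^{\bullet}\in\mathfrak{Z}$, the Cartan $H$ with $F^{\bullet}=\mathscr{F}(H,\chi)$ is noncompact and $Y\in\mathfrak{a}_{F^{\bullet}}\setminus\{0\}$. Choosing $H$ of minimal positive real rank among those computing $F^{\bullet}$, some real root $\beta$ must satisfy $\beta(Y)\neq0$ --- otherwise an inverse Cayley transform in a real root yields a Cartan of smaller real rank still containing $Y$ and $\phi$, contradicting minimality --- and then $\beta(\phi)=0$ puts $X_{\beta}$ in $\g^{\frac{1}{2}\beta(Y),\frac{1}{2}\beta(Y)}$. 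If you want to rescue your approach you need, as you suspect in your last paragraph, a quantity that genuinely couples the $\chi$- and $\bar\chi$-gradings and is forced to be positive; the two natural candidates you propose, $[X,\bar X]$ and $[e,\theta_{F^{\bullet}}e]$, land in $\g^{k,k}$ and in the antidiagonal respectively, and neither delivers that positivity.
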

\begin{proof}
The idea is to use the basic fact that $G(\RR)$ has a compact Cartan.
Any noncompact Cartan is then an iterated Cayley transform of such
and so must have a real root.

Clearly $H$ is noncompact (i.e. $H\in\tilde{\xi}_{\RR}\backslash\xi_{\RR}$)
and $Y\in\mathfrak{a}_{F^{\bullet}}\backslash\{0\}$ (where $\mathfrak{h}_{\RR}=\mathfrak{a}_{F^{\bullet}}\oplus\mathfrak{t}_{F^{\bullet}}$).
Since the $\dim(\mathfrak{g}^{p.q})$ depend only on $F^{\bullet}$,
we may take $H$ to be of minimal (positive) real rank. Suppose $\beta(Y)=0$
$\forall\mbox{\ensuremath{\beta\in\Delta}}_{\RR}(\neq\emptyset)$,
and let $\hat{H}$ be the (inverse) Cayley transform of $H$ in some
$\beta_{0}\in\Delta_{\RR}$. Then $\hat{\mathfrak{a}}_{F^{\bullet}}=\ker(\beta_{0}|_{\mathfrak{a}_{F^{\bullet}}})\ni Y$
and $\hat{\mathfrak{t}}_{F^{\bullet}}\supset\mathfrak{t}_{F^{\bullet}}\ni i\phi$
$\implies$ $\vft_{F^{\bullet}}$ still factors through $\hat{H}$
$\implies$$F^{\bullet}=\mathscr{F}(\hat{H},\hat{\chi})$, in contradiction
to the presumed minimality of the real rank of $H$.

So $\beta(Y)\neq0$ for some $\beta\in\Delta_{\RR}$, while $\phi\in\mathfrak{t}_{F^{\b},\CC}$
$\implies$ $\mbox{\ensuremath{\beta}}(\phi)=0$, whereupon
\[
\Ad(\vft_{\fb}(w,z))X_{\beta}=\Ad(\chi_{Y}(w))X_{\beta}=w^{\beta(Y)}X_{\beta}
\]
$\implies X_{\beta}\in\mathfrak{g}^{\frac{1}{2}\beta(Y),\frac{1}{2}\beta(Y)}$.
\end{proof}
Because even the real rank ($=\dim_{\RR}(\mathfrak{a}_{\fb})$) of
$\mathfrak{h}_{\RR}$ may not be unique in Proposition \ref{prop '}
(cf. $\S6$), the question arises as to whether some choices are better
than others. At least in one fairly general setting, we shall now
see that this is so. Consider the (well-defined) real parabolic
\[
\mathcal{Q}:=Q_{\tilde{W}}\leq G_{\RR}
\]
defined by \eqref{eqn *sharp}, with Lie algebra $\mathfrak{q}=\tilde{W}_{0}\mathfrak{g}_{\RR}$.
This has unipotent radical $\mathfrak{n}:=\tilde{W}_{-1}\mathfrak{g}_{\RR}$
and, with the choice of $\mathfrak{h}$ and $\Theta_{F^{\bullet}}$,
the natural subalgebras $\tilde{\mathfrak{m}}:=\left(\oplus_{p+q=0}\mathfrak{g}^{p,q}\right)\cap\mathfrak{g}_{\RR}$
and (noting $\mathfrak{a}_{\fb}\subset\mathfrak{g}_{\RR}^{0,0}\subset\tilde{\mathfrak{m}}$)
$\mathfrak{a}:=\ker\{\ad:\mathfrak{a}_{\fb}\to End(\tilde{\mathfrak{m}})\}.$
Let $\mathfrak{m}$ be a direct-sum complement to $\mathfrak{a}$
in $\tilde{\mathfrak{m}}$, and $\M,\A,\N\leq G_{\RR}$ the subgroups
corresponding to $\lm,\la,\n$. This gives a \emph{Hodge-theoretically
defined} Langlands decomposition \begin{equation}\label{eqn **}\Q = \M \A \N
\end{equation}of the parabolic. Write $\L:=\Q/\N$, $\l:=Lie(\L)=Gr_{0}^{\tilde{W}}\g$.
\begin{defn}
$\Q$ (resp. $\lq$, $\fb$) is \emph{cuspidal} $\iff$ $\L/Z(\L)$
has a compact Cartan subgroup.
\end{defn}
Assume that $\Q$ is cuspidal; then it admits a Langlands decomposition
in which the reductive group has a compact Cartan. We claim that (when
true) this may be demonstrated Hodge-theoretically, i.e. via \eqref{eqn **}.
\begin{prop}
\label{prop HT cuspidality}In this case, we can choose $\Theta_{\fb}$
and $H$ ($\Theta_{\fb}$-stable) so that $\la=\la_{\fb}$, making
$\lt_{F^{\b}}\subset\lm$ a Cartan subalgebra. The choices of $H$
which accomplish this are of minimal real rank.\end{prop}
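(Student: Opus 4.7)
\emph{Plan.} Starting from any Cartan involution $\Theta_{F^{\bullet}}$ and $\Theta_{F^{\bullet}}$-stable Cartan $H$ provided by Proposition \ref{prop '}, I would keep $\Theta_{F^{\bullet}}$ fixed and modify $H$ inside $\tilde{\mathfrak{m}}$. By Corollary \ref{cor double prime}, $\theta_{F^{\bullet}}(\mathfrak{g}^{p,q}) = \mathfrak{g}^{-q,-p}$ preserves $\tilde{\mathfrak{m}}$ and swaps $\mathfrak{n}$ with the opposite nilradical, so $\theta_{F^{\bullet}}$ restricts to a Cartan involution of the reductive Lie algebra $\tilde{\mathfrak{m}}$. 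Writing $Z(\tilde{\mathfrak{m}}) = Z_{c} \oplus Z_{n}$ for the corresponding $\theta_{F^{\bullet}}$-decomposition, one has $Z_{n} = \mathfrak{a}$ immediately from the definition.

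By the cuspidality hypothesis, $[\tilde{\mathfrak{m}},\tilde{\mathfrak{m}}]$ admits a compact ($\theta_{F^{\bullet}}$-fixed) Cartan subalgebra. The critical step is to pick one, call it $\mathfrak{t}_{\mathfrak{s}}$, that also contains the projection of $\phi$ onto $[\tilde{\mathfrak{m}},\tilde{\mathfrak{m}}]$: since $\phi \in i\mathfrak{t}_{F^{\bullet}} \subseteq \mathfrak{k}$ by Proposition \ref{prop '}(ii), this projection is a compact element of the semisimple algebra $[\tilde{\mathfrak{m}},\tilde{\mathfrak{m}}]$ and fits into a compact Cartan by the standard extension argument for compact tori. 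Set $\mathfrak{h}'_{\mathbb{R}} := \mathfrak{t}_{\mathfrak{s}} \oplus Z(\tilde{\mathfrak{m}})$. This is a $\theta_{F^{\bullet}}$-stable Cartan of $\tilde{\mathfrak{m}}$, and hence of $\mathfrak{g}_{\mathbb{R}}$ since $\tilde{\mathfrak{m}}$ is a Levi; and by construction both summands commute with $\phi$ (the center trivially, and $\mathfrak{t}_{\mathfrak{s}}$ by its containment of $\phi$'s semisimple projection), so $\mathfrak{h}'_{\mathbb{R}}$ lies in the $0$-weight space of $\vft_{F^{\bullet}}|_{S^{1}}$ inside $\tilde{\mathfrak{m}}_{\mathbb{C}} = \oplus_{p}\mathfrak{g}^{p,-p}$, namely $\mathfrak{g}^{0,0}$. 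Combined with \eqref{eqn *!}, this yields a $\chi'$ with $F^{\bullet} = \mathscr{F}(H'_{\mathbb{C}}, \chi')^{\bullet}$. The $\theta_{F^{\bullet}}$-$(-1)$-eigenspace of $\mathfrak{h}'_{\mathbb{R}}$ lies entirely in $Z(\tilde{\mathfrak{m}})$ and equals $Z_{n} = \mathfrak{a}$, giving $\mathfrak{a}_{F^{\bullet}} = \mathfrak{a}$; while $\mathfrak{t}_{F^{\bullet}} = \mathfrak{t}_{\mathfrak{s}} \oplus Z_{c}$ sits in $\mathfrak{m}$ as a Cartan subalgebra.

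For the minimality assertion, any $\Theta_{F^{\bullet}}$-stable Cartan $H$ with $F^{\bullet} = \mathscr{F}(H,\chi)^{\bullet}$ has $\mathfrak{h}_{\mathbb{R}} \subseteq \mathfrak{g}^{0,0} \cap \mathfrak{g}_{\mathbb{R}} \subseteq \tilde{\mathfrak{m}}$ in its own bigrading and Levi, so $\mathfrak{h}_{\mathbb{R}}$ is a Cartan of $\tilde{\mathfrak{m}}$ and must contain $Z(\tilde{\mathfrak{m}})$. Consequently $\mathfrak{a}_{F^{\bullet}} \supseteq Z_{n} = \mathfrak{a}$ unconditionally, with equality --- equivalently, $\mathfrak{h}_{\mathbb{R}} \cap [\tilde{\mathfrak{m}},\tilde{\mathfrak{m}}]$ being a compact Cartan of the semisimple part --- characterizing the $H$'s of minimal real rank, a minimum achievable under cuspidality by the construction above.

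The main obstacle is arranging compatibility of the replacement Cartan with $F^{\bullet}$, i.e.\ ensuring $\mathfrak{h}'_{\mathbb{R}} \subseteq \mathfrak{g}^{0,0}$; this is where the compactness $\phi \in \mathfrak{k}$ enters, allowing the projection of $\phi$ to $[\tilde{\mathfrak{m}},\tilde{\mathfrak{m}}]$ to be absorbed into the compact Cartan $\mathfrak{t}_{\mathfrak{s}}$ supplied by cuspidality, and thereby keeping the new Cartan in the $0$-weight space of $\vft_{F^{\bullet}}|_{S^{1}}$.
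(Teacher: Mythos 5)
Your argument is correct, but it takes a genuinely different route from the paper's. The paper proceeds by descending induction on the real rank: if $\la\subsetneq\la_{\fb}$, cuspidality of $\lq$ supplies a real root $\beta$ of $H$ whose root vector necessarily lies in $\g_{\RR}^{0,0}$ (a real root annihilates $\lt_{\fb,\CC}\ni\phi$ and, living in $Gr_{0}^{\tilde{W}}\g$, also annihilates $Y$), so the inverse Cayley transform $\mathbf{d}_{\beta}$ commutes with $\vft_{\fb}$, keeps $\chi$ as a cocharacter of the new Cartan, and lowers the real rank by one; one iterates until the image of $H$ in $\L/Z(\L)$ is compact. You instead construct the minimal Cartan in a single step: restrict $\theta_{\fb}$ to the Levi $\tilde{\lm}$, use cuspidality to embed the elliptic element $-i\phi$ (projected to $[\tilde{\lm},\tilde{\lm}]$) into a compact Cartan $\mathfrak{t}_{\mathfrak{s}}$ of the semisimple part, and adjoin $Z(\tilde{\lm})$. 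Both are valid. Your version avoids the induction, isolates exactly where cuspidality and the compactness of $\phi$ enter, and yields directly the characterization $\la_{\fb}=Z_{n}\,(=\la)$ of the minimal-rank choices; the paper's version stays inside the Cayley-transform calculus of $\S3.3$--$\S4$ and exhibits the minimal Cartan as an iterated inverse Cayley transform of the given one, which is what the orbit bookkeeping elsewhere uses. One step you should spell out: to conclude $F^{\b}=\mathscr{F}(H'_{\CC},\chi')^{\b}$ you need $\xi=Y+\phi\in\lh'_{\CC}$, not merely $\lh'_{\RR}\subset\g^{0,0}$. This does hold --- $\ad Y$ acts by $p+q=0$ on $\tilde{\lm}$, so $Y\in\la=Z_{n}\subset Z(\tilde{\lm})\subset\lh'_{\RR}$, while $\phi\in\lh'_{\CC}$ by your construction --- or, alternatively, note that $\ad\xi$ annihilates $\g^{0,0}\supseteq\lh'_{\CC}$ and $\lh'$ is self-centralizing, so $\xi\in\lh'_{\CC}$ automatically.
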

\begin{proof}
Suppose we have $\Theta_{\fb}$, $H$, $\chi$, etc. with $\la\subsetneq\la_{\fb}$,
and note that $\lh_{\RR}\subset\g_{\RR}^{0,0}$. If $\L/Z(\L)$ has
a compact Cartan, then the image $\hat{H}$ of $H$ in it may be (inverse)
Cayley transformed into one. This requires the presence of a real
root vector $X_{\beta}\in\l/\mathfrak{z}(\l)$, which can only lie
in $\left(\l/\mathfrak{z}(\l)\right)^{0,0}=\g_{\RR}^{0,0}/\mathfrak{z}(\l)$.
The preimage $H'$ of $\hat{H}':=\Psi_{\mathbf{d}_{\beta}}(\hat{H})$
has $\lh_{\RR}'\subset\g_{\RR}^{0,0}$ and real rank one less than
$H$. Evidently $\mathbf{d}_{\beta}$ commutes with $\vft_{\fb}$,
and so we still have $\chi\in\mathrm{X}_{*}(H_{\CC}')$. Continue
until the image in $\L/Z(\L)$ is compact.
\end{proof}

\section{Connected real orbits and naive boundary strata}

Continuing to fix $\fb_{0}$, $(H_{0},\chi_{0})$, $\Theta$, and
$\tilde{\xi}_{\RR}$, we now turn to the enumeration and analysis
of the $G(\RR)^{\circ}$-orbits in $\check{D}$.

\subsection{Basic results on orbits}
\begin{prop}
\label{prop codim}Given $F^{\bullet}=\mathscr{F}(H,\chi)\in\check{D}$
with associated bigrading \eqref{eqn !!}, the real codimension of
$\mathcal{O}_{\fb}:=G(\RR)^{\circ}.\fb\subset\check{D}$ is
\[
c_{\fb}:=\sum_{(p,q)\in(\ZZ_{0})^{\times2}}\dim_{\CC}(\g^{p,q}).
\]
\end{prop}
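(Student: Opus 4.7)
The plan is to compute the real codimension as a difference $\dim_{\RR}\check{D} - \dim_{\RR}\mathcal{O}_{F^{\bullet}}$, with both quantities expressed as sums of the $\dim_{\CC}\g^{p,q}$, and then use the two symmetries of the bigrading recorded in Lemma \ref{bigrading lemma} (namely $\dim\g^{p,q}=\dim\g^{q,p}$ from \emph{(iii)} and $\dim\g^{p,q}=\dim\g^{-p,-q}$ from the nondegenerate pairing \emph{(v)}) to collapse what remains to $\sum_{p,q>0}\dim_{\CC}\g^{p,q}$.

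For the first term, since $\check{D}$ is a complex manifold of complex dimension $\dim_{\CC}\mathfrak{g}-\dim_{\CC}\mathfrak{q}_{F^{\bullet}}$, and $\mathfrak{q}_{F^{\bullet}}=F^{0}\mathfrak{g}=\oplus_{p\geq 0}\g^{p,q}$ (this was the first remark after Lemma \ref{bigrading lemma}), we have $\dim_{\RR}\check{D}=2\sum_{p<0}\dim_{\CC}\g^{p,q}$. For the orbit, identify $\mathcal{O}_{F^{\bullet}}\cong G(\RR)^{\circ}/(Q_{F^{\bullet}}\cap\overline{Q_{F^{\bullet}}}\cap G(\RR)^{\circ})$ as in \S4.1. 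The stabilizer is a real form of the complex group with Lie algebra
\[
F^{0}\mathfrak{g}\cap\overline{F^{0}\mathfrak{g}}=\bigoplus_{p\geq 0,\,q\geq 0}\g^{p,q}
\]
(using property \emph{(iii)} of Lemma \ref{bigrading lemma} to compute the conjugate), so its real dimension equals the complex dimension of this intersection. Since $\dim_{\RR}G(\RR)^{\circ}=\dim_{\CC}\mathfrak{g}=\sum_{(p,q)}\dim_{\CC}\g^{p,q}$, we get
\[
\dim_{\RR}\mathcal{O}_{F^{\bullet}}=\sum_{\{p<0\}\cup\{q<0\}}\dim_{\CC}\g^{p,q}.
\]

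Subtracting and organizing the resulting sum by the signs of $(p,q)$, the terms indexed by the half-axes $\{p<0,q=0\}$, $\{p>0,q=0\}$, $\{p=0,q<0\}$, $\{p=0,q>0\}$ and by the off-diagonal quadrants $\{p>0,q<0\}$ and $\{p<0,q>0\}$ all cancel in pairs thanks to the two symmetries $\dim\g^{p,q}=\dim\g^{-p,-q}=\dim\g^{q,p}$; the only surviving contribution is the quadrant $\{p<0,q<0\}$, which by $\dim\g^{p,q}=\dim\g^{-p,-q}$ equals $\sum_{p,q>0}\dim_{\CC}\g^{p,q}$, as claimed.

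This is really a bookkeeping computation, so there is no serious obstacle; the only place to be careful is confirming that the stabilizer's Lie algebra is genuinely a real form of $F^{0}\mathfrak{g}\cap\overline{F^{0}\mathfrak{g}}$ (so that its real dimension matches the complex dimension of the latter), which follows since the real structure on $\mathfrak{g}$ preserves this intersection by property \emph{(iii)}, and that the two symmetries of the bigrading apply to \emph{every} $(p,q)$ including those with a zero coordinate.
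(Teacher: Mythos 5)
Your proposal is correct and follows essentially the same route as the paper: both compute $c_{\fb}=\dim_{\RR}\check{D}-\dim_{\RR}\mathcal{O}_{\fb}$ by identifying $\dim_{\RR}\mathcal{O}_{\fb}=\dim_{\CC}\g_{\CC}-\dim_{\CC}(\lq_{\fb}\cap\lq_{\overline{\fb}})$ (the stabilizer's Lie algebra being a real form of $F^{0}\cap\overline{F^{0}}=\oplus_{p,q\geq0}\g^{p,q}$) and $\dim_{\RR}\check{D}=2\sum_{p<0}\dim_{\CC}\g^{p,q}$, then collapsing the difference via the symmetries $\dim\g^{p,q}=\dim\g^{q,p}=\dim\g^{-p,-q}$. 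The only difference is bookkeeping: you cancel the half-axis and off-diagonal-quadrant terms explicitly, whereas the paper absorbs them by rewriting $2\sum_{p<0}$ as $\sum_{p\neq0}$ first.
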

\begin{proof}
Recalling $\lq_{\fb}=\oplus_{(p,q)\in\ZZ_{\geq0}\times\ZZ}\g^{p,q}$
and $T_{\fb}\check{D}\cong\g_{\CC}/\lq_{\fb}$, we have
\[
\begin{array}{ccc}
\dim_{\RR}T_{\fb}\mathcal{O} & = & \dim_{\RR}\left(\g_{\RR}/\{\lq_{\fb}\cap\g_{\RR}\}\right)\\
 & = & \dim_{\CC}\g_{\CC}-\dim_{\CC}\left(\lq_{\fb}\cap\lq_{\overline{\fb}}\right)
\end{array}
\]
while
\[
\dim_{\RR}T_{\fb}\check{D}=2\dim_{\CC}\left(\g_{\CC}/\lq_{\fb}\right)
\]
\[
=2\sum_{(p,q)\in\ZZ_{<0}\times\ZZ}\dim_{\CC}\g^{p,q}
\]
\[
=\sum_{(p,q)\in(\ZZ\backslash\{0\})\times\ZZ}\dim_{\CC}\g^{p,q}
\]
\[
=\dim_{\CC}\g_{\CC}-\dim_{\CC}\left(\lq_{\fb}\cap\lq_{\overline{\fb}}\right)+\dim_{\CC}\left(\bigoplus_{p,q>0}\g^{p,q}\right).
\]
\end{proof}
\begin{cor}
$\mathcal{O}_{\fb}\subset\check{D}$ is open $\iff$ $\fb\in\check{D}\backslash\mathfrak{Z}$.\end{cor}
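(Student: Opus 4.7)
The plan is to reduce both directions to the codimension formula $c_{F^{\bullet}} = \sum_{p,q>0} \dim_{\CC}(\g^{p,q})$ of Proposition \ref{prop codim}: an orbit in $\check{D}$ is open precisely when its real codimension vanishes, so the task reduces to showing that this quadrant sum is zero if and only if $F^{\bullet} \notin \mathfrak{Z}$. Observe that because the $h^{p,q}_{\mathcal{O}}$ depend only on the orbit (by the Corollary following Lemma \ref{bigrading lemma}), it is harmless to use any $H$ from Proposition \ref{prop '} in computing the bigrading.

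For the implication $F^{\bullet} \notin \mathfrak{Z} \Rightarrow \mathcal{O}_{F^{\bullet}}$ open, I would invoke the Proposition at the end of $\S3.1$ characterizing Hodge flags as those with trivial $W_{\bullet}$. Triviality of $W_{\bullet}$ forces $\g^{p,q} = 0$ whenever $p+q \neq 0$, which in particular kills every piece in the strictly-positive quadrant (since $p,q > 0$ implies $p+q > 0$). Hence $c_{F^{\bullet}} = 0$, and the orbit is open.

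For the converse I would contrapose and feed $F^{\bullet} \in \mathfrak{Z}$ directly into Corollary \ref{cor prime}, which supplies some $p > 0$ with $\g^{p,p} \neq 0$. Since $(p,p)$ lies in the quadrant being summed, this single term already makes $c_{F^{\bullet}} > 0$, obstructing openness. The proof is thus a short bookkeeping assembly of the codimension formula, the Hodge-flag criterion, and Corollary \ref{cor prime}. I do not anticipate any real obstacle at this stage: the substantive work has already been done in proving Corollary \ref{cor prime} via the minimal-real-rank / inverse Cayley transform argument, and here one is merely harvesting its consequence.
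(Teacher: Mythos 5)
Your proof is correct and follows essentially the same route as the paper: the paper likewise deduces openness of $\mathcal{O}_{\fb}$ for $\fb\notin\mathfrak{Z}$ from the fact that only the $\g^{p,-p}$ are nonzero (so $c_{\fb}=0$ by Proposition \ref{prop codim}), and handles $\fb\in\mathfrak{Z}$ by citing Corollary \ref{cor prime} to produce a nonzero $\g^{p,p}$ with $p>0$. Your explicit appeal to the Hodge-flag criterion ($W_{\bullet}$ trivial) just makes transparent a step the paper leaves implicit.
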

\begin{proof}
If $\fb\in\mathfrak{Z}$, then by Corollary \ref{cor prime} $c_{\fb}\neq0$;
while $\fb\in\mathfrak{Z}$ $\implies$ only the $\{\g^{p,-p}\}$
are nontrivial $\implies$ $c_{\fb}=0$.
\end{proof}
In Wolf's study \cite{Wo} of complex flag manifolds, it is shown
that $\check{D}$ contains a unique closed orbit $\mathcal{O}_{\mathrm{c}}$
(of real codimension $c_{\mathrm{c}}$); this is in the closure of
all the other $G(\RR)^{\circ}$-orbits, and is acted upon transitively
by $K$. One has $c_{\fb}<c_{\mathrm{c}}$ for any $\fb\notin\mathcal{O}_{\mathrm{c}}$;
and \begin{equation}\label{eqn p. 19 *}c_{\mathrm{c}}\leq \dim_{\CC}\check{D},
\end{equation}with equality if and only if $\mathcal{O}_{\mathrm{c}}$ contains
a \emph{real} flag. We shall say that a flag is of \emph{Hodge-Tate
type} if its $\dim(\g^{p,q})$ are zero for $p\neq q$.
\begin{cor}
(i) Equality holds in \eqref{eqn p. 19 *} if and only if $\check{D}$
contains a Hodge-Tate flag, in which case $\mathcal{O}_{\mathrm{c}}$
is the set of such flags in $\check{D}$.

(ii) In particular, this happens whenever $G$ is $\RR$-split.\end{cor}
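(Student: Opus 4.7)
The plan is to exploit the equivalence \emph{real flag $\Leftrightarrow$ Hodge-Tate flag} at the level of the bigrading, combined with the codimension formula of Proposition \ref{prop codim} and the orbit-invariance of the Hodge numbers $h_{\mathcal{O}}^{p,q} = \dim_{\CC}\g^{p,q}$.

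First I would observe that for any $\fb = \mathscr{F}(H,\chi) \in \check{D}$, properties (ii) and (iii) of Lemma \ref{bigrading lemma} give $F^a = \bigoplus_{p \geq a}\g^{p,q}$ and $\overline{F^a} = \bigoplus_{q \geq a}\g^{p,q}$, so $F^a = \overline{F^a}$ for all $a$ if and only if $\g^{p,q} = 0$ whenever $p \neq q$; that is, $\fb$ is a real flag precisely when it is of Hodge-Tate type. For such an $\fb$, property (v) further gives $\dim_{\CC}\g^{p,p} = \dim_{\CC}\g^{-p,-p}$, and summing yields
\[
c_\fb = \sum_{p > 0}\dim_{\CC}\g^{p,p} = \sum_{p < 0}\dim_{\CC}\g^{p,p} = \dim_{\CC}(\g_{\CC}/\lq_\fb) = \dim_{\CC}\check{D}.
\]
Since $c_\fb \leq c_{\mathrm{c}} \leq \dim_{\CC}\check{D}$, with strict inequality $c_\fb < c_{\mathrm{c}}$ off the closed orbit (Wolf), every Hodge-Tate flag must lie in $\mathcal{O}_{\mathrm{c}}$, and equality in \eqref{eqn p. 19 *} holds.

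For the reverse direction of (i), equality in \eqref{eqn p. 19 *} produces (by Wolf) a real flag in $\mathcal{O}_{\mathrm{c}}$, which by the first observation is Hodge-Tate. That $\mathcal{O}_{\mathrm{c}}$ coincides with the set of Hodge-Tate flags in $\check{D}$ then follows: one containment from the orbit-invariance of the $h_{\mathcal{O}}^{p,q}$ (Corollary following Lemma \ref{bigrading lemma}), the other from the codimension argument just given.

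For part (ii), take the split Cartan $H_n \in \tilde{\xi}_{\RR}^{\theta}$. Then $\lt_n = 0$, so the inclusion $\Lambda_n^{*} \subset i\lt_n \oplus \la_n$ collapses to $\Lambda_n^{*} \subset \la_n \subset \g_{\RR}$; in particular $\xi_n = \Ad(\underline{c}(n))\phi_0$ is real, so $\bar{\chi}_n = \chi_n$. For $\fb = \mathscr{F}(H_n,\chi_n)$, which lies in $\check{D}$ by Theorem \ref{thm surj}, the bigrading condition $X_\alpha \in \g^{p,q}$ reads $\alpha(\xi_n) = 2p = \alpha(\bar{\xi}_n) = 2q$, forcing $p = q$. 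So $\fb$ is Hodge-Tate and part (i) applies. The only genuine obstacle is spotting the real $\Leftrightarrow$ Hodge-Tate dictionary at the outset; everything else is routine bookkeeping with Proposition \ref{prop codim} and Wolf's strict codimension inequality.
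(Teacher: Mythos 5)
Your proof is correct and follows essentially the same route as the paper: the real $\Leftrightarrow$ Hodge--Tate dictionary, the computation $c_{\fb}=\dim_{\CC}\check{D}$ for a Hodge--Tate flag combined with Wolf's strict inequality off the closed orbit, and the split Cartan $H_{n}$ for (ii). The only (cosmetic) difference is that you derive the symmetry $\dim\g^{p,p}=\dim\g^{-p,-p}$ from the Killing-form duality in Lemma \ref{bigrading lemma}(v), whereas the paper compares $\dim Gr_{\fb}^{p}$ with $\dim Gr_{\fb_{0}}^{\pm p}$ and uses the weight-zero Hodge symmetry at the base point.
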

\begin{proof}
\emph{(i)} Real flags ($\fb=\overline{\fb}$) are obviously Hodge-Tate,
and the $\{\dim(\g^{p,q})\}$ are constant on orbits. Conversely,
if $\fb$ is Hodge-Tate, then $\dim Gr_{\fb}^{p}=\dim Gr_{\fb_{0}}^{p}=\dim Gr_{\fb_{0}}^{-p}$
$\implies$ 
\[
\dim\check{D}=\sum_{p>0}\dim(\g_{\fb_{0}}^{p,-p})=\sum_{p>0}\dim(\g_{\fb}^{p,p})=c_{\fb}.
\]

\emph{(ii)} $G_{\RR}$ split $\implies$ $H_{n}$ split $\implies$
$\mathrm{X}_{*}(H_{n}(\CC))=\mathrm{X}_{*}(H_{n}(\RR))$ $\implies$
$\mathscr{F}(H_{n},\chi_{n})$ is real.
\end{proof}
Before proceeding to the heart of the section, we can say something
about the codimension-1 orbits as well:
\begin{cor}
\label{cor codim 1 p.19b}If $c_{\fb}=1$, then $\g^{1,1}$ is spanned
by a single real root vector, and the other $\{\g^{p,q}\}$ with $p,q>0$
are zero.\end{cor}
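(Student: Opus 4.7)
My plan is to read off the bigrading structure directly from the codimension formula in Proposition \ref{prop codim}. First I would note that
\[
c_{\fb}=\sum_{p,q>0}\dim_{\CC}(\g^{p,q})=1
\]
forces a \emph{unique} pair $(p_{0},q_{0})$ with both coordinates positive to satisfy $\g^{p_{0},q_{0}}\ne 0$, with $\dim_{\CC}\g^{p_{0},q_{0}}=1$. Next, the reality relation $\overline{\g^{p,q}}=\g^{q,p}$ from Lemma \ref{bigrading lemma}(iii) shows that if $p_{0}\ne q_{0}$, then $\g^{q_{0},p_{0}}$ is also nonzero and distinct, so $c_{\fb}\ge 2$; hence $p_{0}=q_{0}=:p$.

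By Lemma \ref{bigrading lemma}(i), the one-dimensional space $\g^{p,p}$ is a sum of root spaces of $(\g,\lh)$, and so equals $\CC\langle X_{\alpha}\rangle$ for a single root $\alpha\in\Delta$. Since $\g^{p,p}$ is self-conjugate, it contains $\overline{X_{\alpha}}$, which is a nonzero multiple of $X_{\bar\alpha}$; by one-dimensionality, $\bar\alpha=\alpha$. Under the decomposition $\lh_{\RR}=\lt_{\fb}\oplus\la_{\fb}$ supplied by Proposition \ref{prop '}(ii), the condition $\bar\alpha=\alpha$ means $\alpha|_{\lt_{\fb,\CC}}=0$, so $X_{\alpha}$ is a real root vector.

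The remaining assertion --- that $p=1$ --- is the main obstacle. I would attack it via Proposition \ref{prop '}: write $(H,\chi)=(\Psi_{g}(H_{j}),\Psi_{g}(\chi_{j}^{[w]}))$ for appropriate $g\in G(\RR)^{\circ}$, $j$, and $w$. Pulling $\alpha$ back through $\Psi_{g\,\underline{c}(j)}$ produces a noncompact imaginary root $\gamma\in\Delta_{n}(T_{0})$ with $\pi_{\chi_{0}}(\tilde{w}^{-1}\gamma)=p$. The hypothesis $\pi_{\chi_{0}}(\Delta_{n})\subset 1+2\ZZ$ then forces $p$ to be a positive odd integer, leaving only the exclusion of $p\ge 3$. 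I expect this to follow by analyzing the $\gamma$-root-string together with the $\sl$-triple $\{X_{\gamma},[X_{\gamma},X_{-\gamma}],X_{-\gamma}\}$: for $p\ge 3$, intermediate complex roots along the string would, after the Cayley transform, produce additional positive-quadrant entries of the bigrading, contradicting $c_{\fb}=1$.
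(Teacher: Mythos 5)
Your first two paragraphs are sound and essentially match the paper: the codimension formula of Proposition \ref{prop codim} together with $\dim\g^{q,p}=\dim\g^{p,q}$ isolates a single one\-/dimensional $\g^{p,p}$, and self-conjugacy of a one-dimensional sum of root spaces does force it to be spanned by a real root vector (a detail the paper leaves implicit). The gap is in your third paragraph, the proof that $p=1$. First, the assertion that pulling the real root $\alpha$ back through $\Psi_{g\,\underline{c}(j)}$ yields a \emph{noncompact} imaginary root of $T_{0}$ is unjustified: a real root of $H_{j}$ need only lie in the span of the roots in which the Cayley transforms were performed, and such a root can be \emph{compact} imaginary for $T_{0}$ (for $Sp_{4}$, the compact root $e_{1}-e_{2}$ becomes real after Cayley transforms in $2e_{1}$ and $2e_{2}$); moreover the twist by $\tilde{w}\in W_{\CC}(H_{0})$ does not preserve the compact/noncompact dichotomy. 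So the parity conclusion ``$p$ is odd'' does not follow. (One could try to deduce noncompactness from Proposition \ref{prop p. 25} or Corollary \ref{cor p. 27}, but those come later and their proofs invoke the present corollary via Theorem \ref{thm p. 20}(iii), so that route is circular.) Second, the exclusion of $p\geq3$ is only a hope: to contradict $c_{\fb}=1$ you must produce a root $\beta\neq\pm\alpha$ with \emph{both} $\pi_{\chi}(\beta)>0$ and $\pi_{\bar{\chi}}(\beta)>0$, and a root string or chain of partial sums of simple roots controls only one grading at a time --- decomposing $\alpha=\beta+\gamma$ constrains $\pi_{\chi}(\beta)+\pi_{\chi}(\gamma)$ but says nothing about $\pi_{\bar{\chi}}(\beta)$.

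The paper's proof of $p_{0}=1$ is of a different, non-combinatorial nature, and you will likely need its idea: since $\mathcal{O}_{\fb}$ is a proper (codimension-one) submanifold whose tangent distribution is involutive, the standing bracket-generating hypothesis forces $\mathcal{W}_{\fb}\not\subset T_{\fb}\mathcal{O}_{\fb}$, and a real-dimension count gives
\[
\dim_{\RR}\left(\mathcal{W}_{\fb}/\{\mathcal{W}_{\fb}\cap T_{\fb}\mathcal{O}_{\fb}\}\right)=\dim_{\CC}(\g^{-1,-1})+2\sum_{q<-1}\dim_{\CC}(\g^{-1,q})=\dim_{\CC}(\g^{1,1}),
\]
the last equality using $\dim\g^{p,q}=\dim\g^{-p,-q}$ and the vanishing of $\g^{1,q}$ for $q>1$. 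Positivity of the left-hand side then yields $\g^{1,1}\neq0$, i.e.\ $p_{0}=1$.
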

\begin{proof}
By Proposition \ref{prop codim}, only one $\g^{p_{0},q_{0}}$ with
$p_{0},q_{0}>0$ can be nonzero, and since $\dim\g^{q,p}=\dim\g^{p,q}$
we must have $q_{0}=p_{0}$. Now our standing bracket-generating assumption
says that 
\[
\mathcal{W}_{\fb}+[\mathcal{W}_{\fb},\mathcal{W}_{\fb}]+[\mathcal{W}_{\fb},[\mathcal{W}_{\fb},\mathcal{W}_{\fb}]]+\cdots=T_{\fb}\check{D},
\]
 whilst by Frobenius
\[
T_{\fb}\mathcal{O}_{\fb}+[T_{\fb}\mathcal{O}_{\fb},T_{\fb}\mathcal{O}_{\fb}]+\cdots=T_{\fb}\mathcal{O}_{\fb},
\]
and so $\mathcal{W}_{\fb}/\left\{ \mathcal{W}_{\fb}\cap T_{\fb}\mathcal{O}_{\fb}\right\} \neq\{0\}$.
Taking real dimensions,
\[
\begin{array}{ccc}
0 & < & \dim_{\CC}(\g^{-1,-1})+2\sum_{q<-1}\dim_{\CC}(\g^{-1,q})\\
 & = & \dim_{\CC}(\g^{1,1})+2\sum_{q>1}\dim_{\CC}(\g^{1,q})\\
 & = & \dim_{\CC}(\mathfrak{g}^{1,1})
\end{array}
\]
$\implies$ $p_{0}=1$.
\end{proof}

\subsection{Orbit inventory}

Now let $Q_{j}\leq G(\CC)$ denote the parabolic stabilizing $\fb_{j}:=\mathscr{F}(H_{j},\chi_{j})^{\bullet}.$
The Weyl subgroup
\[
\mathrm{W}_{j}:=\frac{N(Q_{j},H_{j}(\CC))}{H_{j}(\CC)}=\text{Stab}\chi_{j}\leq W_{\CC}(H_{j})
\]
is generated by the reflections in the roots belonging to $\ker(\xi_{j})$.

Consider the finite set
\[
\tilde{\Xi}_{\RR}^{\theta}:=\tilde{\pi}_{\RR}^{-1}\left(\tilde{\xi}_{\RR}^{\theta}\right)=\left\{ (H_{j},\chi_{j}^{[w]})\,\left|\, j\in\{0,\ldots,n\},\, w\in W_{\CC}(H_{j})\right.\right\} 
\]
of distinguished $\theta$-stable-Cartan/co-character pairs (and its
subset $\Xi_{\RR}^{\theta}:=\tilde{\pi}_{\RR}^{-1}(\{H_{0}\})$),
where $\chi_{j}^{[w]}(z)=\Psi_{w}(\chi_{j}(z))$. Let $\overline{\tilde{\Xi}_{\RR}^{\theta}}$
(resp. $\overline{\Xi_{\RR}^{\theta}}$) be the set of equivalence
classes modulo the relation
\[
(H_{j},\chi_{j}^{[w]})\sim(H_{k},\chi_{k}^{[w']})\;\iff\; j=k\text{ and }w'\in W_{\RR}^{\circ}(H_{j}).w.\mathrm{W}_{j},
\]
and $\mathcal{O}_{\RR}^{G}(\check{D})$ denote the set of $G(\RR)^{\circ}$-orbits.
Writing $W_{\RR}^{\circ}(H_{j}).w.\mathrm{W}_{j}=:\{w\}$ for the
double cosets, we introduce the orbit map
\[
\begin{array}{ccccc}
\mathbf{o}: & \overline{\tilde{\Xi}_{\RR}^{\theta}} & \to & \mathcal{O}_{\RR}^{G}(\check{D})\\
 & (H_{j},\chi_{j}^{\{w\}}) & \mapsto & \mathcal{O}_{\mathscr{F}(H_{j},\chi_{j}^{[w]})^{\b}} & =:\mathbf{o}_{j}^{\{w\}}.
\end{array}
\]

\begin{thm}
\label{thm p. 20}(i) $\mathbf{o}$ is well-defined and surjective,
with%
\footnote{$\mathbf{e}$ denotes the identity element in a Weyl group; so $\chi_{0}^{\{\mathbf{e}\}}=\chi_{0}$.%
} $\mathbf{o}^{-1}(D)=\{(H_{0},\chi_{0}^{\{\mathbf{e}\}})\}$.

(ii) It restricts to a bijection between $\overline{\Xi_{\RR}^{\theta}}$
and the set $\mathcal{O}_{\RR}^{G}(\check{D}\backslash\mathfrak{Z})$
of open orbits.

(iii) The codimension-one orbits are of the form $\mathbf{o}_{j}^{\{w\}}$
for $H_{j}$ of real rank $1$.\end{thm}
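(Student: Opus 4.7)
For (i), the well-definedness of $\mathbf{o}$ reduces to two observations: elements of $\mathrm{W}_j$ fix $\chi_j$ and hence the flag $\mathscr{F}(H_j,\chi_j^{[w]})^\b$, while a class in $W_\RR^\circ(H_j)$ lifts to an element of $N(G(\RR)^\circ,H_j(\RR)^\circ)\subset G(\RR)^\circ$ that moves the flag within its $G(\RR)^\circ$-orbit. Surjectivity is Proposition \ref{prop '}(i). For the preimage of $D$: any flag in $D$ is $(-B)$-polarized Hodge, so by Remark \ref{rem p. 13}(c) its presenting Cartan must be compact, forcing the list representative to be $H_0$; the injectivity established in (ii) then pins the Weyl class down to $\{\mathbf{e}\}$.

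For (ii), the open orbits in $\check{D}$ coincide with $\check{D}\setminus\mathfrak{Z}$ by the Corollary to Proposition \ref{prop codim}. Surjectivity onto them combines Theorem \ref{thm surj}(ii) with the $G(\RR)^\circ$-conjugacy of all compact Cartans. For injectivity, suppose $g\in G(\RR)^\circ$ satisfies $g.\mathscr{F}(H_0,\chi_0^{[w]})^\b=\mathscr{F}(H_0,\chi_0^{[w']})^\b=:\fb_2$. Both $H_0(\RR)$ and $gH_0(\RR)g^{-1}$ are compact connected subgroups of the stabilizer $\mathcal{H}_{\fb_2}=G(\RR)^\circ\cap Q_{\fb_2}$, hence lie in its identity component $\mathcal{H}_{\fb_2}^\circ$; as two maximal compact tori of the latter, they are conjugate by some $q\in\mathcal{H}_{\fb_2}^\circ$. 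Then $\sigma:=qg\in N(G(\RR)^\circ,H_0(\RR)^\circ)$ represents a class in $W_\RR^\circ(H_0)$, and $\sigma.\mathscr{F}(H_0,\chi_0^{[w]})^\b=q.\fb_2=\fb_2$. By Remark \ref{rem p. 13}(b), this gives $\chi_0^{[\sigma w]}=\chi_0^{[w']}$, equivalently $w'\in W_\RR^\circ(H_0)\cdot w\cdot\mathrm{W}_0$.

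For (iii), let $c_\fb=1$. By Corollary \ref{cor codim 1 p.19b}, $\g^{1,1}$ is one-dimensional, spanned by a real root vector $X_\beta$, and all other $\g^{p,q}$ with $p,q>0$ vanish. Write $\fb=\mathscr{F}(H_j,\chi_j^{[w]})^\b$; the plan is to reduce to real rank one by iterated inverse Cayley transform. Since $H_j$ is built from the compact $H_0$ by $r:=\text{rank}_\RR(H_j)$ successive Cayley transforms in (strongly orthogonal) noncompact imaginary roots, its real root system has rank $r$; in particular, if $r\geq 2$ it contains a real root $\beta'$ independent of $\beta$. Now $\beta'(Y)=\pm 2$ would place $X_{\beta'}$ in $\g^{\pm 1,\pm 1}=\CC X_{\pm\beta}$, impossible; hence $\beta'(Y)=0$, so $X_{\pm\beta'}\in\g^{0,0}\subset F^0\g_\CC=\lq_\fb$, and $\mathbf{d}_{\beta'}\in Q_\fb$ stabilizes $\fb$. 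Consequently $(\Psi_{\mathbf{d}_{\beta'}}(H_j),\Psi_{\mathbf{d}_{\beta'}}(\chi_j^{[w]}))$ presents the same $\fb$ via a Cartan of real rank $r-1$; iterating, and then $G(\RR)^\circ$-conjugating back into $\tilde{\xi}_\RR^\theta$, expresses $\mathcal{O}_\fb$ as $\mathbf{o}_j^{\{w\}}$ for some $H_j$ of real rank one.

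The main obstacle is the inductive step in (iii): verifying that the real root system rank equals the real rank for each Cartan arising in the iteration (this rests on the strongly orthogonal structure of the Cayley transforms used to build $\tilde{\xi}_\RR^\theta$), and checking that the new Cartan obtained after each inverse Cayley transform still satisfies the bigrading constraints imposed by $c_\fb=1$ so that the process can continue. A minor technicality in (ii) is that the stabilizer $\mathcal{H}_{\fb_2}$ need not be compact for Hodge flags outside the $(-B)$-polarized locus, which is why we pass to its identity component in order to effect the Cartan conjugation.
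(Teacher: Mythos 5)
Your parts (i) and (ii) are essentially the paper's argument: well-definedness via the two-sided action of $\mathrm{W}_j$ and $W_{\RR}^{\circ}(H_j)$, surjectivity from Proposition \ref{prop '}(i), and injectivity on $\overline{\Xi_{\RR}^{\theta}}$ by conjugating the two compact maximal tori inside the reductive stabilizer $\mathcal{H}_{\fb}$ to produce an element of $W_{\RR}^{\circ}(H_0)$; the identification of $\mathbf{o}^{-1}(D)$ via Remark \ref{rem p. 13}(c) is also exactly the paper's route.

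For (iii) you take a genuinely different, though correct, path at the one nontrivial point: producing a second real root $\beta'$, linearly independent of $\beta$, whose root vectors land in $\g^{0,0}$ so that $\mathbf{d}_{\beta'}$ fixes $\fb$ while dropping the real rank. You get $\beta'$ by asserting that the real root system of $H_j$ has rank equal to $\mathrm{rk}_{\RR}H_j$, which you correctly trace to the strongly orthogonal structure of the Cayley sequence building $H_j$; this is a true (and standard) fact, but note the paper's list $\tilde{\xi}_{\RR}^{\theta}$ is defined by arbitrary successive Cayley transforms, so you should add that each conjugacy class admits a representative built from a strongly orthogonal set and that the rank of the real root system is a conjugacy invariant. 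The paper instead avoids this structure theory entirely: it applies $\mathbf{d}_{\beta}$ to reach a Cartan $H_{j'}$ of real rank $\geq 1$, invokes only the elementary fact that a noncompact $\theta$-stable Cartan has \emph{some} real root $\beta_0'$, and transports $\beta_0'$ back by $\mathbf{c}_{\beta'}$ to a real root of $H_j$ orthogonal to $\beta$. The paper's down-and-up argument is more self-contained and sidesteps the iteration bookkeeping you flag as your "main obstacle" (though, as you note, the intrinsic nature of the bigrading makes your iteration go through as well). One small local fix: when you rule out $\beta'(Y)=\pm 2$, you should also dispose of $|\beta'(Y)|\geq 4$, which is immediate from the vanishing of $\g^{p,p}$ for $|p|\geq 2$ already supplied by Corollary \ref{cor codim 1 p.19b}.
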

\begin{proof}
\emph{(i)-(ii)}: For well-definedness, $\{w\}=\{w'\}$ $\implies$
$\chi_{j}^{[w']}=\Psi_{g}(\chi_{j}^{[w]})$ for some $g\in N(G(\RR)^{\circ},H_{j})$
$\implies$ 
\[
\mathscr{F}(H_{j},\chi_{j}^{[w']})^{\b}=\mathscr{F}\left(\Psi_{g}(H_{j}),\Psi_{g}(\chi_{j}^{[w]})\right)^{\b}=\Psi_{g}\left(\mathscr{F}(H_{j},\chi_{j}^{[w]})\right)^{\b}.
\]
Surjectivity is Proposition \ref{prop '}\emph{(i)}.

Suppose $\fb:=\mathscr{F}(H_{0},\chi_{0}^{[w]})^{\bullet}$ 
\[
=\Ad(g)\mathscr{F}(H_{0},\chi_{0}^{[w']})^{\b}=\mathscr{F}(\Psi_{g}(H_{0}),\Psi_{g}(\chi_{0}^{[w']}))^{\b},
\]
 where $g\in G(\RR)^{\circ}$. Using Proposition \ref{prop obvious},
$H_{0}(\RR)$ and $\Psi_{g}(H_{0}(\RR))$ are compact maximal tori
of the real reductive Lie group%
\footnote{While $\fb$ is Hodge, the corresponding Hodge structure need not
be $(-B)$-polarized, and so $\mathcal{H}_{\fb}$ need not be compact.%
} 
\[
\mathcal{H}_{\fb}:=G(\RR)^{\circ}\cap Q_{\fb}(\cap Q_{\overline{\fb}}).
\]
So there exists $g'\in\mathcal{H}_{\fb}$ such that $H_{0}=\Psi_{g'g}(H_{0})$;
let $w_{r}\in W_{\RR}^{\circ}(H_{0})$ be the element represented
by $g'g$. Then 
\[
\fb=\Ad(g')\fb=\mathscr{F}\left(\Psi_{g'g}(H_{0}),\Psi_{g'g}(\chi_{0}^{[w']})\right)^{\b}=\mathscr{F}\left(H_{0},\chi_{0}^{[w_{r}w']}\right)^{\b}
\]
\begin{flalign*}\;\;\; & \implies \;\;\; \chi_0^{[w]}=\chi_0^{[w_r w']} &\\
\;\;\;\ & \implies \;\;\; w.\mathrm{W}_0 = w_r w'.\mathrm{W}_0 & \\
\;\;\; & \implies \;\;\; \{w\}=\{w_r w'\}=\{w'\}.
\end{flalign*}This establishes \emph{(ii)}, and the last statement of \emph{(i)}
follows from this and Remark \ref{rem p. 13}(c).

\emph{(iii)}: Suppose $\fb=\mathscr{F}\left(H_{j},\chi_{j}^{[w]}\right)^{\b}$
has $c_{\fb}=1$, $\g^{1.1}=\CC\langle X_{\beta}\rangle$, $\text{rk}_{\RR}H_{j}\,(=\dim_{\RR}\la_{j})\geq2.$
Then $\Ad(\mathbf{d}_{\beta})X_{\beta}\in\CC\langle X_{\beta'}\rangle$,
$\beta'$ a noncompact imaginary root for $H_{j'}=\Psi_{\mathbf{d}_{\beta}}(H_{j})$.
Since $\text{rk}_{\RR}H_{j'}\geq1$, $H_{j'}$ is not maximally compact
and so has a real root $\beta_{0}'$, necessarily orthogonal to $\beta'$.
Under (conjugation by) $\mathbf{c}_{\beta'}$, $\beta_{0}'$ goes
to a real root $\beta_{0}$ (of $H_{j}$) orthogonal to $\beta$,
and the vanishing of the $\{\g^{p,p}\}_{p\neq0,\pm1}$ forces $X_{\pm\beta}\in\g^{0,0}$.
Therefore, conjugation by $\mathbf{d}_{\beta_{0}}$( $=e^{i\frac{\pi}{4}(X_{-\beta_{0}}-X_{\beta_{0}})}$)
replaces $H_{j}$ by $H_{j'}$ of smaller real rank, whilst leaving
$Y$ and $\phi$ --- hence $\fb$ --- alone.\end{proof}
\begin{cor}
(i) $|\mathcal{O}_{\RR}^{G}(\check{D})|\leq\sum_{j=0}^{n}\left|W_{\RR}^{\circ}(H_{j})\backslash W_{\CC}(H_{j})/\mathrm{W}_{j}\right|\;(<\infty).$

(ii) $|\mathcal{O}_{\RR}^{G}(\check{D}\backslash\mathfrak{Z})|=\left|W_{\RR}^{\circ}(H_{0})\backslash W_{\CC}(H_{0})/\mathrm{W}_{0}\right|.$\end{cor}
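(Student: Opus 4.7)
The plan is to derive both assertions as immediate consequences of Theorem~\ref{thm p. 20}, once the cardinality of the index set $\overline{\tilde{\Xi}_\RR^\theta}$ is written out in terms of double cosets. The key observation is that, since $\mathrm{W}_j = \mathrm{Stab}\,\chi_j \leq W_\CC(H_j)$, two elements $(H_j, \chi_j^{[w]})$ and $(H_j, \chi_j^{[w']})$ of $\tilde{\Xi}_\RR^\theta$ coincide as pairs precisely when $w^{-1}w' \in \mathrm{W}_j$; equivalently, the fiber of $\tilde{\pi}_\RR$ over $H_j$ is naturally parametrized by the right coset space $W_\CC(H_j)/\mathrm{W}_j$. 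The equivalence $\sim$ defining $\overline{\tilde{\Xi}_\RR^\theta}$ further collapses each such coset space by the left action of $W_\RR^\circ(H_j)$, so summing over $j$ yields
\[
\bigl|\overline{\tilde{\Xi}_\RR^\theta}\bigr| \;=\; \sum_{j=0}^n \bigl| W_\RR^\circ(H_j) \backslash W_\CC(H_j) / \mathrm{W}_j \bigr|.
\]

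For part~(i), Theorem~\ref{thm p. 20}(i) supplies the surjection $\mathbf{o}\colon \overline{\tilde{\Xi}_\RR^\theta} \twoheadrightarrow \mathcal{O}_\RR^G(\check{D})$, which combined with the formula above gives the claimed inequality. Finiteness of the bound is automatic: each complex Weyl group $W_\CC(H_j)$ is finite, and the list $\tilde{\xi}_\RR^\theta = \{H_0,\ldots,H_n\}$ of $\theta$-stable conjugacy representatives is finite (since any $H$ arises as an iterated Cayley transform of $H_0$ in noncompact imaginary roots, of which there are finitely many).

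For part~(ii), the same counting applied to $\Xi_\RR^\theta := \tilde{\pi}_\RR^{-1}(\{H_0\})$ --- which retains only the $j=0$ term --- gives
\[
\bigl|\overline{\Xi_\RR^\theta}\bigr| = \bigl| W_\RR^\circ(H_0) \backslash W_\CC(H_0) / \mathrm{W}_0 \bigr|,
\]
while Theorem~\ref{thm p. 20}(ii) upgrades $\mathbf{o}$ restricted to this subset to a bijection onto $\mathcal{O}_\RR^G(\check{D}\setminus\mathfrak{Z})$, producing the equality. There is essentially no obstacle here: this is a bookkeeping corollary, and the only point requiring mild care is verifying that $\mathrm{W}_j$ acts by right multiplication exactly as the stabilizer of $\chi_j$, so that the stated sum really counts honest double cosets rather than a naive coset count.
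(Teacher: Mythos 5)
Your argument is correct and is exactly the intended one: the paper gives no separate proof of this corollary, treating it as an immediate consequence of Theorem \ref{thm p. 20} once one observes that $\bigl|\overline{\tilde{\Xi}_{\RR}^{\theta}}\bigr|$ is the sum of the double coset counts. Your bookkeeping of the fiber of $\tilde{\pi}_{\RR}$ over $H_j$ as $W_{\CC}(H_j)/\mathrm{W}_j$ and the further collapse by $W_{\RR}^{\circ}(H_j)$ matches the definitions of $\tilde{\Xi}_{\RR}^{\theta}$ and of the relation $\sim$ verbatim.
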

\begin{rem}
$W_{\CC}(H_{j})\geq\mathrm{W}_{j}$ are the same for each $j$, whilst
$W_{\RR}^{\circ}(H_{j})$ varies considerably. If $G_{\RR}$ is split
($\implies H_{n}$ is), then $W_{\CC}(H_{n})=W_{\RR}(H_{n})=W_{\RR}^{\circ}(H_{n})$
by \cite[14.6]{BT}.
\end{rem}
When the isotropy group $\mathcal{H}_{\vf_{0}}$ is abelian, it is
a compact maximal torus in $G(\RR)^{\circ}$, and we say that $\check{D}$
is a \emph{complete flag variety} (owing to how this situation most
often arises).

Recall our standing assumption (cf. $\S2$) that $\pi_{\chi_{0}}$
takes only the values $0$ and $1$ on the simple roots.
\begin{lem}
\label{lem complete flag}The following are equivalent:

(i) $\check{D}$ is a complete flag variety;

(ii) $Q_{\fb_{0}}$ is a Borel subgroup of $G(\CC)$;

(iii) $Q_{\fb}$ is Borel for any $\fb\in\check{D}$;

(iv) $\pi_{\chi_{0}}$ takes the value $1$ on every simple root.
\end{lem}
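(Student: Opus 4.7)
The plan is to establish the chain by a short cycle, exploiting the bigrading of Lemma \ref{bigrading lemma}. First I would dispose of (ii) $\iff$ (iii): since $G(\CC)$ acts transitively on $\check{D} = G(\CC)/Q_{\fb_0}$, every $Q_\fb$ is $G(\CC)$-conjugate to $Q_{\fb_0}$, and being Borel is a conjugation-invariant condition on parabolic subgroups.

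For (ii) $\iff$ (iv), I would use that the Lie algebra of $Q_{\fb_0}$ is $F^{0}\g_{\CC} = \lh_0 \oplus \bigoplus_{\pi_{\chi_0}(\alpha)\geq 0}\g_\alpha$, with Levi $\g^{0,0} = \lh_0 \oplus \bigoplus_{\pi_{\chi_0}(\alpha)=0}\g_\alpha$. Thus $Q_{\fb_0}$ is Borel iff its Levi is the Cartan $\lh_0$, iff no root satisfies $\pi_{\chi_0}(\alpha) = 0$. Expanding a positive root $\alpha = \sum_i n_i \alpha_i$ with $n_i \in \ZZ_{\geq 0}$ and invoking the standing assumption $\pi_{\chi_0}(\alpha_i) \in \{0,1\}$, the relation $\pi_{\chi_0}(\alpha) = \sum_i n_i \pi_{\chi_0}(\alpha_i)$ shows that $\pi_{\chi_0}(\alpha) = 0$ iff $\alpha$ is supported only on those simple roots with $\pi_{\chi_0}(\alpha_i) = 0$. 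Hence no root has $\pi_{\chi_0} = 0$ iff no simple root does, i.e., (iv).

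For (i) $\iff$ (ii), note that $\vf_0(S^1) \subseteq H_0(\RR)$ since $\vf_0 = \chi_0|_{S^1}$, so $\mathcal{H}_{\vf_0}$ is the centralizer of $\vf_0(S^1)$ in $G(\RR)^\circ$, with Lie algebra $\ker\{\ad(\phi_0)\}\cap \g_{\RR} = \g^{0,0}\cap\g_\RR$. By property \emph{(iii)} of Lemma \ref{bigrading lemma}, $\g^{0,0}$ is $\rho$-stable, so equals $(\g^{0,0}\cap\g_\RR)\otimes_\RR \CC$. If (ii) holds, then (by the Step 2 analysis) $\g^{0,0} = \lh_0$, whence $\mathrm{Lie}(\mathcal{H}_{\vf_0}) = \lh_{0,\RR}$ is abelian, yielding (i). Conversely, if (i) holds, $\mathrm{Lie}(\mathcal{H}_{\vf_0})$ is an abelian subalgebra of $\g_\RR$ already containing the compact Cartan $\lh_{0,\RR}$ (since $\phi_0 \in \lh_0$ forces $\lh_{0,\RR} \subseteq \ker\{\ad(\phi_0)\}\cap\g_\RR$); by the maximal abelian property of $\lh_{0,\RR}$ in $\g_\RR$, we get $\mathrm{Lie}(\mathcal{H}_{\vf_0}) = \lh_{0,\RR}$, and complexifying gives $\g^{0,0} = \lh_0$, so the Levi of $Q_{\fb_0}$ is a Cartan and $Q_{\fb_0}$ is Borel.

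The only delicate step is (i) $\Rightarrow$ (ii), where I need the abelianness of the \emph{real} centralizer to force the \emph{complex} Levi $\g^{0,0}$ to reduce to $\lh_0$. This is handled cleanly by the $\rho$-stability of $\g^{0,0}$, which allows one to pass freely between the real and complex pictures; the remainder is elementary bookkeeping with root-space decompositions and the standing assumption.
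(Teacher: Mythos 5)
Your proof is correct. The paper states this lemma without proof, so there is nothing to compare against directly; your argument is a reasonable reconstruction of what the authors presumably had in mind. The (ii) $\Leftrightarrow$ (iii) and (ii) $\Leftrightarrow$ (iv) steps are straightforward root-system bookkeeping under the standing assumption $\pi_{\chi_0}(\alpha_i)\in\{0,1\}$, and the (i) $\Leftrightarrow$ (ii) step correctly exploits the $\rho$-stability of $\g^{0,0}$ together with the maximal-abelian property of the compact Cartan $\lh_{0,\RR}$ in $\g_\RR$.

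One minor point worth noting: in the (ii) $\Rightarrow$ (i) direction you pass from $\mathrm{Lie}(\mathcal{H}_{\vf_0})=\lh_{0,\RR}$ being abelian to $\mathcal{H}_{\vf_0}$ itself being abelian. This requires connectedness of $\mathcal{H}_{\vf_0}$, which holds because $\mathcal{H}_{\vf_0}$ is the centralizer of the circle $\vf_0(S^1)$ inside the connected maximal compact $K$ (centralizers of tori in connected compact Lie groups are connected); this is the same implicit fact the paper invokes when it asserts that an abelian $\mathcal{H}_{\vf_0}$ \emph{is} a compact maximal torus. Including a clause to this effect would tighten the argument, but it is not a genuine gap.
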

Under the equivalent conditions of the Lemma, $H_{0}(\RR)^{\circ}=\mathcal{H}_{\vf_{0}}$
and $\dim_{\CC}(H_{0}(\CC))=\dim_{\CC}(Gr_{\fb}^{0}\g)$, so that
for any $\fb=\mathscr{F}(H_{j},\chi_{j}^{[w]})^{\bullet}$, from $\g^{0,0}\supseteq\lh_{j}$
we have \begin{equation}\label{eqn dagger*}\g^{0,0} = \lh_j.
\end{equation}Moreover, since every $Q_{\fb_{j}}$ is Borel, the $\{\mathrm{W}_{j}\}$
are all trivial.
\begin{thm}
\label{thm 4.9}In the complete flag setting, $\mathbf{o}$ is a bijection
and 
\[
|\mathcal{O}_{\RR}^{G}(\check{D})|=\sum_{j=0}^{n}|W_{\RR}^{\circ}(H_{j})\backslash W_{\CC}(H_{j})|.
\]
\end{thm}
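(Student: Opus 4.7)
The plan is to use that Theorem \ref{thm p. 20}(i) already gives surjectivity of $\mathbf{o}$, while Lemma \ref{lem complete flag} forces $\mathrm{W}_j = \{\mathbf{e}\}$ for all $j$; combined with the Corollary to Theorem \ref{thm p. 20}, this yields the upper bound $|\mathcal{O}_{\RR}^{G}(\check{D})| \leq \sum_{j=0}^n |W_{\RR}^\circ(H_j) \backslash W_{\CC}(H_j)|$, so the theorem reduces to proving injectivity of $\mathbf{o}$. I would prove injectivity by extending the Hodge-case argument of Theorem \ref{thm p. 20}(ii) --- where compactness of $\mathcal{H}_{\fb}$ and conjugacy of compact maximal tori were used --- to the general case, replacing ``compact reductive'' by ``connected solvable''.

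Concretely, suppose $\mathbf{o}_j^{\{w\}} = \mathbf{o}_{j'}^{\{w'\}}$, so that for some $g \in G(\RR)^\circ$
\[
\fb := \mathscr{F}(H_j, \chi_j^{[w]})^{\b} = \mathscr{F}\bigl(\Psi_g(H_{j'}),\, \Psi_g(\chi_{j'}^{[w']})\bigr)^{\b}.
\]
Then $H_j$ and $H' := \Psi_g(H_{j'})$ are two real Cartans of $G_{\RR}$ contained in the isotropy $\mathcal{H}_{\fb} = G(\RR)^\circ \cap Q_{\fb} \cap \overline{Q_{\fb}}$. The complete-flag identity \eqref{eqn dagger*} asserts that the Lie algebra of each equals the $\g^{0,0}$-piece of the associated bigrading; equivalently, each is a Levi complement to the unipotent subgroup $N_{\fb} \leq \mathcal{H}_{\fb}^\circ$ with Lie algebra $\bigl(\bigoplus_{p+q > 0,\,p,q \geq 0}\g^{p,q}\bigr) \cap \g_{\RR}$. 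Since $Q_{\fb}$ is Borel (hence solvable) in the complete flag setting, $\mathcal{H}_{\fb}^\circ$ is a connected solvable real Lie group with nilradical $N_{\fb}$.

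I would then invoke the classical fact that any two Cartan subgroups of a connected solvable real Lie group are conjugate via the nilradical to produce $u \in N_{\fb} \subseteq G(\RR)^\circ$ with $\Psi_u(H_j) = H'$. Setting $h := u^{-1} g \in G(\RR)^\circ$ yields $\Psi_h(H_{j'}) = H_j$, which forces $j = j'$ since the $H_0, \ldots, H_n$ are distinct representatives of the $G(\RR)^\circ$-conjugacy classes of real Cartans; and then $h \in N(G(\RR)^\circ, H_j)$ represents some class $w_r \in W_{\RR}^\circ(H_j)$. Using that $u \in \mathcal{H}_{\fb}$ fixes $\fb$, transport via $h$ gives
\[
\mathscr{F}(H_j, \chi_j^{[w_r w']})^{\b} = u^{-1}g.\mathscr{F}(H_{j'}, \chi_{j'}^{[w']})^{\b} = \fb = \mathscr{F}(H_j, \chi_j^{[w]})^{\b},
\]
and triviality of $\mathrm{W}_j$ then forces $\chi_j^{[w_r w']} = \chi_j^{[w]}$, so $w_r w' = w$. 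Hence $\{w\} = \{w'\}$ in $W_{\RR}^\circ(H_j) \backslash W_{\CC}(H_j)$, completing the proof of injectivity.

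The main obstacle is the invocation of Cartan-conjugacy inside $\mathcal{H}_{\fb}^\circ$: one has to verify both that this group is genuinely solvable (immediate from $Q_{\fb}$ being Borel) and --- more substantively --- that each of $H_j(\RR)^\circ, H'(\RR)^\circ$ serves as a Cartan subgroup of $\mathcal{H}_{\fb}^\circ$ in the Lie-theoretic sense. The complete-flag identity \eqref{eqn dagger*} accomplishes precisely this by pinning down each candidate Cartan's Lie algebra as the $\g^{0,0}$-stratum of a bigrading, after which the rest of the injectivity argument runs in parallel to the compact-reductive template of Theorem \ref{thm p. 20}(ii).
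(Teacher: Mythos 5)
Your proposal is correct and follows essentially the same route as the paper: both arguments reduce to injectivity, place the two Cartans inside $\mathcal{H}_{\fb}$ via Proposition \ref{prop obvious}, use \eqref{eqn dagger*} to see that $\mathcal{H}_{\fb}^{\circ}$ is connected solvable, and then apply conjugacy of maximal tori in a connected solvable group (the paper cites \cite[Prop.~19.2]{Bo}) to force $j=j'$ and $\{w\}=\{w'\}$. The only cosmetic difference is that you insist the conjugating element lie in the unipotent radical, which is harmless since any element of $\mathcal{H}_{\fb}^{\circ}$ already fixes $\fb$.
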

\begin{proof}
Suppose $\fb:=\mathscr{F}(H_{j},\chi_{j}^{[w]})^{\b}$
\[
=\Ad(g)\mathscr{F}(H_{k},\chi_{k}^{[w']})^{\bullet}=\mathscr{F}(\Psi_{g}(H_{k}),\Psi_{g}(\chi_{k}^{[w']}))^{\bullet},
\]
where $g\in G(\RR)^{\circ}$, and let $\g=\oplus\g^{p,q}$ be the
bigrading induced by $H_{j}$. Using Proposition \ref{prop obvious},
$H_{j}(\RR)^{\circ}$ and $\Psi_{g}(H_{k}(\RR)^{\circ})$ are maximal
tori in the identity component of 
\[
\mathcal{H}_{\fb}:=G(\RR)^{\circ}\cap Q_{\fb}(\cap Q_{\overline{\fb}}),
\]
and \eqref{eqn dagger*} says that $\mathcal{H}_{\fb}^{\circ}/U(\mathcal{H}_{\fb}^{\circ})$
is a torus of the same dimension; that is, $\mathcal{H}_{\fb}^{\circ}$
is (connected) solvable. By \cite[Prop. 19.2]{Bo}, there exists $\tilde{g}\in\mathcal{H}_{\fb}^{\circ}$
such that $\Psi_{\tilde{g}}\left(\Psi_{g}(H_{k}(\RR)^{\circ})\right)=H_{j}(\RR)^{\circ}$;
i.e. $\Psi_{\tilde{g}g}(H_{k})=H_{j}$ with $\tilde{g}g\in G(\RR)^{\circ}$.
But then $k=j$ (cf. $\S3.3$), and $\tilde{g}g$ represents an element
$w_{r}\in W_{\RR}^{\circ}(H_{j})$; we have
\[
\fb=\Ad(\tilde{g})\mathscr{F}\left(\Psi_{g}(H_{j}),\Psi_{g}(\chi_{j}^{[w']})\right)^{\bullet}=\mathscr{F}\left(H_{j},\chi_{j}^{[w_{r}w']}\right)^{\bullet}
\]
\begin{flalign*}\;\;\; & \implies \;\;\; \chi_j^{[w]}=\chi_j^{[w_r w']} &\\
\;\;\;\ & \implies \;\;\; w.\mathrm{W}_j = w_r w'.\mathrm{W}_j & \\
\;\;\; & \implies \;\;\; \{w\}=\{w_r w'\}=\{w'\},
\end{flalign*}done.
\end{proof}
Continuing to assume $\check{D}$ a complete flag variety:
\begin{cor}
The ``real rank map'' $\check{D}\to\ZZ_{>0}$ given by 
\[
\fb=\mathscr{F}(H,\chi)^{\bullet}\mapsto\dim_{\RR}\la_{\fb}
\]
is well-defined.
\end{cor}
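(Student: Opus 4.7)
The plan is to deduce well-definedness directly from the bijectivity asserted in Theorem \ref{thm 4.9}: the key observation is that the real rank of a real Cartan subgroup $H \leq G_{\RR}$ (i.e.\ the dimension of a maximal $\RR$-split subtorus, equivalently $\dim_{\RR}\la$ for any Cartan involution making $H$ stable) is invariant under $G(\CC)$-conjugation of Cartans. So it suffices to prove that the $G(\RR)^{\circ}$-conjugacy class of $H$ is determined by $\fb$, for any representation $\fb = \mathscr{F}(H,\chi)^{\bullet}$ with $H\in\tilde{\xi}_{\RR}$.

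Given such a pair $(H,\chi)$, I would first use the fact recalled at the start of \S3.3 that every real Cartan is $G(\RR)^{\circ}$-conjugate to some element of the distinguished list $\tilde{\xi}_{\RR}^{\theta}$, choosing $g\in G(\RR)^{\circ}$ with $\Psi_{g}(H)=H_{j}$ for some $j\in\{0,\ldots,n\}$. Then by $G(\RR)^{\circ}$-equivariance,
\[
g.\fb = \mathscr{F}(H_{j},\Psi_{g}(\chi))^{\bullet}.
\]
Because $(H,\chi)\in\tilde{\Xi}_{\RR}\subset\Xi_{\CC}$ (Remark \ref{rem p. 13}(b)), the pair $(H_{j},\Psi_{g}(\chi))$ is $G(\CC)$-conjugate to $(H_{j},\chi_{j})$; since any two elements of $\mathrm{X}_{*}(H_{j}(\CC))$ that are $G(\CC)$-conjugate differ by an element of $N(G(\CC),H_{j}(\CC))/H_{j}(\CC)=W_{\CC}(H_{j})$, we get $\Psi_{g}(\chi)=\chi_{j}^{[w]}$ for some $w\in W_{\CC}(H_{j})$ (noting $\mathrm{W}_{j}=\{\mathbf{e}\}$ in the complete-flag setting by Lemma \ref{lem complete flag}). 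Thus $\fb\in\mathbf{o}_{j}^{\{w\}}$.

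By the bijectivity in Theorem \ref{thm 4.9}, the index $j$ (and indeed $\{w\}$) is intrinsic to the orbit $\mathcal{O}_{\fb}$, hence to $\fb$. Therefore any real Cartan $H'$ appearing in a representation $\fb=\mathscr{F}(H',\chi')^{\bullet}$ is also $G(\RR)^{\circ}$-conjugate to the same $H_{j}$, and so has the same real rank as $H$. Consequently $\dim_{\RR}\la_{\fb}$ does not depend on the choice of $(H,\chi)$.

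The only nontrivial input is the bijectivity of Theorem \ref{thm 4.9}; once that is in hand the argument is essentially a bookkeeping exercise, with the minor subtlety being to invoke the triviality of $\mathrm{W}_{j}$ (which is what distinguishes the complete-flag case and makes the passage from $\Psi_{g}(\chi)$ to a canonical $\chi_{j}^{[w]}$ unambiguous).
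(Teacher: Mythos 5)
Your argument is correct and is essentially the paper's: the corollary is stated as an immediate consequence of Theorem \ref{thm 4.9}, whose injectivity pins down the index $j$ for any flag $\fb$, hence the $G(\RR)^{\circ}$-conjugacy class of any real Cartan $H$ with $\fb=\mathscr{F}(H,\chi)^{\bullet}$, hence $\dim_{\RR}\la_{\fb}$. One correction to your opening sentence, though: the real rank of a real Cartan is \emph{not} invariant under $G(\CC)$-conjugation (all Cartans of $G_{\CC}$ are conjugate, while e.g.\ the compact and split Cartans of $SL_{2}(\RR)$ have real ranks $0$ and $1$); what you need --- and what the remainder of your argument actually uses --- is invariance under $G(\RR)^{\circ}$-conjugation, which does hold.
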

We also recover the well-known
\begin{cor}
The $\left|W_{\RR}^{\circ}(H_{0})\backslash W_{\CC}(H_{0})\right|$
open orbits are in 1-to-1 correspondence with Weyl chambers up to
reflections in the compact roots. (Explicitly, the correspondence
is given by sending $\mathbf{o}_{0}^{\{w\}}\mapsto w.C_{0}$, where
$C_{0}$ is the chamber associated to $Q_{\fb_{0}}$.)
\end{cor}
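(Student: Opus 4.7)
The corollary falls out almost immediately from Theorem \ref{thm 4.9} specialized to the open orbits, so my plan has three short steps, with no real obstacle beyond bookkeeping.

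First I would specialize Theorem \ref{thm 4.9} to the open locus. By Theorem \ref{thm p. 20}(ii), the set $\mathcal{O}_{\RR}^{G}(\check{D}\setminus\mathfrak{Z})$ of open orbits is in bijection with $\overline{\Xi_{\RR}^{\theta}}$, i.e.\ with $W_{\RR}^{\circ}(H_{0})\backslash W_{\CC}(H_{0})/\mathrm{W}_{0}$. Under the standing complete-flag assumption, $Q_{\fb_{0}}$ is a Borel subgroup (Lemma \ref{lem complete flag}), so $\mathrm{W}_{0}$ is trivial and the indexing set collapses to $W_{\RR}^{\circ}(H_{0})\backslash W_{\CC}(H_{0})$.

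Next I would identify $W_{\RR}^{\circ}(H_{0})$ with the subgroup generated by reflections in $\Delta_{c}$. The discussion in $\S2$ records that $W_{\RR}^{\circ}(H)$ always contains the subgroup generated by reflections in $\Delta_{\RR}\cup\Delta_{c}$, with equality when $H(\RR)$ is compact. Since $H_{0}=T_{0}$ is by construction a compact maximal torus, all roots are imaginary and $\Delta_{\RR}=\emptyset$, so
\[
W_{\RR}^{\circ}(H_{0})\;=\;\langle s_{\alpha}\mid \alpha\in\Delta_{c}\rangle.
\]

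Finally I would invoke the classical fact that $W_{\CC}(H_{0})$ acts simply transitively on the set of Weyl chambers of $(\g,\lh_{0})$. Taking $C_{0}$ to be the chamber determined by the positive system $\pi_{\chi_{0}}(\Delta^{+})\subset\ZZ_{\geq 0}$ (equivalently, the chamber such that $Q_{\fb_{0}}$ is the Borel of positive roots), the map $w\mapsto w.C_{0}$ is a $W_{\RR}^{\circ}(H_{0})$-equivariant bijection from $W_{\CC}(H_{0})$ onto chambers. Passing to the left quotient by $W_{\RR}^{\circ}(H_{0})$ then yields the asserted bijection between open orbits $\mathbf{o}_{0}^{\{w\}}$ and Weyl chambers modulo reflections in compact roots, together with the explicit formula $\mathbf{o}_{0}^{\{w\}}\mapsto w.C_{0}$. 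The only point requiring any verification is that $w\mapsto w.C_{0}$ descends unambiguously to the double coset $\{w\}$, but since $\mathrm{W}_{0}=\{\mathbf{e}\}$ there is no right-side ambiguity and left-equivariance handles the left side.
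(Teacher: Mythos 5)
Your argument is correct and is exactly the derivation the paper intends: the corollary is stated without proof as an immediate consequence of Theorem \ref{thm p. 20}(ii) (equivalently Theorem \ref{thm 4.9} restricted to $H_{0}$), the triviality of $\mathrm{W}_{0}$ in the complete flag case, and the identification of $W_{\RR}^{\circ}(H_{0})$ with $\langle s_{\alpha}\mid\alpha\in\Delta_{c}\rangle$ for the compact Cartan recorded in $\S2$. Nothing is missing.
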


\subsection{Closure order}

It remains to address how the various orbits fit together. Consider
the following two operations on the finite set of points $\{\mathscr{F}(H_{j},\chi_{j}^{[w]})^{\bullet}\}$
in $\check{D}$:
\begin{enumerate}
\item \emph{Cayley transforms} $\mathbf{c}_{\alpha}$ in noncompact imaginary
roots:
\[
\fb=\mathscr{F}\left(H_{j},\chi_{j}^{[w]}\right)^{\bullet}\longmapsto\mathscr{F}\left(\Psi_{\mathbf{c}_{\alpha}}(H_{j}),\Psi_{\mathbf{c}_{\alpha}}(\chi_{j}^{[w]})\right)^{\bullet}=:\mathbf{c}_{\alpha}\fb;
\]

\item \emph{Cross actions}, i.e. $c_{\fb}$-increasing Weyl reflections
$w_{\gamma}$ in complex roots:
\[
\fb=\mathscr{F}\left(H_{j},\chi_{j}^{[w]}\right)^{\bullet}\longmapsto\mathscr{F}\left(H_{j},\chi_{j}^{[w_{\gamma}w]}\right)^{\bullet}=:w_{\gamma}\fb.
\]

\end{enumerate}
There is a well-developed theory of \emph{Bruhat order} (i.e., closure
order%
\footnote{By this we mean, in general, the partial order on orbits given by
$\mathcal{O}_{1}\geq\mathcal{O}_{2}\iff cl(\mathcal{O}_{1})\supseteq\mathcal{O}_{2}$.%
}) for the $K_{\CC}$-orbits on complete flag varieties, where $K_{\CC}\leq G(\CC)$
is the complexification of a maximal compact subgroup of $G(\RR)^{\circ}$.%
\footnote{More precisely, one takes $K_{\CC}$ to be the identity connected
component of $G(\CC)^{\Theta}$%
} (The foundational article is \cite{RS}; also see the helpful recent
exposition \cite{Ye}.) We can import these results into our setting
by way of Matsuki duality, which produces a 1-to-1 correspondence
between $K_{\CC}$- and $G(\RR)^{\circ}$-orbits in a complete flag
variety $\check{D}$, while reversing closure order. See \cite{Ma1}
and the Introduction to \cite{Ma2}.

The upshot of this for us is twofold: 
\begin{lyxlist}{00.00.0000}
\item [{(a)}] In the general case, where $\check{D}$ is not necessarily
a complete flag variety, $\mathcal{O}_{\mathbf{c}_{\alpha}\fb}$ and
$\mathcal{O}_{w_{\alpha}\fb}$ always lie in the analytic closure
$cl(\mathcal{O}_{\fb})=:\mathcal{O}_{\fb}\amalg\partial\mathcal{O}_{\fb}$.
This may also be deduced directly from the discussion below. 
\item [{(b)}] In the complete flag case, the codimension-one inclusions
obtained as in (a) generate all closure relations in the sense of
\cite[Theorem 3.15]{Ye} (the ``subexpression property'' which generates
more relations than mere iteration of (a)).\end{lyxlist}
\begin{rem}
\label{rem Bruhat}(i) We should accompany these statements with the
warning that our $\mathbf{c}_{\alpha}$ and $w_{\gamma}$ (which operate
differently from the Cayley transform and cross-action in \cite{Ye})
are not well-defined operations on the level of orbits: if $w_{r}\in W_{\RR}^{\circ}$,
it can happen that (while $\mathcal{O}_{w_{r}\fb}=\mathcal{O}_{\fb}$)
$\mathcal{O}_{\mathbf{c}_{\alpha}w_{r}\fb}\neq\mathcal{O}_{\mathbf{c}_{\alpha}\fb}$
and so forth.

(ii) On the other hand, with $\alpha\in\Delta_{n}(H_{j})$ and $\fb$
as above, we need not worry about both $\mathbf{c}_{\alpha}\fb$ and
$\mathbf{c}_{-\alpha}\fb$: if $\beta\in\Delta(\Psi_{\mathbf{c}_{\alpha}}(H_{j}))$
is $\mathbf{c}_{\alpha}$ of $\alpha$ (more precisely, $X_{\beta}=-i(\Ad\mathbf{c}_{\alpha})X_{\alpha}$),
then $\mathbf{d}_{\beta}=\mathbf{c}_{\alpha}^{-1}$ and $\mathbf{d}_{\beta}^{2}=w_{\beta}$
$\implies$ $w_{\beta}\mathbf{c}_{\alpha}\fb=\mathbf{c}_{\alpha}^{-1}\fb=\mathbf{c}_{-\alpha}\fb$
$\implies$ $\mathcal{O}_{\mathbf{c}_{\alpha}\fb}=\mathcal{O}_{\mathbf{c}_{-\alpha}\fb}.$

(iii) We can further simplify computations by noticing that if $w_{r}\in W_{\RR}^{\circ}(H_{j})$
and $\alpha\in\Delta_{n}(H_{j})$, then we have $w_{r}(\alpha)\in\Delta_{n}(H_{j})$
and so $\mathbf{c}_{\alpha}w_{r}=w_{r}w_{r}^{-1}\mathbf{c}_{\alpha}w_{r}=w_{r}\mathbf{c}_{w_{r}(\alpha)}$
$\implies$ $\mathcal{O}_{\mathbf{c}_{w_{r}(\alpha)}\fb}=\mathcal{O}_{\mathbf{c}_{\alpha}w_{r}\fb}.$
\end{rem}
\begin{disc}To see what is going on in a simple case, consider an
$\mathfrak{sl}_{2}$-triple $X_{\alpha},h_{\alpha},X_{-\alpha}$ with
$\alpha\in\Delta_{n}$, $X_{-\alpha}=\overline{X_{\alpha}}$, $h_{\alpha}=[X_{\alpha},X_{-\alpha}]$
and $[h_{\alpha},X_{\pm\alpha}]=\pm2X_{\pm\alpha}$. Put 
\[
\mathsf{F}^{1}:=\CC\langle X_{\alpha}\rangle\subset\mathsf{F}^{0}:=\CC\langle X_{\alpha}\rangle_{B}^{\perp}=\CC\langle X_{\alpha},h_{\alpha}\rangle\subset\mathfrak{sl}_{2,\CC}.
\]
Writing \begin{flalign*}\;\;\; & \gamma_t:=\exp\left\{\frac{t}{2}(X_{-\alpha}-X_{\alpha})\right\}\in SL_2(\CC), &\\
\text{we have} \\
\;\;\;\ & h(t):=\Ad (\gamma_t)h_{\alpha} = (\cos t)h_{\alpha} + (\sin t)(X_{\alpha}+X_{-\alpha}),& \\ 
\;\;\; & X_{\pm}(t):=\Ad (\gamma_t)X_{\pm \alpha}  &\\
\;\;\; & \;\;\;\;\;\;\;\;\;\; = \frac{1}{2}(\sin t)h_{\alpha} + \frac{1}{2}(\cos t +1)X_{\pm \alpha} + \frac{1}{2}(\cos t -1)X_{\mp \alpha}.
\end{flalign*}In particular, this gives $h(\frac{\pi}{2})=\Ad(\mathbf{c}_{\alpha})h_{\alpha}=X_{\alpha}+X_{-\alpha}$,
$X_{\pm}(\frac{\pi}{2})=\Ad(\mathbf{c}_{\alpha})X_{\pm\alpha}=\frac{1}{2}h_{\alpha}$
and $h(\pi)=\Ad(w_{\alpha})h_{\alpha}=-h_{\alpha}$, $X_{\pm}(\pi)=\Ad(w_{\alpha})X_{\pm\alpha}=X_{\mp\alpha}.$

The \emph{flag} $\mathsf{F}^{1}(t):=\CC\langle X_{+}(t)\rangle\subset\mathsf{F}^{0}(t):=\CC\langle X_{+}(t)\rangle_{B}^{\perp}$
is in fact in the \emph{real} (i.e., $SL_{2}(\RR)$-)orbit of $\mathsf{F}^{\bullet}$
for $t\in[0,\frac{\pi}{2})$; explicitly, we have $\mathsf{F}^{\bullet}(t)=\Ad(\mu_{t})\mathsf{F}^{\bullet}$,
where $\mu_{t}=\text{diag}\left\{ f(t)^{\frac{1}{2}},f(t)^{-\frac{1}{2}}\right\} $
and $f(t)=\frac{1-\sin t}{\cos t}$. The problem at $t=\frac{\pi}{2}$
(where $\mathsf{F}^{\bullet}(\frac{\pi}{2})=\mathbf{c}_{\alpha}\mathsf{F}^{\bullet}$)
is that $X_{+}(t)$ becomes pure imaginary,%
\footnote{If one puts $X_{\pm\alpha}=\left(\begin{array}{cc}
1 & \mp i\\
\mp i & -1
\end{array}\right)$, $h_{\alpha}=\left(\begin{array}{cc}
0 & i\\
-i & 0
\end{array}\right)$, then $(\Ad\mathbf{c}_{\alpha})X_{\alpha}=iX_{\beta}=i\left(\begin{array}{cc}
0 & 0\\
1 & 0
\end{array}\right).$ For $X_{\alpha}\in\mathfrak{g}_{\fb}^{1,-1}$, $X_{\beta}\in\g_{\mathbf{c}_{\alpha}\fb}^{1,1}.$
The Hodge-theoretically minded reader will no doubt think that $\left(\begin{array}{cc}
0 & 0\\
1 & 0
\end{array}\right)$ should be in $\g^{-1,-1}$; this is resolved by the effect of the
naive limit map in $\S5$ below, which roughly ``flips'' indices
$(p,q)\mapsto(-q,-p)$.%
} with \emph{real span}, and the real group $SL_{2}(\RR)$ cannot take
a non-real line to a real one. The comparable result in the general
case follows from this one since the flag $F^{\bullet}$ on $\mathfrak{g}$
along an $SL_{2}$-orbit is determined by the restriction of $F^{\bullet}$
to the $\mathfrak{sl}_{2}$. For cross-actions, the analysis is similar
except the corresponding ``non-real to real'' problem occurs at
$t=\pi$.\end{disc}The most interesting general statement (\emph{not}
assuming $\check{D}$ is a complete flag variety) we can make beyond
Remark \ref{rem Bruhat} and (a)-(b) above it, is that Cayley transforms
give all the codimension-one orbits in the closure of an open orbit:
\begin{prop}
\label{prop p. 25}Let $\mathbf{o}_{0}^{\{w\}}$ be any open orbit
in $\check{D}$ and $\mathbf{o}_{j}^{\{w'\}}$ an orbit of codimension
$1$, where we may take $H_{j}$ of real rank $1$ (cf. Theorem \ref{thm p. 20}(iii)).
Then $\mathbf{o}_{j}^{\{w'\}}\subset\partial\mathbf{o}_{0}^{\{w\}}$
$\iff$ $(H_{j},\chi_{j}^{[w']})=\left(\Psi_{\mathbf{c}_{\alpha}}(H_{0}),\Psi_{\mathbf{c}_{\alpha}}(\chi_{0}^{[w_{0}]})\right)$
for some $\alpha\in\Delta_{n}$, $w_{0}\in\{w\}$.\end{prop}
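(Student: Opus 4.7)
The plan is to establish the biconditional by treating the two directions separately, combining Corollary \ref{cor codim 1 p.19b}'s structural description of codimension-$1$ orbits with the explicit $\mathfrak{sl}_2$-computation in the Discussion immediately preceding.

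For the $(\Leftarrow)$ direction, I would apply the Discussion's computation to the $\mathfrak{sl}_2$-subgroup attached to the noncompact imaginary root $\alpha$. Setting $\fb_0 := \mathscr{F}(H_0, \chi_0^{[w_0]})^\bullet$, which lies in $\mathbf{o}_0^{\{w\}}$ since $w_0 \in \{w\}$, the one-parameter family $\mu_t \in G(\RR)^\circ$ ($t \in [0,\pi/2)$) from the Discussion keeps $\Ad(\mu_t)\fb_0$ inside $\mathbf{o}_0^{\{w\}}$ and has limit $\Ad(\mathbf{c}_\alpha) \fb_0 = \mathscr{F}(\Psi_{\mathbf{c}_\alpha}(H_0), \Psi_{\mathbf{c}_\alpha}(\chi_0^{[w_0]}))^\bullet = \mathscr{F}(H_j, \chi_j^{[w']})^\bullet$ at $t=\pi/2$. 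This places the latter (hence $\mathbf{o}_j^{\{w'\}}$) in $\partial \mathbf{o}_0^{\{w\}}$.

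For the $(\Rightarrow)$ direction, I would fix $\fb \in \mathbf{o}_j^{\{w'\}}$ and use Corollary \ref{cor codim 1 p.19b} to locate the real root $\beta \in \Delta_\RR(H_j)$ with $\g^{1,1} = \CC\langle X_\beta\rangle$; since $\mathrm{rk}_\RR(H_j) = 1$, necessarily $\Delta_\RR(H_j) = \{\pm\beta\}$. Form the real $\mathfrak{sl}_2$-triple $\{X_\beta, h_\beta, X_{-\beta}\}$ and let $L \cong SL_2$ be the corresponding $\RR$-subgroup. Since $X_\beta \in F^1$ and $h_\beta \in F^0$ lie in $\lq_\fb$ while $X_{-\beta} \in \g^{-1,-1}$ does not, the orbit $L \cdot \fb \cong \PP^1$, and $L(\RR) \cong SL_2(\RR)$ acts on it with two open ``hemisphere'' orbits and one closed orbit $\PP^1(\RR) \ni \fb$. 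Because the normal to $\mathbf{o}_j^{\{w'\}}$ at $\fb$ in $T_\fb\check D$ is (by the proof of Corollary \ref{cor codim 1 p.19b}) spanned by the image of $X_{-\beta}$, the $L$-orbit fills out the entire local normal direction. I would then argue that an approach to $\fb$ from $\mathbf{o}_0^{\{w\}}$ must, up to a $G(\RR)^\circ$-adjustment, lie in one of the two $L(\RR)$-hemispheres, so that hemisphere sits in $\mathbf{o}_0^{\{w\}}$; in particular $\fb' := \Ad(\mathbf{d}_\beta) \fb$ lies there. Writing $\fb' = \mathscr{F}(\Psi_{\mathbf{d}_\beta}(H_j), \Psi_{\mathbf{d}_\beta}(\chi_j^{[w']}))^\bullet$, noting $\Psi_{\mathbf{d}_\beta}(H_j)$ is compact (real rank $0$), and conjugating by $G(\RR)^\circ$ to identify it with $H_0$, I obtain $\fb' = \mathscr{F}(H_0, \chi_0^{[w_0]})^\bullet$ for some $w_0 \in \{w\}$. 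Inverting the Cayley transform with $\alpha := \Psi_{\mathbf{d}_\beta}(\beta) \in \Delta_n(H_0)$ (absorbing a possible sign via Remark \ref{rem Bruhat}(ii)) produces the desired identification.

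The hardest part will be the middle step in $(\Rightarrow)$: confirming that one of the two $L(\RR)$-hemispheres in $L \cdot \fb$ actually lies in $\mathbf{o}_0^{\{w\}}$, as opposed to in a \emph{different} open orbit that also has $\mathbf{o}_j^{\{w'\}}$ in its closure. The codimension-$1$ hypothesis is essential here: it reduces the normal geometry of $\mathbf{o}_j^{\{w'\}}$ at $\fb$ to the single $X_{-\beta}$-direction, so the $L$-orbit captures \emph{all} transverse geometry and any open orbit containing $\fb$ in its closure must align locally with one of the $L(\RR)$-hemispheres.
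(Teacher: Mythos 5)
Your proposal is correct and follows essentially the same route as the paper: the $(\Leftarrow)$ direction via the $\mathfrak{sl}_2$ Discussion, and the $(\Rightarrow)$ direction by locating the real root $\beta$ with $\g^{1,1}=\CC\langle X_{\beta}\rangle$, using the split $\mathfrak{sl}_2$ through $\beta$ (equivalently the curve $e^{t\delta_{\beta}}\fb$) to exhibit the two open orbits on either side as inverse Cayley transforms, and invoking the codimension-one transversality plus the fact that a codimension-one orbit bounds on at most two open orbits. The "hardest part" you flag is exactly the step the paper disposes of with that elementary observation, and your justification for it is the paper's.
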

\begin{proof}
Since ``$\Longleftarrow$'' is clear from the preceding discussion,
we prove the converse, using the elementary observation that any codimension-1
$G(\RR)^{\circ}$-orbit can bound on at most 2 open $G(\RR)^{\circ}$-orbits.

By assumption we have $\Delta_{\RR}(H_{j})=\{\beta,-\beta\}$, so
that $\mathbf{d}_{\beta}$ sends $H_{j}$ to $H_{0}$ (the only real-rank
$0$ Cartan in $\tilde{\xi}_{\RR}^{\theta}$), and $\beta$ to $\alpha\in\Delta_{n}(H_{0})$,
$\fb=\mathscr{F}(H_{j},\chi_{j}^{[w']})^{\bullet}$ to $\mathscr{F}(H_{0},\chi_{0}^{[w_{1}]})^{\bullet}$
(where $\chi_{0}^{[w_{1}]}=\mathbf{d}_{\beta}(\chi_{j}^{[w']})$).
Since $\mathbf{c}_{\alpha}$ reverses the operation, it gives $\partial\mathbf{o}_{0}^{\{w_{1}\}}\supset\mathbf{o}_{j}^{\{w'\}}$.
Writing $w_{\alpha}\in W_{\CC}(H_{0})$ for the element induced by
$\mathbf{c}_{\alpha}^{2}$, we have $\mathbf{d}_{-\beta}=\mathbf{c}_{\alpha}^{2}\mathbf{d}_{\beta}$
hence 
\[
\mathbf{d}_{-\beta}\mathscr{F}(H_{j},\chi_{j}^{[w']})^{\bullet}=\mathscr{F}(H_{0},\chi_{0}^{[w_{\alpha}w_{1}]}),
\]
so that $\partial\mathbf{o}_{0}^{\{w_{\alpha}w_{1}\}}\supset\mathbf{o}_{j}^{\{w'\}}$
is given by $\mathbf{c}_{-\alpha}$. Writing $\delta_{\beta}:=i\frac{\pi}{4}(X_{-\beta}-X_{\beta})$,
it is clear from the discussion that $e^{\epsilon\delta_{\beta}}\fb\in\mathbf{o}_{0}^{\{w_{1}\}}$
whilst $e^{-\epsilon\delta_{\beta}}\fb\in\mathbf{o}_{0}^{\{w_{\alpha}w_{1}\}}$.
Since the projection of $\delta_{\beta}\in\g^{1,1}\oplus\g^{-1,-1}$
to $T_{\fb}\check{D}$ is transverse to $T_{\fb}\mathbf{o}_{j}^{\{w'\}}$,
this clearly establishes that $\mathbf{o}_{0}^{\{w_{1}\}}$ and $\mathbf{o}_{0}^{\{w_{\alpha}w_{1}\}}$
are the only open orbits bounding on $\mathbf{o}_{j}^{\{w'\}}$. Now
apply Theorem \ref{thm p. 20}(ii).\end{proof}
\begin{defn}
The \emph{(naive) boundary strata} of the Mumford-Tate domain $D$
are the $G(\RR)^{\circ}$-orbits in $\partial D\subset\check{D}$.\end{defn}
\begin{cor}
\label{cor p. 27}(i) To obtain (representatives of) all codimension-1
boundary strata, it suffices to consider (modulo equivalence) those
$\fb=\mathbf{c}_{\alpha}\mathscr{F}(H_{0},\chi_{0}^{[w_{r}]})^{\bullet}$,
$\alpha\in\Delta_{n}(H_{0})$ and $w_{r}\in W_{\RR}^{\circ}$, with
$c_{\fb}=1$.

(ii) In the complete flag case, we have $c_{\fb}=1$ in (i) $\iff$
$\alpha$ is orthogonal to a wall of $w_{r}.C_{0}$. The resulting
codimension-1 stratum separates $D$ from the open orbit corresponding
to $w_{\alpha}w_{r}.C_{0}$, the Weyl chamber ``across the wall''
from $w_{r}.C_{0}$.\end{cor}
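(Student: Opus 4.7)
Part (i) is essentially immediate from Proposition \ref{prop p. 25} applied to the open orbit $D = \mathbf{o}_{0}^{\{\mathbf{e}\}}$: since $\{\mathbf{e}\} = W_{\RR}^{\circ}(H_{0})\cdot \mathrm{W}_{0}$ and $\mathrm{W}_{0}$ stabilizes $\chi_{0}$, every codimension-one orbit in $\partial D$ has a representative $\fb = \mathbf{c}_{\alpha}\mathscr{F}(H_{0},\chi_{0}^{[w_{r}]})^{\bullet}$ with $\alpha\in\Delta_{n}(H_{0})$ and $w_{r}\in W_{\RR}^{\circ}$; the restriction $c_{\fb}=1$ just singles out those $(\alpha,w_{r})$ actually producing codim-$1$ orbits.

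For part (ii), the key step is the bigrading computation at such a $\fb$. Set $H:=\Psi_{\mathbf{c}_{\alpha}}(H_{0})$ and $\chi:=\Psi_{\mathbf{c}_{\alpha}}(\chi_{0}^{[w_{r}]})$, with tangent $\xi = \Ad(\mathbf{c}_{\alpha})\phi_{0}^{[w_{r}]}$. Using $\overline{\mathbf{c}_{\alpha}} = \mathbf{c}_{\alpha}^{-1}$ (since $\alpha\in\Delta_{n}$ gives $X_{-\alpha}=\overline{X_{\alpha}}$) and $\overline{\phi_{0}^{[w_{r}]}} = -\phi_{0}^{[w_{r}]}$ (as $H_{0}$ is compact), we obtain $\bar\xi = -\Ad(\mathbf{c}_{\alpha}^{-1})\phi_{0}^{[w_{r}]}$; combined with $\Ad(\mathbf{c}_{\alpha}^{-2})|_{\lh_{0}} = w_{\alpha}$, the root space $\Ad(\mathbf{c}_{\alpha})\g_{\alpha'}$ (for each $\alpha'\in\Delta(H_{0})$) lands in $\g^{p,q}$ with
\[
p = h(\alpha'),\qquad q = -h(\alpha') + \langle\alpha',\alpha^{\vee}\rangle\,h(\alpha),
\]
where $h:=\pi_{\chi_{0}^{[w_{r}]}}$ is the height function for the Weyl chamber $w_{r}.C_{0}$. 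In particular, $\alpha$ itself yields a \emph{real} root $\beta$ of $H$ occupying $\g^{h(\alpha),h(\alpha)}$.

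Invoking Corollary \ref{cor codim 1 p.19b}, $c_{\fb}=1$ is equivalent to $\g^{1,1}$ being one-dimensional (spanned by a real root vector) and $\g^{p,q}=0$ for all other $(p,q)$ with $p,q>0$. The former forces $h(\alpha)=1$ (after absorbing the sign via Remark \ref{rem Bruhat}(ii) if $h(\alpha)=-1$), equivalently $\alpha$ is simple in $w_{r}.C_{0}$, i.e.\ $\alpha\perp$ a wall of $w_{r}.C_{0}$. For the converse, assume $h(\alpha)=1$: any $\alpha'\neq\pm\alpha$ contributing to $(p,q)=(1,1)$ would demand $h(\alpha')=1$ and $\langle\alpha',\alpha^{\vee}\rangle=2$, contradicting the standard non-positivity of pairings of distinct simple roots; and a contribution to $(p,q)\neq(1,1)$ with $p,q>0$ needs $0 < h(\alpha') < n:=\langle\alpha',\alpha^{\vee}\rangle$, but the $\alpha$-root-string $\{\alpha'-j\alpha : 0\leq j\leq n\}$ (all roots) then includes a root of height $0$, which is impossible.

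Finally, the separating claim reads off from the proof of Proposition \ref{prop p. 25}: the codim-$1$ stratum $\mathbf{o}_{\fb}$ bounds exactly two open orbits, namely $D=\mathbf{o}_{0}^{\{w_{r}\}}$ (as $\{w_{r}\}=\{\mathbf{e}\}$) and $\mathbf{o}_{0}^{\{w_{\alpha}w_{r}\}}$; writing $\alpha=w_{r}.\alpha_{i}$ for some simple $\alpha_{i}$ of $C_{0}$ gives $w_{\alpha}w_{r} = w_{r}w_{\alpha_{i}}$, so the latter orbit corresponds to $(w_{\alpha}w_{r}).C_{0} = w_{r}w_{\alpha_{i}}.C_{0}$, adjacent to $w_{r}.C_{0}$ across the wall $\alpha^{\perp}$. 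The main technical hurdle is the bigrading computation, requiring precise bookkeeping of $\Ad(\mathbf{c}_{\alpha})$ acting on both $\chi$ and its complex conjugate; the subsequent root-string argument then disposes of extraneous positive-positive pieces uniformly across simply- and non-simply-laced types, keyed on the non-existence of a root of height zero.
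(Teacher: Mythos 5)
Your proposal is correct, and for part (i) and the final ``separating'' claim it follows the paper's route exactly (Proposition \ref{prop p. 25} plus $\mathbf{c}_{\alpha}^{2}=w_{\alpha}$). Where you genuinely diverge is in the equivalence $c_{\fb}=1\iff h(\alpha)=1$ of part (ii), specifically its ``if'' direction. The paper reduces to $w_{r}=1$, translates orthogonality-to-a-wall into $\pi_{\chi}(\beta)=1$ via Lemma \ref{lem complete flag}, and then disposes of other root vectors in $F^{1}\cap\overline{F^{1}}$ by a qualitative conjugation argument: any $\gamma\in\Delta^{+}(\chi)\setminus\{\beta\}$ decomposes as $r\beta+i\delta$ with $\delta\neq0$ in a half-plane of $\mathfrak{t}_{j}$, so $\bar{\gamma}$ is not positively graded and $X_{\gamma}\notin\overline{F^{1}}$. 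You instead derive the exact bidegree $(p,q)=\bigl(h(\alpha'),\,-h(\alpha')+\langle\alpha',\alpha^{\vee}\rangle h(\alpha)\bigr)$ of $\Ad(\mathbf{c}_{\alpha})\g_{\alpha'}$ from $\bar{\xi}=-\Ad(\mathbf{c}_{\alpha}^{-1})\phi_{0}^{[w_{r}]}$ and $\Ad(\mathbf{c}_{\alpha}^{-2})|_{\lh_{0}}=w_{\alpha}$, and then kill all other $\g^{p,q}$ with $p,q>0$ by a root-string argument keyed to the nonexistence of height-zero roots in the complete flag case (this in fact subsumes your separate $(1,1)$ case, since $j=h(\alpha')=1<n$ already produces the forbidden height-zero root $\alpha'-\alpha$). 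I checked your formula against the paper's $\mathfrak{sl}_{2}$ discussion ($X_{\alpha}\in\g_{\fb}^{1,-1}\mapsto X_{\beta}\in\g_{\mathbf{c}_{\alpha}\fb}^{1,1}$) and it agrees. What your version buys is an explicit, checkable computation of the entire mixed Hodge diagram of $\mathbf{c}_{\alpha}\fb$ in terms of root data of $H_{0}$ -- more information than needed here, but useful for the examples in $\S6$ -- at the cost of slightly heavier bookkeeping; the paper's half-plane argument is shorter but leaves more to the reader.
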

\begin{proof}
\emph{(i)} is clear from Proposition \ref{prop p. 25}. For \emph{(ii)},
it suffices to consider the case $w_{r}=1$. By Remark \ref{rem Bruhat}(ii),
we may assume $\alpha\in\Delta_{n}^{+}$; $\mathbf{c}_{\alpha}$ transforms
$H_{0}\mapsto H_{j}$, $\alpha$ to $\beta\in\Delta_{\RR}(H_{j})=\{\beta,-\beta\}$,
and $\fb_{0}$ to $\fb:=\mathscr{F}(H_{j},\chi:=\Psi_{\mathbf{c}_{\alpha}}(\chi_{0}))^{\bullet}$
with associated $Y$ and $\phi$. Now,
\[
\alpha\text{ is }\perp\text{ to a wall of }C_{0}\;\iff
\]
\[
\alpha\text{ is simple for }\Delta^{+}\;\iff
\]
\begin{equation}\label{eqn *p.27}\left\{ \begin{array}{c}\beta\text{ is simple for the system } \Delta^{+}(\chi):=\mathbf{c}_{\alpha}(\Delta^{+})\\=\pi_{\chi}^{-1}(\ZZ_{>0})\cap\Delta(H_{j})\text{ of positive roots.}\end{array}\right.
\end{equation}Since $\pi_{\chi}$ of simple roots is $1$ by Lemma \ref{lem complete flag},
and therefore exceeds $1$ for any other positive root, \eqref{eqn *p.27}
is equivalent to $\pi_{\chi}(\beta)=1$.

If $c_{\fb}=1$, then by Corollary \ref{cor codim 1 p.19b} $\g^{1,1}=\CC\langle X_{\beta}\rangle$,
and so $\pi_{\chi}(\beta)=1$. Conversely, assume \eqref{eqn *p.27}
and consider $\gamma\in\Delta\backslash\{\beta\}$ with $X_{\gamma}\in F^{1}$
($\implies$ $\gamma\in\Delta^{+}(\chi)$). Then $\gamma$ is of the
form $\gamma=r\beta+i\delta$, with $r\in\RR$ and $\delta$ in some
open half-plane $\mathbb{H}_{j}\subset\mathfrak{t}_{j}$, and $\bar{\gamma}=r\beta+i(-\delta)$
$\implies$ $\overline{X_{\gamma}}\notin F^{1}$ $\implies$ $X_{\gamma}\notin\overline{F^{1}}$.
So no root vectors besides $X_{\beta}$ lie in $F^{1}\cap\overline{F^{1}}$,
and $c_{\fb}=1$.

The last statement follows from $\mathbf{c}_{\alpha}^{2}=w_{\alpha}$.\end{proof}
\begin{rem}
(i) In the situation of (the proof of) Corollary \ref{cor p. 27},
$\dim\la_{j}=1$, $Y\in\la_{j}$, and $(\ad Y)X_{\beta}=2X_{\beta}$
force $\{X_{\beta},Y,X_{-\beta}\}$ to be an $\mathfrak{sl}_{2,\RR}$-triple.

(ii) We can reduce the computation of $c_{\fb}$ to pictures by computing
the $\dim(\g^{p,q})$, but the following can be faster. Let $\Delta_{+}(\chi_{j}^{\{w\}})\subset\Delta(H_{j})$
be the roots \emph{positively} \emph{graded} by $\chi_{j}^{\{w\}}$.
(This is an actual system of positive roots if and only if $\check{D}$
is a complete flag variety.) Then $c_{\fb}=\left|\Delta_{+}(\chi_{j}^{\{w\}})\cap\overline{\Delta_{+}(\chi_{j}^{\{w\}})}\right|.$
\end{rem}

\section{Boundary components and the naive limit map}

The reader may have noticed the formal similarity between the $\RR$-mixed
Hodge structures associated to flags in $\check{D}$ (cf. $\S3$)
and limiting mixed Hodge structures. In this section, we shall elaborate
on that relationship by determining precisely when a naive boundary
stratum $\mathcal{O}\subset\partial D$ contains a flag $\fb$ in
the ``naive'' limit of a polarized variation of Hodge structure
into $\Gamma\backslash D$, $\Gamma\leq G(\QQ)$ a discrete group.

\subsection{Limiting filtrations}

For simplicity, let
\[
\Phi:\Delta^{*}\to\langle T\rangle\backslash D
\]
be the period map associated to a PVHS over the punctured unit disk,%
\footnote{or more generally over any $\Delta_{\epsilon}^{*}:=\{z\in\Delta^{*}|\,|z|<\epsilon\}$.%
} with unipotent monodromy $T\in G(\QQ)$, and $N:=\log(T)\in\g_{\QQ}$.
We can take the limit of $\Phi$ at the origin in two different ways:
\begin{enumerate}
\item Choosing a local parameter $q$ on $\Delta^{*}$ (and thus $\tau:=\ell(q):=\frac{\log(q)}{2\pi i}$
on $\lh$)
\[
\Psi:=e^{-\tau N}\Phi:\,\Delta^{*}\to\check{D}
\]
is well-defined and extends across the origin by the Nilpotent Orbit
Theorem of \cite{Sc}.%
\footnote{Technically, one chooses also a lift $\tilde{\Phi}$ to define $\Psi$,
but here this is absorbed by the choice of $q$. If we start with
a period map into $\Gamma\backslash D$, the lift becomes essential.%
} Define the \emph{limiting Hodge flag}
\[
\tilde{\mathscr{F}}_{lim}(\Phi):=\Psi(0)\in\check{D},
\]
where the tilde is a reminder of the dependence on $q$.
\item Choosing a lift $\tilde{\Phi}:\lh\to D$ of $\Phi$, we define the
\emph{naive limiting flag} by
\[
\hat{\mathscr{F}}_{lim}(\Phi):=\lim_{\Im(\tau)\to\infty}\tilde{\Phi}(\tau)\in cl(D),
\]
where the limit is taken whilst confining $\Re(\tau)$ to an arbitrary
compact interval. As we shall see, it depends only on $\Phi$.
\end{enumerate}
Remark that in (1), transversality forces $\tilde{\mathscr{F}}_{lim}(\Phi)^{-1}\ni N$
in the limit.

Write $\tilde{F}^{\bullet}:=\tilde{\mathscr{F}}_{\lim}(\Phi)^{\bullet}$,
and let $\tilde{W}_{\b}:=W(N)_{\b}$ denote the unique filtration
on $\g_{\QQ}$ satisfying \begin{flalign*}\;\;\; & \text{(i)}\;\; N(\tilde{W}_{\ell}\g_{\QQ})\subset \tilde{W}_{\ell-2}\g_{\QQ} \;(\forall \ell) &\\
\;\;\;\ & \text{(ii)} \;\; N^k :\, Gr^{\tilde{W}}_k \g_{\QQ} \to Gr^{\tilde{W}}_{-k}\g_{\QQ}\;\text{is an isomorphism}\;(\forall k\geq 0). & 
\end{flalign*}Then by the $SL_{2}$-orbit theorem of \cite{Sc}, $\psi_{q}\Phi:=(\tfb,\tilde{W}_{\b})$
is a $\QQ$-MHS on $\g$, called the limiting mixed Hodge structure
of $\Phi$ (with respect to the parameter $q$). Let 
\[
\g_{\CC}=\bigoplus_{(p,q)\in\ZZ^{2}}\tilde{\g}_{0}^{p,q}
\]
be the unique (Deligne) bigrading such that \begin{flalign*}\;\;\; & \text{(a)} \;\; \tilde{F}^a \g_{\CC} = \bigoplus_{p\geq a ;\, q\in \ZZ} \tilde{\g}^{p,q}_0 &\\
\;\;\;\ & \text{(b)} \;\; \tilde{W}_b \g_{\CC} = \bigoplus_{p+q\leq b} \tilde{\g}^{p,q}_0 & \\
\;\;\; & \text{(c)} \;\; \overline{\tilde{\g}^{b,a}_0} \equiv \tilde{\g}^{a,b}_0 \; \text{mod}\; \oplus_{p<a;\, q<b}\tilde{\g}^{p,q}_0 ,&
\end{flalign*}with \emph{equality} in (c) if and only if $\psi_{q}\Phi$ is $\RR$-split.

Now we clearly have $N\in\left(\tilde{F}^{-1}\cap\overline{\tilde{F}^{-1}}\cap\tilde{W}_{-2}\right)_{\RR}\subset\tilde{\g}_{0,\RR}^{-1,-1}$.
There also exists a unique element $\delta\in\left(\oplus_{(p,q)\in(\ZZ_{<0})^{\times2}}\tilde{\g}_{0}^{p,q}\right)_{\RR}$
(commuting with $N$) and a holomorphic map $\Gamma:\Delta\to\oplus_{p<0;\, q\in\ZZ}\tilde{\g}_{0}^{p,q}$
(with $\Gamma(0)=0$) such that, putting $\tilde{F}_{\RR}^{\b}:=e^{-i\delta}\tfb$,
$(\tfb_{\RR},\tilde{W}_{\b})$ is $\RR$-split and $\tilde{\Phi}(\tau)=e^{\tau N}e^{\Gamma(q)}\tfb$.
Writing $\g_{\CC}=\oplus_{(p,q)\in\ZZ^{2}}\tilde{\g}^{p,q}$ for the
bigrading associated to $\left(\tfb_{\RR},\tilde{W}_{\b}\right)$,
we remark that $\tfb_{\RR}$ does not depend on the choice of $q$,
while $\delta$ is still in $\oplus_{(p,q)\in(\ZZ_{<0})^{\times2}}\tilde{\g}^{p,q}$
and $N\in\tilde{\g}^{-1,-1}$. Moreover, the element $\tilde{Y}\in End(\g_{\RR})$
defined by $\ad(\tilde{Y})|_{\tilde{\g}^{p,q}}=(p+q)\text{id}_{\tilde{\g}^{p,q}}$
($\forall p,q$) belongs to $\tilde{\g}_{\RR}^{0,0}$ (see the proof
of Lemma 3.2 in \cite{KP}, or below) and there is a unique $N_{+}\in\tilde{\g}_{\RR}^{1,1}$
completing $(N,Y)$ to an $\mathfrak{sl}_{2}$-triple. One consequence
of this is that \begin{equation}\label{eqn ***}W(N_+)_{-k} = \bigoplus_{p+q\geq k}\tilde{\g}^{p,q}.
\end{equation}

Computing 
\[
\begin{array}{ccc}
\tilde{\Phi}(\tau) & = & e^{\tau N}e^{\Gamma(q)}e^{i\delta}\tfb_{\RR}\;\;\;\;\;\;\;\;\;\;\;\;\;\;\\
 & = & e^{\tau N}e^{\Gamma(q)}e^{-\tau N}e^{\tau N}e^{i\delta}\tfb_{\RR}\\
 & = & e^{\tau[N,\Gamma(q)]}e^{i\delta}e^{\tau N}\tfb_{\RR},\;\;\;\;\;\;\;
\end{array}
\]
we note that by \cite[p. 478]{CKS}\begin{equation}\label{eqn !! p. 32}e^{\tau N}\tfb_{\RR} = e^{\frac{1}{\tau}N_+\hat{F}^{\b}},
\end{equation}where\begin{equation}\label{eqn ** p. 32}\hat{F}^a := \bigoplus_{p\in \ZZ ;\, q \leq -a} \tilde{\g}^{p,q}.
\end{equation}So for the naive limit we have
\[
\begin{array}{ccc}
\hat{\mathscr{F}}_{lim}(\Phi) & = & \underset{\Im(\tau)\to0}{\lim}e^{q\ell(q)\mathcal{O}(1)}e^{i\delta}e^{\frac{1}{\tau}N_{+}}\hat{F}^{\bullet}\\
 & = & e^{i\delta}\hat{F}^{\b}\;\;\;\;\;\;\;\;\;\;\;\;\;\;\;\;\;\;\;\;\;\;\;\;\;\;\;\;\\
 & = & \hat{F}^{\b},\;\;\;\;\;\;\;\;\;\;\;\;\;\;\;\;\;\;\;\;\;\;\;\;\;\;\;\;\;\;\;
\end{array}
\]
since $\delta\in\hat{F}^{1}$. We conclude from this that the naive
limit can be determined from the limiting Hodge flag, but is independent
of $q$; in fact, it only depends on the \emph{$SL_{2}$-orbit }$e^{\tau N}\tfb_{\RR}$
canonically associated to $\Phi$, which shares its naive limit. Therefore
it will suffice to restrict our investigation of which boundary strata
contain a naive limit flag to limits of $SL_{2}$-orbits. The following
definitions will serve to formalize these observations.
\begin{defn}
\label{defn boundary component}Given a nilpotent element $N\in\g_{\QQ}$,
let $\tilde{B}(N)$ {[}resp. $\tilde{B}_{\RR}(N)${]} be the subset
of $\check{D}$ consisting of flags $\tfb$ such that $e^{\tau N}\tfb$
is a nilpotent {[}resp. $SL_{2}$-{]}orbit: that is,\vspace{2mm}\\
(a) $e^{\tau N}\tfb\in D$ for $\Im(\tau)\gg0$\\
(b) $N\tilde{F}^{j}\subset\tilde{F}^{j-1}$ ($\forall j$)\\
{[}(c) $\left(\tfb,W(N)_{\b}\right)$ is $\RR$-split{]}.\vspace{2mm}\\
The (Hodge-theoretic, rational) \emph{boundary component} associated
to $N$ is 
\[
B(N):=\Ad(e^{\CC\langle N\rangle})\backslash\tilde{B}(N),
\]
with $\RR$-split locus $B_{\RR}(N):=\Ad(e^{\RR\langle N\rangle})\backslash\tilde{B}_{\RR}(N)$.
\end{defn}
Let $N$ be such that $\tilde{B}(N)\neq\emptyset$. Given $\tfb\in\tilde{B}(N)$,
$e^{\tau N}\tfb$ may be regarded as a period map $\Phi_{(\tfb,N)}:\Delta_{\epsilon}^{*}\to\langle e^{N}\rangle\backslash D$
with LMHS $\psi_{q}\Phi_{(\tfb,N)}=\left(\tfb,W(N)_{\b}\right)$.
Clearly, we may regard $\tilde{B}(N)$ as the set of possible LMHS
for period maps with local monodromy $e^{N}$.
\begin{defn}
The \emph{naive limit map}
\[
\begin{array}{cccc}
\mathscr{F}_{lim}^{N}: & \tilde{B}(N) & \longrightarrow & cl(D)\\
 & \tfb & \longmapsto & \hat{\mathscr{F}}_{lim}\left(\Phi_{(\tfb,N)}\right)
\end{array}
\]
sends nilpotent orbits to their naive limit flags.
\end{defn}
There are several important remarks at this point. First, it is clear
that $\mathscr{F}_{\infty}^{N}$ factors through $\tilde{B}_{\RR}(N)$
(and $B(N)$, hence $B_{\RR}(N)$): we have a diagram\[\xymatrix{\tilde{B}(N) \ar@{->>} [rd]_{\sigma_{\RR}} \ar@{->>} [rrd]^{\mathscr{F}^N_{lim}} \\ \tilde{B}_{\RR}(N) \ar@{=} [r] \ar@{^(->} [u] & \tilde{B}_{\RR}(N) \ar@{->>} [r]_{\mathscr{F}^N_{lim}} & \hat{B}(N) \ar@{^(->} [r] & cl(D), }\]where
$\sigma_{\RR}$ is the canonical splitting described above, and
\[
\hat{B}(N):=\mathscr{F}_{lim}^{N}\left(\tilde{B}(N)\right)=\mathscr{F}_{lim}^{N}\left(\tilde{B}_{\RR}(N)\right).
\]

\begin{prop}
A naive boundary stratum $\mathcal{O}$ contains a naive limit of
a VHS if and only if $\mathcal{O}$ contains a $\hat{B}(N)$.
\end{prop}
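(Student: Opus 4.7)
The proposition amounts to repackaging the preceding analysis of nilpotent orbits and naive limits into a clean statement about $G(\RR)^{\circ}$-orbits; I handle the two implications separately.

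For the ($\Leftarrow$) direction, suppose $\hat{B}(N)\subseteq\mathcal{O}$ for some nilpotent $N\in\g_{\QQ}$. Pick any $\tfb\in\tilde{B}(N)$, which is nonempty since $\hat{B}(N)$ is. Conditions (a)--(b) of Definition \ref{defn boundary component} ensure that the prescription $\Phi_{(\tfb,N)}(q):=e^{\ell(q)N}.\tfb$ defines a PVHS on some $\Delta_{\epsilon}^{*}$ with unipotent monodromy $e^{N}$. Its naive limit is, by the very definition of $\mathscr{F}_{lim}^{N}$, the point $\mathscr{F}_{lim}^{N}(\tfb)\in\hat{B}(N)\subseteq\mathcal{O}$.

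For ($\Rightarrow$), suppose $\mathcal{O}$ contains $\hat{\mathscr{F}}_{lim}(\Phi)$ for some PVHS $\Phi:\Delta^{*}\to\Gamma\backslash D$. After replacing the disk by a finite cover we may assume the monodromy is unipotent with logarithm $N\in\g_{\QQ}$, and consider the limiting MHS $\psi_{q}\Phi=(\tfb,W(N))$ together with its canonical $\RR$-splitting $\tfb_{\RR}\in\tilde{B}_{\RR}(N)$. The computation culminating in \eqref{eqn !! p. 32}--\eqref{eqn ** p. 32} established that the naive limit of $\Phi$ coincides with the naive limit $\hat{F}^{\bullet}$ of the associated $SL_{2}$-orbit $e^{\tau N}.\tfb_{\RR}$. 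Thus $\hat{\mathscr{F}}_{lim}(\Phi)=\mathscr{F}_{lim}^{N}(\tfb_{\RR})\in\hat{B}(N)$, and in particular $\mathcal{O}\cap\hat{B}(N)\neq\emptyset$.

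To upgrade nonempty intersection to the containment $\hat{B}(N)\subseteq\mathcal{O}$ --- which is the real content of ($\Rightarrow$) under the stronger reading of ``contains'' --- I would argue that $\hat{B}(N)$ is contained in a single $G(\RR)^{\circ}$-orbit. Two ingredients combine to this end: (i) the formation of $\mathscr{F}_{lim}^{N}$, via the canonical splitting $\sigma_{\RR}$ and the $\Im(\tau)\to\infty$ limit of an $SL_{2}$-orbit, is equivariant under the real centralizer $Z_{G}(N)(\RR)^{\circ}$; and (ii) by the analysis of \cite{KP}, the $\RR$-split boundary component $B_{\RR}(N)=\tilde{B}_{\RR}(N)/e^{\RR\langle N\rangle}$ is a $Z_{G}(N)(\RR)^{\circ}$-homogeneous space. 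Chasing the diagram displayed just before the proposition then exhibits $\hat{B}(N)$ as a single $Z_{G}(N)(\RR)^{\circ}$-orbit, forcing $\hat{B}(N)\subseteq\mathcal{O}$. The main obstacle I anticipate is (ii): verifying that any two $\RR$-split polarized MHS with the same weight filtration $W(N)$ are conjugate by the \emph{real} centralizer of $N$ (rather than merely by the complex one) is where the polarization condition and the rigidity in the $SL_{2}$-orbit theorem do real work, and where I would lean most heavily on the prior analysis in \cite{KP}.
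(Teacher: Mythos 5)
Your argument is correct and is essentially the paper's own (the paper states this proposition without a separate proof, as an immediate consequence of the computation showing that $\hat{\mathscr{F}}_{lim}(\Phi)$ equals the naive limit of the associated $SL_{2}$-orbit, i.e.\ lies in $\hat{B}(N)$, together with the converse construction $\Phi_{(\tfb,N)}$). The one ingredient you flag as a potential obstacle --- that $\tilde{B}_{\RR}(N)$, and hence $\hat{B}(N)$, is a single $Z(N)(\RR)$-orbit, so that meeting $\mathcal{O}$ upgrades to containment in $\mathcal{O}$ --- is exactly what the paper imports from \cite[sec.~7]{KP} and records in $\S 5.3$ via $\hat{B}(N)=Z(N)(\RR).\fb$, so no gap remains.
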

Next, we may regard $\mathscr{F}_{lim}^{N}$ as sending a ($\QQ$-)LMHS
$\left(\fb,W(N)_{\b}\right)$ to the $\RR$-MHS $\left(\hat{F}^{\b},W(N_{+})_{\b}\right)$,
where $\hfb=\mathscr{F}_{lim}^{N}(\tfb)$ and $N_{+}$ is as in the
argument above. By \eqref{eqn ***} and \eqref{eqn ** p. 32}, this
has Deligne bigrading
\[
\hat{\g}^{p,q}:=\tilde{\g}^{-q,-p}.
\]
In other words, viewing MHS in terms of their bigradings, on the $\RR$-split
locus $\tilde{B}_{\RR}(N)$ the naive limit map \emph{is nothing but
the reflection about the antidiagonal}. As an easy consequence, the
upside-down ``weight'' filtration \eqref{eqn *sharp} attached to
$\hfb\in\check{D}$ is completely determined by $N$:
\[
\sum_{p\in\ZZ}\hat{F}^{p}\cap\overline{\hat{F}^{j-p}}=W(N)_{-j}\,(=\tilde{W}_{-j}).
\]

Furthermore, since Hodge tensors remain Hodge in the limit, the mixed
Hodge representation $\tilde{\vf}_{\tfb}(w,z)$ attached to the bigrading
factors through $G(\RR)$, forcing the associated $i\tilde{\phi}$
and $\tilde{Y}$ into $\tilde{\mathfrak{g}}_{\RR}^{0,0}$. For $\tfb\in\tilde{B}_{\RR}(N)$,
the ``flip'' merely sends these to $i\hat{\phi}:=i\tilde{\phi}$
and $\hat{Y}:=-\tilde{Y}$.%
\footnote{From this perspective, the ``loss of extension-class information''
we shall describe later seems rather surprising, but has the heuristic
explanation that ``more Hodge tensors reside at the bottom of $\mathfrak{sl}_{2}$-chains
than at the top'': flipping them to the top annihilates some extensions.%
} Taking a Cartan subalgebra $\lh\ni\tilde{Y},\tilde{\phi}$, $\lh$
lives in $\tilde{\g}^{0,0}$, whereupon the entirety of Lemma \ref{bigrading lemma}
holds with $\g^{p,q}:=\hat{\g}^{p,q}$, $\fb:=\hfb$, and $\tilde{W}_{\b}:=W(N)_{\b}$.
So the LMHS provides Cartan/co-character data for the naive limit
flag; in particular:
\begin{prop}
In the complete flag case, and more generally whenever $\dim_{\CC}\g^{0,0}=\text{rank}(G_{\CC})$,
$\mathscr{F}_{lim}^{N}|_{\tilde{B}_{\RR}(N)}$ factors unambiguously
through $\tilde{\Xi}_{\RR}$.
\end{prop}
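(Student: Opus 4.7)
The plan is to construct the factorization $\tilde{B}_{\RR}(N) \to \tilde{\Xi}_{\RR}$ canonically from the ``flipped'' limiting bigrading, and then invoke the rank hypothesis to collapse the only source of ambiguity, namely the choice of Cartan inside $\hat{\g}^{0,0}$.

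I would start by recalling from the discussion preceding the proposition that, for $\tfb \in \tilde{B}_{\RR}(N)$, the image $\hfb := \mathscr{F}_{lim}^{N}(\tfb)$ carries the bigrading $\hat{\g}^{p,q} := \tilde{\g}^{-q,-p}$ together with $\hat{Y} := -\tilde{Y}$, $\hat{\phi} := \tilde{\phi}$ in $\hat{\g}_{\RR}^{0,0}$, and that the full conclusion of Lemma \ref{bigrading lemma} applies to $(\g_{\CC},\hfb,W(N)_{\b})$ with this bigrading. In particular, any Cartan $H \leq G_{\RR}$ whose root-space decomposition refines $\{\hat{\g}^{p,q}\}$ (as required by condition (i) of Lemma \ref{bigrading lemma}) satisfies $\lh \subseteq \hat{\g}^{0,0}$; and $\hat{\g}^{0,0}$ is itself defined over $\RR$, since the antidiagonal flip $(p,q) \mapsto (q,p)$ of property (iii) fixes $(0,0)$.

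Next I would invoke the rank hypothesis $\dim_{\CC}\hat{\g}^{0,0} = \text{rank}(G_{\CC})$: because $\dim_{\CC}\lh$ also equals the rank, the inclusion $\lh \subseteq \hat{\g}^{0,0}$ is forced to be an equality, and $H$ is uniquely determined as the $\RR$-rational torus whose complex Lie algebra is $\hat{\g}^{0,0}$. The cocharacter is then pinned down by $\xi := \hat{Y}+\hat{\phi} \in \lh$: on each root space $\CC\langle X_{\alpha}\rangle \subseteq \hat{\g}^{p,q}$, the mixed Hodge representation $\vft_{\hfb}(w,z)$ acts by $w^{p+q}z^{p-q}$, which combined with the identity $\Ad(\chi_{\xi}(z))X_{\alpha} = z^{2\pi_{\chi_{\xi}}(\alpha)}X_{\alpha}$ from \S2 forces $\pi_{\chi_{\xi}}(\alpha) = p$, whence $\mathscr{F}(H_{\CC},\chi_{\xi})^{\b} = \hfb$. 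Membership of $(H_{\CC},\chi_{\xi})$ in $\tilde{\Xi}_{\RR}$ is then automatic from Theorem \ref{thm surj}(i) together with Remark \ref{rem p. 13}(b), giving the desired well-defined assignment $\tfb \mapsto (H_{\CC},\chi_{\xi})$.

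In the complete flag case, the rank hypothesis is automatic from \eqref{eqn dagger*}. The main obstacle, and indeed the only real content of the result, is the uniqueness of the Cartan: absent the dimension hypothesis, $\hat{\g}^{0,0}$ is merely a reductive subalgebra properly containing a Cartan, and $\hat{Y},\hat{\phi}$ then lie in a positive-dimensional family of Cartans, each producing a distinct element of $\tilde{\Xi}_{\RR}$ above the same $\hfb$. Once the Cartan is pinned down, the corresponding cocharacter follows by the routine $\ad$-eigenvalue computation sketched above.
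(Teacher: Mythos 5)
Your argument is correct and follows essentially the same route as the paper, which proves this proposition via the discussion immediately preceding it: the flipped bigrading $\hat{\g}^{p,q}=\tilde{\g}^{-q,-p}$ supplies Cartan/cocharacter data with $\lh\subseteq\hat{\g}^{0,0}$, and the rank hypothesis (automatic in the complete flag case by \eqref{eqn dagger*}) forces $\lh=\hat{\g}^{0,0}$, eliminating the only ambiguity. Your explicit identification of the cocharacter via $\xi=\hat{Y}+\hat{\phi}$ and the appeal to Remark \ref{rem p. 13}(b) for membership in $\tilde{\Xi}_{\RR}$ are faithful elaborations of what the paper leaves implicit.
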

Finally, we observe that it is possible to make $\mathscr{F}_{lim}$
even more ``symmetric'', by extending Definition \ref{defn boundary component}
to the setting of $\RR$-nilpotent orbits, i.e. where $N\in\g_{\RR}$.
The resulting \emph{real boundary components} $\tilde{B}(N)\supset\tilde{B}_{\RR}(N)$
can now only be regarded as parametrizing $\RR$-LMHS $\left(\tfb,W(N)_{\b}\right)$
(with the attendant \emph{much} coarser equivalence relation%
\footnote{Obviously, we are not going modulo this relation, or $\tilde{B}_{\RR}(N)$
would reduce to a point.%
}), an apparent weakness. On the other hand, a short computation with
the Deligne-Schmid formula \eqref{eqn !! p. 32} shows that if we
let $G(\RR)^{\circ}$ act on everything in sight ($(N,Y,N_{+}),\tilde{F}_{\RR}^{\b},\hat{F}^{\b},\tilde{\g}^{p,q}$,
etc.) then the naive limit becomes a $G(\RR)^{\circ}$-equivariant
map\begin{equation}\label{eqn dagger p. 36}\mathscr{F}^{\mathcal{N}}_{lim}:\,\bigcup_{N\in\mathcal{N}}\tilde{B}_{\RR}(N)\longrightarrow cl(D),
\end{equation}where $\mathcal{N}$ is any \emph{nilpotent orbit} (that is, the $G(\RR)^{\circ}$-orbit
of a nilpotent element%
\footnote{This is the usual meaning of the term in Lie theory (as opposed to
Hodge theory).%
}) in $\g_{\RR}$. The image of \eqref{eqn dagger p. 36} is obviously
a boundary stratum, which we shall denote by $\hat{\mathcal{B}}(\mathcal{N})$.
In this sense, if one flag in a stratum is a naive limit, they all
are. On the other hand, there can exist strata of the form $\hat{\mathcal{B}}(\mathcal{N})$
that do not contain a $\hat{B}(N)$ for $N\in\g_{\QQ}$ (cf. $\S6.2.3$),
essentially because there can exist nilpotent orbits with no rational
points.

\subsection{Main results}

Returning to the question motivating this section, we wish to determine
when a naive boundary stratum contains a $\hat{B}(N)$ (or more generally,
is a $\hat{\mathcal{B}}(\mathcal{N})$). The key is given by the following
two definitions, which concern the situation 
\[
\fb=\mathscr{F}(H,\chi)^{\b}\in\mathcal{O}\subset cl(D)
\]
together with its associated
\begin{itemize}
\item bigrading (cf. Lemma \ref{bigrading lemma})\begin{equation}\label{eqn !! p. 37}\g_{\CC} = \bigoplus_{p,q} \g^{p,q},\;\; \overline{\g^{p,q}}=\g^{q,p},
\end{equation}
\item filtration 
\[
\tilde{W}_{-j}\g_{\CC}:=\sum_{p\in\ZZ}F^{p}\cap\overline{F^{j-p}}=\bigoplus_{p+q\geq j}\g^{p,q}
\]
defined over $\RR$, and
\item $\RR$-parabolic subalgebra $\lq:=\tilde{W}_{0}\g$.\end{itemize}
\begin{defn}
\label{defn rational}$\mathcal{O}$ is \emph{rational} $\iff$ $\tilde{W}_{\b}$
is conjugate to a filtration defined over $\QQ$.\end{defn}
\begin{rem}
\label{rem * p. 37} In Definition \ref{defn rational}, it suffices
to assume $\lq$ is $G(\RR)^{\circ}$-conjugate to a $\QQ$-parabolic,
provided $\g_{1}:=\oplus_{p\in\ZZ}\g^{p,1-p}$ bracket-generates $\tilde{W}_{-1}$.
(This does \emph{not} follow from our bracket-generating assumption
on the horizontal distribution.) This is because $\lq=Lie(Q)=\tilde{W}_{0}$
defined over $\QQ$ $\implies$ $\tilde{W}_{-1}=Lie(U(Q))$ and $\tilde{W}_{1}=\tilde{W}_{-1}^{\perp}$
are defined over $\QQ$, and bracket-generation then implies $\tilde{W}_{-2}=[\tilde{W}_{-1},\tilde{W}_{-1}]$
and so forth, so that all filtrands are defined over $\QQ$.\end{rem}
\begin{defn}
\label{defn polarizable} (a) $\mathcal{O}$ is \emph{polarizable}
if and only if there exists $\hat{N}\in\g_{\RR}^{-1,-1}$ such that:\begin{flalign*}\;\;\; & \text{(i)} \;\; \hat{N}^j  \; \text{gives isomorphisms}\; \g^{p,j-p}\overset{\cong}{\to}\g^{p-j,-p}\;\text{for each}\; p,j;\;\text{and} &\\
\;\;\;\ & \text{(ii)}\;\; i^{-j} (-1)^{p+1} B(v,\hat{N}^j \bar{v})>0\; \text{for each}\; p,\; j,\; \text{and} &\\
\;\;\; & \;\;\;\;\;\;\;  v\in\hat{P}^{p,j-p}:=\g^{p,j-p}\cap \ker(\hat{N}^{j+1}). & 
\end{flalign*}(b) $\mathcal{O}$ belongs to the \emph{nilpotent closure} $ncl(D)$
$\iff$ there exists a nilpotent $\hat{N}\in F^{-1}\cap\g_{\RR}$
such that $e^{iy\hat{N}}\fb\in D$ for $y>0$. (Clearly $ncl(D)\subseteq cl(D)$.)
\end{defn}
The criteria (a) and (b) are useful in different situations, and will
turn out to be equivalent (cf. Theorem \ref{thm equiv} below). Evidently
(b) is independent of the choice of $F^{\b}\in\mathcal{O}$ and $H$,
and hence well-defined. (That the same is true for (a) follows from
the proof of Theorem \ref{thm equiv}.)
\begin{rem}
Unlike the $\{\g^{p,q}\}$, the $\hat{P}^{p,q}$ need not be sums
of root spaces, precisely because $\hat{N}$ need not be a multiple
of a root vector, cf. $\S6.2.1$.
\end{rem}
An additional criterion, which will make an appearance in the examples
in $\S6$, is given by
\begin{defn}
A boundary stratum $\mathcal{O}$ {[}resp. boundary component $B(N)${]}
is \emph{cuspidal} $\iff$ $\lq$ {[}resp. $W(N)_{0}\g${]} is a cuspidal
parabolic subalgebra.
\end{defn}
Since the anti-diagonal flip sends $W(N)_{0}\g$ exactly to $\lq$,
the naive limit map sends cuspidal boundary components to cuspidal
strata.
\begin{prop}
Codimension-one boundary strata are cuspidal.\end{prop}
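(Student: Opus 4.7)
The plan is to exhibit a compact Cartan in $\l/\lz(\l)$ directly, by showing that, for a suitable choice of $\Theta$-stable representative, the split part of the associated Cartan of $\l$ is automatically central. I would start by invoking Theorem \ref{thm p. 20}(iii) to represent the codimension-one stratum by $\fb = \mathscr{F}(H_j,\chi_j^{[w]})^\b$ with $H_j$ of real rank one, so that $\dim_\RR \la_j = 1$.

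Next I would show that $\la_j = \RR\langle Y\rangle$ lands inside $\lz(\l)$. The element $Y = \frac{\xi+\bar\xi}{2}$ from Proposition \ref{prop '}(ii) lies in $\la_{F^\b} = \la_j$; since $\mathcal{O}\subset\partial D$ means $\fb$ is not Hodge, the filtration $\tilde{W}_\b$ is nontrivial, and as $Y$ grades $\tilde{W}_\b$ this forces $Y\neq 0$. Hence $Y$ spans $\la_j$. Because $\ad Y$ acts as multiplication by $p+q$ on $\g^{p,q}$, the Levi $\l = \bigoplus_p \g^{p,-p}$ is exactly $\ker(\ad Y|_\g)$, whence $Y\in\lz(\l)$ and $\la_j\subset\lz(\l)$; in particular, $\la = \la_j$ in the sense of \eqref{eqn **}.

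To finish, $\lh_j\subset\g^{0,0}\subset\l$ is a Cartan of $\l$ (self-centralizing there since it is already a Cartan of $\g$), so its image $\bar{\lh}_j$ in $\l/\lz(\l)$ is a Cartan of the quotient. Using Corollary \ref{cor double prime} to see that $\l$ and $\lz(\l)$ are $\Theta_{F^\b}$-stable, the Cartan decomposition $\lh_{j,\RR} = \lt_j\oplus\la_j$ descends: since $\la_j\subset\lz(\l)$ is killed and $\lt_j\subset\mathfrak{k}$ maps into the compact part, $\bar{\lh}_j$ lies entirely in the compact part of $\l/\lz(\l)$, delivering cuspidality of $\lq$. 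The one subtle point is confirming $Y\neq 0$ (without which the quotient argument would be vacuous); once that is secured, cuspidality is a direct consequence of unwinding the bigrading from Lemma \ref{bigrading lemma}, so I do not anticipate any serious obstacle.
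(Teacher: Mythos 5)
Your argument is correct and is essentially the paper's own proof, spelled out in more detail: the paper likewise reduces to a real-rank-one Cartan (via Corollary \ref{cor p. 27}(i), which rests on Theorem \ref{thm p. 20}(iii)) and observes that the noncompact part $\mathcal{A}=e^{\RR\langle Y\rangle}$ centralizes the Levi, so that $\lt_{F^{\b}}$ furnishes the compact Cartan of $\L/Z(\L)$. Your verification that $Y\neq0$ spans $\la_j$ and lies in $\lz(\l)$ is exactly the content being invoked there.
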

\begin{proof}
This is an immediate consequence of Corollary \ref{cor p. 27}(i),
as the Cartan $\Psi_{\mathbf{c}_{\alpha}}(H_{0})$ will have real
rank $1$, with ``noncompact part'' $\mathcal{A}=e^{\mathbb{R}\langle Y\rangle}$
centralizing the Levi.
\end{proof}
To put definition \ref{defn polarizable}(a) in context, recall the
notion of a polarized $\RR$-MHS (on $(\g,-B)$), which for us shall
mean a triple $\left(W_{\b},\fb,N\right)$ such that:
\begin{lyxlist}{00.00.0000}
\item [{(I)}] $W_{\b}$ is an increasing filtration of $\g_{\RR}$, and
$(\fb,W_{\b})$ is an $\RR$-MHS (not necessarily split), with associated
Deligne bigrading $\{\g^{p,q}\}$ of $\g_{\CC}$;
\item [{(II)}] $N$ is a nilpotent element of $F^{-1}\cap\g_{\RR}$, with
$W(N)_{\b}=W_{\b}$ (which implies $N\in\g^{-1,-1}$); and
\item [{(III)}] the Hodge structure induced by $\fb$ on 
\[
\ker\left\{ N^{j+1}:Gr_{j}^{W}\to Gr_{-j-2}^{W}\right\} =:P_{j}
\]
 is polarized by $-B(\cdot,N^{j}(\cdot))$, for each $j\geq0$.
\end{lyxlist}
Conditions \ref{defn polarizable}(a)(i,ii) are nothing but a translation
of (II,III) for the specific (split) setting considered there. We
shall require a couple of lemmas, from the work of Cattani, Kaplan
and Schmid (cf. \cite[Thm. 6.16]{Sc}, \cite[Cor. 3.13]{CKS}, \cite[(2.18)]{CK}):
\begin{lem}
\label{lem 1 p. 39}If $e^{zN}\fb$ is an $\RR$-nilpotent orbit,
then $(W(N)_{\b},\fb,N)$ is a polarized $\RR$-MHS.
\end{lem}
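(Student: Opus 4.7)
The plan is to read off the conclusion from Schmid's $SL_2$-orbit theorem \cite{Sc}, which handles exactly this situation. Applied to the $\RR$-nilpotent orbit $e^{zN}\tfb$, that theorem supplies an $\mathfrak{sl}_2$-triple $\{N,Y,N_+\}\subset\g_\RR$, the Deligne $\RR$-splitting $\tfb_\RR=e^{-i\delta}\tfb$ of $(\tfb,W(N)_\b)$ (with $\delta$ real, nilpotent, lying in $\oplus_{p,q<0}\tilde\g^{p,q}$), and an approximating $SL_2$-orbit $e^{zN}\tfb_\RR$ lying in $D$ for all $\Im(z)>0$; moreover $Y$ acts on the Deligne bigrading $\tilde\g^{p,q}$ of $\tfb_\RR$ by the weight $p+q$.

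Conditions (I) and (II) will then drop out directly. For (I), the splitting $\tfb_\RR$ witnesses that $(\tfb,W(N)_\b)$ is an $\RR$-MHS. For (II): $N\in\g_\RR$ is hypothesized; $W(N)_\b=W_\b$ is definitional; the horizontality $[N,\tilde F^j]\subseteq\tilde F^{j-1}$ built into the nilpotent-orbit condition places $N$ in $\tilde F^{-1}\g$; and $N$ has $Y$-weight $-2$ (from $[Y,N]=-2N$ in the $\mathfrak{sl}_2$-triple), so $N\in\tilde W_{-2}\g$, whence combining $N\in\tilde F^{-1}\cap\overline{\tilde F^{-1}}\cap\tilde W_{-2}\g$ with the Deligne-bigrading characterization (as in Lemma \ref{bigrading lemma}) yields $N\in\tilde\g^{-1,-1}$.

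The substantive content is (III). For each $y>0$ the Hodge flag $e^{iyN}\tfb\in D$ is $(-B)$-polarized, so $-B(\cdot,C_y\overline{\cdot})>0$ on its Hodge pieces, where $C_y$ is the associated Weil operator. The strategy is to conjugate this pointwise positivity by $e^{-\frac{1}{2}\log(y)Y}$ in order to transport it onto the limit bigrading $\tilde\g^{p,q}$, and then let $y\to\infty$: the leading asymptotic on each $Gr_j^W\g_\CC$ isolates the primitive decomposition $P_j\oplus NP_{j+2}\oplus\cdots$ determined by $\{N,Y,N_+\}$, and the surviving positive term becomes precisely $i^{-j}(-1)^{p+1}B(v,N^j\bar v)>0$ for primitive $v\in P_j\cap\tilde\g^{p,j-p}$. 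The hard part is the careful control of lower-order terms in this asymptotic expansion, which is exactly what Schmid's estimates \cite[Thm.~6.16]{Sc} deliver; for our purposes one may simply quote the packaged form \cite[Cor.~3.13]{CKS} or \cite[(2.18)]{CK}, of which this lemma is essentially a reformulation in the paper's notation.
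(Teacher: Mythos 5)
Your proposal is correct and matches the paper's treatment: the paper gives no independent proof of this lemma, but simply quotes it from the literature (\cite[Thm.~6.16]{Sc}, \cite[Cor.~3.13]{CKS}, \cite[(2.18)]{CK}), exactly the sources you invoke for the substantive polarization condition (III). Your additional remarks verifying (I) and (II) -- horizontality placing $N$ in $\tilde{F}^{-1}$, reality and the weight shift placing it in $\tilde{\g}^{-1,-1}$ -- are accurate but routine, and the asymptotic argument you sketch for (III) is indeed the content of the cited results rather than something to be reproved here.
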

${}$
\begin{lem}
\label{lem 2 p. 39}If $(W(N)_{\b},\fb,N)$ is a polarized $\RR$-MHS,
then $e^{zN}\fb$ is an $\RR$-nilpotent orbit; if it is $\RR$-split,
then $e^{zN}\fb\in D$ for $\Im(z)>0$.
\end{lem}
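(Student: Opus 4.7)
The plan is to verify the two defining conditions of a nilpotent orbit—horizontality of $N$ (immediate from $N \in \g^{-1,-1}$) and that $e^{zN}F^\bullet \in D$ for $\Im(z) \gg 0$. The heart of the matter is the positivity statement, which we handle by reducing to a single ``test point'' via two successive reductions.

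\emph{Step 1: Reduction to the $\RR$-split case.} Given a polarized $\RR$-MHS $(W(N)_\bullet, F^\bullet, N)$, invoke Deligne's canonical splitting (the same one used in \S5.1): there is a unique $\delta \in \bigoplus_{p,q<0}\g^{p,q} \cap \g_\RR$ with $[\delta, N] = 0$ such that $F_\RR^\bullet := e^{-i\delta}F^\bullet$ yields an $\RR$-split polarized MHS with the same $(W_\bullet, N)$. Since $\delta$ commutes with $N$,
\[
e^{zN}F^\bullet \;=\; e^{i\delta}\cdot e^{zN}F_\RR^\bullet.
\]
Granting (b) applied to $F_\RR^\bullet$, the tail of the ray $e^{zN}F_\RR^\bullet$ lies in $D$, and a standard perturbation estimate from CKS shows that the fixed automorphism $e^{i\delta}$ carries this deep tail back into $D$ for $\Im(z) \gg 0$, establishing (a).

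\emph{Step 2: Reduction to $z = i$ via the $\mathfrak{sl}_2$-triple.} For the $\RR$-split case, let $(N, Y, N_+)$ be the canonical $\mathfrak{sl}_2$-triple, with $Y \in \g^{0,0}_\RR$ grading by $p+q$ on $\g^{p,q}$. For $y > 0$, the element $y^{-Y/2} \in G(\RR)^\circ$ fixes each subspace $\g^{p,q}$ (scaling it), hence preserves $F^\bullet = \bigoplus_{p \geq \bullet, q}\g^{p,q}$. Combined with $\mathrm{Ad}(y^{-Y/2})N = yN$, this gives
\[
e^{iyN}F^\bullet \;=\; y^{-Y/2}\bigl(e^{iN}F^\bullet\bigr),
\]
and since $D$ is a $G(\RR)^\circ$-orbit, it suffices to show $e^{iN}F^\bullet \in D$.

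\emph{Step 3: Positivity at $z = i$.} Using the identity \eqref{eqn !! p. 32} with $\tau = i$, one can rewrite $e^{iN}F^\bullet = e^{-iN_+}\hat{F}^\bullet$ where $\hat{F}^\bullet$ is the naive-limit flag of the associated $SL_2$-orbit. Decompose $\g_\CC$ into $\mathfrak{sl}_2$-primitives under $(N, Y, N_+)$: $\g_\CC = \bigoplus_{j \geq 0} N^j \hat{P}^{p,q}$. Compute the Weil operator $C_{e^{iN}F^\bullet}$ explicitly on each $N^j \hat{P}^{p,q}$ via the $\mathfrak{sl}_2$-representation theory; the polarization hypothesis \ref{defn polarizable}(a)(ii) on primitives, combined with the Lefschetz identity $B(v, N^j\bar{v}) = (-1)^j B(N^j v, \bar v)$, matches the resulting signs and yields the Hodge-Riemann relation $-B(w, C_{e^{iN}F^\bullet}\bar w) > 0$ for all nonzero $w$, placing $e^{iN}F^\bullet$ in $D$.

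The main obstacle is the sign-bookkeeping in Step 3: propagating the primitive-level positivities of Definition \ref{defn polarizable}(a)(ii) to genuine Hodge-Riemann relations on the full $\g_\CC$ once transported through $e^{iN}$. This is the representation-theoretic heart of the classical $SL_2$-orbit theorem, requiring careful tracking of the $i^{p-q}(-1)^{p+q+j}$ factors that link the Weil operator action on each $N^j\hat{P}^{p,q}$ to the primitive signs $i^{-j}(-1)^{p+1}$ appearing in the hypothesis—an identity that hinges on $p+q = -j$ for primitives.
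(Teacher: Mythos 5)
The paper does not prove this lemma at all: it is imported verbatim from the literature, with the citation to \cite[Thm. 6.16]{Sc}, \cite[Cor. 3.13]{CKS}, \cite[(2.18)]{CK} standing in for a proof. Your outline is, in substance, the argument of those references, so there is nothing to compare against in the paper itself; the question is only whether your reconstruction is sound. Steps 1 and 2 are correct. In Step 1 the "standard perturbation estimate" deserves one more line: $e^{i\delta}$ is not in $G(\RR)$, so the actual mechanism is the commutation $e^{i\delta}\, y^{-Y/2}=y^{-Y/2}\, e^{i\,\Ad(y^{Y/2})\delta}$ together with $\Ad(y^{Y/2})\delta\to0$ as $y\to\infty$ (since $\delta$ lives in $\oplus_{p,q<0}\g^{p,q}$, where $Y$ acts with negative weight), plus openness of $D$; also note the trivial further reduction from general $z=x+iy$ to $z=iy$ via $e^{xN}\in G(\RR)^{\circ}$. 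Step 2 (the identity $e^{iyN}\fb=y^{-Y/2}e^{iN}\fb$ from $\Ad(y^{-Y/2})N=yN$ and $Y$-invariance of $\fb$) is exactly right. Step 3 is the one place where you describe rather than execute: the verification that $e^{iN}\fb$ is a $(-B)$-polarized Hodge structure, via the $\mathfrak{sl}_2$-isotypic decomposition into $N^{j}$-translates of primitives and the sign identity linking $i^{-j}(-1)^{p+1}B(v,N^{j}\bar v)>0$ to the Hodge--Riemann relation for $e^{iN}\fb$. You have correctly identified this as the crux and named the right ingredients (Lefschetz decomposition, $B(N^{j}v,\bar v)=(-1)^{j}B(v,N^{j}\bar v)$, reduction to irreducible $\mathfrak{sl}_2$-representations), but the computation is not carried out; it is precisely \cite[Lemma 3.12]{CKS} and \cite[(2.18)]{CK}. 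Given that the paper itself delegates the entire lemma to those sources, your proposal is an accurate and more detailed account of the standard proof, with the central positivity computation left at the level of a (correctly specified) citation.
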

We are now ready to prove the first main theorem of this section:
\begin{thm}
\label{thm equiv}For $\mathcal{O}\subset cl(D)$, the following are
equivalent:

(A) $\mathcal{O}\subset ncl(D)$;

(B) $\mathcal{O}$ is of the form $\hat{\mathcal{B}}(\mathcal{N})$;

(C) $\mathcal{O}$ is polarizable.\end{thm}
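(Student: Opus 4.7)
I will prove the cycle \emph{(C)} $\Rightarrow$ \emph{(A)} $\Rightarrow$ \emph{(B)} $\Rightarrow$ \emph{(C)}, using Lemmas~\ref{lem 1 p. 39}--\ref{lem 2 p. 39} to translate between nilpotent orbits and polarized $\RR$-MHS, together with the antidiagonal flip $\hat{\g}^{p,q}=\tilde{\g}^{-q,-p}$ identified in \S5.1. Throughout, I represent $\mathcal{O}$ by a flag $F^{\b}=\mathscr{F}(H,\chi)^{\b}$ with its $\RR$-split Cartan bigrading $\{\g^{p,q}\}$ from Lemma~\ref{bigrading lemma}.

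For \emph{(C) $\Rightarrow$ (A)}, let $\hat{N}\in\g_{\RR}^{-1,-1}$ satisfy Definition~\ref{defn polarizable}(a)(i,ii). Setting $W_b:=\bigoplus_{p+q\leq b}\g^{p,q}$, the shift $\hat{N}\,W_b\subset W_{b-2}$ is immediate from $\hat{N}\in\g^{-1,-1}$, while (a)(i) is precisely the defining isomorphism $\hat{N}^j:Gr_j^W\overset{\cong}{\to}Gr_{-j}^W$; together these identify $W_{\b}$ with $W(\hat{N})_{\b}$. Thus $(F^{\b},W(\hat{N})_{\b})$ is an $\RR$-split MHS whose Deligne bigrading equals $\{\g^{p,q}\}$, and (a)(ii) is exactly its polarization by $-B$. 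Lemma~\ref{lem 2 p. 39} then yields $e^{iy\hat{N}}F^{\b}\in D$ for every $y>0$.

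For \emph{(A) $\Rightarrow$ (B)}, applying Lemma~\ref{lem 1 p. 39} to the nilpotent orbit $e^{z\hat{N}}F^{\b}$ gives a polarized $\RR$-MHS $(W(\hat{N})_{\b},F^{\b},\hat{N})$, and the hypothesis that $e^{iy\hat{N}}F^{\b}\in D$ for \emph{all} $y>0$ (not merely $y\gg 0$) forces this MHS to be $\RR$-split, since any nonzero Deligne splitting element $\delta\in\oplus_{p,q<0}\g^{p,q}$ would shift the entry point of the orbit above $y=0$. Let $(\hat{N},Y,N_+)$ be the associated $\mathfrak{sl}_2$-triple, so that $N_+\in\g^{1,1}$, and define $\tilde{F}^{\b}$ by the flipped bigrading $\tilde{\g}^{p,q}:=\g^{-q,-p}$. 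The flip symmetry transports the polarization of $(F^{\b},W(\hat{N})_{\b},\hat{N})$ to one of $(\tilde{F}^{\b},W(N_+)_{\b},N_+)$, whence $\tilde{F}^{\b}\in\tilde{B}_{\RR}(N_+)$ by Lemma~\ref{lem 2 p. 39}, and a direct computation using \eqref{eqn ** p. 32} recovers $\mathscr{F}_{lim}^{N_+}(\tilde{F}^{\b})=F^{\b}$. Hence $F^{\b}\in\hat{\mathcal{B}}(\mathcal{N})$ for $\mathcal{N}$ the $G(\RR)^{\circ}$-orbit of $N_+$, and $\mathcal{O}$ is this stratum.

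For \emph{(B) $\Rightarrow$ (C)}, represent $\mathcal{O}$ by a naive limit $\hat{F}^{\b}=\mathscr{F}_{lim}^N(\tilde{F}^{\b})$ of an $\RR$-split LMHS $(\tilde{F}^{\b},W(N)_{\b},N)$, and choose the Cartan $H$ so that the bigrading of $\hat{F}^{\b}$ realizes the flipped one $\hat{\g}^{p,q}=\tilde{\g}^{-q,-p}$. Setting $\hat{N}:=N_+\in\tilde{\g}^{1,1}=\hat{\g}^{-1,-1}$, condition (a)(i) follows from the $\mathfrak{sl}_2$-iso $N_+^j:\tilde{\g}^{p-j,-p}\overset{\cong}{\to}\tilde{\g}^{p,j-p}$ coming from the LMHS, and the positivity (a)(ii) is the LMHS polarization transported through the flip. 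The principal obstacle will be the step asserting $\RR$-splitness in \emph{(A) $\Rightarrow$ (B)}: verifying rigorously that requiring $e^{iy\hat{N}}F^{\b}\in D$ for every $y>0$ forces vanishing of the Deligne splitting element $\delta$; sign-tracking for (a)(ii) through the flip (e.g., the appearance of $(-1)^{p+1}$) is routine but must be checked carefully.
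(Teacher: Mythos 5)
Your cycle of implications, your use of Lemmas \ref{lem 1 p. 39}--\ref{lem 2 p. 39}, and your treatment of \emph{(C)}$\Rightarrow$\emph{(A)} and \emph{(B)}$\Rightarrow$\emph{(C)} via the antidiagonal flip all match the paper's proof. The problem is the step you yourself flag as the ``principal obstacle'' in \emph{(A)}$\Rightarrow$\emph{(B)}: your proposed mechanism for $\RR$-splitness does not work. The claim that a nonzero Deligne splitting element $\delta$ ``would shift the entry point of the orbit above $y=0$'' is not true in general (already for the upper half-plane, a nilpotent orbit attached to a non-split MHS can lie in $D$ for all $y>0$), and in any case it misidentifies where the difficulty lies. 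The real issue is not $\delta$ but the weight filtration: the boundary flag $F^{\b}\in\check D$ carries the Cartan bigrading $\{\g^{p,q}\}$ of Lemma \ref{bigrading lemma}, which is automatically conjugation-symmetric, with weight filtration $W_{\b}:=\oplus_{p+q\leq\b}\g^{p,q}$. The MHS $(F^{\b},W(\hat N)_{\b})$ is $\RR$-split precisely when $W(\hat N)_{\b}=W_{\b}$, since then uniqueness of the Deligne bigrading forces it to coincide with $\{\g^{p,q}\}$. The paper proves this identity of filtrations by contradiction: if some $\hat N^{j}:\g_j\to\g_{-j}$ fails to be an isomorphism, one produces a nonzero $\alpha\in F^{p}\cap\overline{F^{j-p}}\cap\ker\hat N^{j}$ and checks, via the truncation of the exponential series by $\hat N^{j}\alpha=0$, that $e^{iy\hat N}\alpha$ is a nonzero element of $F_y^{p}\cap\overline{F_y^{-p+1}}$ for the Hodge flag $F_y^{\b}:=e^{iy\hat N}F^{\b}\in D$, violating the $p$-opposedness condition. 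That argument is the missing idea; no statement about $\delta$ substitutes for it.

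A second, smaller gap: Definition \ref{defn polarizable}(b) only gives a nilpotent $\hat N\in F^{-1}\cap\g_{\RR}$, whereas everything in your \emph{(A)}$\Rightarrow$\emph{(B)} (the $\mathfrak{sl}_2$-triple in the bigrading, the flip) requires $\hat N\in\g_{\RR}^{-1,-1}$. You need the reduction the paper carries out, namely that the projection of $\hat N$ to $\g_{\RR}^{-1,-1}$ still satisfies the hypothesis, using that $i\hat N\in F^{-1}\cap\overline{F^{-1}}$ and that $F^{-1}\cap\overline{F^{-1}}\cap(F^{0}+\overline{F^{0}})\subset\g_{\RR}+F^{0}$. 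Once these two points are supplied, the remainder of your outline (forming $(\hat N,\hat Y,\hat N_+)$, flipping, and invoking the Deligne--Schmid formula to exhibit $F^{\b}$ as a naive limit) goes through as in the paper.
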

\begin{proof}
$\underline{(C)\implies(A)}:$ Let $\fb\in\mathcal{O}$, $\{\g^{p,q}\}$,
$\hat{N}\in\g_{\RR}^{-1.-1}$ be as in Definition \ref{defn polarizable}(a)
and $W_{\bullet}=\oplus_{p+q\leq\b}\g^{p,q}$. (Note that $\hat{N}$
must be nilpotent.) By \ref{defn polarizable}(a)(i), we have $W_{\b}=W(\hat{N})_{\b}$,
whereupon \ref{defn polarizable}(a)(i-ii) and \eqref{eqn !! p. 37}
$\implies$ $(W_{\b},\fb,\hat{N})$ is a polarized split $\RR$-MHS.
By Lemma \ref{lem 2 p. 39}, $e^{iy\hat{N}}\fb\in D$ for $y>0$.\vspace{2mm}\\
$\underline{(B)\implies(C)}:$ $(B)$ says that there exists a polarized
$\RR$-mixed Hodge structure $(W(N)_{\b},\tfb,N)$, without loss of
generality $\RR$-split, such that $\lim_{y\to\infty}e^{iyN}\tfb=:\fb\in\mathcal{O}$.
Let $\{\g^{p,q}\}$ be the associated bigrading, $N_{+}\in\tilde{\g}^{-1,-1}$
be as in the discussion preceding \eqref{eqn ***}, and $\hat{N}:=-N_{+}$.
Then $\g^{p,q}:=\tilde{\g}^{-q,-p}$ is the bigrading associated to
$(\fb,W(\hat{N})_{\b})$ and it is an easy exercise to check that
$(W(\hat{N})_{\b},\fb,\hat{N})$ is a polarized $\RR$-MHS. It is
obviously split, and $(C)$ follows at once.\vspace{2mm}\\
$\underline{(A)\implies(B)}:$ Let $\fb\in\mathcal{O}$, $\{\g^{p,q}\}$
be as in the discussion preceding Definitions \ref{defn rational}
and \ref{defn polarizable}, and put $W_{\b}:=\oplus_{p+q\leq\b}\g^{p,q}$,
$\g_{j}:=\oplus_{p+q=j}\g^{p,q}$. By assumption, $\hat{N}\in F^{-1}\cap\g_{\RR}$
is nilpotent with $e^{iy\hat{N}}\fb\in D$ for $y>0$. The projection
of $\hat{N}$ to $\g_{\RR}^{-1,-1}$ still satisfies this hypothesis,
since $i\hat{N}\in F^{-1}\cap\overline{F^{-1}}$ while $F^{-1}\cap\overline{F^{-1}}\cap(F^{0}+\overline{F^{0}})$
belongs to $\g_{\RR}+F^{0}$. Hence we may assume $\hat{N}\in\g_{\RR}^{-1,-1}$.

By Lemma \ref{lem 1 p. 39}, $(W(\hat{N})_{\b},\fb,\hat{N})$ is a
polarized $\RR$-MHS. To deduce that it is split, we will show that
$W(\hat{N})_{\b}=W_{\b}$. If this is not the case, then for some
$j\geq0$ the map
\[
\nu_{j}:\g_{j}\to\g_{-j}
\]
induced by $\hat{N}^{j}$ is not an isomorphism, and there exists
\[
\alpha\in F^{p}\cap\overline{F^{j-p}}\cap\ker\hat{N}^{j}.
\]
But then $\alpha\in\overline{F^{j-p}\cap\ker\hat{N}^{j}}$ $\implies$
\[
\begin{array}{ccccc}
e^{iy\hat{N}}\alpha & \in & e^{iy\hat{N}}(\overline{F^{j-p}\cap\ker\hat{N}^{j}}) & = & \overline{e^{-iy\hat{N}}}(\overline{F^{j-p}\cap\ker\hat{N}^{j}})\;\;\;\;\;\;\;\;\;\;\;\;\;\;\\
 &  &  & = & \overline{e^{iy\hat{N}}\left\{ e^{-2iy\hat{N}}(F^{j-p}\cap\ker\hat{N}^{j})\right\} }\\
 &  &  & \subseteq & \overline{e^{iy\hat{N}}F^{-p+1}},\;\;\;\;\;\;\;\;\;\;\;\;\;\;\;\;\;\;\;\;\;\;\;\;\;\;\;\;\;\;
\end{array}
\]
where the last inclusion is argued as follows: given $\beta\in F^{j-p}\cap\ker\hat{N}^{j}$,
\[
e^{-2iy\hat{N}}\beta=\beta-2iy\hat{N}\beta-\frac{4y^{2}}{2}\hat{N}^{2}\beta+\cdots+\frac{(-2i)^{j-1}}{(j-1)!}y^{j-1}\hat{N}^{j-1}\beta+0
\]
\[
\in F^{(j-p)-(j-1)}=F^{-p+1}.
\]
So 
\[
(0\neq)\, e^{iy\hat{N}}\alpha\in e^{iy\hat{N}}F^{p}\cap\overline{e^{iy\hat{N}}F^{-p+1}}=:F_{y}^{p}\cap\overline{F_{y}^{-p+1}},
\]
where $F_{y}^{\b}\in D$ is a Hodge flag (for $y>0$). This violates
the $p$-opposed condition on a Hodge flag.

We conclude that $(W(\hat{N})_{\b}=W_{\b},\fb,\hat{N})$ is a split
polarized $\RR$-MHS. Let $\hat{Y}\in\g_{\RR}^{0,0}$ be the element
inducing the grading $\{\g_{j}\}$, and $\hat{N}_{+}\in\g_{\RR}^{1,1}$
complete $\hat{N}$, $\hat{Y}$ to an $\mathfrak{sl}_{2}$-triple.
Then setting $\tilde{F}^{-a}:=\oplus_{p\in\ZZ;\, q\leq a}\g^{p,q}$,
$\tilde{W}_{-b}:=\oplus_{p+q\geq b}\g^{p,q}$, $(\tilde{N},\tilde{Y},\tilde{N}_{+}):=(-\hat{N}_{+},-\hat{Y},-\hat{N})$,
we have $\tilde{W}_{\b}=W(\tilde{N})_{\b}$, and $(W(\tilde{N})_{\b},\tfb,\tilde{N})$
is a split polarized $\RR$-MHS. At this point the Deligne-Schmid
formula applies to give $\lim_{y\to\infty}e^{iy\tilde{N}}\tilde{F}^{\b}=\lim_{y\to\infty}e^{-\frac{i}{y}\hat{N}}\fb=\fb$,
completing the proof.\end{proof}
\begin{prop}
(i) Any codimension-one boundary stratum is polarizable.

(ii) Suppose $D$ is \emph{strongly classical}, in the sense that
$\pi_{\chi_{0}}$ takes values in $\{-1,0,1\}$. (This implies in
particular that $D$ is Hermitian symmetric.) Then all boundary strata
are polarizable.

(iii) If $\g_{\mathcal{O}}^{-1,-1}=\{0\}$, then $\mathcal{O}$ is
not polarizable.\end{prop}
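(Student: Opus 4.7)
The three parts will be attacked separately. \textbf{Part (iii)} is essentially tautological: the hypothesis $\g_\mathcal{O}^{-1,-1} = \{0\}$ forces any candidate $\hat N \in \g_\RR^{-1,-1}$ to vanish, and since the only open orbit in $cl(D)$ is $D$ itself (Corollary following Proposition \ref{prop codim}), $\mathcal O$ is not open, so $\fb \in \mathfrak Z$ is not a Hodge flag. The unlabeled proposition at the end of \S3.1 then forces $\tilde W_\bullet$ to be nontrivial, so some $\g^{p,q}$ with $p+q>0$ is nonzero, whereupon condition \ref{defn polarizable}(a)(i) fails for $j = p+q$, since $\hat N^j = 0$ cannot be an iso from a nonzero source.

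For \textbf{part (i)}, I would apply Corollary \ref{cor codim 1 p.19b} together with the Killing-form duality in Lemma \ref{bigrading lemma}(v) to see that both $\g^{1,1}$ and $\g^{-1,-1}$ are one-dimensional, spanned respectively by a real root vector $X_\beta$ (for some $\beta \in \Delta_\RR(H_j)$) and $X_{-\beta}$, while all other $\g^{p,q}$ with $p, q$ of the same nonzero sign vanish. The Remark after Corollary \ref{cor p. 27} then supplies an $\mathfrak{sl}_{2,\RR}$-triple $\{X_\beta, Y, X_{-\beta}\}$ in which $Y$ grades $\tilde W_\bullet$. Setting $\hat N := c X_{-\beta}$ for a suitable $c > 0$, condition \ref{defn polarizable}(a)(i) follows because $\ad(\hat N)$ shifts bidegree by $(-1,-1)$ (since $\hat N \in \g^{-1,-1}$) and acts as the lowering operator of this $\mathfrak{sl}_2$-action; the standard iso $\ad(\hat N)^j : \g_j \xrightarrow{\cong} \g_{-j}$ on $\ad(Y)$-weight spaces then restricts to each bigraded summand, and matching dimensions via Lemma \ref{bigrading lemma}(v) gives the claimed iso $\g^{p, j-p} \xrightarrow{\cong} \g^{p-j, -p}$. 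For the positivity \ref{defn polarizable}(a)(ii), I would appeal to Theorem \ref{thm equiv}: it suffices to show $\mathcal O \in ncl(D)$, and the explicit $SL_2(\RR)$-calculation in the Discussion of \S4.3 shows that $e^{iy\hat N}\fb$ reverses the Cayley transform $\mathbf c_\alpha$, so for $y > 0$ it lies in the $G(\RR)^\circ$-orbit of $\fb_0$, namely $D$; the scalar $c$ is fixed by the sign of $B(X_\beta, X_{-\beta})$.

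For \textbf{part (ii)}, the strongly classical condition $\pi_{\chi_0}(\Delta) \subset \{-1, 0, 1\}$ is preserved under Weyl and Cayley transforms, so every bigrading satisfies $\g^{p,q} = 0$ unless $(p, q) \in \{-1, 0, 1\}^2$, the filtration $\tilde W_\bullet$ has at most five steps, and $D$ is Hermitian symmetric (since $[\g^{1,-1}_{\vf_0}, \g^{1,-1}_{\vf_0}] \subset \g^{2,-2}_{\vf_0} = 0$). The plan is to apply Theorem \ref{thm equiv}: by the classical theory of Hermitian symmetric domains (Wolf--Kor\'anyi, Baily--Borel), every $G(\RR)^\circ$-orbit in $\partial D$ arises as the naive limit of an $\mathfrak{sl}_2$-nilpotent orbit whose generator is a tripotent of the Jordan triple system on $\mathfrak p_+ \cong \g^{1,-1}_{\vf_0,\RR}$, so $\mathcal O = \hat{\mathcal B}(\mathcal N)$ for some nilpotent orbit $\mathcal N \subset \g_\RR$, and polarizability follows. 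The main obstacle will be making this Jordan-theoretic reduction precise; alternatively, in the narrow weight range condition \ref{defn polarizable}(a)(i) reduces to three iso conditions ($\ad(\hat N)$ on $\g^{1,0}$ and on $\g^{0,1}$, and $\ad(\hat N)^2$ on $\g^{1,1}$), which admit a direct construction of $\hat N$ as a ``maximal-rank'' element of $\g^{-1,-1}_\RR$, with positivity inherited from the Hermitian form at $\vf_0$.
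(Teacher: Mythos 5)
Your parts (i) and (iii) are essentially sound and track the paper's own argument. For (iii) the paper just says ``obvious,'' and your reasoning (a boundary stratum has nontrivial $\tilde W_\bullet$, so some $\g^{p,q}$ with $p+q=j>0$ is nonzero, and no power of an element of $\g^{-1,-1}_{\RR}=\{0\}$ can induce the required isomorphism) is the right filling-in. For (i) the paper's route is the same in essence but shorter: once Corollary \ref{cor codim 1 p.19b} gives that $\g^{-1,-1}$ is spanned by a single real root vector $X$, the element $iX$ spans the one-dimensional normal space $(\mathcal{N}_{\mathcal{O}/\check D})_{\fb}$, so $X$ or $-X$ satisfies Definition \ref{defn polarizable}(b), and $(A)\Rightarrow(C)$ of Theorem \ref{thm equiv} finishes. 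Your separate verification of condition (a)(i) via the $\mathfrak{sl}_2$-triple is therefore redundant once you invoke $(A)\Rightarrow(C)$, and the sign of $c$ is determined not by $B(X_\beta,X_{-\beta})$ but simply by which of the two open orbits bounding the stratum is $D$ (one of them must be, since $\mathcal{O}\subset\partial D$).

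Part (ii) is where there is a genuine gap. Your primary route --- that by Wolf--Kor\'anyi/Baily--Borel every boundary orbit of a Hermitian symmetric domain is $\hat{\mathcal{B}}(\mathcal{N})$ for a tripotent-generated nilpotent orbit --- is essentially a restatement of the conclusion (condition (B) of Theorem \ref{thm equiv}), and you yourself flag the translation as ``the main obstacle''; the alternative ``maximal-rank element of $\g^{-1,-1}_{\RR}$'' is not constructed. What you are missing is the elementary observation that makes the strongly classical case work: since every bigrading has $p,q\in\{-1,0,1\}$, the only $(p,q)$ with $p,q>0$ is $(1,1)$, so by Proposition \ref{prop codim} the full normal space $(\mathcal{N}_{\mathcal{O}/\check D})_{\fb}$ identifies with $i\g^{-1,-1}_{\RR}$ --- you note the narrow weight range but do not draw this consequence. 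A normal slice at $\fb$ must meet the open set $D$, so there exists $N\in\g^{-1,-1}_{\RR}\setminus\{0\}$ with $e^{iN}\fb\in D$; the argument from $(A)\Rightarrow(B)$ of Theorem \ref{thm equiv} (with $y\hat N$ replaced by $N$) then shows $W_\bullet=W(N)_\bullet$, giving condition (a)(i), and positivity (a)(ii) follows by direct computation, e.g.\ for $v\in\ker(N^2)\subset\g^{1,0}$ one has $e^{iN}v\in\g_0^{1,-1}$ for a flag $F_0^\bullet\in D$, whence $0<B(e^{iN}v,\overline{e^{iN}v})=-2iB(v,N\bar v)$. This keeps the proof self-contained and avoids importing the Jordan-theoretic classification.
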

\begin{proof}
\emph{(i)} By Corollary \ref{cor codim 1 p.19b}, $\g^{-1,-1}$ (for
some $\fb\in\mathcal{O}$) is spanned by a single real root vector
$X$. Clearly $iX$ spans the normal tangent space $(\mathcal{N}_{\mathcal{O}/\check{D}})_{\fb}$,
and so $X$ or $-X$ satisfies Definition \ref{defn polarizable}(b).
Now use the implication $(A)\implies(C)$.

\emph{(ii)} The normal space $(\mathcal{N}_{\mathcal{O}/\check{D}})_{\fb}$
identifies naturally with $i\g_{\RR}^{-1,-1}$, yielding a diffeomorphism
between a ball $B\ni\fb$ and a ball in $i\g_{\RR}^{-1,-1}$. We must
have $B\cap D\neq\emptyset$, and so there exists $N\in\g_{\RR}^{-1,-1}\backslash\{0\}$
such that $e^{iN}\fb\in D$. The same argument as in the proof of
$(A)\implies(B)$ (with $y\hat{N}$ replaced by $N$) in Theorem \ref{thm equiv}
shows that $W_{\b}=W(N)_{\b}$, whereupon direct calculation establishes
\emph{(ii)} in Definition \emph{\ref{defn polarizable}(a)}. (For
example, given $v\in\ker(N^{2})\subset\g^{1,0}$, we have $e^{iN}v\in F_{0}^{1}=\g_{0}^{1,-1}$
where $F_{0}^{\b}\in D$, so that $0<B(e^{iN}v,\overline{e^{iN}v})=-2iB(v,N\bar{v})$.)

\emph{(iii)} is obvious.
\end{proof}
Our second result completely characterizes when $\mathcal{O}$ has
a flag occurring as the naive limit of a $\QQ$-VHS into a discrete
quotient $\Gamma\backslash D$.
\begin{thm}
Let $\mathcal{O}\subset cl(D)$ be a boundary stratum. Then $\mathcal{O}$
contains a $\hat{B}(N)$ ($N\in\g_{\QQ}$) $\iff$ $\mathcal{O}$
is rational and polarizable.\end{thm}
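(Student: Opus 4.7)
For the forward direction, fix $\hfb \in \hat{B}(N) \subseteq \mathcal{O}$, written as $\hfb = \mathscr{F}_{lim}^{N}(\tfb_0)$ for some $\tfb_0 \in \tilde{B}_{\RR}(N)$. The identity $\sum_{p}\hat{F}^{p}\cap\overline{\hat{F}^{j-p}} = W(N)_{-j}$ from \S5.1 identifies the filtration $\tilde{W}_{\b}$ attached to $\hfb$ (via \eqref{eqn *sharp}) with $W(N)_{\b}$, which is $\QQ$-rational since $N \in \g_{\QQ}$; so $\mathcal{O}$ is rational. For polarizability, apply the Deligne--Schmid formula \eqref{eqn !! p. 32} with $\tau = i/y$ ($y > 0$ small) to get $e^{-iyN_{+}}\hfb = e^{\frac{i}{y}N}\tfb_0 \in D$, where $N_{+} \in \tilde{\g}^{1,1}_{\RR} = \hat{\g}^{-1,-1}_{\RR}$ is the $\mathfrak{sl}_{2}$-partner of $N$ in the $\RR$-split bigrading of $(\tfb_0,W(N)_{\b})$. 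Thus $\hfb \in ncl(D)$ with witness $\hat{N} := -N_{+}$, and Theorem \ref{thm equiv} yields polarizability of $\mathcal{O}$.

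For the reverse direction, suppose $\mathcal{O}$ is rational and polarizable. By Theorem \ref{thm equiv} $(C) \Rightarrow (B)$, $\mathcal{O} = \hat{\mathcal{B}}(\mathcal{N})$ for a unique $G(\RR)^{\circ}$-orbit $\mathcal{N} \subset \g_{\RR}$ of nilpotent elements. It will suffice to produce an element $N \in \mathcal{N}\cap\g_{\QQ}$, since then $\hat{B}(N) \subseteq \hat{\mathcal{B}}(\mathcal{N}) = \mathcal{O}$. Pick $\hfb \in \mathcal{O}$; by rationality, after replacing $\hfb$ with a $G(\RR)^{\circ}$-translate (staying in $\mathcal{O}$) we may assume the filtration $\tilde{W}_{\b}$ attached to $\hfb$ is itself $\QQ$-rational. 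Following the proof of Theorem \ref{thm equiv} $(A) \Rightarrow (B)$ we obtain $\tilde{N} \in \mathcal{N}$ with $W(\tilde{N})_{\b} = \tilde{W}_{\b}$.

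The parabolic $\lq = \tilde{W}_{0}\g$ and its nilradical $\n = \tilde{W}_{-1}\g$ are now $\QQ$-rational, as is the ambient affine space $(\tilde{W}_{-2}\g)_{\RR}$ containing $\tilde{N}$. Consider
\[
\mathcal{R} := \{\, N \in (\tilde{W}_{-2}\g)_{\RR} : W(N)_{\b} = \tilde{W}_{\b}\,\},
\]
which is $\QQ$-Zariski open in $(\tilde{W}_{-2}\g)_{\RR}$ (the requirement that each $N^{k} : Gr^{\tilde{W}}_{k}\g \to Gr^{\tilde{W}}_{-k}\g$ be an isomorphism is the non-vanishing of a $\QQ$-polynomial in the coordinates of $N$) and contains $\tilde{N}$. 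Every $N \in \mathcal{R}$ has the same Jordan type as $\tilde{N}$, and so lies in the complex nilpotent orbit $G(\CC).\tilde{N}$. Invoking the standard fact that real nilpotent orbits are open in the real locus of the complex orbit containing them (a consequence of the dimension identity $\dim_{\RR}\mathcal{N} = \dim_{\CC}G(\CC).\tilde{N}$), we see that $\mathcal{N}\cap\mathcal{R}$ is a nonempty Euclidean-open subset of the $\QQ$-rational real affine space $(\tilde{W}_{-2}\g)_{\RR}$. The density of $\QQ$-rational points in an $\RR$-affine space then produces a rational $N \in \mathcal{N}\cap\g_{\QQ}$, giving $\hat{B}(N)\subseteq \mathcal{O}$.

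The principal obstacle is the last step: showing that rational perturbations of $\tilde{N}$, constrained only by the $\QQ$-rational conditions of lying in $\tilde{W}_{-2}\g$ and inducing the weight filtration $\tilde{W}_{\b}$, remain in the same \emph{real} orbit $\mathcal{N}$ rather than hopping to a different real form of the complex orbit $G(\CC).\tilde{N}$. This rests on the openness of real nilpotent orbits within the real points of their complex orbit---a property that fails for semisimple orbits but holds here by the dimension coincidence noted above---combined with the $\QQ$-rationality of the ambient parabolic coming from the rationality hypothesis on $\mathcal{O}$.
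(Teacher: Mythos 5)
Your overall strategy is sound, and both directions reach the right conclusions: the forward direction is what the paper treats as immediate, and your reverse direction is a recognizable cousin of the paper's own argument (both ultimately perturb the nilpotent inside $\tilde{W}_{-2}\g_{\RR}$ to a rational point using density of $\QQ$-points plus an openness statement). The difference lies in \emph{which} openness statement is used, and that is where the one genuine flaw sits. You assert that every $N\in\mathcal{R}$ ``has the same Jordan type as $\tilde{N}$, and so lies in the complex nilpotent orbit $G(\CC).\tilde{N}$.'' The first half is fine (the weight filtration determines the partition of $\ad N$), but the inference in the second half is invalid in general: the Jordan type of the adjoint action does not separate nilpotent orbits. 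For instance, the two ``very even'' orbits in type $D_{4}$ are distinct orbits of the adjoint group exchanged by an outer automorphism, hence have identical $\ad$-Jordan type. The claim you actually need --- that all of $\mathcal{R}$ lies in a single complex orbit --- is nevertheless true, but for a different reason: for any $N\in\tilde{W}_{-2}\g$ with $W(N)_{\b}=\tilde{W}_{\b}$ one has $[\lq_{\CC},N]=\ad(N)(\tilde{W}_{0}\g_{\CC})=\tilde{W}_{-2}\g_{\CC}$, so every $Q(\CC)$-orbit in the complexification $\mathcal{R}_{\CC}$ is open there; since $\mathcal{R}_{\CC}$ is Zariski-open in an affine space and hence connected, it is a single $Q(\CC)$-orbit. (Alternatively: bring the Jacobson--Morozov gradings of two such elements into agreement by the unipotent radical of $Q$ and invoke Kostant's theorem that the nilnegative elements of $\mathfrak{sl}_{2}$-triples with a fixed semisimple member form one orbit of its centralizer.)

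Once that step is repaired your argument closes, but note that the paper short-circuits all of it: the same tangent-space identity, now over $\RR$, namely $[\lq_{\RR},N]=N(\tilde{W}_{0}\g_{\RR})=\tilde{W}_{-2}\g_{\RR}$, shows directly that $\Ad Q(\RR)^{\circ}.N$ is already open in $\tilde{W}_{-2}\g_{\RR}$ about $N$; every point of that orbit is tautologically in $\mathcal{N}$ and induces $\tilde{W}_{\b}$, so density of $\QQ$-points finishes at once, with no need for $\mathcal{R}$, Jordan types, or the openness of real orbits in the real locus of a complex orbit. Two smaller remarks: the uniqueness of $\mathcal{N}$ with $\mathcal{O}=\hat{\mathcal{B}}(\mathcal{N})$ is neither established nor needed --- simply set $\mathcal{N}:=G(\RR)^{\circ}.\tilde{N}$ for the $\tilde{N}$ you construct, and observe $\fb\in\hat{B}(\tilde{N})\subseteq\hat{\mathcal{B}}(\mathcal{N})$ forces $\hat{\mathcal{B}}(\mathcal{N})=\mathcal{O}$; and your parenthetical that openness of real orbits inside the real points of their complex orbit ``fails for semisimple orbits'' is not correct (the same dimension count applies there), though nothing in your proof depends on it.
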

\begin{proof}
Only ``$\Longleftarrow$'' requires proof. By polarizablility, $\mathcal{O}$
contains the naive limit of an element $(\tfb,\tilde{W}_{\b}=W(N)_{\b})\in\tilde{B}(N)$,
$N\in\g_{\RR}$. Bearing in mind the antidiagonal flip, $\tilde{W}_{\b}$
is the $\tilde{W}_{\b}$ in Definition \ref{defn rational}, and by
rationality we may assume it is defined over $\QQ$. The issue is
whether we can orbit by $Q(\RR)$ to get $N$ into $\g_{\QQ}$. But
$W_{-2}\g_{\RR}$ is exactly $N(W_{0}\g_{\RR})$, and for $\gamma\in W_{0}\g_{\RR}$,
\[
\frac{d}{dt}\left(\Ad(e^{t\gamma})N\right)|_{t=0}=\ad(\gamma)N=-N(\gamma).
\]
Hence $T_{N}\left(\Ad Q(\RR).N\right)=W_{-2}\g_{\RR}$, and $\Ad Q(\RR).N$
contains an open subset of $W_{-2}\g_{\RR}$ centered about $N$.
Since $\QQ$-points are dense in $\tilde{W}_{-2}\g_{\RR}$, we are
done.\end{proof}
\begin{rem}
It suffices to assume $\mathfrak{q}$ is $G(\RR)^{\circ}$-conjugate
to a $\QQ$-parabolic if $\g_{1}$ bracket-generates $\tilde{W}_{-1}$
(cf. Remark \ref{rem * p. 37}) or if $\tilde{W}_{-1}=\tilde{W}_{-2}$.
\end{rem}

\subsection{Dimension formulas}

In the remainder of this section, we turn our attention to the naive
limit map
\[
\mathscr{F}_{lim}^{N}:\,\tilde{B}(N)\twoheadrightarrow\hat{B}(N)
\]
and its image, where $N\in\g_{\QQ}\backslash\{0\}$ is nilpotent and
$\tilde{B}(N)\neq\emptyset$.

Let $\tfb\in\tilde{B}_{\RR}(N)$ resp. $\fb:=\mathscr{F}_{lim}^{N}(\tfb)\in\hat{B}(N)$
be given, with associated MHS $(\tfb,\tilde{W}_{\b}:=W(N)_{\b})$
resp. $(\fb,W_{\b}:=W(N_{+})_{\b})$%
\footnote{As above, $N_{+}$ is determined by $(N,Y)$ where $Y$ arises from
the Deligne bigrading associated to $(\tilde{F}^{\b},\tilde{W}_{\b})$.%
} and bigradings $\tilde{\g}^{p,q}$ resp. $\g^{p,q}=\tilde{\g}^{-q,-p}$
with dimension $h^{p,q}:=\dim_{\CC}\tilde{\g}^{p,q}=\dim_{\CC}\g^{p,q}$.
In $\tilde{\g}^{\b,\b}$ indexing we have the pictures \[\includegraphics[scale=0.6]{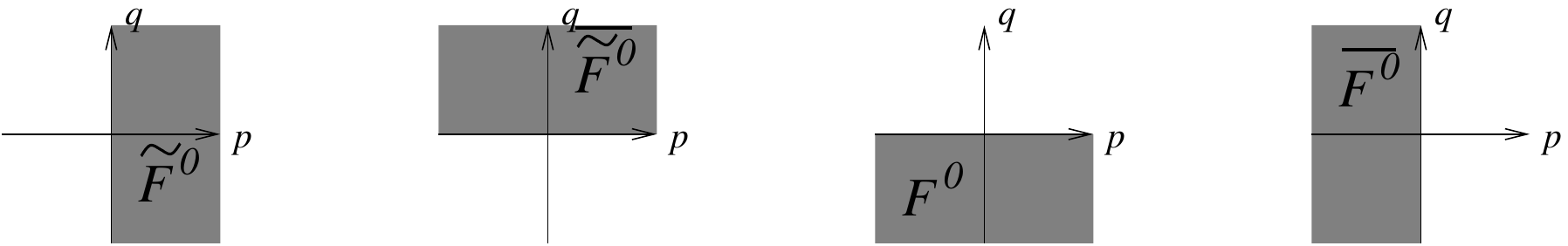}\]
For $p+q\geq0$, the primitive subspaces are $\tilde{P}^{p,q}:=\{\ker(N^{p+q+1})\subseteq\tilde{\g}^{p,q}$\}
and 
\[
P^{p,q}:=N^{p+q}(\tilde{P}^{p,q})=\{\ker(N)\subseteq\g^{p,q}\},
\]
of dimension $h_{prim}^{p,q}$; for $p+q\leq0$ we set $z^{p,q}:=h_{prim}^{-q,-p}$.
Write $\tilde{\g}_{j}:=\oplus_{p+q=j}\tilde{\g}^{p,q}=\g_{-j}$, and
$\lq=\tilde{W}_{0}\g=Lie(Q)$. Of course, $c_{\fb}>0$ and $\hat{B}(N)\subset\mathcal{O}_{\fb}\subset\partial D$.

We recall the following basic material on boundary components from
\cite[sec. 7]{KP}.%
\footnote{In this paragraph, all groups are tacitly identity components of the
group written.%
} Let $Z(N)$ denote the centralizer of $N$ in $G$, with unipotent
radical $U_{N}$ and Levi subgroup $G_{N}\cong Z(N)/U_{N}$; write
\[
\mathscr{G}(N):=U_{N}(\CC)\rtimes G_{N}(\RR)\geq U_{N}(\RR)\rtimes G_{N}(\RR)=Z(N)(\RR).
\]
On the Lie algebra level, we have $\mathfrak{z}(N):=Lie(Z(N)(\CC))=\ker(\ad N)\subset W_{0}\g_{\CC},$
$\mathfrak{u}_{N}:=Lie(U_{N}(\CC))=\mathfrak{z}(N)\cap W_{-1}\g_{\CC},$
and $\g_{N}:=Lie(G_{N}(\CC))=\g_{0}\cap\mathfrak{z}(N).$ Letting
$Z(N)(\RR)$ resp. $\mathscr{G}(N)$ act on $(\tfb,\tilde{W}_{\b})$
gives isomorphisms
\[
\tilde{B}_{\RR}(N)\cong Z(N)(\RR)/\{Z(N)(\RR)\cap Q_{\tfb}(\cap Q_{\overline{\tfb}})\},
\]
\[
\tilde{B}(N)\cong\mathscr{G}(N)/\{\mathscr{G}(N)\cap Q_{\tfb}\}.
\]
Passing to the associated graded (or quotienting by $U_{N}$) gives
projections from both to the Mumford-Tate domain
\[
D(N)\cong G_{N}(\RR)/\mathcal{H}_{N}
\]
(where $Lie(\mathcal{H}_{N})=\g_{\RR}^{0,0}\cap\mathfrak{z}(N)$),
which is Hermitian symmetric if $\g_{N}\subset\g^{-1,1}\oplus\g^{0,0}\oplus\g^{1,-1}.$
The boundary components $B_{\RR}(N)$ and $B(N)$ are obtained by
quotienting out by $e^{\RR\langle N\rangle}$ resp. $e^{\CC\langle N\rangle}$
on the left. Finally, from the equivariant nature of $\mathscr{F}_{lim}^{N}$,
it is evident that 
\[
\hat{B}(N)=Z(N)(\RR).\fb\cong Z(N)(\RR)/\{Z(N)(\RR)\cap Q_{\fb}(\cap Q_{\overline{\fb}})\}.
\]
Like $Q_{\tfb}\cap Z(N)(\RR)$, $Q_{\fb}\cap Z(N)(\RR)$ projects
to $\mathcal{H}_{N}$ in $G_{N}(\RR)$; and so $\hat{B}(N)$, too,
maps to $D(N)$. In a diagram, we have \begin{equation}\label{eqn !* p. 46}\xymatrix{
{} & B(N) \ar@{->>} [rd] \ar@{->>} [ddl]
\\ 
{} & B_{\RR}(N) \ar@{^(->} [u]_{(\beta)} \ar@{->>} [dl] \ar@{->>} [r]^{(\gamma)} & \hat{B}(N) \ar@{->>} [dll]^{(\alpha)} \ar@{^(->} [r]^{(\delta)} & \mathcal{O}_{\fb} \subset \partial D,
\\ 
D(N)
}\end{equation} and one might wonder (for example) when $(\alpha)$-$(\delta)$
are isomorphisms. To state the next result, consider the regions
\begin{lyxlist}{00.00.0000}
\item [{$\mathbf{I}$:}] $p<0$, $q\geq0$, and $p+q\leq0$
\item [{$\mathbf{I'}$:}] $q>0$ and $p+q\leq0$
\item [{$\mathbf{I''}$:}] $q>0$ and $p+q<0$
\item [{$\mathbf{II}$:}] $p\leq0$ and $q\leq0$, but $(p,q)\neq(0,0)$
\item [{$\mathbf{II'}$:}] $p<0$ and $q<0$
\end{lyxlist}
in $\ZZ^{2}$; for example, \textbf{$\mathbf{I'}$} is \[\includegraphics[scale=0.6]{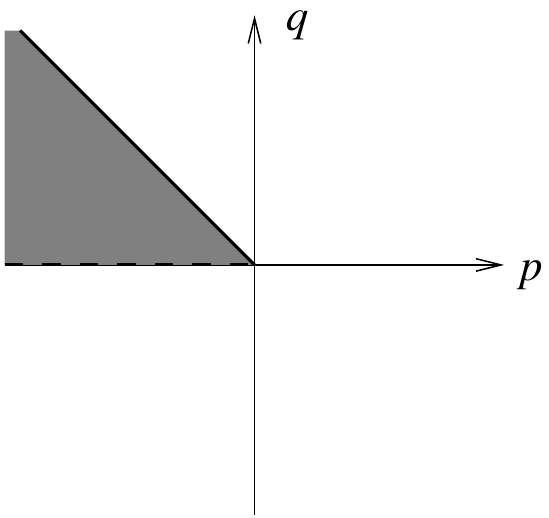}\]
and $\mathbf{I}\amalg\mathbf{II}\amalg\overline{\mathbf{I}}=\mathbf{I'}\amalg\mathbf{II}\amalg\overline{\mathbf{I'}}$
consist of pairs $(p,q)\neq(0,0)$ with $p+q\leq0$. Write $d:=\dim_{\CC}\check{D}$.
\begin{prop}
(Dimension formulas)\\
(i) $\dim_{\RR}B(N)=2\sum_{(p,q)\in\mathbf{I}}z^{p,q}+2\left(\sum_{(p,q)\in\mathbf{II'}}z^{p,q}\;-\;1\right)$.\\
(ii) $\dim_{\RR}B_{\RR}(N)=2\sum_{(p,q)\in\mathbf{I}}z^{p,q}+\left(\sum_{(p,q)\in\mathbf{II'}}z^{p,q}\;-\;1\right)$.\\
(iii) $\dim_{\RR}\hat{B}_{\RR}(N)=2\sum_{(p,q)\in\mathbf{I'}}z^{p,q}$.\\
(iv) $\dim_{\RR}D(N)=2\sum_{p<0}z^{p,-p}$.\\
(v) $\dim_{\RR}\mathcal{O}_{\fb}=\sum_{_{p<0\text{ or q>0}}}h^{p,q}=2d-c_{\fb},$ 

where we recall $c_{\fb}=\sum_{(p,q)\in\mathbf{II'}}h^{p,q}.$\end{prop}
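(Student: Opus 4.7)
The plan is to reduce each of (i)--(iv) to a computation of dimensions of two homogeneous spaces using the four isomorphisms recalled immediately above the proposition, then to derive (v) directly from Proposition~\ref{prop codim} via the antidiagonal flip. The fundamental input is the $\mathfrak{sl}_2$-structure coming from the Jacobson-Morosov triple $(N,Y,N_+)$: restricted to the bigrading piece $\tilde{\g}^{p,q}$, $\ker(\mathrm{ad}\,N)$ sits as the ``bottoms of Lefschetz chains'' through $(p,q)$, so that $\dim_{\CC}(\ker N\cap\tilde{\g}^{p,q})$ equals $h_{prim}^{p,q}$ if $p+q\geq 0$ and $h_{prim}^{-q,-p}=z^{p,q}$ if $p+q\leq 0$ (consistent on the diagonal). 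Summing over the appropriate ranges immediately gives $\dim_{\CC}\mathfrak{z}(N)=\sum_{p+q\leq 0}z^{p,q}$, $\dim_{\CC}\mathfrak{u}_N=\sum_{p+q\leq-1}z^{p,q}$, and $\dim_{\CC}\g_N=\sum_{p+q=0}z^{p,q}$.

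Next I would compute the isotropy Lie algebras. Working in the $\RR$-split case (which suffices by continuity), $\lq_{\tfb}=\bigoplus_{p\geq 0}\tilde{\g}^{p,q}$ while $\lq_{\tfb}\cap\overline{\lq_{\tfb}}=\bigoplus_{p,q\geq 0}\tilde{\g}^{p,q}$; by the antidiagonal flip, $\lq_{\fb}\cap\overline{\lq_{\fb}}=\bigoplus_{p,q\leq 0}\tilde{\g}^{p,q}$. Intersecting with $\mathfrak{z}(N)$ yields the isotropy for $\tilde{B}_{\RR}(N)$ (a single $z^{0,0}$) and for $\hat{B}(N)$ ($\sum_{p,q\leq 0}z^{p,q}$). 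For $\tilde{B}(N)$, one must treat $\mathscr{G}(N)=U_N(\CC)\rtimes G_N(\RR)$ as a hybrid object: the complex part $\mathfrak{u}_N$ contributes $2\dim_{\CC}(\mathfrak{u}_N\cap F^0)$ to the real dimension of $\mathscr{G}(N)\cap Q_{\tfb}$, whereas the real Levi $\g_{N,\RR}$ contributes only the ``Hodge-real'' piece $\g_{N,\RR}\cap F^0\cap\overline{F^0}=\g_{N,\RR}\cap\tilde{\g}^{0,0}$, of real dimension $z^{0,0}$. Finally, (i) and (ii) come from (their $\tilde{B}$ counterparts) by subtracting $2$ or $1$ for modding out by $e^{\CC N}$ or $e^{\RR N}$; and (iv) from $\dim_{\CC}G_N=z^{0,0}+2\sum_{p<0}z^{p,-p}$ (using the conjugation symmetry $z^{p,-p}=z^{-p,p}$) minus $z^{0,0}$ for $\mathcal{H}_N$.

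Assembling (i)--(iv) into the stated form is then index manipulation. The key identity is that on $\{p+q\leq 0\}$ the conjugation symmetry $z^{p,q}=z^{q,p}$ pairs $\{q>0,\,p+q\leq 0\}$ with $\{p>0,\,p+q\leq 0\}$, both equal to $\sum_{\mathbf{I'}}z^{p,q}$, and absorbing the axial pieces $\{p<0,q=0\}$, $\{p=0,q<0\}$ into these sums enlarges $\mathbf{I'}$ to $\mathbf{I}$. For instance, in (iii) the isotropy sum $\sum_{p,q\leq 0}z^{p,q}$ cancels against the part of $\sum_{p+q\leq 0}z^{p,q}$ with both indices $\leq 0$, leaving exactly $2\sum_{\mathbf{I'}}z^{p,q}$; the analogous bookkeeping for (i), (ii) recovers the $2\sum_{\mathbf{I}}+(\sum_{\mathbf{II'}}-1)$ or $2\sum_{\mathbf{II'}}+2\sum_{\mathbf{I}}-2$ form after tracking the axial and diagonal terms.

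Statement (v) is a direct consequence of Proposition~\ref{prop codim}: $\dim_{\RR}\mathcal{O}_{\fb}=2d-c_{\fb}=\dim_{\CC}\g_{\CC}-\dim_{\CC}(\lq_{\fb}\cap\overline{\lq_{\fb}})$, and a routine rewrite via $\g^{p,q}=\tilde{\g}^{-q,-p}$ together with the Lefschetz equality $h^{p,q}=h^{-q,-p}$ puts the sum over the stated index set. I expect the main obstacle to be the hybrid nature of $\mathscr{G}(N)$ in (i): correctly accounting for the ``factor of $2$'' on the complex part $\mathfrak{u}_N$ versus the single factor on the real Levi is where sign and dimension errors are most likely, and verifying that the leftover $z^{0,0}$ and the diagonal pieces $\{z^{p,-p}\}$ combine cleanly with the $e^{\CC N}$-quotient to produce precisely the $\mathbf{I}$ and $\mathbf{II'}$ sums is the most delicate combinatorial point.
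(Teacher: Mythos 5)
Your proposal is correct and follows essentially the same route as the paper: the paper's proof likewise computes the tangent spaces of $\tilde{B}(N)$, $\tilde{B}_{\RR}(N)$, and $\hat{B}(N)$ as quotients of $\mathfrak{z}(N)$ (resp.\ $\mathfrak{z}(N)_{\RR}$) by the isotropy subalgebras $\mathfrak{z}(N)\cap\tilde{F}^{0}(\cap\overline{\tilde{F}^{0}})$ resp.\ $\mathfrak{z}(N)\cap F^{0}\cap\overline{F^{0}}$, identifies these with sums of primitive subspaces $P^{p,q}$, and then performs the same index bookkeeping, with (v) read off from Proposition \ref{prop codim}. The one slip is your claim that $\dim_{\CC}(\ker N\cap\tilde{\g}^{p,q})=h_{prim}^{p,q}$ for $p+q\geq 0$ --- hard Lefschetz makes $N$ injective on $\tilde{\g}^{p,q}$ when $p+q>0$, so $\ker(\ad N)$ is concentrated in degrees $p+q\leq 0$ --- but since every sum you subsequently use ranges only over $p+q\leq 0$, this does not affect the argument.
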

\begin{proof}
Only \emph{(i)-(iii)} require justification. These results follow
from computing tangent spaces:
\[
T_{\tfb}\tilde{B}_{\RR}(N)\cong\mathfrak{z}(N)_{\RR}/\{\mathfrak{z}(N)_{\RR}\cap\tilde{F}^{0}(\cap\overline{\tilde{F}^{0}})\}
\]
\[
=\mathfrak{z}(N)_{\RR}/\mathfrak{z}(N)_{\RR}^{0,0}\;\cong\;\left(\bigoplus_{(p,q)\neq(0,0)}P^{p,q}\right)_{\RR};
\]
similarly,
\[
T_{\tfb}\tilde{B}(N)\cong\mathfrak{z}(N)/\{\mathfrak{z}(N)\cap\tilde{F}^{0}\}=\bigoplus_{p\in\ZZ;\, q>0}P^{p,q}.
\]
To get tangent spaces to $B(N)$ and $B_{\RR}(N)$, quotient out the
span of $N$ in $P_{\RR}^{1,1}$ resp. $P^{1,1}$. Noting that $\tilde{W}_{0}\cap F^{0}\supset\tilde{W}_{0}\cap\tilde{F}^{0}$
and $\mathfrak{z}(N)\subset\tilde{W}_{0}$ $\implies$ $\mathfrak{z}(N)\cap F^{0}\supset\mathfrak{z}(N)\cap\tilde{F}^{0},$
\[
T_{\fb}\hat{B}(N)\cong\mathfrak{z}(N)_{\RR}/\{\mathfrak{z}(N)\cap F^{0}(\cap\overline{F^{0}})\}\mspace{100mu}
\]
\[
\mspace{100mu}\cong\mathfrak{z}(N)\cap\left(\bigoplus_{p<0\text{ or }q>0}\tilde{\g}^{p,q}\right)=\left(\bigoplus_{p<0\text{ or }q>0}P^{p,q}\right)_{\RR}.
\]

\end{proof}
One immediate consequence is that if $\fb$ is Hodge-Tate, then $\hat{B}(N)$
is a point (namely, $\fb$). We also have:
\begin{cor}
The maps in \eqref{eqn !* p. 46} are isomorphisms under the following
conditions:\\
$(\alpha):$ $z^{p,q}=0$ for $(p,q)\in\mathbf{I''}$;\\
$(\beta):$ $c_{\fb}=1$;\\
$(\gamma):$ $c_{\fb}=1$, and $z^{p,0}=0$ for $p<0$; or equivalently, 

$z^{p,q}=0$ for $(p,q)\in\mathbf{II}\backslash\{(1,1)\}$ and $z^{-1,-1}=1$;\\
$(\delta):$ never.
\end{cor}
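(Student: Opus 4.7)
The approach is that every arrow in \eqref{eqn !* p. 46} is an equivariant injection or surjection between smooth manifolds, so is an isomorphism precisely when the two real dimensions coincide. The formulas \emph{(i)}--\emph{(v)} in the preceding Proposition then reduce each case to an identity in the primitive dimensions $z^{p,q}=h^{-q,-p}_{prim}$, to be tested via the Lefschetz decomposition $h^{p,q}=\sum_{k\geq 0}h^{p+k,q+k}_{prim}$ for the polarized MHS $(\tfb,W(N)_\b)$.

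For $(\alpha)$: one decomposes $\mathbf{I'}=\mathbf{I''}\amalg\{(-k,k):k\geq 1\}$, whose antidiagonal part accounts for all of $\dim_\RR D(N)$; thus $\dim_\RR\hat B(N)-\dim_\RR D(N)=2\sum_{\mathbf{I''}}z^{p,q}$, vanishing iff the stated condition holds. For $(\beta)$, the difference is $\sum_{\mathbf{II'}}z^{p,q}-1$; under $c_\fb=1$, Corollary \ref{cor codim 1 p.19b} forces $h^{1,1}=1$ with $h^{p,q}=0$ for all other $(p,q)$ in the positive quadrant, and Lefschetz pins down $h^{1,1}_{prim}=1$ as the only nonzero primitive with both indices $\geq 1$, yielding $\sum_{\mathbf{II'}}z^{p,q}=1$. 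For $(\gamma)$, since $\mathbf{I}\setminus\mathbf{I'}=\{(p,0):p<0\}$,
\[
\dim_\RR B_\RR(N)-\dim_\RR\hat B(N) \;=\; 2\sum_{p<0}z^{p,0}+\Bigl(\sum_{\mathbf{II'}}z^{p,q}-1\Bigr),
\]
and the two hypotheses kill the brackets separately; the ``equivalent'' reformulation is the same set of vanishings read off in $z$-coordinates.

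The real content is in $(\delta)$, where I must show $\dim_\RR\hat B(N)<\dim_\RR\mathcal{O}_\fb$ unconditionally. Expanding $(2d-c_\fb)-2\sum_{\mathbf{I'}}z^{p,q}$ primitive-by-primitive organizes the difference into three manifestly nonnegative partial sums, indexed by $(a,b)$-primitives with $\{a=0,\,b\geq 1\}$ (coefficient $2b$), $\{a,b\geq 1\}$ (coefficient $2b-\min(a,b)\geq 1$), and $\{a<0,\,a+b>0\}$ (coefficient $2(a+b)$). The crucial input is that the $\mathfrak{sl}_2$-triple $(N,Y,N_+)$ attached to the polarized MHS is itself an irreducible weight-$2$ $\mathfrak{sl}_2$-subrep of $\g$ under its own $\ad$-action; the highest-weight vector $N_+$ is real, lies in $\tilde{\g}^{1,1}$, and satisfies $\ad N^3(N_+)=0$, so $h^{1,1}_{prim}\geq 1$. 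This forces the $(1,1)$ term in the middle sum to be $\geq 1$, giving the strict inequality. The main obstacle is precisely this primitive-by-primitive bookkeeping: for each $h^{a,b}_{prim}$ with $a+b\geq 0$ one counts the positions $(p,q)=(a-k,b-k)$ with $p<0$ contributing to $d$, subtracts the contributions to $c_\fb$ and $2\sum_{\mathbf{I'}}z^{p,q}$, and verifies nonnegativity case by case --- after which the guaranteed primitive $h^{1,1}_{prim}\geq 1$ from the $\mathfrak{sl}_2$-triple closes the argument.
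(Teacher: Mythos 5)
Your proposal is correct and is exactly the intended argument: the paper offers no proof, treating the corollary as an immediate consequence of the dimension formulas (i)--(v), and your reduction of each map to an equality of those dimensions, with the region decompositions $\mathbf{I'}=\mathbf{I''}\amalg\{(-k,k)\}_{k\geq1}$ and $\mathbf{I}\setminus\mathbf{I'}=\{(p,0)\}_{p<0}$, reproduces it faithfully. The only substantive step is $(\delta)$, and your primitive-by-primitive expansion checks out --- the coefficients $2b$, $2b-\min(a,b)$, $2(a+b)$ are what one gets from $\dim\tilde{\g}^{a,b}=\sum_{k\geq0}h^{a+k,b+k}_{prim}$ together with $\dim_{\RR}\mathcal{O}_{\fb}=2d-c_{\fb}$, and the strict positivity via $N_{+}\in\tilde{P}^{1,1}_{\RR}$ (so $h^{1,1}_{prim}\geq1$ whenever $N\neq0$) is precisely the right closing observation.
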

On an infinitesimal level, the $P^{p,q}$ with $p+q=0$ and $p\neq0$
parametrize the Hodge structure given by $\tfb$ on the associated
graded $\oplus_{i}Gr_{i}^{\tilde{W}}$; the $P^{p,q}$ with $p+q>0$
parametrize extension classes. The information lost by the naive limit
map $(\gamma)$ is precisely that which is parametrized by the $P^{p,q}$
with $p\geq0$ and $q\geq0$ (except for $\langle N\rangle\subset P^{1,1}$).
Carayol's nonclassical $SU(2,1)$ example (\cite{Car,KP}), with mixed-Hodge
diagram \[\includegraphics[scale=0.6]{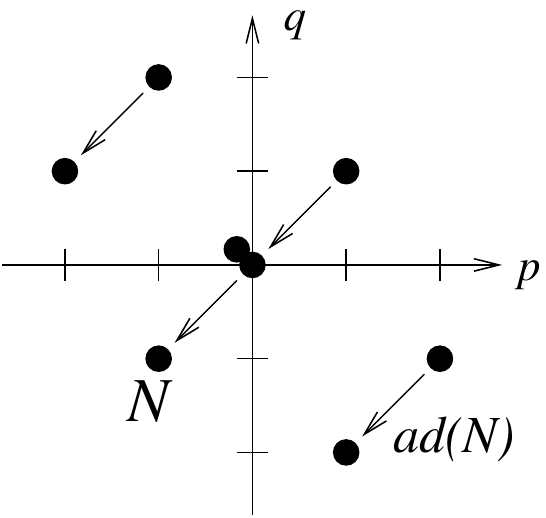}\] is one instance where
$(\gamma)$ is an isomorphism.

At another extreme is the \emph{strongly classical} case, where $D$
is Hermitian symmetric with $F^{-1}\g_{\CC}=\g_{\CC}$, so that (for
$\Gamma\leq G(\QQ)$ neat arithmetic) $X:=\Gamma\backslash D$ is
a connected Shimura variety. From \cite{KU}, it is known that smooth
toroidal compactifications (as in \cite{AMRT}) are obtained by adding
in quotients of our $B(N)$'s (parametrizing nilpotent orbits). The
Baily-Borel compactification, on the other hand, is obtained by using
$\Gamma$-invariant sections of $K_{D}^{\otimes M}$ (for some $M\gg0$)
to embed $X$ in a projective space, and then taking (Zariski or analytic)
closure. Heuristically, since $K_{D}\cong\bigwedge^{d}(F^{-1}/F^{0})$
measures exactly the changes in the Hodge flag (in any direction),
the limits of sections of any $K_{D}^{\otimes M}$ keep track of limits
of flags. Writing $\Gamma_{N}:=Stab(\langle N\rangle)\leq\Gamma$,
this gives part \emph{(a)} of:
\begin{cor}
In the strongly classical case:

(a) the map \[\xymatrix{\Gamma_N \backslash B(N) \ar@{->>} [r]_{\mathscr{F}^{N,\Gamma}_{lim}} & \Gamma_N \backslash \hat{B}(N) }\] recovers
the map from the AMRT boundary components to the Baily-Borel boundary
components; and

(b) $\mathscr{F}_{lim}^{N,\Gamma}$ sends LMHS (up to $\Gamma$ and
$e^{\CC\langle N\rangle}$) to their associated graded (up to $\Gamma_{N}$,
in $\Gamma_{N}\backslash D(N)$).\end{cor}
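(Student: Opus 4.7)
The plan is twofold: first show that in the strongly classical setting the naive-limit map identifies with the associated-graded projection, then invoke the standard description of the AMRT-to-Baily-Borel map to conclude. Part (b) is really the main content; part (a) will follow from (b) plus classical compactification theory.

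First, I would verify that the arrow $(\alpha):\hat{B}(N)\to D(N)$ in diagram \eqref{eqn !* p. 46} is an isomorphism under the strongly classical hypothesis. By the previous corollary, this amounts to showing $z^{p,q}=0$ for $(p,q)\in\mathbf{I''}$. The hypothesis $F^{-1}\g_{\CC}=\g_{\CC}$ forces the weight filtration $W(N)_{\bullet}$ on $\g_{\QQ}$ to be supported in $[-2,2]$ (since $N^{3}=0$), and the induced Hodge structure on each $Gr^{W}_{k}$ is again of "Hermitian symmetric type." Tracing this through the Deligne bigrading, the primitive pieces $\tilde{P}^{p,q}$ with $p+q>0$ consist only of the $\mathfrak{sl}_{2}$-direction $\RR\langle N_{+}\rangle\subset\tilde{\g}^{1,1}$ together with translates of the pure Hodge pieces at $p+q=0$; after the antidiagonal flip $(p,q)\mapsto(-q,-p)$, this kills $z^{p,q}$ throughout region $\mathbf{I''}$.

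Second, with $(\alpha)$ an isomorphism, the commutative triangle in \eqref{eqn !* p. 46} shows that $\mathscr{F}_{lim}^{N}$ factors as the associated-graded projection $\tilde{B}(N)\twoheadrightarrow D(N)$ followed by the identification $D(N)\cong\hat{B}(N)$. In other words, modulo the $e^{\CC\langle N\rangle}$-action, the naive limit of an LMHS equals its associated graded as a polarized Hodge structure. Passing to $\Gamma_{N}$-quotients gives (b) directly.

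For (a), I would identify $\Gamma_{N}\backslash D(N)$ with a boundary stratum of the Baily-Borel compactification: this is the classical reduction-theoretic description (see \cite{AMRT}), and the AMRT toroidal boundary components are precisely the $\Gamma_{N}\backslash B(N)$'s (cf.\ \cite{KU} and \S7 of \cite{KP}). The natural projection from the toroidal to the Baily-Borel compactification is characterized as "forgetting the $e^{\CC\langle N\rangle}$-orbit of the LMHS and retaining its associated graded," which by part (b) is precisely $\mathscr{F}_{lim}^{N,\Gamma}$.

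The main obstacle is the bigrading computation establishing the vanishing of $z^{p,q}$ in region $\mathbf{I''}$. One must be careful that the strongly classical condition, which is phrased in terms of $\pi_{\chi_{0}}$ on the \emph{reference} Hodge structure, actually propagates to every LMHS shape that can occur — this relies on the fact that any $\RR$-split LMHS on $\g$ arises as a limit of Hodge structures in $D$, so its bigrading is constrained by the same small range of weights. Once this bookkeeping is in place, the identification with AMRT/Baily-Borel is standard.
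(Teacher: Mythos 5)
Your proposal is correct and follows essentially the same route as the paper: the paper likewise reduces everything to showing that the map $(\alpha)$ in \eqref{eqn !* p. 46} is an isomorphism (i.e.\ that $\mathscr{F}_{lim}$ loses all extension data), and then appeals to the classical AMRT/Baily--Borel description for the identification in (a). The only difference is cosmetic: the paper's justification is the one-line observation that in the strongly classical case all nonzero $\tilde{\g}^{p,q}$ lie in the square $[-1,1]^{\times2}$ (so region $\mathbf{I''}$, which requires $-p\geq2$ after the antidiagonal flip, is empty), which is a cleaner way to package your $N^{3}=0$ / weight-range argument.
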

\begin{proof}
We must show $\mathscr{F}_{lim}$ loses all extension information,
or equivalently that $(\alpha)$ is an isomorphism. But this is clear
since the nonzero $\tilde{\g}^{p,q}$ are in the square $[-1,1]^{\times2}.$
\end{proof}

\subsection{Parabolic induction and parabolic orbits}

For applications of this material to representation theory, which
we plan to pursue in subsequent work, the following Hodge-theoretic
approach to an Iwasawa decomposition will be of use. Writing $\fb=\mathscr{F}(H,\chi)^{\bullet}$,
let $\Theta$ be as in Proposition \ref{prop '} so that (by Corollary
\ref{cor double prime}) $\Theta(\g^{p,q})=\g^{-q,-p}$. For each
$(p,q)$ with $p+q\neq0$, the $(+1)$- and $(-1)$-eigenspaces of
$\Theta$ on $\left(\g^{p,q}\oplus\g^{q,p}\oplus\g^{-q,-p}\oplus\g^{-p,-q}\right)_{\RR}$
are clearly both of (real) dimension $2h^{p,q}$. On the anti-diagonal
line, we can use the fact that $(\tilde{W}_{\b},\tfb,N)$ is a polarized
MHS on $(\g,-B)$ to deduce that the sign of $\Theta$ on $N^{j}\tilde{P}^{p+j,j-p}\subset\tilde{g}^{p,-p}$
is $(-1)^{j+p}$, determining its eigenspaces in $\g_{\RR}^{0,0}$
and $(\g^{p,-p}\oplus\g^{-p,p})_{\RR}$. The sum $\mathfrak{k}$ of
all the $(+1)$-eigenspaces is the Lie algebra of a maximal compact
subgroup $K\leq G(\RR)$.

Now assume $\mathcal{Q}:=Q(\RR)$ is cuspidal, with Langlands decomposition
$\mathcal{Q}=\mathcal{MAN}$ as in Proposition \ref{prop HT cuspidality};
in particular, $\mathcal{MA}=G_{0}(\RR)$ and $\mathcal{N}=U(\mathcal{Q})$.
(Note that $Lie(\mathcal{Q})=\lq_{\RR}=\tilde{W}_{0}\g_{\RR},$ $Lie(\mathcal{N})=\tilde{W}_{-1}\g_{\RR}$,
and $\mathcal{M}$ is reductive with compact Cartan, but possibly
not connected.) Then $\mathfrak{k}\oplus\lq_{\RR}$ evidently gives
all of $\g_{\RR}$, and so
\[
G(\RR)^{\circ}=K\mathcal{Q}=K\mathcal{MAN}.
\]
Since $\mathcal{Q}$ contains the stabilizer $Q_{\fb}(\cap Q_{\fb})\cap G(\RR)^{\circ}$
of $\fb$ in $G(\RR)^{\circ}$, we have a natural fibration
\[
\mathcal{O}_{\fb}\;\overset{\pi}{\twoheadrightarrow}\; G(\RR)^{\circ}/\mathcal{Q}\;\cong\; K/K\cap\mathcal{M}\;=:\;\mathcal{K}_{\fb},
\]
with compact base and fibers of real dimension $\sum_{(p,q)\in\mathbf{I'}}h^{p,q}\,(\geq\dim\hat{B}(N))$,
one of which contains $\hat{B}(N)$.

Associated to any complex representation $\rho:\mathcal{Q}\to Aut(V)$
is a vector bundle 
\[
\mathcal{V}_{\rho}:=\frac{G(\RR)^{\circ}\times V}{\mathcal{Q}}\twoheadrightarrow\mathcal{K}_{\fb},
\]
and we may define a representation $Ind_{\mathcal{Q}}^{G(\RR)^{\circ}}(\rho)$
of $G(\RR)^{\circ}$ by letting the latter act by left translation
on the space of $C^{\infty}$ $\CC$-valued sections of $\mathcal{V}_{\rho}$.
In order for boundary components to provide a useful framework for
studying these representations, we should have at least $\mathcal{M}/Z(\mathcal{M})\subseteq G_{N}(\RR)$,
or equivalently 
\[
\left\{ \begin{array}{c}
z^{p,-p}=h^{p,-p}\;\text{for}\; p\neq0\;\;\;\;\;\;\;\;\;\;\;\\
\text{and}\;\;\;\;\;\;\;\;\;\;\;\;\;\;\;\;\;\;\;\;\;\;\;\;\;\;\;\;\;\;\;\\
\g^{0,0}/\mathfrak{z}(\g_{0})\cong\{\ker(N)\subset\g^{0,0}\}.
\end{array}\right.
\]
In this situation, one can begin with a representation $\mu$ of $G_{N}(\RR)$,%
\footnote{optionally twisted by a character of the component group of $\mathcal{M}$%
} for instance on a coherent cohomology group $H^{i}(D(N),\mathcal{O}(\mathcal{V}'))$
(with $\mathcal{V}'$ a holomorphic homogeneous vector bundle over
$D(N)$), together with a character $\sigma$ of $\mathcal{A}$, and
pull $\sigma\mu$ back to $\mathcal{Q}$ (via the projection $\mathcal{Q}\twoheadrightarrow\mathcal{Q}/Z(\mathcal{M})\mathcal{N}$).

Two special cases of interest are:
\begin{lyxlist}{00.00.0000}
\item [{(A)}] when $\mathcal{O}_{\fb}$ has $c_{\fb}=1$, $\dim(Gr_{\pm2}^{W}\g)=1$,
and $\dim(Gr_{\pm k}^{W}\g)=0$ for $k>2$; and
\item [{(B)}] when $\mathcal{O}_{\fb}$ is the closed orbit, $(\fb,W_{\b})$
Hodge-Tate, and $G_{N}$ trivial.
\end{lyxlist}
In case (B), $\mathcal{M}$ is finite, and for $\sigma=\Delta_{\mathcal{Q}}^{\frac{1}{2}}$
($\Delta_{\mathcal{Q}}:=$ modular character) together with an appropriate
choice of $\mu$, $Ind_{\mathcal{Q}}^{G(\RR)}(\sigma\mu)$ is the
direct sum of the TDLDS (totally degenerate limits of discrete series)
for $G(\RR)^{\circ}$.

In another (but related) direction, we expect in some cases (including
(A) above), the $Q(\CC)$-orbit of $\tfb$ to play a role in generalizing
H. Carayol's results on Fourier coefficients \cite{Car} for nonclassical
automorphic cohomology classes (in some $H^{i}(\Gamma\backslash D,\mathcal{O}(\mathcal{V}))$).
More precisely, $(Q(\CC).\tfb)\cap D$ will project to a sort of punctured
tubular neighborhood of $\Gamma_{N}\backslash B(N)$ in $\Gamma\backslash D$
, whenever
\[
\dim_{\CC}(Q(\CC).\tfb)=\sum_{(p,q)\in\mathbf{I}\amalg\mathbf{II'}}h^{p,q}
\]
equals $\dim_{\CC}(B(N))+1$, which is to say when $z^{p,q}=h^{p,q}$
for $(p,q)\neq(0,0).$ (Basically, this gives a homogeneous structure
to a union of nilpotent orbits.) The pullback of a cohomology class
to this neighborhood then is expected to have a Laurent expansion
``about'' $\Gamma_{N}\backslash B(N)$, with coefficients lying
in groups of the form $\left\{ H^{i}(\Gamma_{N}\backslash B(N),\mathcal{O}(\mathcal{L}^{\otimes k}\otimes W))\right\} _{k\in\ZZ}.$
This will be taken up in a future work.

\section{Examples}

The simplest nontrivial case is, of course, the upper half-plane,
Let $G=PGL_{2}$, $D=\mathfrak{H}$, $\check{D}=\PP^{1}$, where we
think of $\mathfrak{H}$ as parametrizing polarized Hodge structures
on $\g=\mathfrak{sl}_{2}$ with $h^{-1,1}=h^{0,0}=h^{1,-1}=1$. The
group $G(\RR)$ has two components, with $G(\RR)^{\circ}\cong SL_{2}(\RR)/\{\pm\text{id}\}$
and \Tiny $\left[\left(\begin{array}{cc}
0 & 1\\
1 & 0
\end{array}\right)\right]$\normalsize  in the non-identity component, so that $W_{\RR}^{\circ}$
is trivial and $W_{\RR}=W_{\CC}\cong\ZZ/2\ZZ$. The two nontrivial
$G(\RR)^{\circ}$-conjugacy classes of Cartan subgroups in $G(\RR)^{\circ}$
are depicted in a Hasse diagram \[\includegraphics[scale=0.6]{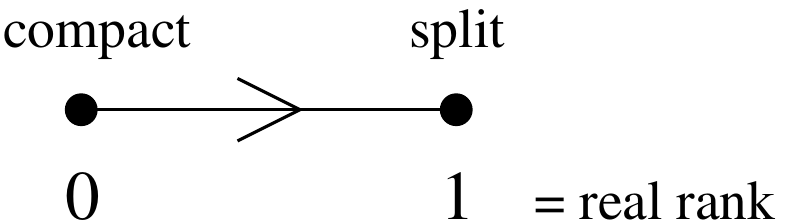}\]where
the arrow denotes a Cayley transform. The $G(\RR)^{\circ}$-orbits
in $\check{D}$ are of course \begin{equation}\label{eqn EHD sl2}\includegraphics[scale=0.6]{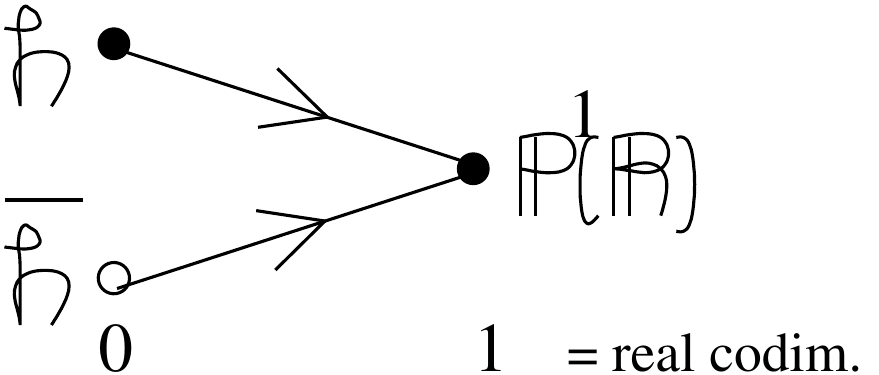}\end{equation}where
the segments denote incidence: that is, the orbit on the right endpoint
is contained in the closure of the left-endpoint orbit. 

We call \eqref{eqn EHD sl2} an \emph{enhanced Hasse diagram}, and
produce them for a number of other examples in $\S6.1$. The ``enhancements''
are as follows:
\begin{itemize}
\item a solid vertex corresponds to an orbit in $ncl(D)$ (i.e., polarizable);
\item a vertex with an ``$\times$'' denotes an orbit in $cl(D)\backslash ncl(D)$
(i.e. non-polarizable);
\item an open vertex signifies an orbit not in $cl(D)$;
\item a solid edge with {[}resp. without{]} an arrow is an incidence obtained
via a Cayley transform {[}resp. cross-action{]}; and
\item a dotted edge is an incidence deduced from the subexpression property.
\end{itemize}
Orbits will be labeled as in $\S4.2$ (viz., $\mathbf{o}_{j}^{\{w\}}$),
and we shall indicate as well the $\{\dim(\g^{p,q})\}$ attached to
each orbit: for $PGL_{2}$ this is simply \[\includegraphics[scale=0.6]{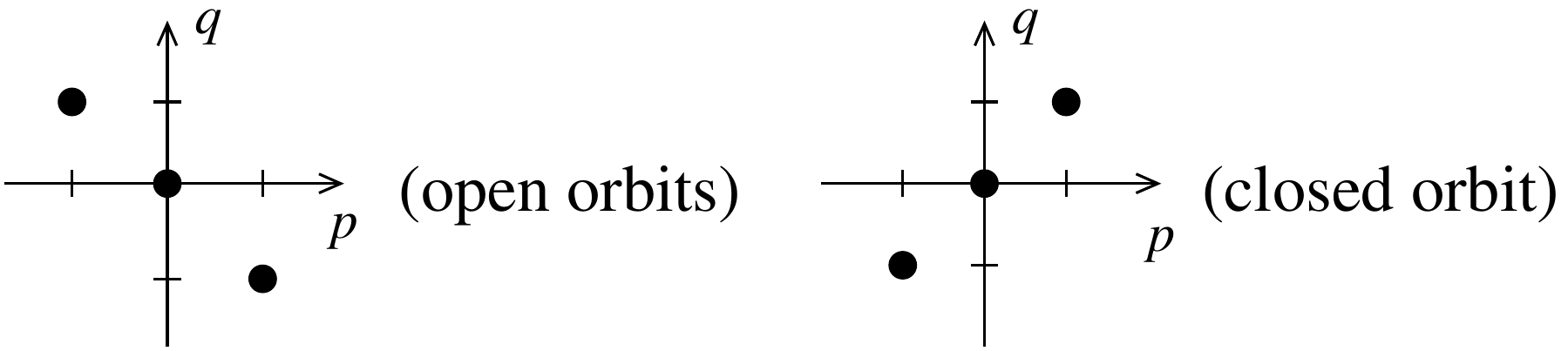}\]where
a dot stands for a single complex dimension. We call these \emph{mixed
Hodge diagrams}.

In $\S6.2$ we will discuss a few simple ``negative examples'' which
motivated the definitions in $\S5.2$.

\subsection{Enhanced Hasse diagrams}

This subsection treats the cases where $G$ is (a $\QQ$-form of)
$SU(2,1)^{ad}$, $PSp_{4}$, and ($\RR$-split) $G_{2}$, briefly
illustrating the method for $SU(2,1)^{ad}$ and merely describing
results for the other two groups.

\subsubsection{$G=SU(2,1)^{ad}$}

$G(\RR)\,(=G(\RR)^{\circ})$ has two conjugacy classes of Cartan subgroups
\[\includegraphics[scale=0.6]{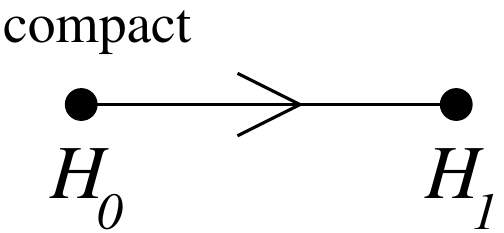}\]and no Cartan of real rank
$2$. In the root diagram associated to a choice of (real) Cartan,
we denote by \[\includegraphics[scale=0.6]{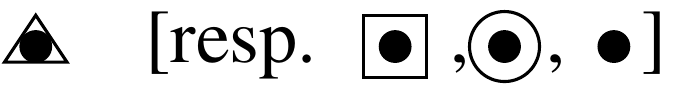}\]a noncompact
imaginary {[}resp. compact imaginary, real, complex{]} root.%
\footnote{We represent the Cartan subalgebra in our root diagram by two bullets
at the origin.%
} A character $\chi\in\mathrm{X}_{*}(H(\CC))$ is depicted by shading
half of the root diagram, which is meant to heuristically indicate
$\chi^{-1}(\RR_{\geq0})\subset\Lambda\otimes\RR$. We begin with $(H_{0},\chi_{0})$,
apply a Cayley transform to get $(H_{1},\chi_{1})$, then apply $W_{\CC}$
to both, and finally, group the results in $W_{\RR}^{\circ}(H_{0})$-
resp. $W_{\RR}^{\circ}(H_{1})$-orbits. These latter are labeled by
their images under the orbit map ($\S$4.2), with elements of $W_{\CC}$
written in terms of reflections in simple roots.

Now, the results willl depend upon $D$, for which there are essentially
two choices compatible with the assumptions of $\S2$. For both, we
let $V$ be a $6$-dimensional $\QQ$-vector space, with a symplectic
form $Q$ and a decomposition $V_{\QQ(i)}=V_{+}\oplus\overline{V_{+}}$
with $Q(V_{+},V_{+})=0$. We assume that the Hermitian form $H(v,w):=-2iQ(v,\bar{w})$
on $V_{+}$ has signature $(2,1)$, so that the projectivization of
those $v\in V_{+}$ with $H(v,v)<0$ yields a (Picard) $2$-ball $\mathbb{B}\subset\PP(V_{+})$.
This parametrizes Hodge structures on $V$ with Hodge numbers $(3,3)$,
and a nontrivial involution (with eigenspaces $V_{+},\overline{V_{+}}$).
Alternatively, one can consider Carayol's nonclassical domain $\mathbb{D}$
parametrizing point-line pairs $(p,L)$ in $\PP(V_{+})$ with $L\cap\mathbb{B}\neq\emptyset$
and $p\in L\backslash(L\cap cl(\mathbb{B}))$, or equivalently HS-with-involution
on $V$ with numbers $(1,2,2,1)$. The corresponding Hodge structures
on $\g$ have Hodge numbers $(2,4,2)$ for $\mathbb{B}$ and $(1,2,2,2,1)$
for $\mathbb{D}$.

W carry out the procedure for $\mathbb{D}$ first: since it is a complete
flag domain, Theorem \ref{thm 4.9} applies. From $H_{0}$, we obtain
the (three) open orbits: \[\includegraphics[scale=0.5]{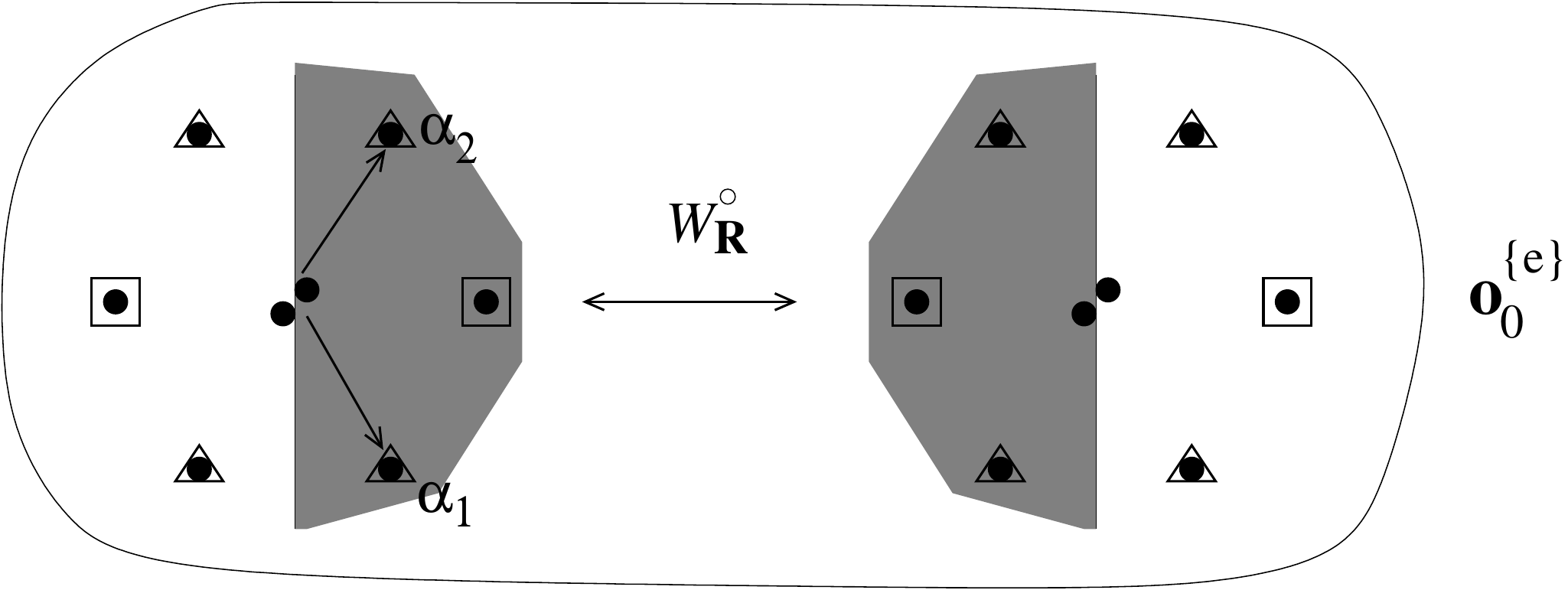}\]\[\includegraphics[scale=0.5]{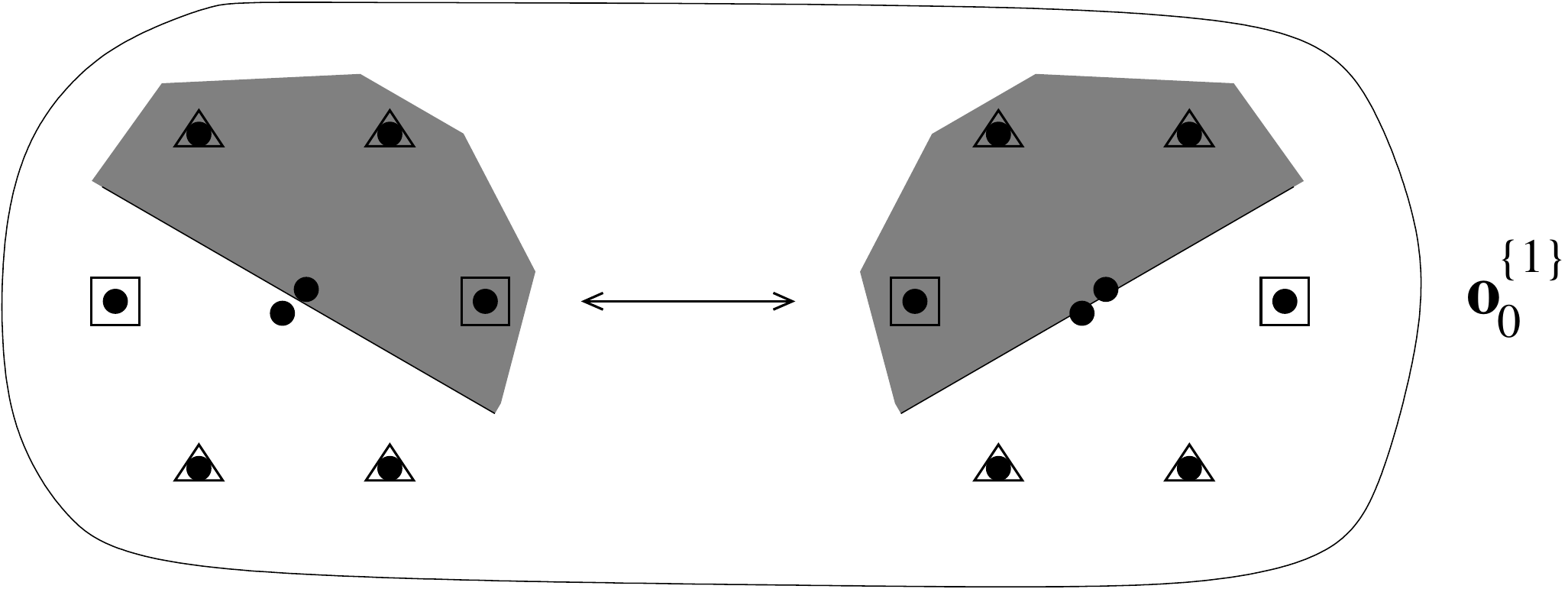}\]\[\includegraphics[scale=0.5]{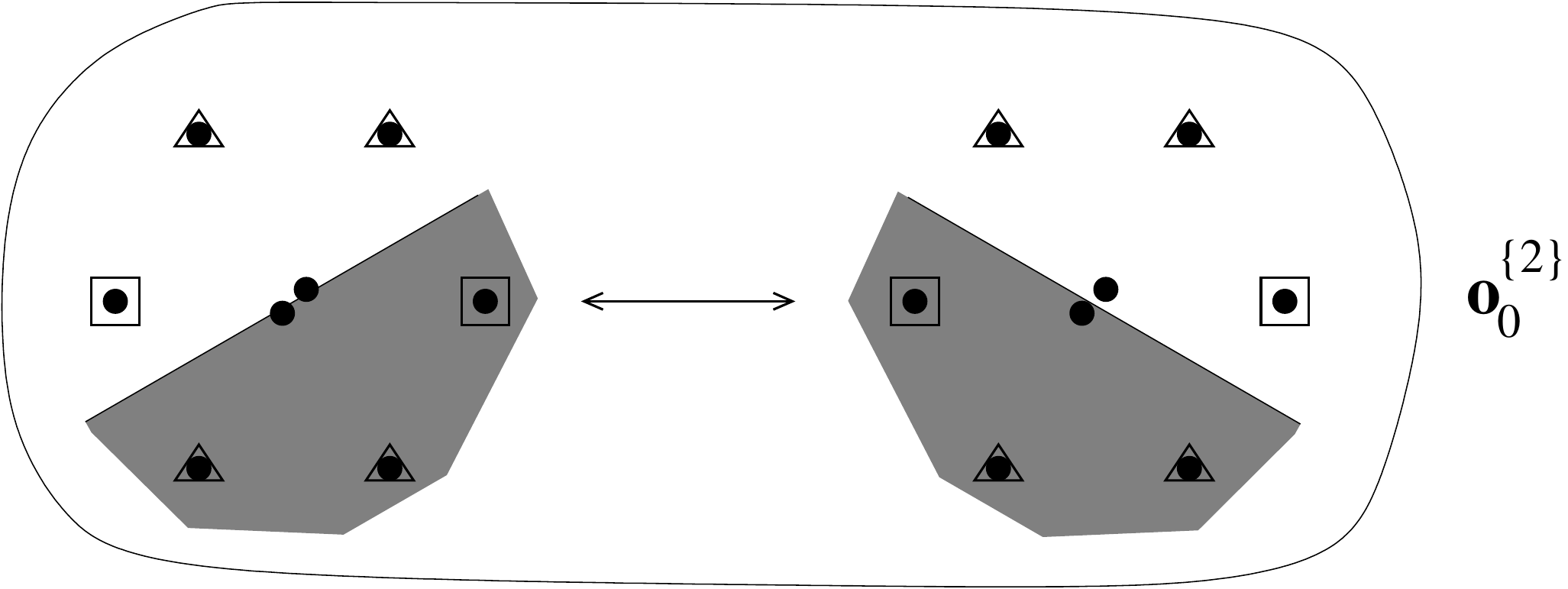}\](Note
that $W_{\RR}(H_{0})\cong\ZZ/2\ZZ$ is generated by reflection in
$\boxed{\bullet}$.) From $H_{1}$, we get three more orbits whose
codimensions can be read off from the accompanying mixed Hodge diagrams:
\[\includegraphics[scale=0.45]{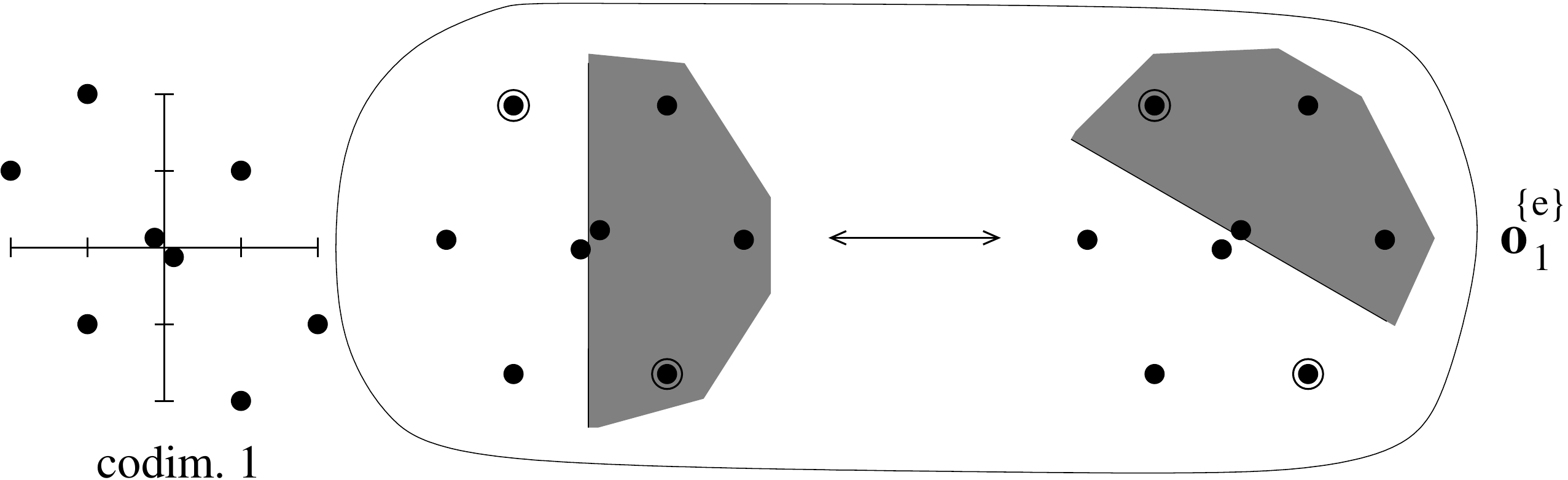}\]\[\includegraphics[scale=0.45]{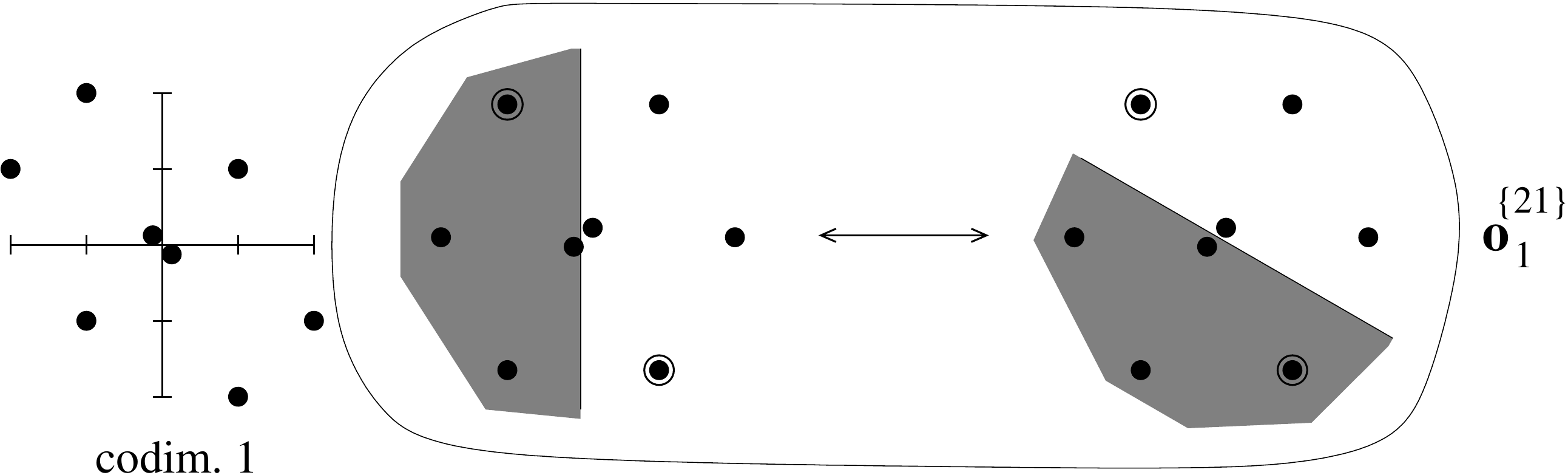}\]\[\includegraphics[scale=0.45]{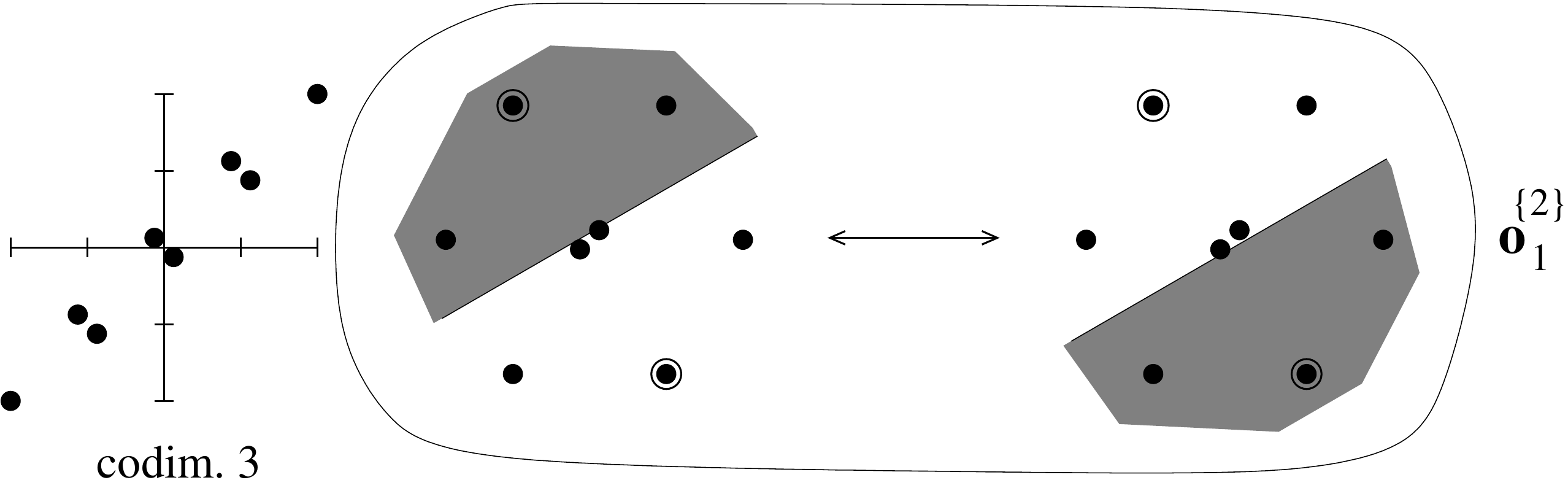}\](Here
$W_{\RR}(H_{1})\cong\ZZ/2\ZZ$ is generated by reflection in a real
root.) 

For the enhanced Hasse diagram, we read off inclusions
\[
cl(\mathbf{o}_{0}^{\{1\}})\supset\mathbf{o}_{1}^{\{e\}}\subset cl(\mathbb{D})\supset\mathbf{o}_{1}^{\{21\}}\subset cl(\mathbf{o}_{0}^{\{2\}})
\]
from visually ``obvious'' Cayley transforms,%
\footnote{What seems clear from the pictures may be justified more carefully
using $\S4.3$; this is left to the reader.%
} obtaining the left half of \[\includegraphics[scale=0.6]{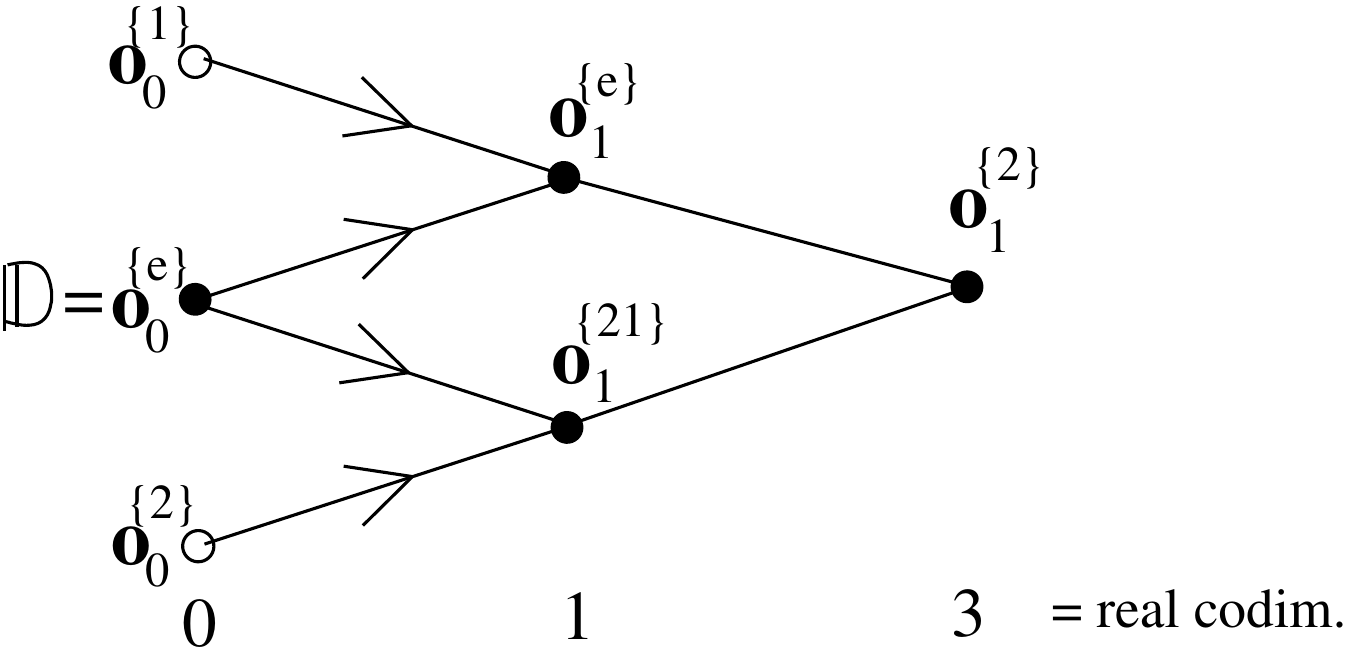}\]The
right-hand inclusions are, as the diagram indicates, given by cross-actions.

Turning to the classical domain $\mathbb{B}$, we have (from $H_{0}$)
only 2 open orbits:%
\footnote{The ``photographic negatives'' of these pictures do not appear because
they cannot be obtained from the first picture via $W_{\CC}$.%
} \[\includegraphics[scale=0.55]{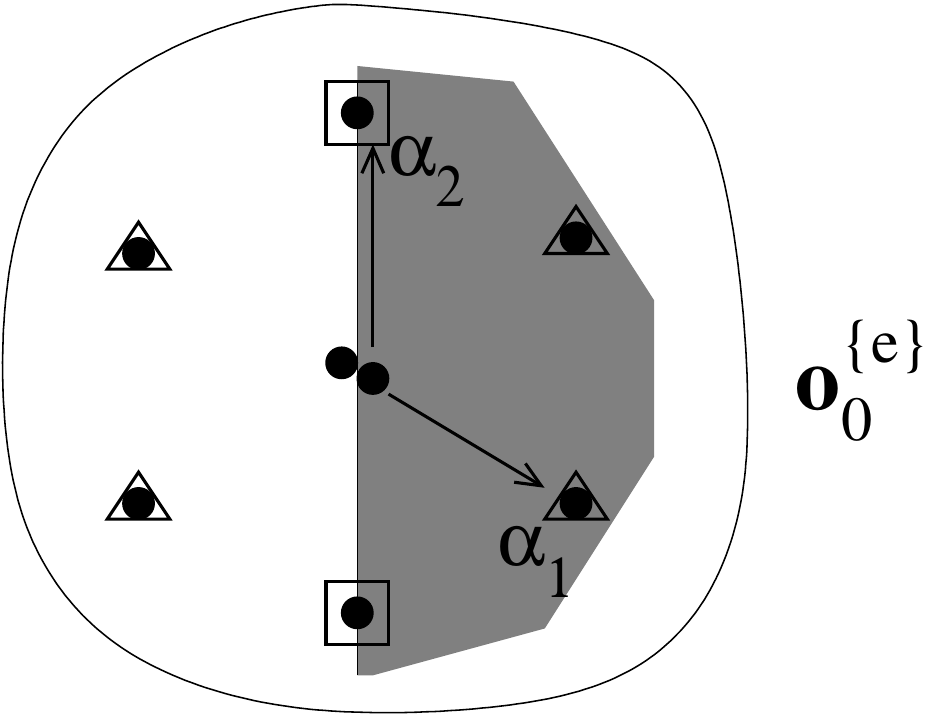}\]\[\includegraphics[scale=0.55]{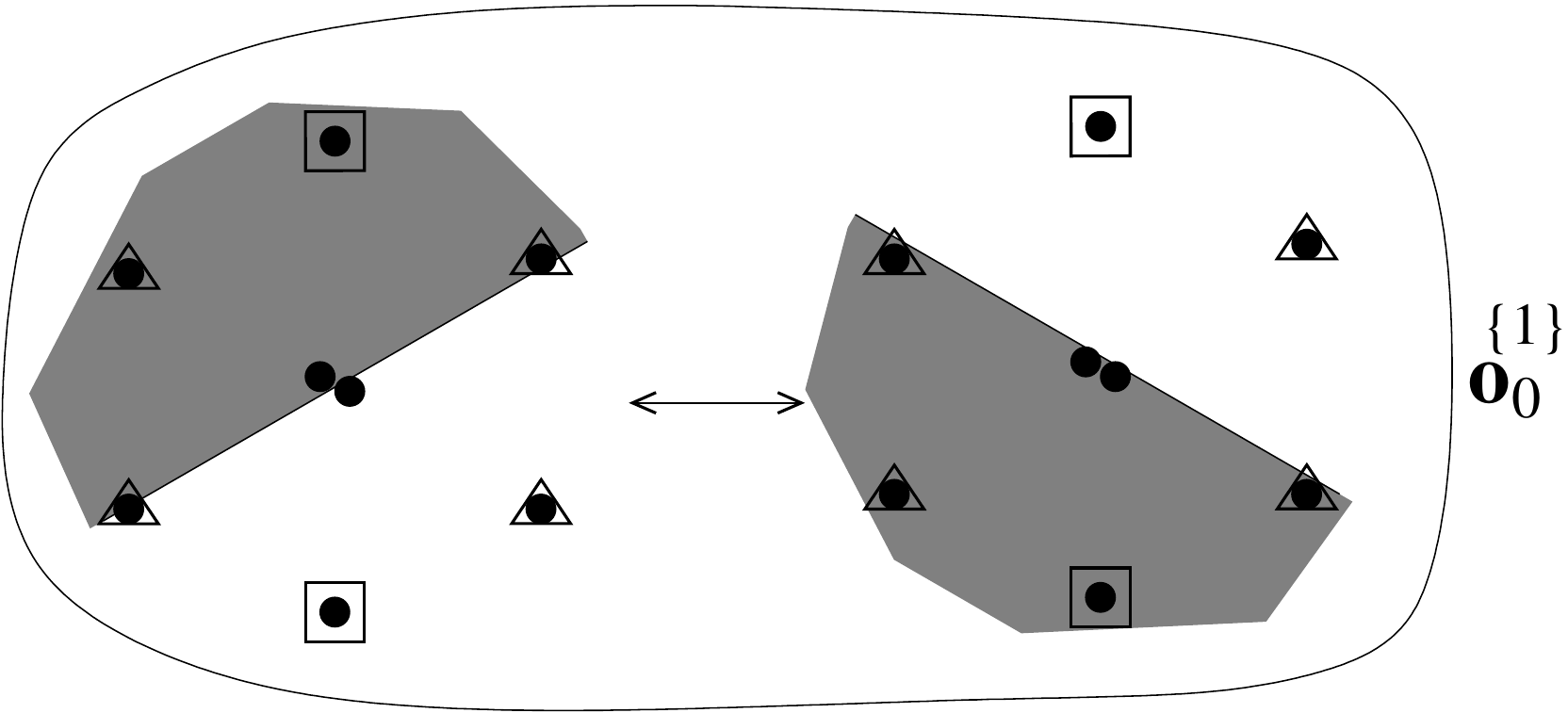}\]From
$H_{1}$, we get \[\includegraphics[scale=0.5]{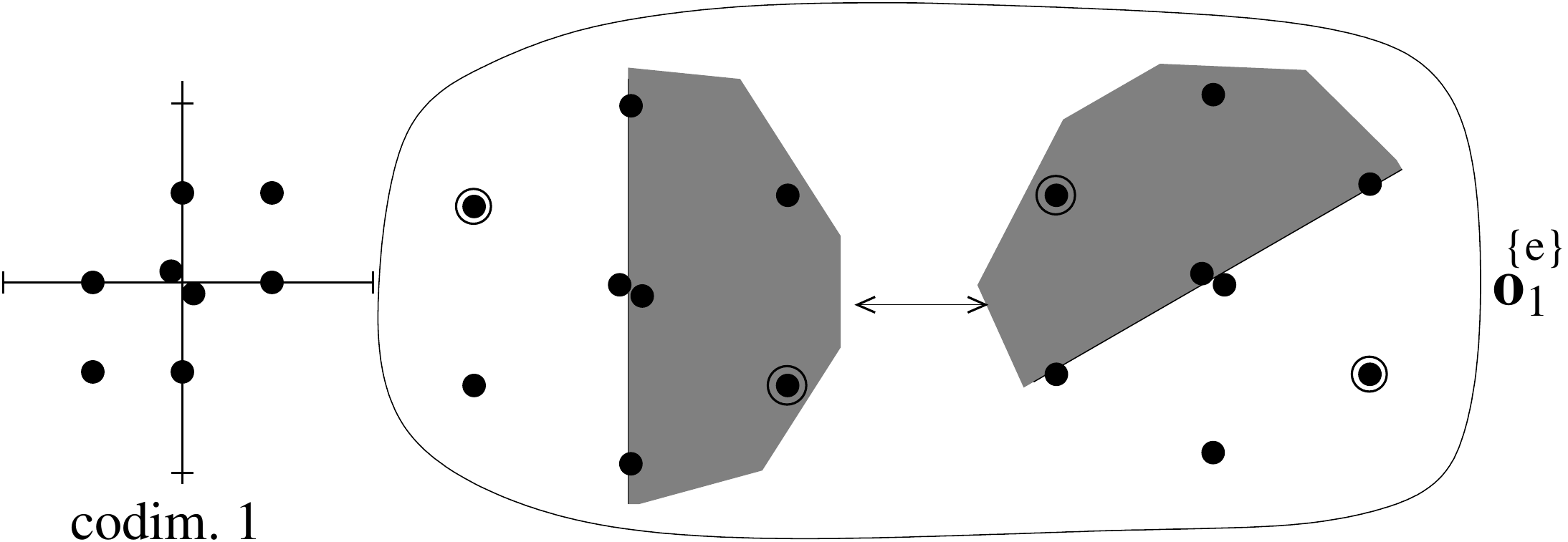}\]\[\includegraphics[scale=0.55]{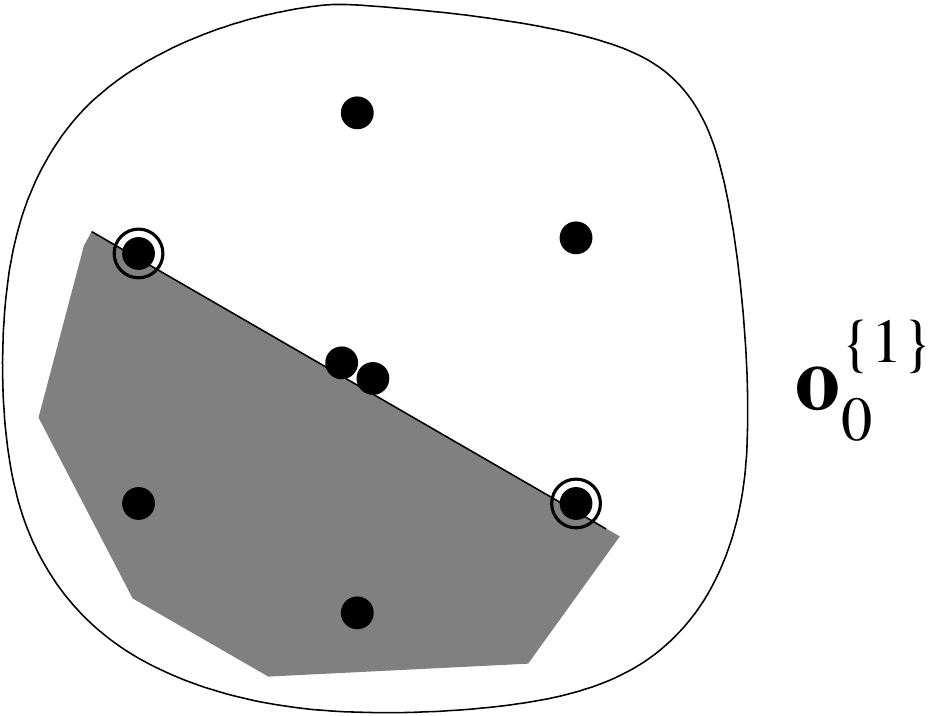}\]where
the last $W_{\RR}$-orbit demonstrates that the orbit map $\mathbf{o}$
need not be one-to-one in the non-complete-flag case. The incidence
diagram is the same as for $PGL_{2}$, since $\mathbf{o}_{0}^{\{e\}}=\mathbb{B}$,
$\mathbf{o}_{0}^{\{1\}}=\PP(V_{+})\backslash cl(\mathbb{B})$, and
$\mathbf{o}_{1}^{\{e\}}=\partial\mathbb{B}\cong S^{3}$.

\subsubsection{$G=PSp_{4}$}

The Hasse diagram for the Cartan subgroups of $G(\RR)^{\circ}$ is
\[\includegraphics[scale=0.6]{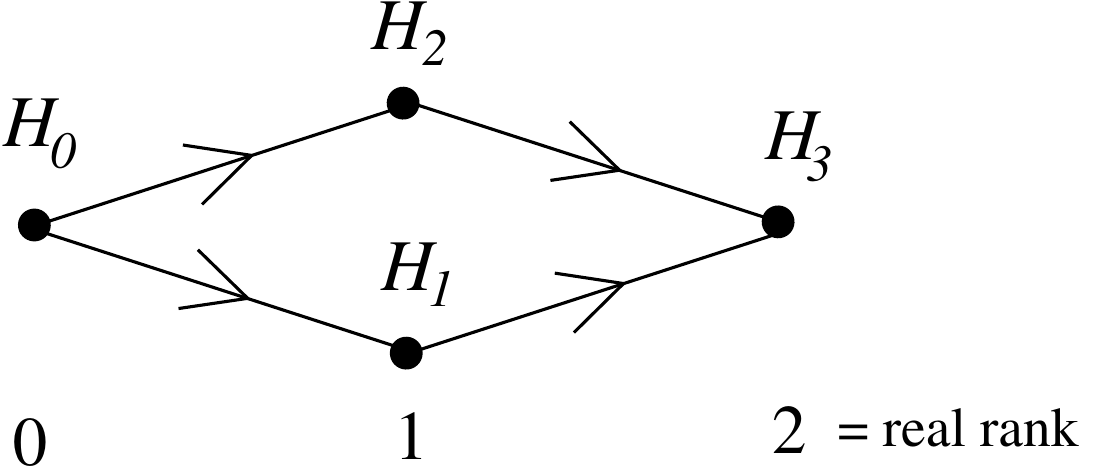}\]where $H_{1}$ {[}resp. $H_{2}${]}
is obtained from $H_{0}$ by the Cayley transform ub a long {[}resp.
short{]} root. We shall only consider the two cases%
\footnote{There is also a weight $2$ case with numbers $(1,2,1)$, left to
the reader.%
} where $D$ is the period domain for rank $4$ Hodge structures of
weight $1$ resp. 3 with Hodge numbers $(2,2)$ resp. $(1,1,1,1)$
(inducing weight zero HS of type $(3,4,3)$ resp. $(1,1,2,2,2,1,1)$
on $\g$). The pictures corresponding to $(H_{0},\chi_{0})$ are%
\footnote{with apologies to the reader for taking $\alpha_{1}$ to be the short
root (meaning that the Cayley transform in $\alpha_{1}$ gives $H_{2}$
and vice versa).%
} \[\includegraphics[scale=0.55]{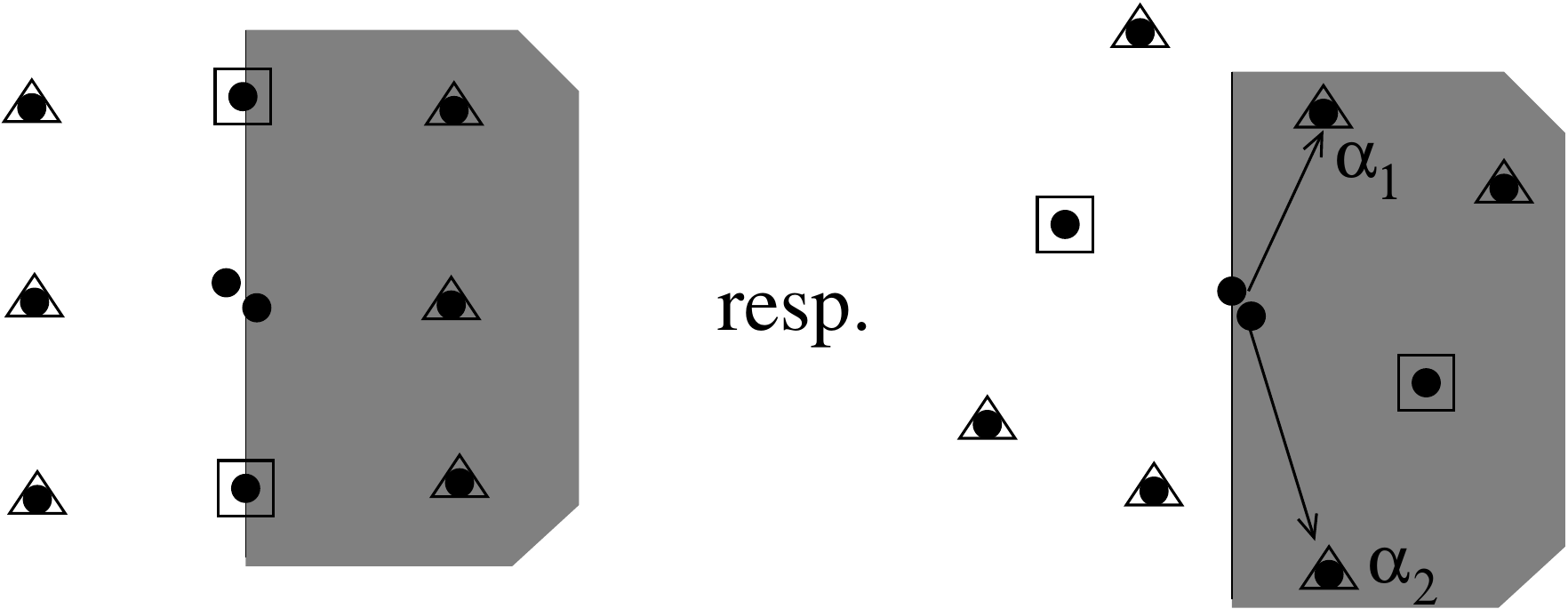}\]and the enhanced Hasse
diagram in the first (Siegel $\mathfrak{H}_{2}$) case is, up to labeling,
the same as for the Carayol domain. The second (complete flag) case
$D=D_{(1,1,1,1)}$ has enhanced Hasse diagram \[\includegraphics[scale=0.6]{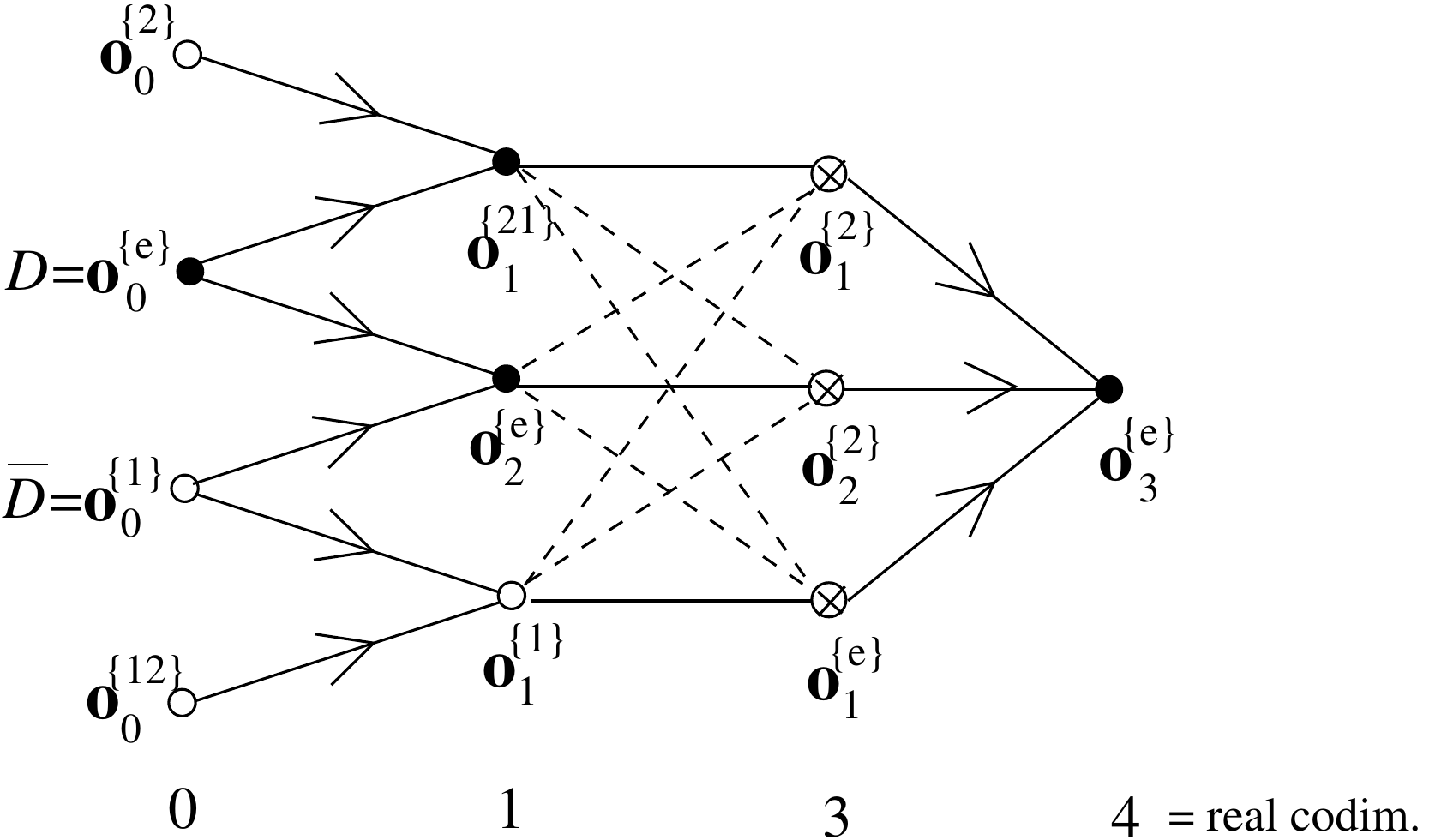}\]in
which we obtain the first examples of non-polarizable boundary strata.
Of the mixed Hodge diagrams \[\includegraphics[scale=0.55]{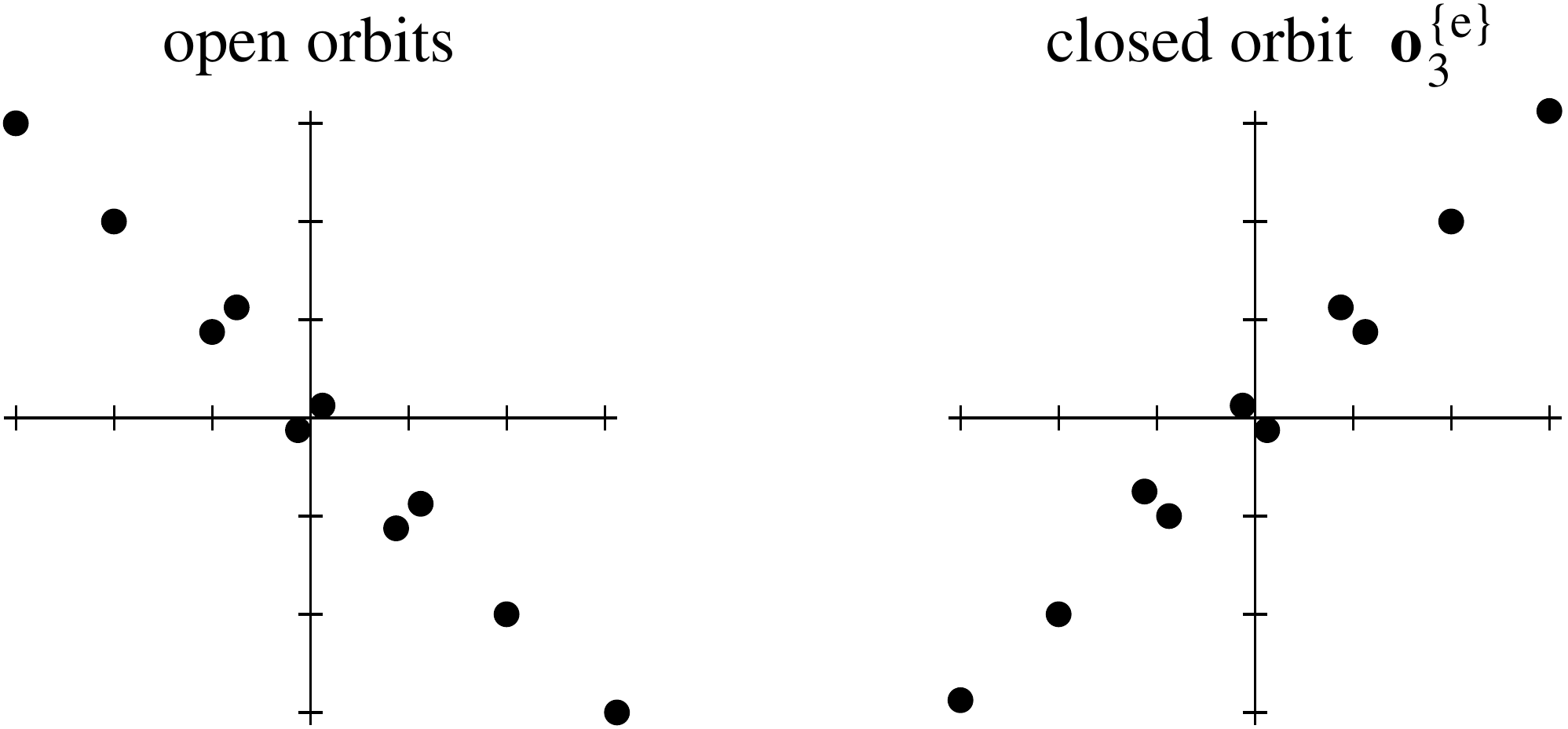}\]\[\includegraphics[scale=0.55]{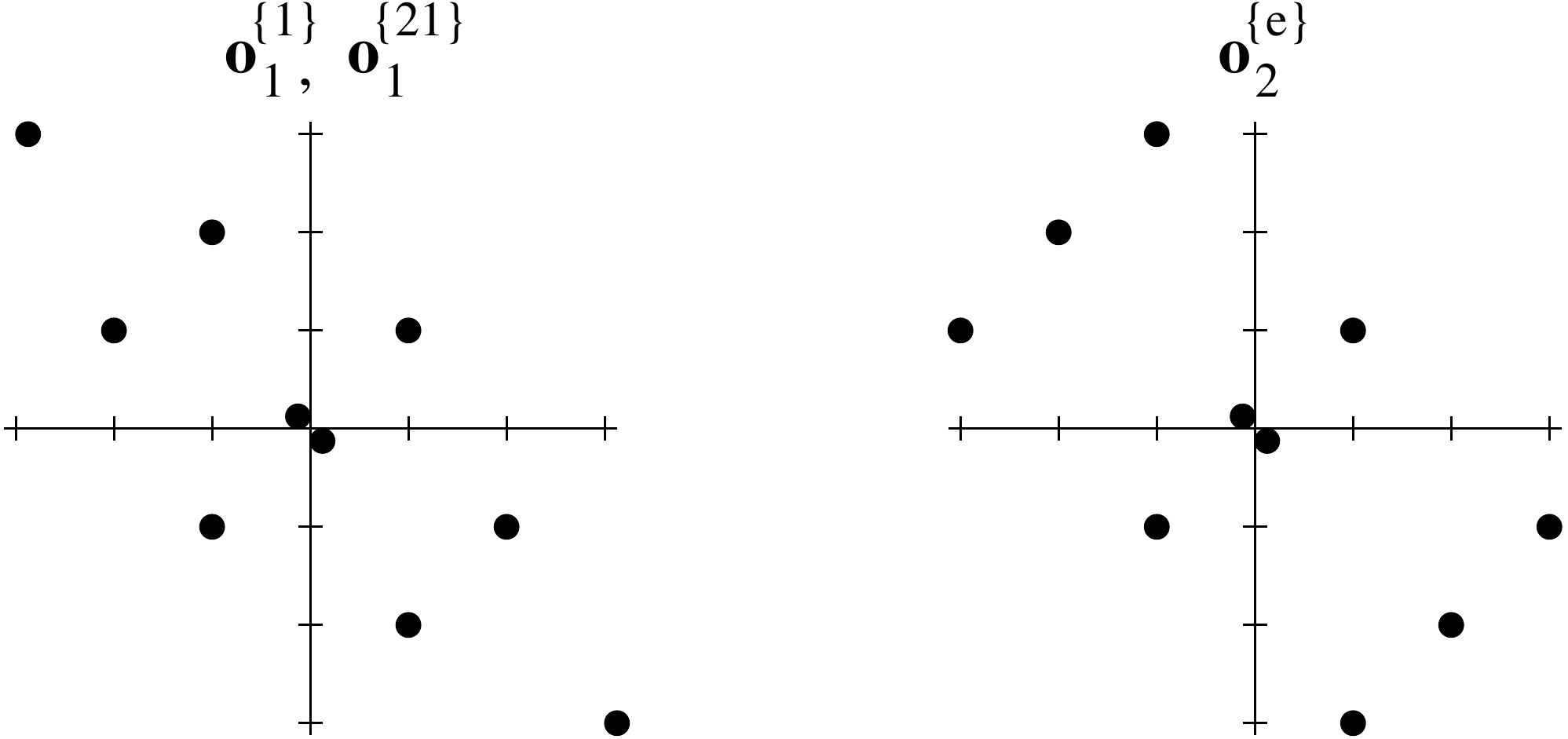}\]\[\includegraphics[scale=0.55]{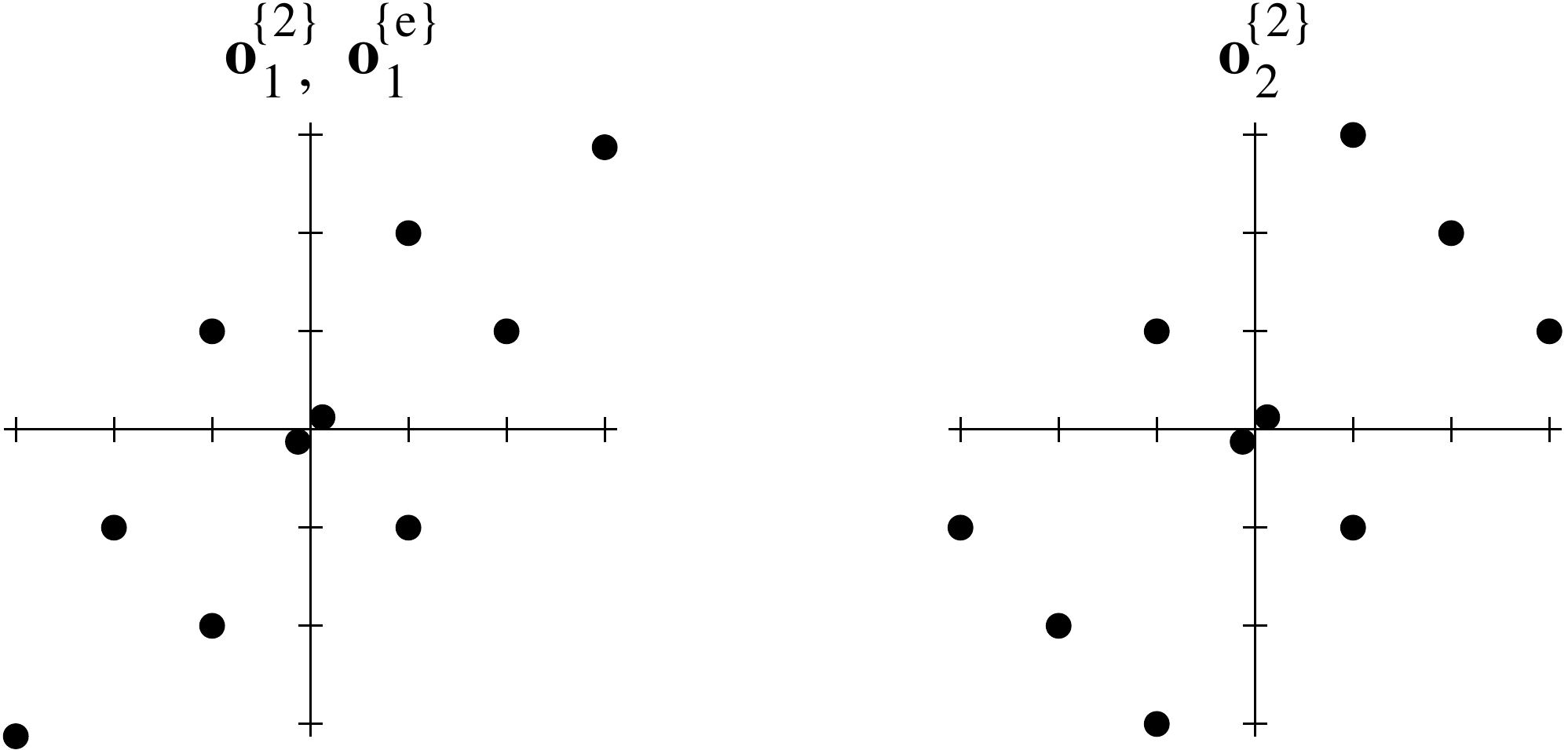}\]the
first four correspond to boundary components in \cite{KP}, whereas
the bottom two have $\dim(\g^{-1,-1})=0$ making the codimension-three
substrata obviously non-polarizable.

\subsubsection{$G=$ split $G_{2}$}

Note that $G(\RR)=G(\RR)^{\circ}$. There are three cases, corresponding
to polarized HS's with Hodge numbers (A) $(2,3,2)$, (B) $(1,2,1,2,1)$,
resp. (C) $(1,1,1,1,1,1,1)$ on the standard (7-dimensional) representation
and (A) $(1,4,4,4,1)$, (B) $(2,1,2,4,2,1,2)$, resp. (C) $(1,1,1,1,2,2,2,1,1,1,1)$
on $\g$ (see \cite[Chap. 4]{GGK}). The Cartan diagram is as for
$PSp_{4}$,%
\footnote{same meaning for $H_{1}$ vs. $H_{2}$, and the same absurd convention
on $\alpha_{1}$ vs. $\alpha_{2}$%
} and the $(H_{0},\chi_{0})$ pictures are \[\includegraphics[scale=0.45]{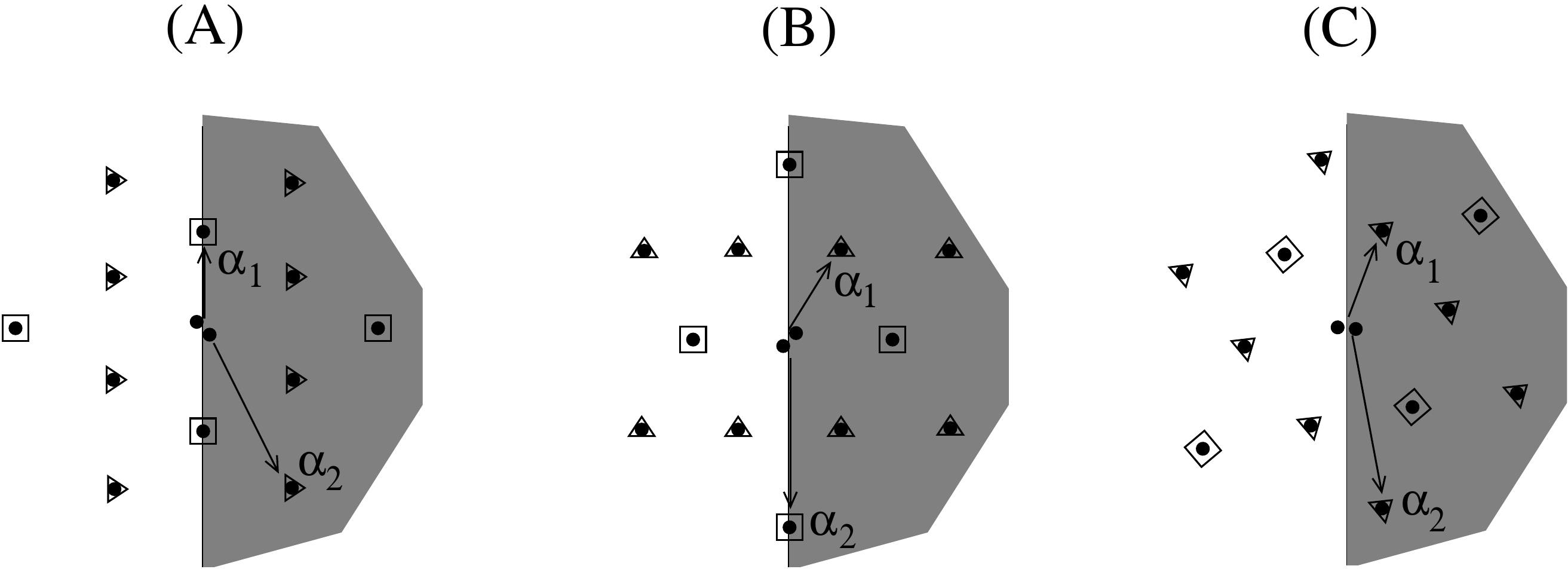}\](Note
that $W_{\RR}^{\circ}(H_{j})$ is generated by reflections in $\Delta_{c}$
resp. $\Delta_{\RR}$ for $j=0$ resp. 3, and by the reflection in
$\Delta_{\RR}$ and $(-\text{id})$ for $j=1$ and $2$.) The complete
flag case is (C), with enhanced Hasse diagram \[\includegraphics[scale=0.6]{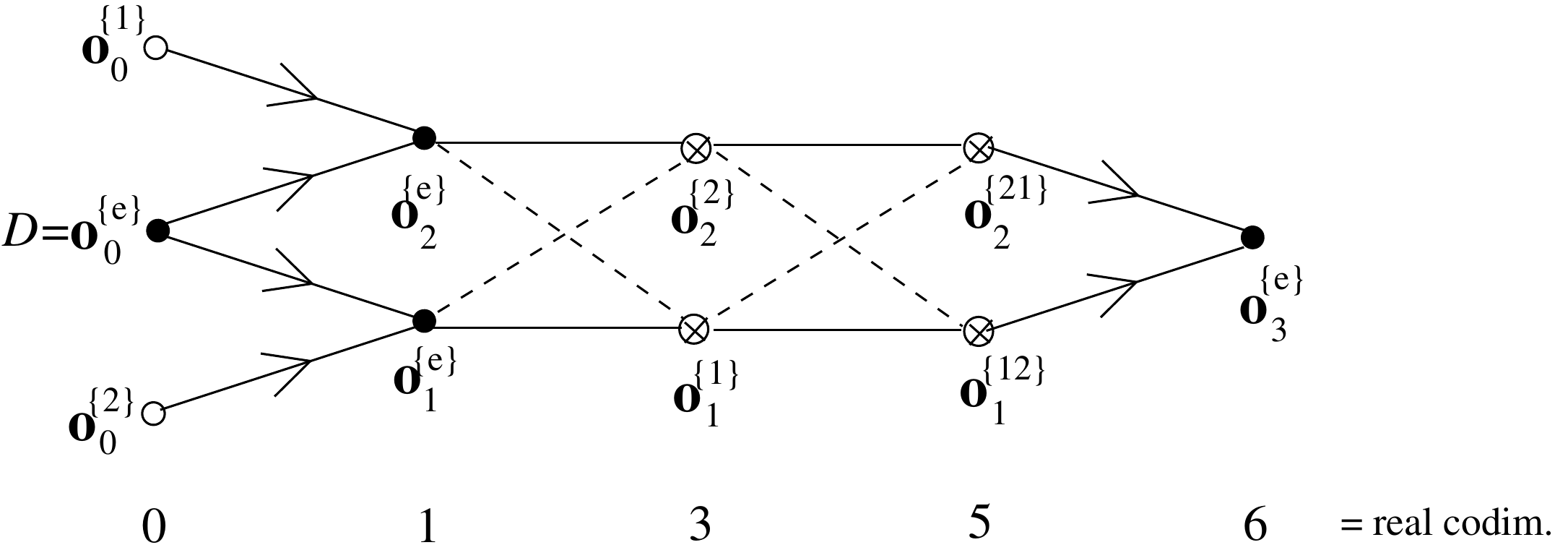}\]We
omit the mixed Hodge diagrams, which are unwieldy, but include them
for (B) \[\includegraphics[scale=0.75]{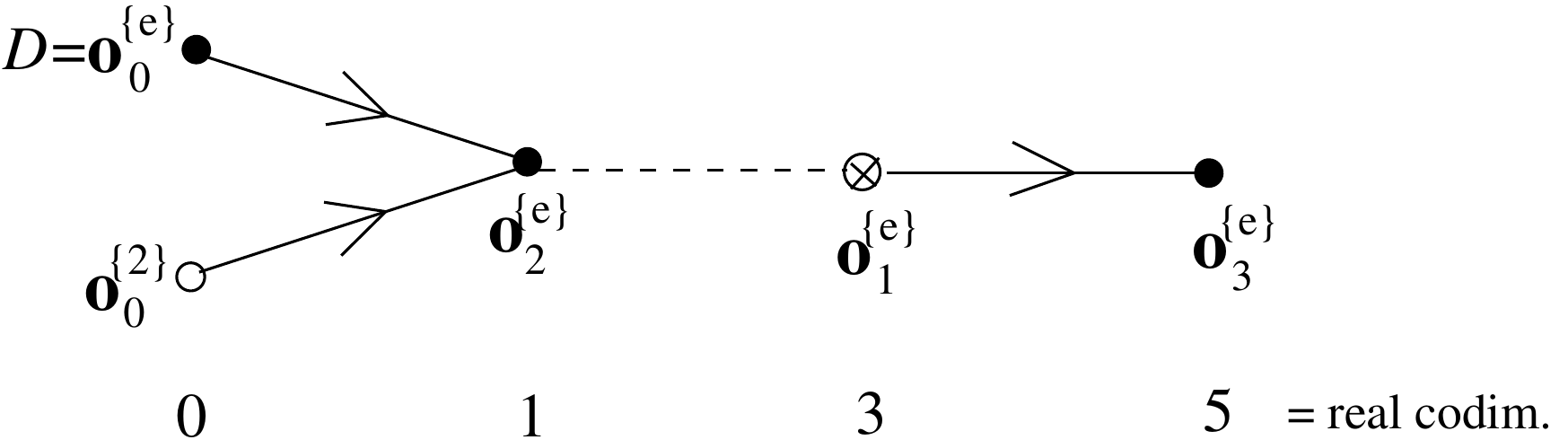}\]\[\includegraphics[scale=0.4]{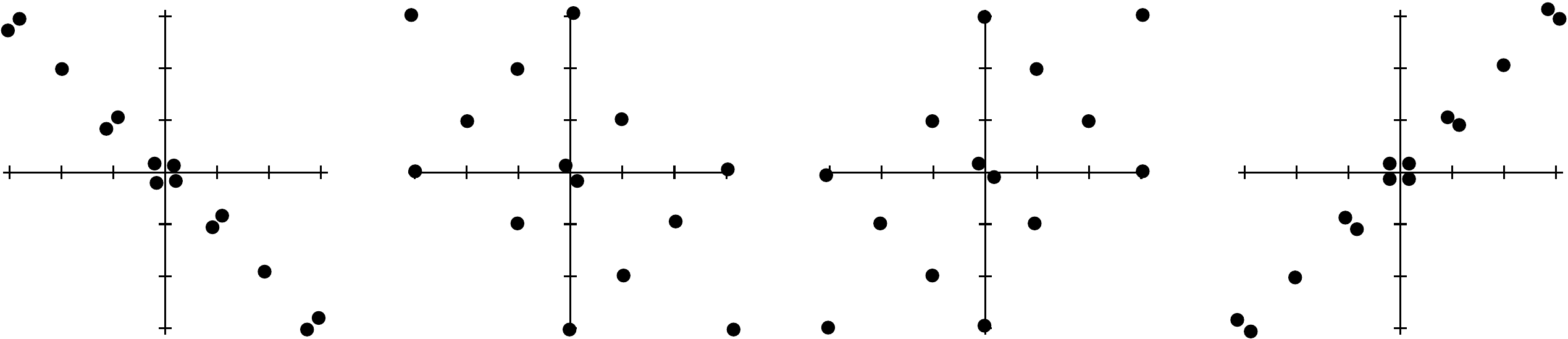}\]and
for (A) \[\includegraphics[scale=0.75]{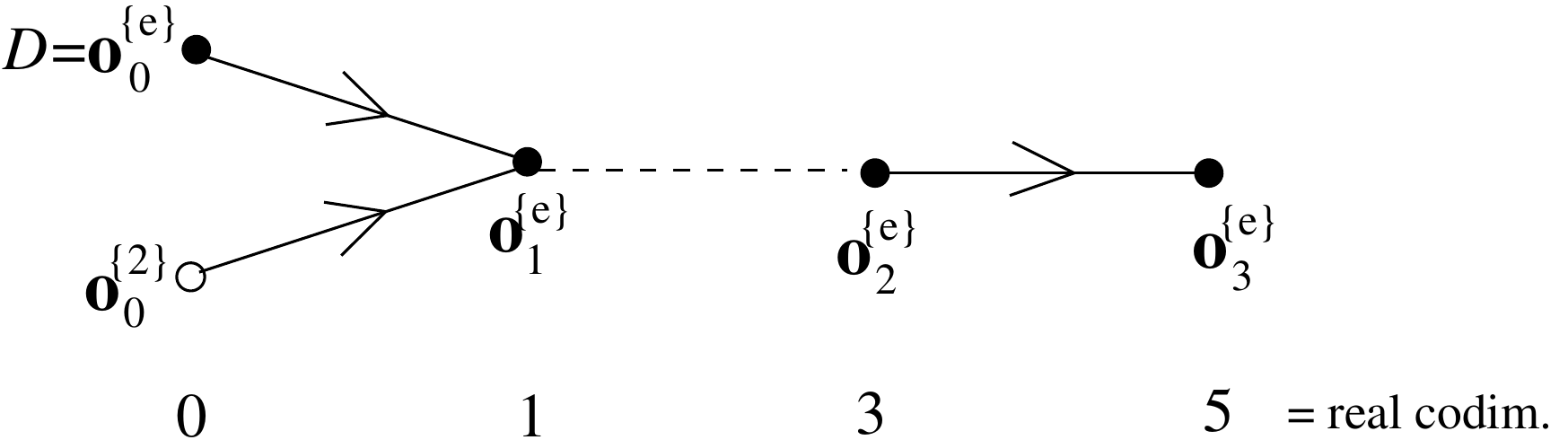}\]\[\includegraphics[scale=0.5]{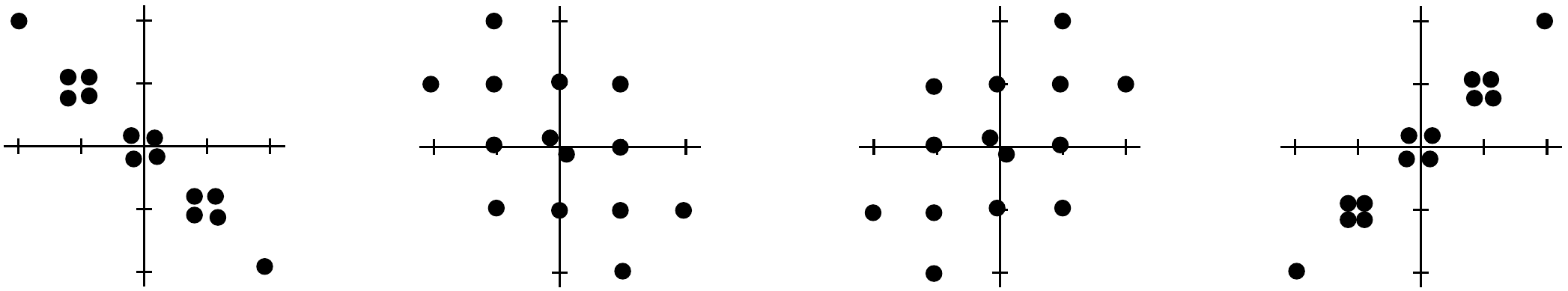}\]Except
for (B) $\mathbf{o}_{1}^{\{e\}}$, one can check that these orbits
are all in the image of a $B(N)$ (see \cite[sec. 8]{KP} for (A)).

\subsection{Counterexamples}

The 3 vignettes with which we conclude this paper illustrate, for
polarizable boundary strata, the potential failure of cuspidality,
of rationality, and of a stronger notion of polarizability.

\subsubsection{$\mathfrak{g}^{-1,-1}$ need not contain a real root}

To see this, we have to consider strata of codimension strictly larger
than $1$. Let $D$ be Carayol's nonclassical domain (cf. $\S6.1$),
and $\mathcal{O}_{\mathrm{c}}\subset\check{D}$ the (polarizable)
closed orbit. The Cartan $H$ determined by $F^{\b}\in\mathcal{O}_{\mathrm{c}}$
has real rank $1$, which gives a root diagram\[\includegraphics[scale=0.6]{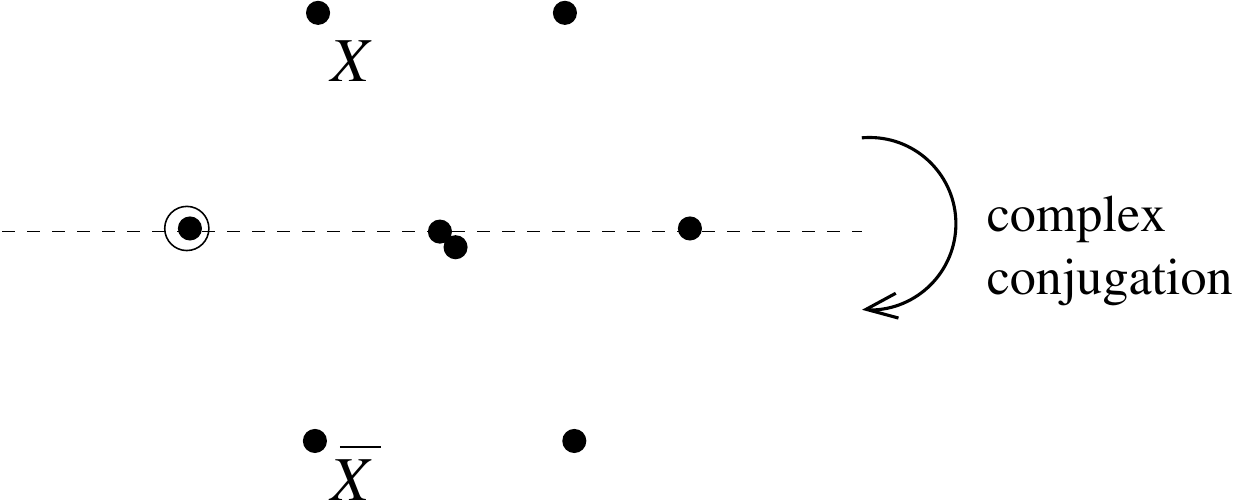}\]The
associated bigrading is\[\includegraphics[scale=0.6]{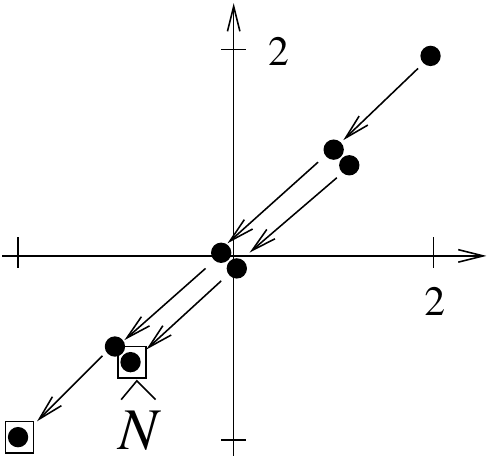}\]where
$\hat{N}\in\mathfrak{g}_{\RR}^{-1.-1}$ is as in Definition \ref{defn polarizable}
and arrows denote the action of $\text{ad}N$. Clearly $\mathfrak{g}^{-1,-1}=\CC\langle X,\bar{X}\rangle$
and $\hat{N}$ is a multiple of $X+\bar{X}$.

\subsubsection{A noncuspidal boundary component}

This time, in addition to codimension $>1$, we have to start with
a Mumford-Tate group of rank at least $3$. Taking $G=Sp_{6}$, we
consider the Siegel domain $D=\mathfrak{H}_{3}$ parametrizing Hodge
structures $\vf$ of type $(3,3)$. The associated projection $\pi_{\chi}$
on roots takes the form\[\includegraphics[scale=0.55]{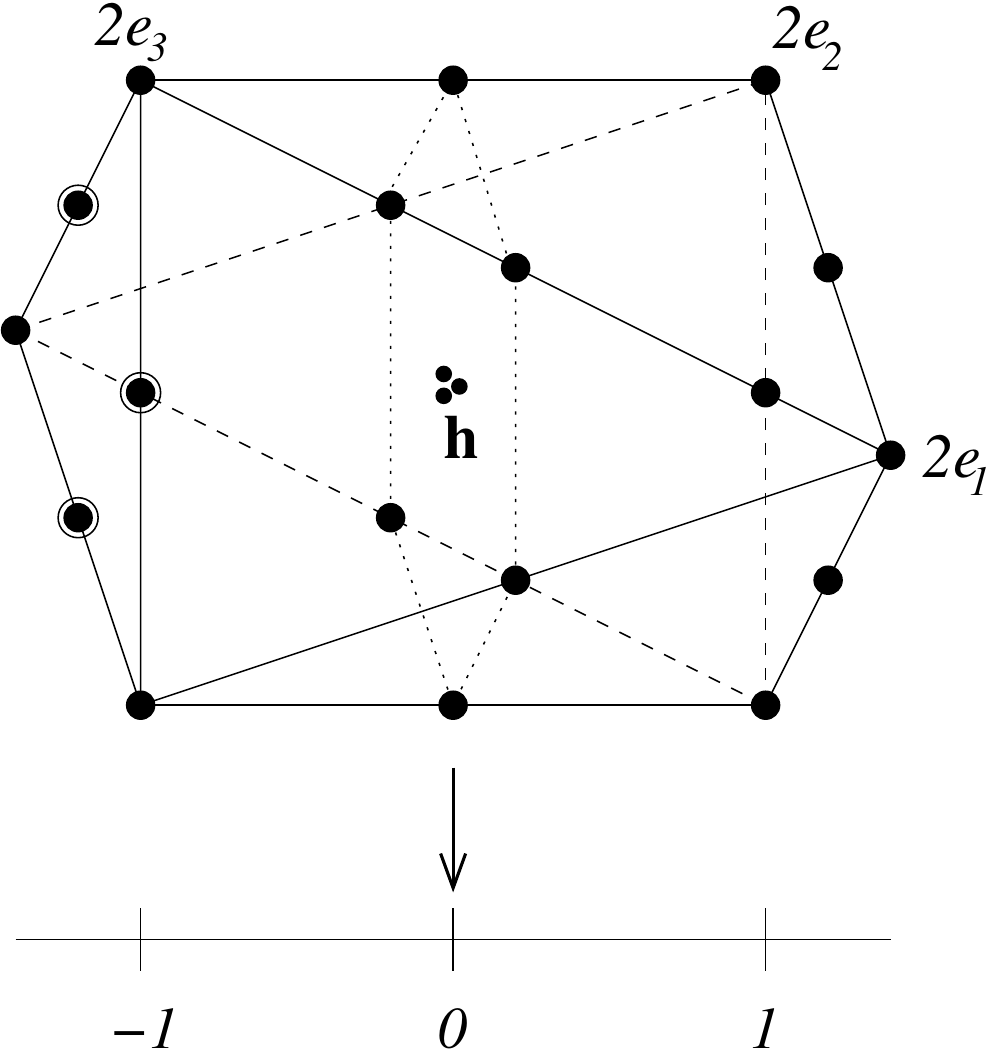}\]sending
$ae_{1}+be_{2}+ce_{3}\mapsto\frac{1}{2}(a+b-c)$. 

On the other hand, with $H$ $\mathbb{R}$-split, the same picture
describes $\pi_{\chi_{\tilde{Y}}}$ for the (Hodge-Tate) limiting
mixed Hodge structures parametrized by $B_{\RR}(N)$, where $N$ is
the sum of root vectors for the $3$ circled roots ($e_{3}-e_{1}$,
$e_{3}-e_{2}$, $-e_{1}-e_{2}$). The subalgebra $\ker(\text{ad}\tilde{Y})\cong Gr_{0}^{\tilde{W}}\mathfrak{g}$,
whose root system maps to $0$ in the picture, is $\mathfrak{sl}_{3}$.
Since $SL_{3}$ has no compact Cartan, $\mathfrak{q}=\tilde{W}_{0}\mathfrak{g}$
is not a cuspidal parabolic subgroup. The boundary component $B(N)$
maps to the closed orbit $\mathcal{O}_{\mathrm{c}}\subset\check{D}$,
and so $\mathcal{O}_{\mathrm{c}}$ is not cuspidal.

\subsubsection{A non-rational $G(\RR)^{\circ}$-orbit in $\partial D$}

Take $D$ once more to be Carayol's $(1,2,2,1)$-domain, but this
time with $G$ a $\QQ$-anisotropic form of $SU(2,1).$ More precisely,
let $V_{+}$ be a 3-dimensional vector space over $\QQ(i)$, $H$
a $\QQ(i)$-Hermitian form of signature $(2,1)$ on $V_{+}$ hat does
not represent zero, and $Q$ the alternating form on $V:=Res_{\QQ(i)/\QQ}(V_{+})$
given by minus the imaginary part of $H$. So
\[
G=Res_{\QQ(i)/\QQ}\left(Aut(V_{+},H)\right)=Res_{\QQ(i)/\QQ}\left(GL(V_{+})\right)\cap Sp(V,Q),
\]
and extending $Q$ to $V_{\QQ(i)}=V_{+}\oplus\overline{V_{+}}$, we
have $H(v,v)=-2iQ(v,\bar{v})$.

Now, we know that the resulting domain $D$ has polarizable boundary
strata, which then come from $\RR$-limit mixed Hodge structures.
But if the stratum is rational in our sense, then $\tilde{W}_{\b}=W(N)_{\b}$
can be defined over $\QQ$ (with weights on $V$ centered about $3$,
not $0$). Taking any nonzero rational vector $w\in W_{2}V$, we have
$w=v+\bar{v}$ for $(0\neq)\, v\in W_{2}V_{+}$, and $0=Q\left(W_{2}W_{\QQ(i)},W_{2}V_{\QQ(i)}\right)$
forces
\[
0=-2iQ(v,\bar{v})=H(v,v),
\]
in contradiction to anisotropy.

\curraddr{\noun{${}$}\\
\noun{Department of Mathematics, Campus Box 1146}\\
\noun{Washington University in St. Louis}\\
\noun{St. Louis, MO} \noun{63130, USA}}

\email{\emph{${}$}\\
\emph{e-mail}: matkerr@math.wustl.edu}

\curraddr{${}$\\
\noun{Mathematics Department, Mail stop 3368}\\
\noun{Texas A\&M University}\\
\noun{College Station, TX 77843, USA}}

\email{${}$\\
\emph{e-mail}: gpearl@math.msu.edu}
\end{document}